\newcommand{\ii }{{\rm i} } 
\newcolumntype{C}[1]{>{\centering\arraybackslash}b{#1}}
\newcolumntype{R}[1]{>{\raggedleft\arraybackslash}b{#1}}
\newcolumntype{L}[1]{>{\raggedright\arraybackslash}b{#1}}
\newcolumntype{M}[1]{>{\centering}m{#1}}
\newtheorem{thm}{Theorem}[section]
\newtheorem{lem}{Lemma}[section]
\newtheorem{prop}{Proposition}[section]
\theoremstyle{remark}
\newtheorem{rem}{Remark}[section]
\newtheorem{cor}{Corollary}[section]
\numberwithin{equation}{section}
\renewcommand*{\div}{\operatorname{div}}
\newcommand*{\rad}{\text{rad}}
\newcommand*{\curl}{\operatorname{curl}}
\newcommand*{\id}{\operatorname{Id}}
\newcommand*{\supp}{\operatorname{supp}}
\newcommand*{\loc}{\mathrm{loc}}
\newcommand*{\norm}[1]{\left\Vert #1\right\Vert}
\newcommand*{\bydef}{\overset{\rm def}{=}}
\date{\today}
\author[1]{Taoufik Hmidi} 
\author[2]{Haroune Houamed}
\address{New York University Abu Dhabi \\
Abu Dhabi \\
United Arab Emirates}
\email{th2644@nyu.edu, haroune.houamed@nyu.edu} 
\author[3]{Emeric Roulley} 
\address{SISSA International School for Advanced Studies, Via Bonomea 265, 34136 \\
	Trieste \\
	Italy}
\email{eroulley@sissa.it} 
\author[4]{Mohamed Zerguine} 
\address{LEDPA, Université de Batna 2,   département des mathématiques, 05000\\
	BATNA\\
	ALGERIE}
\email{m.zerguine@univ-batna2.dz} 
\title[V-states for the lake equation]{Uniformly rotating vortices for the lake equation}
\begin{document}  
\keywords{Lake equation, Euler equations, V-states, vortex, elliptic regularity, singular kernels, fundamental solution }

\begin{abstract}

	We investigate the existence of time-periodic vortex patch solutions, in both  simply and doubly-connected cases,  for the two-dimensional lake equation  where the depth function of the lake is assumed to be non-degenerate and radial. 
The proofs employ   bifurcation techniques, where the most challenging steps are related to  the regularity study of some nonlinear functionals and the spectral analysis of their linearized operators around Rankine type vortices. 
The main difficulties stem from   the roughness and the implicit form of the Green function connecting  the fluid vorticity and its stream function.  We handle in part these issues  by exploring the asymptotic structure of the solutions  to the associated elliptic problem. As to the distribution of the spectrum, it is tackled by a fixed-point argument through a perturbative approach.	 
\end{abstract} 
 
\maketitle
\tableofcontents
 
\section{Introduction and main results}
 
\subsection{Aspects of interest}
In this paper, we are interested in the study of time-periodic solutions of the lake equations, given by
\begin{equation}\label{LE} 
\begin{cases}
\partial_tv+v \cdot \nabla v +\nabla p=0,\\
\div(bv)=0,
\end{cases}
\end{equation}
supplemented with an initial datum. 
In this set of equations, the space variable, denoted by $x$ hereafter, takes values in the whole plane $\mathbb{R}^2$. The time-independent function $b:\mathbb{R}^2\rightarrow\mathbb{R}^+$ describes the bottom topography (or depth function) of the lake, whereas the vector field $v$ refers to the velocity field of the fluid.  Finally, the scalar function $p$ designates the pressure. Further assumptions on the coefficient  $b$ will be specified later on. The  equations \eqref{LE}, describing  balanced  flow without gravity waves, are basically recovered from the shallow-water equations in an appropriate time-space scale limit \cite {G80}, see also \cite{DK11}.   

One of the most important features of the  equations  \eqref{LE} is their equivalent vorticity formulation, which we recall next
\begin{equation}\label{Lake1}
\begin{cases}
\partial_t\left(\frac{\omega}{b}\right)+v\cdot\nabla\left(\frac{\omega}{b}\right)=0,\\
\div(bv)=0,\\
\omega=\curl v.
\end{cases}
\end{equation} 
Note that the divergence free condition in \eqref{Lake1} allows us to  express $bv$ as
\begin{equation}\label{def psib}
	bv=-\nabla^{\perp}\psi_{b},\qquad\nabla^{\perp}\bydef\begin{pmatrix}
		-\partial_{x_2}\\
		\partial_{x_1}
	\end{pmatrix},
\end{equation}
where the potential $\psi_b$ solves the elliptic problem 
\begin{equation}\label{elliptic:intro} 
		-\div\left(\frac{\nabla\psi_b}{b}\right)=\omega.
	\end{equation}

The lake equation, formulated as \eqref{Lake1}, has garnered considerable attention over the past few decades, resulting in numerous significant contributions from both analytical and numerical perspectives. Interested readers can find more comprehensive series of results on the well-posedness of \eqref{LE} in \cite{AL21,BM06,HLM21,LNP14,LOT96} and the references therein.   Most of these results are established in bounded domains, with or without island, under various assumptions on the depth function $b$.  For additional insights and progress on specific types of solutions, such as point vortices and steady states, we refer to  \cite{CZZ22, DJ19, DJ22, HLM22, M23, Y21}.

It is worth noting that the vorticity reformulation \eqref{Lake1} shares structural similarities with the classical Euler equations, governing the motion of an ideal incompressible fluid. This similarity is observed by setting $b\equiv 1$ in the equations above. Additionally, under reasonable conditions on the depth function $b$, a Yudovich theory for the lake equation \eqref{Lake1} can be established, akin to its known statement for the Euler equations \cite{Y63}. As a consequence of such a theory, the structure of an initial patch
$$ \omega|_{t=0}=b \mathds{1}_{D_0},$$
 for any  bounded domain  $D_0$ in $\mathbb{R}^2$, remains unchanged by the flow. More precisely, the solution associated with the preceding initial datum is given by 
\begin{equation}\label{omg vp}
	\omega(t,\cdot)=b\mathds{1}_{D_t}, \qquad D_t=\Psi(t,D_0),
\end{equation}
for all positive times $t\geq 0$, where $\Psi(t,\cdot) $ is the flow of the log-Lipschitz velocity field $v$,     defined as the unique solution of the integro-differential equation
$$ \Psi(t,x)=x+\int_0^t v\big(\tau,\Psi(\tau,x)\big)d\tau, \quad x\in \mathbb{R}^2.$$ 

In this paper, our focus extends to a specific subclass of patches where the transported domain $D_t$ is exactly a  rotation, by  some angular velocity $\Omega\in \mathbb{R}$, of the initial domain, referred to  as V-states. In other words, we are seeking solutions of the form \eqref{omg vp}, where
\begin{equation}\label{Dt:V-state}
	D_t = D_0 e^{\ii t\Omega},
\end{equation}
for all times  $t\geq 0$. It is important to note that, in general and conversely to the Eulerian case, the area of the transported patch   may not be conserved. However, as a by-product of the specific ansatz \eqref{Dt:V-state}, the area of the transported domain in that case  remains unchanged by the flow.

 Notably, in the case where the depth function is radially symmetric, one can prescribe a huge class of stationary solutions with radial shapes. Among these steady states, it is readily seen that the functions
\begin{equation*}
	\omega(t,\cdot )=b(|\cdot |)\mathds{1}_{\{ x\in \mathbb{R}^2:\; |x| < a \}}, \quad a>0,
\end{equation*}
and
\begin{equation*}
	\omega(t,\cdot )=b(|\cdot |)\mathds{1}_{\{ x\in \mathbb{R}^2:\;  a_2 < |x| < a_1 \}}, \quad 0< a_2 < a_1,
\end{equation*}
are   solutions of \eqref{Lake1} for all $t\geq0$.  

 Therefore, it is  natural to seek  time-periodic solutions in the vicinity of these steady states.  The justification for the existence of such solutions is the central theme of this work, and further details on this will be provided later on.

Note that  time-periodic (and time-quasi-periodic) solutions have previously been extensively studied and established for various fluid models. The initial discovery  in this direction for the Euler equations, specifically for ellipses, dates back to Kirchhoff \cite{K76}. Subsequently, Deem and Zabusky \cite{DZ78} identified cases with higher symmetries through numerical simulations. Burbea \cite{B82} then proposed an analytical approach to rigorously justify their existence, based on local bifurcation theory. Specifically, for any $m$-fold symmetry, $m\geq 1$, there exists only one   branch of V-states  bifurcating from the Ranking vortices at the angular velocity $ \Omega_m = \frac{m-1}{2m}$. 

Similar structures for the doubly-connected case have been explored in  \cite{HHMV16}. In addition to that, several other properties of the V-states have been investigated for   Euler equations, including questions about   the regularity of the boundary \cite{CCG16,HMV13}, the global bifurcation \cite{HMW20} and  the existence of multipole vortex patches \cite{G20,G21,HH21,HW22,HM17}. Further other aspects have also been addressed, such as the implementation of Nash--Moser scheme and KAM techniques to construct time-quasi-periodic solutions; see \cite{BHM23,  HHR23, HR22}.

Similar inquiries have   been discussed about other active-scalar equations such as the generalized surface quasi-geostrophic equation (SQG) \cite{HHH16,JG24, HH15,HHM21,HXX22,R17}, the quasi-geostrophic shallow-water equations  \cite{DHR19,HR21,R21} or the Euler-$\alpha$ model \cite{R22}. A general approach generalizing the previous results in the simply-connected case has been obtained in \cite{HXX23}. More recently, bifurcation theory has been implemented for more complex coupled Hamiltonian systems, see \cite{HZHH23} for  an interconnected layers model arising in geophysical flows  and   \cite{R23}  for the  Vlasov--Poisson equation. At last, we also refer the interested reader to another set of results in a three-dimensional setting \cite{GHR23,GHM22,GHM22-1}. 

\subsection{Main results}
Before we get into the details of the main contribution  of the paper, allow us  to setup the basis of our assumptions on the bottom topography $b$ as well as some notations to be constantly used thereafter. 

\subsubsection*{Overall sufficient conditions on $b$}
The depth function $b$ will be assumed to enjoy certain conditions which are  consistent with the vortex-patch solutions of our interest throughout this paper, though they might not be optimal in some sense. The sufficient conditions on $b$ are now listed below for the sake of clarity and convenience, even though they will not be required as all of them in every single result of the paper. More details of  that will be discussed subsequently. 
\begin{enumerate}[label=\textbullet]
	\item (Regularity) We assume that the function $b$ enjoys the regularity  	\begin{equation}\label{Hb1:eq}\tag{Hb1}
		b\in C^2(\mathbb{R}^2,\mathbb{R}).
	\end{equation}
	\item (Positivity)  We assume that $b$ is positive, i.e.,  \begin{equation}\label{Hb2:eq}\tag{Hb2}
		b(x)>0 ,\quad \text{for all} \quad x\in\mathbb{R}^2. 
	\end{equation}
	\item (Constancy  at infinity) There exists  $R_\infty>0$ and $b_{\infty}>0$ such that 
	\begin{equation}\label{Hb3:eq}\tag{Hb3}
		  b(x)=b_{\infty}, \quad \text{for all} \quad x \in   \mathbb{R}^2\setminus B(0,R_\infty)  .
	\end{equation}
	\item (Radial symmetry) There exists $\mathtt{b}:[0,\infty) \to (0,\infty)$ such that
	\begin{equation}\label{Hb4:eq}\tag{Hb4}
		  b(x)=\mathtt{b}(|x|),\quad \text{for all} \quad x\in\mathbb{R}^2  .
	\end{equation}
	For a mere of simplicity, with an abuse of notation, we will keep using  $b$ instead of $\mathtt{b}$.
\end{enumerate}
At last, we introduce the sequence \begin{equation}\label{def Lambdan}
	\Lambda_{n}(\alpha,\beta) \bydef\int_{0}^{2\pi}K_b\big(\alpha,\beta e^{\ii \eta}\big)\cos(n\eta)d\eta,
\end{equation} 
for any given $\alpha,\beta>0$ and $n\in\mathbb{N}^*,$
where $K_b$ refers to the kernel associated with the elliptic equation \eqref{elliptic:intro}, which will be defined and discussed with details in Section \ref{sec:kernel}.

The first main result of the paper concerns the existence of time-periodic solutions of \eqref{Lake1} bifurcating from a   given disc, which we state next.
\begin{thm}\label{thm:1}   
Let $a>0$ and $b$ be  a function satisfying \eqref{Hb1:eq}, \eqref{Hb2:eq}, \eqref{Hb3:eq} and \eqref{Hb4:eq}. Then, there is a positive constant $M(a,b)>0$  such that, for any $m\in\mathbb{N}^*$ with
	 \begin{equation*}
	 	m> M(a,b),
	 \end{equation*}	
	    there exists a curve of $m$-fold symmetric simply-connected V-states for  \eqref{Lake1} bifurcating from the stationary solution
	    $$\omega = b\mathds{1}_{\{x\in\mathbb{R}^2:\; |x|<  a\}},$$
	     at the angular velocity 
	 $$\Omega_m =\frac{1}{a^2} \int_0^{a} \tau b(\tau) d\tau -\Lambda_{m}(a,a).$$  
	 \end{thm}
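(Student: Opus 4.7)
\medskip

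\textbf{Proof strategy.} The plan is to recast the V-state condition as a zero-search problem $F(\Omega, f)=0$ for a suitable nonlinear functional and then apply a Crandall--Rabinowitz local bifurcation argument. Under the ansatz \eqref{Dt:V-state}, rewriting \eqref{Lake1} in the frame rotating with angular velocity $\Omega$ shows that $\partial D_0$ is a V-state if and only if the effective stream function is constant along $\partial D_0$. Using $b v =-\nabla^{\perp}\psi_b$ together with the radial symmetry \eqref{Hb4:eq} (which lets us write $b(x)\,\Omega x^{\perp}=\nabla^{\perp} G(|x|)$ with $G(r)=\int_0^{r}\tau b(\tau)\,d\tau$), the boundary equation becomes
\begin{equation*}
\psi_b(x)+\Omega\, G(|x|)=\text{const on }\partial D_0,
\end{equation*}
where $\psi_b$ is recovered from $\omega=b\mathds{1}_{D_0}$ through the Green kernel $K_b$. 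I would parametrize $\partial D_0$ in polar coordinates as $r(\theta)=\sqrt{a^2+2a f(\theta)}$ with $f$ small in some Hölder class $C^{1+\alpha}_{m}$ of $m$-fold symmetric $2\pi$-periodic functions, and define $F(\Omega,f)(\theta)$ as the tangential derivative along $\partial D_0$ of $\psi_b+\Omega G$, so that $F(\Omega,0)\equiv 0$ reflects that the radial disc is a steady state for every $\Omega$.

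\medskip

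\textbf{Linearization and spectrum.} The linearized operator $D_f F(\Omega,0)$ is a Fourier multiplier by radial symmetry: decomposing $f=\sum_{k\geq 1}f_k\cos(km\theta)$, the $k$-th mode is acted on by a scalar $\mu_k(\Omega)$. A direct computation, plugging the expansion of $\psi_b$ in terms of $K_b$ into $F$ and integrating $\cos(n\eta)$ against $K_b(a,a e^{\ii\eta})$, yields
\begin{equation*}
\mu_k(\Omega)=\Omega-\tfrac{1}{a^2}\int_0^{a}\tau b(\tau)\,d\tau+\Lambda_{km}(a,a),
\end{equation*}
(up to an overall positive factor coming from $|r'(\theta)|$ at $f=0$). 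Setting
\begin{equation*}
\Omega_m=\tfrac{1}{a^2}\int_0^{a}\tau b(\tau)\,d\tau-\Lambda_{m}(a,a)
\end{equation*}
makes $\mu_1(\Omega_m)=0$, giving a candidate one-dimensional kernel spanned by $\cos(m\theta)$. Transversality $\partial_\Omega \mu_1(\Omega_m)=1\neq 0$ is immediate from the expression above, and the range/cokernel condition follows from the Fourier-diagonal structure once the non-degeneracy $\mu_k(\Omega_m)\neq 0$ for $k\geq 2$ is established.

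\medskip

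\textbf{Main obstacles.} The delicate points concentrate on two issues. First, the regularity of $F$: the Green kernel $K_b$ is only defined implicitly via \eqref{elliptic:intro} and carries a logarithmic singularity of Newtonian type plus smoother remainders depending on $b$. To establish that $F:\mathbb{R}\times V\to C^{\alpha}_{m}$ is $C^1$ on a neighborhood $V$ of $0$, I would use the decomposition of $K_b$ into a principal (Newtonian) part and a corrector controlled via the elliptic theory for $-\div(b^{-1}\nabla\cdot)$; the principal part reduces to the classical analysis already known in the Euler setting, while the corrector is handled by the smoothing effect of the lower-order part. Second, and this is the true heart of the matter, one must show that the spectrum $\{\Lambda_{km}(a,a)\}_{k\geq 1}$ separates well enough so that the kernel is exactly one-dimensional. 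Since $K_b$ is not explicit, I would establish an asymptotic expansion of $\Lambda_n(a,a)$ as $n\to\infty$, likely of the form $\Lambda_n(a,a)=\tfrac{c(a,b)}{n}+o(\tfrac{1}{n})$, by combining the singular integral estimates of the kernel and a stationary-phase/fixed-point analysis as sketched in the abstract. This monotone-like behavior for large indices is precisely what allows to choose $M(a,b)$ ensuring that for $m>M(a,b)$ the values $\Lambda_{km}(a,a)$ are distinct from $\Lambda_{m}(a,a)$ for all $k\geq 2$, hence $\mu_k(\Omega_m)\neq 0$.

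\medskip

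\textbf{Conclusion.} Once the $C^1$ regularity of $F$, the one-dimensional kernel, the Fredholm/cokernel property, and the transversality condition are in place, the Crandall--Rabinowitz theorem produces a local $C^1$ curve $s\mapsto(\Omega(s),f_s)$ with $(\Omega(0),f_0)=(\Omega_m,0)$ of nontrivial zeros of $F$, each element of which corresponds to an $m$-fold symmetric simply-connected V-state bifurcating from the disc of radius $a$. The hardest step is clearly the spectral separation: this is where the restriction $m>M(a,b)$ enters and where the implicit nature of $K_b$ forces a genuinely perturbative analysis of $\Lambda_n$, as opposed to the fully explicit Euler case where the analogue reduces to Bessel-type computations.
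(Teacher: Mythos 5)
Your proposal reconstructs essentially the same route the paper takes: the condition ``$\psi_b+\Omega G$ constant on $\partial D_0$'' is precisely the tangentially-differentiated form of the paper's contour dynamics equation \eqref{CD:EQ} (dividing your $F$ by $b(R(\theta))$ yields \eqref{contour_dynamics}, and since $\mathcal{F}(\Omega,0)\equiv 0$ the two linearizations agree up to the harmless factor $b(a)$), the parametrization and Fourier-multiplier structure of $d_r\mathcal{F}(\Omega,0)$ coincide with the paper's Lemma \ref{lemma:linearized-OP}, and your strategy for the kernel --- split into Newtonian principal part plus a smoother corrector, then run a perturbative/fixed-point analysis for the spectrum --- is exactly Lemma \ref{lemma:expansion1}, Proposition \ref{prop.kernel}, and Proposition \ref{lemma:eigenfunction}. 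The one small technical caveat is that the expansion rate $\Lambda_n(a,a)=\tfrac{c(a,b)}{n}+o(1/n)$ you quote is weaker than what the paper actually proves ($f_n(a,a)=O(1/n^3)$ in \eqref{bnd fn_prop}); your $o(1/n)$ suffices for spectral separation ($\Lambda_{km}\neq\Lambda_m$ for $k\geq2$ and $m$ large, since $c=b(a)/2>0$), but to get the \emph{monotonicity} of $(\Lambda_n(a,a))_n$ that the paper uses to uniformly bound $q_{nm}(\Omega_m,a)$ away from zero in the range argument (Proposition \ref{prop.last}, part (3)), you need the remainder to be $O(1/n^2)$ or better, which is what the fixed-point estimate in Proposition \ref{lemma:eigenfunction} delivers.
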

\begin{rem}    A precise expression of the constant $M(a,b)$ is given in   Proposition \ref{lemma:eigenfunction}, below. In particular, this constant is identically zero if $b$ is a constant function. Moreover, it holds that 
		\begin{equation*}
			\lim_{\norm{b''}_{L^\infty}  \to 0} M(a,b)=0 
		\end{equation*}  
		and  when $b\equiv 1$, that
		$$M(a,b) =0 \qquad \text{and} \qquad \Omega_{m}  =\frac{m-1}{2m},$$
		for any integer  $m\geq 1$. These values of the angular velocities correspond to the   same values of Burbea's result, previously established  for Euler equations \cite{B82}.
		Accordingly,   Theorem \ref{thm:1} above establishes ``sharp'' existence of $m$-fold symmetric $V$-states for any $m\geq  1$ (resp. large symmetries $m\gg 1$) if $\norm{b''}_{L^\infty}$ is sufficiently small (resp. if $\norm{b''}_{L^\infty}$ is arbitrary large).  
\end{rem}
The second main result of the paper establishes the existence of doubly-connected time-periodic solutions of \eqref{Lake1} bifurcating from an annulus, which  is the content of the next theorem.
\begin{thm}\label{thm:2}
	Let $0<a_2<a_1$ and $b$ be a function satisfying \eqref{Hb1:eq}, \eqref{Hb2:eq}, \eqref{Hb3:eq} and \eqref{Hb4:eq}. Then, there is a positive constant $N(a_1,a_2,b)>0$  such that, for any $m\in\mathbb{N}^*$ with 	
	$$ m> N(a_1,a_2,b),$$
	 there exist two curves of $m$-fold symmetric doubly-connected V-states bifurcating from the stationary solution of \eqref{Lake1}
	  $$\omega = b\mathds{1}_{\{x\in\mathbb{R}^2:\; a_2<|x|<a_1\}},$$   at the angular velocities  \begin{align*}
		\Omega_m^{\pm}& \bydef   \frac{1}{2 a_1^2} \int_{a_2}^{a_1} \tau b(\tau) d\tau+\frac{\Lambda_m(a_2,a_2)-\Lambda_m(a_1,a_1)}{2}\\
		& \quad \pm\frac{1}{2}\sqrt{\left(\frac{1}{  a_1^2} \int_{a_2}^{a_1} \tau b(\tau) d\tau-\Lambda_m(a_1,a_1)-\Lambda_m(a_2,a_2)\right)^2-4\Lambda_m^2(a_1,a_2)}.
	\end{align*}
\end{thm}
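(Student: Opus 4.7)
The plan is to cast the problem in the Crandall--Rabinowitz framework, as in the simply-connected Theorem~\ref{thm:1}, but now for a pair of unknown boundary curves. First I would parametrize the two boundaries in polar form as $r_j^2(\theta)=a_j^2+2f_j(\theta)$, $j\in\{1,2\}$, with $(f_1,f_2)$ in the product of Banach spaces $X_m^{\alpha}\times X_m^{\alpha}$ of $m$-fold symmetric, mean-zero Hölder functions on the torus. Translating the rotation ansatz \eqref{Dt:V-state} into the tangency of the rotating-frame velocity $v-\Omega x^\perp$ on each boundary, and using $bv=-\nabla^\perp\psi_b$ together with \eqref{elliptic:intro}, yields a pair of scalar equations $F_j(\Omega,f_1,f_2)=0$ expressed as explicit integral operators built from the kernel $K_b$.

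Next, I would verify that $F=(F_1,F_2)$ is of class $C^1$ between the appropriate Banach spaces, leaning on the regularity analysis of $K_b$ carried out in Section~\ref{sec:kernel}. The strategy is to split $K_b$ into its Newtonian singular part, handled by classical Calderón--Zygmund-type estimates familiar from the Eulerian case, and a smoother remainder encoding the non-constancy of $b$, whose regularity is obtained from the asymptotic expansions of the solutions of \eqref{elliptic:intro}.

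The heart of the argument is the spectral study of the linearization $L(\Omega)\bydef dF(\Omega,0,0)$. By radial symmetry of $b$ and rotational invariance of the background annulus, $L(\Omega)$ block-diagonalizes on Fourier modes: at each frequency $n\geq 1$ the problem reduces to a $2\times 2$ matrix $M_n(\Omega)$, whose diagonal entries assemble the self-rotation generated by the background stream function (producing the $\tfrac{1}{a_1^2}\int_{a_2}^{a_1}\tau b(\tau)d\tau$ contribution) together with the self-interactions $\Lambda_n(a_j,a_j)$, while the off-diagonal entries encode the cross-interaction $\Lambda_n(a_1,a_2)$ from \eqref{def Lambdan}. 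Solving $\det M_m(\Omega)=0$ is a quadratic equation whose two roots are precisely $\Omega_m^\pm$, and their reality amounts to positivity of the displayed discriminant, which already imposes a lower bound on $m$.

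The main difficulty will be the one-dimensionality of $\ker L(\Omega_m^\pm)$: one needs $\det M_n(\Omega_m^\pm)\neq 0$ for every $n\geq 1$ with $n\neq m$. Following the perturbative scheme announced in the abstract, the key tool is the asymptotic behaviour of $\Lambda_n(\alpha,\beta)$ as $n\to\infty$, extracted from the elliptic problem \eqref{elliptic:intro}. In the Eulerian limit $b\equiv 1$ the eigenvalues are explicit and the spectrum separates; the perturbation is then controlled in terms of $\|b''\|_{L^\infty}$ through a fixed-point argument that isolates $\Omega_m^\pm$ from the rest of the dispersion set. This produces the threshold $N(a_1,a_2,b)$. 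Transversality of the Crandall--Rabinowitz condition will follow easily from $\partial_\Omega M_n=-\id$, while the Fredholm index zero property comes from compactness of the off-diagonal integral operators. Assembling these ingredients, Crandall--Rabinowitz delivers the two announced bifurcation curves emanating at $\Omega_m^\pm$.
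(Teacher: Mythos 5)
Your proposal follows essentially the same path as the paper's proof: same polar parametrization of the two interfaces, same reduction to a coupled contour system $\mathcal{G}(\Omega,r_1,r_2)=0$, same Fourier block-diagonalization of the linearization into $2\times 2$ matrices $M_n(\Omega)$, same quadratic dispersion relation $\det M_m(\Omega)=0$ giving $\Omega_m^\pm$, and the same reliance on the asymptotics of $\Lambda_n$ (via the fixed-point bound on the remainder $f_n$) to separate the spectrum and force simplicity of the kernel for $m$ large. Your description of the Fredholm argument is also in the right spirit, though the paper's decomposition $\mathscr{I}+\mathscr{H}+\mathscr{R}$ treats the diagonal Hilbert-transform contribution and the diagonal remainder $f_n(a_i,a_i)$ as compact perturbations as well, not just the off-diagonal $\Lambda_n(a_1,a_2)$ blocks.

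One point in your sketch is genuinely over-optimistic: the claim that transversality ``will follow easily from $\partial_\Omega M_n = -\mathrm{id}$.'' In a two-component bifurcation problem, $\partial_\Omega M_m = \mathrm{id}$ alone does \emph{not} give transversality. The Crandall--Rabinowitz condition here amounts to checking that $\langle w_0, v_0\rangle_{\mathbb{R}^2}\neq 0$, where $v_0$ generates $\ker M_m(\Omega_m^\pm)$ and $w_0$ generates $\ker M_m^\top(\Omega_m^\pm)$; for a general singular $2\times 2$ matrix these two vectors can perfectly well be orthogonal (e.g.\ for a nilpotent Jordan block). The paper's Proposition~\ref{prop.last2}\,(3) carries out this verification explicitly: using $\det M_m(\Omega_m^\pm)=0$, the pairing reduces to
\begin{equation*}
\big(\Omega_m^\pm - \Lambda_m(a_2,a_2)\big)\Big(2\Omega_m^\pm - Q(a_1,a_2)+\Lambda_m(a_1,a_1)-\Lambda_m(a_2,a_2)\Big),
\end{equation*}
and non-vanishing hinges on $\Delta_m\neq 0$ (positivity of the discriminant, built into the threshold on $m$) together with $\Lambda_m(a_1,a_2)\neq 0$ (Proposition~\ref{lemma:eigenfunction}\,(3)). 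You should flag that this is a non-trivial algebraic check rather than an automatic consequence of the $\Omega$-dependence being affine.
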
 
\begin{rem}
By setting $b\equiv 1$ in Theorem \ref{thm:2} we recover the   same values of the angular velocities for the Euler equations established in \cite{HHMV16}. Moreover, in that case,  the  condition  on $m$ can be shown to align with the constraints on the  same symmetry parameter  derived from the results on the Euler equations \cite{HHMV16}.  
\end{rem} 

\subsubsection*{Comments about the assumptions on the depth function $b$} It is apparent through the statements of our theorems above that   the assumptions \eqref{Hb1:eq}--\eqref{Hb4:eq} are sufficient  for the construction of time-periodic solutions of \eqref{Lake1}. However, we do not make any claim   about their necessity and optimality. Though, let us briefly shed   light on the relevance of these conditions to the mechanism  underlying  our methods and proofs of Theorems \ref{thm:1} and \ref{thm:2}. Further discussion, specifically focused on the elliptic problem \eqref{elliptic:intro}, will be provided in Section \ref{Sec:elliptic}.
  
\begin{itemize}
	\item Condition \eqref{Hb1:eq} is closely tied to defining the kernel associated with the elliptic equation \eqref{elliptic:intro}. While it  has  been shown to be  useful and has been employed  in other papers on the lake equation \cite{AL21,DS20,M23}, it is    not strictly  required in many parts in the study of   the elliptic problem \eqref{elliptic:intro}. More precisely, it is particularly related to the decomposition proposed in \cite{DS20} of the kernel $K_b$, see \eqref{EL:P2}. Additionally, it is notably employed in the regularity analysis in Section \ref{section:spectral analysis}. 
	\item Condition \eqref{Hb2:eq} was commonly imposed in various other papers \cite{AL21,DS20,M23}. Our proofs heavily rely on it and our approach possibly breaks-down otherwise. Allowing the depth function to degenerate, as  in \cite{BM06,HLM21}, for example, would necessitate non-trivial changes in all the arguments presented in this paper. Therefore, it is currently unclear whether time-periodic solutions could be constructed if $b$  vanishes on a non-empty set.
	\item Condition \eqref{Hb3:eq} was conjectured in \cite{DS20} as sufficient to extend all the results therein from bounded domains to the whole plane $\mathbb{R}^2$ (see \cite[open problem 2, page 1493]{DS20}). Additionally, our motivation for imposing this condition is justified by the objective of studying rotating solutions, whose construction is primarily local-in-space. In other words, roughly speaking, solutions of the form \eqref{omg vp} are barely    impacted by the behavior of the depth function at infinity.  Nevertheless, condition \eqref{Hb3:eq} can possibly be relaxed. Indeed, we believe that most of the results in Section \ref{Sec:elliptic} about the elliptic problem \eqref{elliptic:intro} can potentially be established under appropriate assumptions on  the decay at infinity of derivatives of the depth function $b$, as is for instance discussed  in \cite{M23}. Even though, we emphasize that,   with condition \eqref{Hb3:eq} in hand, several properties of solutions of \eqref{elliptic:intro} can be proven through  direct computations via constructive proofs, which will be detailed later on in Section \ref{Sec:elliptic}.  
	\item It  will be noticed that the condition \eqref{Hb4:eq} appears to be  necessary in the construction of the V-states. Indeed, this is used to generate Rankine vortices from which we can bifurcate under suitable assumptions.  Thus, for the sake of simplicity, we have decided to discuss the elliptic problem \eqref{elliptic pb intro} under the radial symmetry assumption on the depth function $b$, that is \eqref{Hb4:eq}.  Nevertheless, most of the results in Section \ref{Sec:elliptic} could be extended to cover more general cases of the depth function $b$.   
 \end{itemize}

 \nobreak
 
\subsection{Challenges and roadmap of the proofs}  The proofs of Theorems \ref{thm:1} and \ref{thm:2} will be built upon the scheme laid out by the generalized version of implicit-functions theorem, that is, Crandall-Rabinowitz's Theorem \ref{Crandall-Rabinowitz theorem}. To that end, in the simply connected case, the starting point consists in observing that time-periodic solutions are patches, by definition, which  obey the following contour dynamics equation 
\begin{equation}\label{CD:EQ}
	\partial_{t}z(t,\theta)\cdot\mathbf{n}\big(t,z(t,\theta)\big)+\frac{1}{b\big(z(t,\theta)\big)}\partial_{\theta}\Big(\psi_b\big(t,z(t,\theta)\big)\Big)=0, 
\end{equation}
for all $ t\geq 0$, and $\theta\in \mathbb{T}$, where the unknown function $z(t,\cdot)\subset \partial D_t$ refers to a parametrization of the boundary of the patch at time $t\geq 0$, whereas $ \mathbf{n}\big(t,z(t,\theta )\big)$ denotes the outward normal vector to the boundary at the  point $z(t,\theta)$ and $\psi_b$ is the stream function that solves the elliptic problem \eqref{elliptic:intro}. In addition to that, time-periodic solutions will be characterized by a particular form 
\begin{equation*}
	z(t,\theta) = R(\theta) e^{\ii (\theta+ t \Omega)},
\end{equation*}
for  some   function $R >0$ and some angular velocity $\Omega\in \mathbb{R}$. At last, as is aforementioned, it is natural to look for solutions   close to the stationary state  
\begin{equation*}
	\omega = b\mathds{1}_{\{x\in\mathbb{R}^2:\; |x|<a\}},
\end{equation*} 
for a given radius $a>0$. Therefore, this leads us to the ansatz
\begin{equation*}
	z(t,\theta) = \sqrt{ a^2 + 2 r(\theta)} e^{\ii (\theta+ t \Omega)},
\end{equation*}   
where $r:\mathbb{T} \rightarrow \mathbb{R}$. Afterwards, by incorporating the preceding ansatz in the contour dynamics equation, one sees that the time-variable can be discarded from the equations, whereby we arrive at the stationary problem 
\begin{equation*}
	\mathcal{F}(\Omega,r)=0,
\end{equation*}
for some functional $\mathcal{F}$. The details of that are postponed and will be discussed in Section \ref{sec:controu}. Likewise in the case of doubly-connected V-states near a given annulus, we emphasize that a similar analysis leads to a two-dimensional functional system  
\begin{equation*}
	\mathcal{G}(\Omega,r_1,r_2)\bydef\Big(\mathcal{G}_1(\Omega,r_1,r_2), \mathcal{G}_2(\Omega,r_1,r_2) \Big)=0,
\end{equation*}
for some functionals $\mathcal{G}_1$ and $\mathcal{G}_2$ which will be precised and discussed in Section \ref{sec DC}, as well.

The strategy is then to show that the functionals $\mathcal {F}$ and  $\mathcal {G}$  satisfy all the prerequisites in the statement of Crandall-Rabinowitz's Theorem \ref{Crandall-Rabinowitz theorem}. This is mainly related to the regularity study of these nonlinear functionals and the spectral analysis of their linearized operators around the stationary solutions. In contrast to \cite{B82, CCG16-2, HH15,  HMV13}, the kernels involved in these functionals are not explicit and with a quite rough regularity, making their analysis much more delicate. In particular, our analysis cannot be included in the general work \cite{HXX23}. To give an insight onto the encountered difficulties throughout our proofs, let us now focus on the main aspects on to the reconstruction of the stream functions related to the elliptic problem 
\begin{equation}\label{elliptic pb intro}
		-\div(\tfrac{1}{b}\nabla\psi_b)=f,
	\end{equation}
	with data $b$, $f$ and unknown $\psi_b.$  We prove in Proposition \ref{prop.kernel} a Dekeyzer--Schaftingen-type decomposition \cite{DS20} in the form
\begin{equation}\label{DS dec intro}
	\psi_b(x)=\int_{\mathbb{R}^2}K_b(x,y)f(y)dy,\qquad K_b(x,y)=-\tfrac{1}{2\pi}\log|x-y|\sqrt{b(x)b(y)}+S_b(x,y),
\end{equation}
with 
$S_b$ being symmetric and of regularity $W^{2,p}$, for all $p\in (1,\infty)$. Note that a similar decomposition was obtained recently in \cite{M23}, using different arguments. Such a decomposition reveals to be useful to describe the discrete  spectrum of the linearized operator at the equilibrium state, laid out in Lemma \ref{lemma:linearized-OP} and Proposition  \ref{prop regular2}. More specifically, although  no explicit form of the     eigenvalues associated with   this linearized operator is available, we are able to develop a suitable asymptotic expansion of the spectrum through a contraction argument. This is very crucial and is discussed in Proposition \ref{lemma:eigenfunction}.

As for the regularity of the functionals $\mathcal{F}$ and $\mathcal{G}$,  we rely on  a new decomposition obtained in  Lemma \ref{lemma:expansion1} taking the form, in the particular case $k=1$ from that lemma,   
$$\psi_b(x)=-\tfrac{1}{2\pi}\int_{\mathbb{R}^2}\log|x-y|b(y)f(y)dy+\phi_f(x) ,$$
with $\nabla\phi_f\in W^{2,p}(\mathbb{R}^2)$, for any $p>2$, provided that $f$ is compactly supported and belongs to the Lebesgue space $L^p$. We believe that   the generic statement of Lemma \ref{lemma:expansion1}   could be used to explore more important questions on related topics.

 \bigskip 

In the sequel, we are going to use the standard notations for Lebesgue, Sobolev and H\"older spaces: $L^p(\mathbb{R}^2)$, $H^s(\mathbb{R}^2)$, $\dot{H}^s(\mathbb{R}^2)$, $C^{\alpha}(\Omega)$, $\dot C^{\alpha}(\Omega)$ \dots, where $p\in [1,\infty]$,  $s\in \mathbb{R}$, $\alpha\in (0,1)$ and $\Omega = \mathbb{R}^2$ or $\mathbb{T}^2$.  The preceding spaces are then equipped with their standard (semi-)norms. Note that a particular attention will be paid    to homogeneous Sobolev spaces $\dot H^s(\mathbb{R}^2)$ in Section \ref{sec:Sobolev}.

For a mere of simplicity, we  employ ``$\lesssim $'' instead of ``$\leq C$'' where the dependance on the constant $C>0$ is not crucial. Moreover, we occasionally use the symbol ``$\lesssim_\delta  $'' to emphasize the dependance of the preceding inequality on some parameter or function $\delta$. 

\section{On an elliptic problem with variable coefficients}\label{Sec:elliptic}  
This section is dedicated to the analysis of the elliptic problem 
\begin{equation}\label{EL:P1} 
		\displaystyle-\div\left(\frac{1}{b}\nabla\psi_b\right)=f,  
\end{equation} 
where $f=f(x)$ and $b=b(x)$ are two given real functions defined on the entire  domain $\mathbb{R}^2$.  Further precise assumptions on both $f$ and $b$ will be   explicitly stated, later on. Notice that     \eqref{EL:P1} is relevant to the lake equation \eqref{LE} as the stream function $\psi_b$   is recovered from the vorticity $\omega$ by solving \eqref{EL:P1} with the source term  $f=  \omega$.

Our primary focus here is to discuss some qualitative and quantitative properties of $\psi_b$ as the solution of \eqref{EL:P1} with a general source term $f$. The main novelty in this section concerns the new decomposition of solutions given by Lemma \ref{lemma:expansion1}. Additionally,  many other results are collected here, though  they may not be brand-new. For the sake of completeness, we supply this section  with a self-contained appendix (Appendix \ref{appendix:elliptic}), which provides a collection of some complementary  results and proofs that we present in a different (simple and self-contained) fashion compared to some other references.

\subsection{A priori bounds and decomposition of solutions} To begin with, let us assume that, under reasonable conditions on the source term $f$ to be specified subsequently, the elliptic problem \eqref{EL:P1} admits at least a solution $\psi_b$. This solution can in fact be shown to be unique in appropriate functional spaces. We defer the discussion about the existence and uniqueness of solutions of \eqref{EL:P1} to Appendix \ref{section:A:existence}. Moreover,  the solution of \eqref{EL:P1} enjoys the classical   elliptic regularities as will be stated  in the next proposition. For the sake of completeness,   a full justification of this proposition will be detailed in Appendix \ref{section:A:regularity}.
\begin{prop}\label{prop:Elliptic:00}  
	Let $b$ be a $C^1$ function satisfying assumptions \eqref{Hb2:eq}, \eqref{Hb3:eq}  and \eqref{Hb4:eq}. Further consider a source term $f$ with the properties that  
	\begin{equation*}
		f\in L^1\big((1+|x|)dx;\mathbb{R}^2\big)\cap L^p(\mathbb{R}^2),
	\end{equation*}
	for some  $p\in [2,\infty)$. Then, any solution of \eqref{EL:P1}   enjoys the bound
	\begin{equation}\label{Bound:ELLIP}
		\norm{\nabla ^2 \psi_b}_{ L^m(\mathbb{R}^2)} \lesssim_b \norm f_{L^m(\mathbb{R}^2)}, 
	\end{equation} 
	for all $m\in (1,p]$. 
	\end{prop}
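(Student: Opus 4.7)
My plan is to reduce \eqref{EL:P1} to a Poisson equation with a compactly supported lower-order perturbation, apply the classical two-dimensional Calder\'on--Zygmund inequality, and remove the perturbative term via the moment assumption on $f$.

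Expanding the divergence and using \eqref{Hb1:eq}--\eqref{Hb3:eq} (so that $b$ is $C^1$, bounded above and away from zero, with $\nabla b$ supported in $B(0,R_\infty)$), I rewrite
\begin{equation*}
-\Delta\psi_b=bf-\frac{\nabla b}{b}\cdot\nabla\psi_b,
\end{equation*}
which satisfies $\|bf\|_{L^m}\lesssim_b\|f\|_{L^m}$ and $\|(\nabla b/b)\cdot\nabla\psi_b\|_{L^m}\lesssim_b\|\nabla\psi_b\|_{L^m(B(0,R_\infty))}$. Invoking the uniqueness framework of Appendix \ref{section:A:existence}, where the moment condition $f\in L^1\big((1+|x|)dx\big)$ pins down the behavior of $\psi_b$ at infinity, the solution can be represented as an integral operator whose leading singular kernel is $-\tfrac{1}{2\pi}\log|x-y|$. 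The Calder\'on--Zygmund inequality, valid on $L^m$ for every $m\in(1,\infty)$, then yields
\begin{equation*}
\|\nabla^2\psi_b\|_{L^m}\lesssim_b\|f\|_{L^m}+\|\nabla\psi_b\|_{L^m(B(0,R_\infty))}.
\end{equation*}

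To dispose of the remaining gradient term, I would apply the same integral representation to $\nabla\psi_b$, whose kernel has size $|x-y|^{-1}$ (a Riesz-type, weak-$L^2$ kernel in 2D). Splitting the integration into $\{|y|\leq 2R_\infty\}$ and $\{|y|>2R_\infty\}$, the near region is handled by local H\"older inequality (possibly combined with interpolation against $f\in L^p$ when $m$ is close to $2$), contributing at most $\|f\|_{L^m\cap L^p}$, while the far region yields at most $\int(1+|y|)^{-1}|f(y)|dy\leq \|f\|_{L^1}\leq \|f\|_{L^1((1+|x|)dx)}$. A short bootstrap from the $L^2$ energy estimate (obtained by testing \eqref{EL:P1} against $\psi_b$) covers the remaining range of $m$ and closes the bound.

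The main obstacle is this last step: securing a local $L^m$ bound on $\nabla\psi_b$ uniformly in $m\in(1,p]$ without circular dependence on $\nabla^2\psi_b$. The moment assumption is crucial here, as it both fixes $\psi_b$ uniquely at infinity and renders the explicit kernel representation pointwise usable; once this is secured, the problem reduces to standard potential-theoretic estimates for the two-dimensional log kernel.
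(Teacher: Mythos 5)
Your Calder\'on--Zygmund reduction to the Poisson equation $-\Delta\psi_b = bf - (\nabla b/b)\cdot\nabla\psi_b$ and the resulting inequality
\begin{equation*}
\|\nabla^2\psi_b\|_{L^m}\lesssim_b\|f\|_{L^m}+\|\nabla\psi_b\|_{L^m(B(0,R_\infty))}
\end{equation*}
are sound, and the paper also uses this bootstrap step. But the step you flag as "the main obstacle" is a genuine gap, and the two workarounds you sketch both fail as stated.

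The $L^2$ energy estimate ``obtained by testing \eqref{EL:P1} against $\psi_b$'' does not produce a finite bound for a general $f\in L^1((1+|x|)dx)\cap L^p$: in two dimensions a source with $\int f\neq 0$ does \emph{not} lie in $\dot H^{-1}(\mathbb R^2)$, and indeed the explicit radial solution $\psi_{b,\rad}(r)=-\int_0^r \frac{b(s)}{s}\int_0^s\tau f_\rad(\tau)\,d\tau\,ds$ grows logarithmically as $r\to\infty$, so $\nabla\psi_b$ decays only like $|x|^{-1}$ and is not in $L^2(\mathbb R^2)$. The term $\int f\psi_b$ in the weak formulation is therefore not controlled. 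Likewise, the ``integral representation of $\nabla\psi_b$ with kernel $|x-y|^{-1}$'' is circular, since the density of that representation itself contains $\nabla\psi_b$ through $(\nabla b/b)\cdot\nabla\psi_b$; no amount of near/far splitting closes the loop without a priori information.

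The missing ingredient is the decomposition $\psi_b=\psi_{b,\rad}+\bar\psi_b$ used in the paper: the radial part is obtained in closed form from the ODE and estimated directly (its second derivative involves only $b$, $b'$, and averaged integrals of $f_\rad$), while the remainder $\bar\psi_b$ solves \eqref{EL:P1} with source $\bar f=f-f_\rad$, which is automatically mean-free. It is precisely this mean-free property, combined with the moment hypothesis, that places $\bar f$ in $\dot H^{-1}(\mathbb R^2)$ (Lemma \ref{lemma:duality}) and renders the $L^2$ energy estimate legitimate, after which the bootstrap you describe for the Poisson equation works without circularity. Without this split, the $L^2$ starting point is unavailable, and the argument does not close.
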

	\begin{rem}
		By Sobolev embedding, it is readily seen that the preceding proposition entails the control
\begin{equation*}
	\nabla \psi_b\in  L^n(\mathbb{R}^2),
\end{equation*}
for all $n\in (2,\infty)$. The endpoint case $n=2$ is not attained in general. However, the case $2= \infty$ is reachable in the case $p>2$. This is crucial and, once again, for a mere sake of completeness,  it will be further discussed in  Appendix \ref{section:A:regularity}.
	\end{rem}

\begin{rem}\label{RMK:k:der}
	Note that the results of the preceding proposition can be extended  to cover similar bounds on derivatives of the solution as soon as the  source term enjoys extra Sobolev regularities. More precisely, it is straightforward to show that, if the source term $f$ is compactly supported\footnote{Here, the assumption on the compactness of the support of the source term is made for the sake of simplicity. This can be relaxed under adequate assumption on the growth of the function $b$.} and belongs to $W^{k,p}(\mathbb{R}^2)$, for some $k\in \mathbb{N}^*$, then it holds that  
	\begin{equation*}
		\norm{\nabla ^2 \psi_b}_{ \dot W^{j,m} (\mathbb{R}^2)} \lesssim \norm f_{ W^{j,m}(\mathbb{R}^2)}, 
	\end{equation*} 
	for all $m\in (1,p]$ and $j\in \llbracket1,k\rrbracket$. Indeed, to see that for $j=1$, we just need to expand \eqref{EL:P1} after applying a directional derivative $\partial_i$, for $i\in \{1,2\}$, to write that 
	\begin{equation*} 
		\displaystyle-\div\left(\frac{1}{b(x)}\nabla (\partial_i \psi_b)(x)\right)= \partial_i f(x) - \Delta \psi_b \frac{\partial_i b}{b^2}  - \nabla \left( \frac{\partial_i b}{b^2}\right) \cdot \nabla \psi_b \bydef F.
\end{equation*}
This means that $\partial_i \psi_b$ satisfies the elliptic equation \eqref{EL:P1}   with the new source term  $F$.
The justification of our claim above for $j=1$ amount then to showing that the new source term $F$    belongs to $ L^p(\mathbb{R}^2)$. This is done by virtue of a direct computation  using the results of Proposition \ref{prop:Elliptic:00} and the assumptions on $b$ and $f$.  Moreover, we emphasize that this procedure can  straightforwardly be iterated to show the desired control for any $j \in  \llbracket1,k\rrbracket$.
\end{rem}

In the next simple, but essential, lemma, we establish a new decomposition of  the solution of \eqref{EL:P1}. In particular, this new decomposition allows us to regard \eqref{EL:P1} as a regular perturbation of the classical Poisson's problem with a suitable source term where the size of its $L^p$-norm  is comparable to the same norm of the original source in \eqref{EL:P1}.

A crucial observation here, which will come in handy  later on, is that the remainder in the new expansion of $\psi_b$ below can be set to be regular as much as we need, provided that $b$ is sufficiently regular. We believe that the features of the new decomposition given in the next lemma go beyond the scope of the present paper as it can serve in other different situations.

\begin{lem}\label{lemma:expansion1}   Let  $b$ be a function that satisfies assumptions \eqref{Hb2:eq}, \eqref{Hb3:eq} and \eqref{Hb4:eq}  and belongs to $C^{k+1}(\mathbb{R}^2)$, for some integer $k\geq 1$. Further consider a compactly supported function $f$ in $L^p(\mathbb{R}^2)$, for some $p\in[2,\infty)$.   
 Then, there are two functions  $\varphi_k\in L^p(\mathbb{R}^2)$ and   $\phi_k \in \dot{W}^{k+2,m} (\mathbb{R}^2)$, for all $m\in (1,p]$, such that any solution of \eqref{EL:P1} with source term  $f$  can be decomposed as 
$$ \psi_b =-\frac{1}{2\pi}\int_{\mathbb{R}^2}\log|\cdot -y|\varphi_k(y)dy+\phi_k .$$
Furthermore, $\varphi_k$ is compactly supported and satisfies the bound 
\begin{equation*}
	\norm {\varphi_k}_{L^q  (\mathbb{R}^2)} \lesssim_b  \norm {f}_{L^q  (\mathbb{R}^2)},
\end{equation*}
for all $q\in [1,p]$, and $\phi_k$ enjoys the control
\begin{equation*}
\norm {\phi_k}_{\dot{W}^{i+2,m}  (\mathbb{R}^2)} \lesssim_b \norm {f}_{L^m  (\mathbb{R}^2)}, 
\end{equation*}
for all $m\in (1,p]$ and any $i\in \llbracket0,k\rrbracket$, as well as 
\begin{equation*}
	 \norm {\nabla \phi_k}_{L^n  (\mathbb{R}^2)} \lesssim_b \norm {f}_{L^n  (\mathbb{R}^2)},
\end{equation*}
for every $n\in (2,p]$.
\end{lem}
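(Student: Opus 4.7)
The strategy is to recast the equation for $\psi_b$ as a fixed-point identity for $\varphi_0 := -\Delta\psi_b$, and then to extract a compactly supported principal part through a Neumann iteration whose remainder picks up one Sobolev derivative per step.

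Expanding the divergence in \eqref{EL:P1} and using \eqref{Hb2:eq}, we first rewrite
\begin{equation*}
    -\Delta\psi_b = bf - \frac{\nabla b}{b}\cdot\nabla\psi_b =: \varphi_0.
\end{equation*}
By \eqref{Hb3:eq} combined with the compact support of $f$, the function $\varphi_0$ is compactly supported, and Proposition \ref{prop:Elliptic:00} ensures $\varphi_0 \in L^p(\mathbb{R}^2)$. The uniqueness theory developed in Appendix \ref{section:A:existence}, combined with a Liouville-type argument applied to the harmonic difference $\psi_b - N * \varphi_0$ (which is at most logarithmically growing, hence constant, and the constant can be absorbed into $\phi_k$), yields the representation $\psi_b = N * \varphi_0$ with $N(x) := -\tfrac{1}{2\pi}\log|x|$. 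Differentiating gives $\nabla\psi_b = \nabla N * \varphi_0$, and substituting back into the definition of $\varphi_0$ produces the fixed-point identity
\begin{equation*}
    \varphi_0 = bf + T\varphi_0, \qquad T\varphi := -\frac{\nabla b}{b}\cdot(\nabla N * \varphi).
\end{equation*}

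The operator $T$ is smoothing on compactly supported inputs: since $\nabla b$ is compactly supported by \eqref{Hb3:eq} and $b \in C^{k+1}$, the multiplier $\nabla b/b$ is of class $C^k$ with compact support, whereas $\nabla N *$ gains one Sobolev derivative on compactly supported inputs (via Calderón-Zygmund applied to $-\Delta(N*\varphi) = \varphi$). Thus $T$ sends compactly supported $W^{j,p}$ functions to compactly supported $W^{j+1,p}$ functions for $j \in \llbracket 0, k\rrbracket$. Iterating $\varphi_0 = bf + T\varphi_0$ exactly $k$ times produces
\begin{equation*}
    \varphi_0 = \varphi_k + R_k, \qquad \varphi_k := \sum_{j=0}^{k-1} T^j(bf), \qquad R_k := T^k\varphi_0.
\end{equation*}
Here $\varphi_k$ is compactly supported with $\|\varphi_k\|_{L^q} \lesssim_b \|f\|_{L^q}$ for all $q \in [1,p]$ (by the $L^q$-boundedness of $T$ on compactly supported inputs, discussed below), and $R_k$ is compactly supported in $W^{k,p}$ with $\|R_k\|_{W^{k,p}} \lesssim_b \|f\|_{L^p}$. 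Setting $\phi_k := N * R_k$ then delivers the desired decomposition $\psi_b = N*\varphi_k + \phi_k$.

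The regularity of $\phi_k$ follows from $-\Delta\phi_k = R_k$: local Calderón-Zygmund gives $\phi_k \in W^{k+2,p}_{\mathrm{loc}}$, while the compact support of $R_k$ allows the Taylor expansion $\log|x-y| = \log|x| + O(|y|/|x|)$ for $|x| \gg 1$, yielding the pointwise asymptotics $\nabla^j\phi_k(x) = O(|x|^{-j})$ as $|x|\to\infty$ for $1 \leq j \leq k+2$. This provides $\nabla\phi_k \in L^n(\mathbb{R}^2)$ for all $n > 2$ and $\nabla^j\phi_k \in L^m(\mathbb{R}^2)$ for $m > 2/j$ (hence for all $m \in (1,p]$ as soon as $j \geq 2$), so that $\phi_k \in \dot W^{k+2,m}(\mathbb{R}^2)$ for $m \in (1,p]$, with the quantitative norm inequalities obtained by tracking constants through the iteration. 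The main technical obstacle is establishing the uniform $L^q$-boundedness of $T$ on compactly supported inputs over the full range $q \in [1,p]$, and in particular at the endpoint $q=1$. This reduces to the estimate $\|\nabla N*\varphi\|_{L^q(K)} \lesssim_K \|\varphi\|_{L^q}$ for $\varphi$ compactly supported, which follows by Fubini's theorem from the uniform-in-$y$ bound $\int_K |x-y|^{-1}\,dx \leq C_K$ at $q=1$, by Hardy--Littlewood--Sobolev combined with local Hölder for $q \in (1,2)$, and by Morrey's embedding (or a BMO duality argument at $q=2$) for $q \geq 2$.
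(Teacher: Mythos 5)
Your proof is correct and takes essentially the same route as the paper: both iterate the identity $-\Delta\psi_b = bf - \tfrac{\nabla b}{b}\cdot\nabla\psi_b$ to peel off the same compactly supported layers $T^j(bf)$ (the paper's $\varrho_{j+1}$), gaining one Sobolev derivative per step, and arrive at the same principal part $\varphi_k$ and remainder $\phi_k$. You package this as a Neumann series for $\varphi_0 = -\Delta\psi_b$ composed with the Newtonian potential, which requires a short Liouville step to establish $\psi_b = N*\varphi_0$ and lets the remainder $\phi_k = N*R_k$ be handled by constant-coefficient Calder\'on--Zygmund theory, whereas the paper defines $\widetilde\phi_j$ and $\phi_k$ through recursive PDEs that telescope to \eqref{EL:P1} and controls the remainder via the variable-coefficient estimate of Remark~\ref{RMK:k:der} --- two equivalent organizations of the same argument.
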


\begin{rem}
	We emphasize that the assumption on the support of the source term can be traded with  weaker assumptions on its growth in Lebesgue spaces along with suitable assumptions on the growth of the function $b$ in the absence of the stronger condition \eqref{Hb3:eq}. For the sake of simplicity, we chose to restrict our selves to the case where the condition \eqref{Hb3:eq} and the assumption one the compactness of the source  are in action, which are    reasonable hypothesis in the context of vortex-patch solutions of \eqref{Lake1}.
\end{rem}

\begin{rem}
	Later on, we will only need to apply the preceding lemma with  $k=1$. This in particular allows us  to write that
	\begin{equation*}
		\psi_b(x)=-\frac{1}{2\pi}\int_{\mathbb{R}^2}\log|x-y|b(y)f(y)dy+\phi_f(x), 
	\end{equation*}
	 for all $x\in \mathbb{R}^2$, where the remainder term solves the equation
	 \begin{equation}\label{Remainder_remark}
	 	-\div\left(\frac{1}{b}\nabla\phi_f\right)=\frac{\nabla b}{b^2}\cdot\nabla (\Delta)^{-1}(bf)
	 \end{equation}
	 and enjoys the bound 
\begin{equation}\label{rem:reg:remainder}
	\nabla \phi _f \in W_{}^{2,p}(\mathbb{R}^2).
\end{equation}
Notably, this remainder enjoys a better   regularity compared to the decomposition of the solution proposed  in \cite{DS20}. 
\end{rem}

\begin{proof} Observe first that the elliptic equation   \eqref{EL:P1} can be recast as
$$-\Delta\psi_b=bf-\frac{\nabla b}{b}\cdot\nabla\psi_b.$$
This  inspires introducing  the following expansion of $\psi_b$ 
$$\psi_b=\sum_{j=1}^k\widetilde{\phi}_j+\phi_k,$$
where $\widetilde{\phi}_j$ is defined by the   induction scheme  
$$-\Delta\widetilde{\phi}_1\bydef bf
\qquad\text{and}\qquad 
-\Delta\widetilde{\phi}_j\bydef-\frac{\nabla b}{b}\cdot\nabla\widetilde{\phi}_{j-1},
 \quad \text{for all} \quad 
   j\in \llbracket2,k\rrbracket,$$ 
whereas the remainder term $\phi_k $ in the expansion above is   governed by the equation 
\begin{equation}\label{remainder:eq}
	-\div\left(\frac{1}{b}\nabla\phi_k\right)=-\frac{\nabla b}{b^2}\cdot\nabla\widetilde{\phi}_{k}.
\end{equation}
Notice that the  assumptions on the supports of $f$ and $b$ imply   that the right-hand sides in   the previous elliptic equations are compactly supported. Therefore, classical elliptic regularity for Poisson's problem (or simply consider Proposition \ref{prop:Elliptic:00} with $b\equiv 1$)  ensure the following bounds 
\begin{equation}\label{Lk:ES:00}
	\norm {\nabla ^2 \widetilde \phi_j}_{L^m(\mathbb{R}^2)}   \lesssim_b \norm { f}_{L^m(\mathbb{R}^2)},
\end{equation}
and 
\begin{equation}\label{Lk:ES:0011}
	 \norm {\nabla   \widetilde \phi_j}_{L^n(\mathbb{R}^2)} \lesssim_b \norm { f}_{L^n(\mathbb{R}^2)},
\end{equation}
for all $m\in (1,p]$,  $n\in (2,p]$ and $j \in \llbracket 1,k\rrbracket$. Likewise for the remainder term, Proposition \ref{prop:Elliptic:00} yields the same bounds
  \begin{equation*}
	\norm {\nabla ^2   \phi_k}_{L^m(\mathbb{R}^2)}  \lesssim_b \norm { f}_{L^m(\mathbb{R}^2)},
\end{equation*}
and 
\begin{equation*}
	  \norm {\nabla   \phi_k}_{L^n(\mathbb{R}^2)} \lesssim_b \norm { f}_{L^n(\mathbb{R}^2)}.
\end{equation*} 
 We are now ready to justify the claims of the lemma. To that end, we introduce the functions
$$\varrho_1\bydef bf
\qquad\text{and}\qquad 
 \varrho_j\bydef-\frac{\nabla b}{b}\cdot\nabla\widetilde{\phi}_{j-1}, \quad \text{for all} \quad  j\in\llbracket2,k\rrbracket.$$
Accordingly, we define $\varphi_k$ as
\begin{equation*}
	\varphi_k\bydef\sum_{j=1}^k\varrho_j,
\end{equation*}
whereby obtaining that 
\begin{equation*}
	\sum_{j=1}^k\widetilde{\phi}_j=(-\Delta)^{-1}\varphi_k=-\frac{1}{2\pi}\int_{\mathbb{R}^2}\log|\cdot-y|\varphi_k(y)dy.
\end{equation*}
Moreover, due to the bounds established in the first part of the proof together with the support assumption on $\nabla b$ deduced from \eqref{Hb3:eq},  it is readily seen that 
\begin{equation*}
	\norm{\varphi_k}_{ L^q(\mathbb{R}^2)} \lesssim_b \norm {f}_{L^q(\mathbb{R}^2)},
\end{equation*}
for all $q\in [1,p]$. Let us now be more precise about the regularity of the summands $ \widetilde {\phi}_j$, which will allow us to show that the remainder $\phi_k$ enjoys a better regularity. To see that, we proceed by induction to show that 
\begin{equation*}
	 \widetilde{\phi}_j\in \dot{W}^{j+1,n}(\mathbb{R}^2),
\end{equation*}
for any $j\in \llbracket 1,k\rrbracket$ and $n\in (1,p]$, with 
\begin{equation*}
	\norm {\nabla ^{j+1} \widetilde{\phi}_j}_{L^n(\mathbb{R}^2)} \lesssim_b \norm {f}_{L^n(\mathbb{R}^2)}.
\end{equation*}
Indeed, the previous claim is already proven for $j=1$, whereas its proof for $j\in \llbracket 2,k\rrbracket$ only relies on the observation that any solution of Poisson's equation with a source term belonging to $\dot{W}^{n,q}(\mathbb{R}^2)$, for some $n\in \mathbb{N}$ and $q\in (1,\infty)$, belongs itself to $\dot{W}^{2+n,q}(\mathbb{R}^2)$. Thus, repeating this procedure and utilizing the $C^{1+k}(\mathbb{R}^2)$ regularity of $b$, altogether with assumption \eqref{Hb3:eq}, leads to the desired regularity for $ \widetilde{\phi}_j$.  
At the end, we particularly have shown that 
 \begin{equation*}
	\nabla  \widetilde{\phi}_k\in \dot{W}^{k ,p}(\mathbb{R}^2),
\end{equation*} 
 with 
\begin{equation*}
	\norm {\nabla   \widetilde{\phi}_k}_{\dot W^{k,p} (\mathbb{R}^2)} \lesssim_b \norm {f}_{L^p(\mathbb{R}^2)}.
\end{equation*}
Therefore, it follows that the source term in \eqref{remainder:eq} is compactly supported (due to assumption \eqref{Hb3:eq}) and belongs to $ W^{k ,p}(\mathbb{R}^2)$ (by virtue of the preceding bound  altogether with \eqref{Lk:ES:0011}).  Thus, in view of Remark \ref{RMK:k:der}, we deduce that  
\begin{equation*}
	\norm {\phi_k}_{\dot{W}^{k+2,m}  (\mathbb{R}^2)}   \lesssim_b \norm {f}_{L^m  (\mathbb{R}^2)},
\end{equation*}
for all  $m\in (1,p]$. At last, combining the previous bound with \eqref{Lk:ES:00}, we deduce by a direct interpolation inequality, that
\begin{equation*}
	  \norm {\phi_k}_{\dot{W}^{i+2,m}  (\mathbb{R}^2)} \lesssim_b \norm {f}_{L^m  (\mathbb{R}^2)}, 
\end{equation*}
for all $m\in (1,p]$ and $i\in \llbracket0,k\rrbracket$, thereby completing the proof of the lemma. 
\end{proof}

\subsection{Kernels and integral representation of solutions}\label{sec:kernel}
Here, we show the existence of a kernel associated with the elliptic operator $$-\div(b^{-1} \nabla \cdot).$$  Several properties of this kernel will also be established in the next proposition which will serve   in the upcoming sections of this article.   
    The solution of the following elliptic equation
 \begin{equation}\label{EL:P2} 
	\displaystyle-\div_x\left(\frac{1}{b(x)}\nabla_x S_b(x,y)\right)=\frac{1}{2\pi}\log|x-y|\sqrt{b(y)}\Delta_x\left(\frac{1}{\sqrt{b(x)}}\right)  ,
\end{equation}
for all $ x\in\mathbb{R}^2$ and any given $y\in \mathbb{R}^2$, is also relevant in a sense that will be made precise, now.

\begin{prop}\label{prop.kernel} 
Let $b$ be a function satisfying assumptions \eqref{Hb1:eq}, \eqref{Hb2:eq}, \eqref{Hb3:eq} and \eqref{Hb4:eq}. Then, for any given $y\in \mathbb{R}^2$, there is  a unique solution\footnote{Here, the uniqueness of the solution is understood in the sense of Theorem \ref{thm:EL:1}.} $S_b(\cdot ,y)$ of \eqref{EL:P2}  enjoying the bounds 
\begin{equation*}
	 \nabla ^2_x S_b(x ,y) \in  L^\infty_{\loc }\big((dy;\mathbb{R}^2);  L^m (dx;\mathbb{R}^2)\big),
\end{equation*}
for all $m\in (1,p]$, and 
\begin{equation*}
	 \nabla_x  S_b(x ,y) \in  L^\infty_{\loc }\big((dy;\mathbb{R}^2); L^n (dx;\mathbb{R}^2)\big),
\end{equation*}
for all $n\in (2,p]$, as well as
 \begin{equation*}
	  S_b  \in  L^\infty_{\loc } ( \mathbb{R}^2\times \mathbb{R}^2).
\end{equation*}

Moreover,  for any compactly supported function $f$  in $L^p(\mathbb{R}^2)$, for some $p\in [2,\infty)$, the unique solution of \eqref{EL:P1} can be represented as 
 \begin{equation}\label{integral rep psib}
		\psi _b(x)=\int_{\mathbb{R}^2}K_b(x,y)f(y)dy,
	\end{equation}
 for any $x\in \mathbb{R}^2$, where we set 
 \begin{equation}\label{symmetry-kernel:split} 
		 K_b(x,y)\bydef-\frac{1}{2\pi}\log|x-y|\sqrt{b(x)b(y)}+S_b(x,y) ,
		 	\end{equation}
	for any $x,y\in \mathbb{R}^2$ with $x\neq y$.
	
Furthermore,  $K_b(x,y)$ is symmetric for any $x,y\in \mathbb{R}^2$ with $x\neq y$, i.e.
\begin{equation}\label{symmetry-kernel-0}
	 K_b(x,y)=K_b(y,x),
\end{equation}
and it is stable by rotations and involution transformation in the sense that 
\begin{equation}\label{symmetry-kernel}
		K_b(\mathcal{R}_\theta\cdot x,\mathcal{R}_\theta\cdot y)=K_b(x,y),\qquad K_b(\mathcal{S}x, \mathcal{S} y)=K_b(x,y),
	\end{equation}
	for any $\theta \in [0,2\pi]$,  where $\mathcal{R}_\theta$ and $\mathcal{S}$ denote the rotation matrix with angle $\theta$ and the involution operator, respectively, i.e.
	$$\mathcal{R}_\theta\bydef\begin{pmatrix}
		\cos(\theta) & -\sin(\theta)\\
		\sin(\theta) & \cos(\theta)
	\end{pmatrix},$$
	and 
		$$\mathcal{S}x\bydef|x|e^{-i\eta},$$
		for any given $x=|x|e^{\ii \eta} \in \mathbb{R}^2$, for some $\eta\in[0,2\pi]$ 
	\end{prop}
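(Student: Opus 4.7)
The plan is to first establish existence, uniqueness, and regularity of $S_b$ for each fixed $y$, then derive the representation formula \eqref{integral rep psib} via a direct distributional computation, and finally deduce the symmetry and invariance of $K_b$ from self-adjointness and the rotational/reflectional symmetries of the elliptic operator. First I would fix $y\in\mathbb{R}^2$ and observe that the source term $F_y(x)\bydef\tfrac{\sqrt{b(y)}}{2\pi}\log|x-y|\Delta_x\bigl(1/\sqrt{b(x)}\bigr)$ of \eqref{EL:P2} is compactly supported in $B(0,R_\infty)$, since by \eqref{Hb3:eq} the function $1/\sqrt{b}$ is constant outside this ball, and belongs to $L^p(\mathbb{R}^2)$ for every $p\in[1,\infty)$ with norm bounded uniformly for $y$ in any compact set (because $\log|\cdot-y|$ is locally integrable to any power). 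Existence and uniqueness of $S_b(\cdot,y)$ then follow from Theorem \ref{thm:EL:1} in the appendix, and Proposition \ref{prop:Elliptic:00} applied with the source $F_y$ yields the claimed $L^m$ bound on $\nabla_x^2 S_b(\cdot, y)$ and $L^n$ bound on $\nabla_x S_b(\cdot, y)$, uniformly for $y$ in a compact set. The $L^\infty_{\loc}$ control on $S_b$ itself follows from the $L^n_{\loc}$ estimate on $\nabla_x S_b$ with $n>2$ via Morrey's embedding, which additionally gives H\"older continuity off the diagonal jointly in $(x,y)$ thanks to the continuous dependence of $F_y$ on $y$.

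Next I would compute $-\div_x\bigl(b(x)^{-1}\nabla_x K_b(x,y)\bigr)$ distributionally. Writing $K_1(x,y)\bydef-\tfrac{1}{2\pi}\log|x-y|\sqrt{b(x)b(y)}$ as $u(x)\sqrt{b(x)}\sqrt{b(y)}$ with $u(x)\bydef-\tfrac{1}{2\pi}\log|x-y|$, a direct expansion using the identity $\nabla\sqrt{b}/b=-\nabla(1/\sqrt{b})$ gives the algebraic identity
$$-\div_x\bigl(b(x)^{-1}\nabla_x (u\sqrt{b})\bigr)=-\Delta_x u/\sqrt{b(x)} + u\,\Delta_x\bigl(1/\sqrt{b(x)}\bigr).$$
Combining this with $-\Delta_x u=\delta(x-y)$ and the pointwise identity $\sqrt{b(y)/b(x)}\,\delta(x-y)=\delta(x-y)$ yields
$$-\div_x\bigl(b^{-1}\nabla_x K_1\bigr)=\delta(x-y)-\tfrac{\sqrt{b(y)}}{2\pi}\log|x-y|\Delta_x\bigl(1/\sqrt{b(x)}\bigr),$$
which together with \eqref{EL:P2} shows that $K_b$ is the Green's function of $-\div(b^{-1}\nabla\cdot)$. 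For compactly supported $f\in L^p$, Fubini--combined with the joint integrability of $K_b$ established in the first paragraph--then shows that $\psi_b(x)\bydef\int K_b(x,y)f(y)dy$ solves \eqref{EL:P1} in the distributional sense, and uniqueness from Theorem \ref{thm:EL:1} identifies it with the unique solution.

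For the symmetries, the logarithmic term in \eqref{symmetry-kernel:split} is manifestly invariant under the swap $x\leftrightarrow y$, the rotations $\mathcal{R}_\theta$, and the involution $\mathcal{S}$, so all three properties reduce to corresponding statements for $S_b$. To prove \eqref{symmetry-kernel-0} I would exploit the formal self-adjointness of $L\bydef-\div(b^{-1}\nabla\cdot)$: for arbitrary $f_1,f_2\in C_c^\infty(\mathbb{R}^2)$ with corresponding solutions $\psi_1,\psi_2$, integration by parts (with vanishing boundary contributions thanks to the decay estimates and \eqref{Hb3:eq}) gives $\int f_1\psi_2\,dx=\int f_2\psi_1\,dx$; inserting the integral representation and exchanging variables then yields $K_b(x,y)=K_b(y,x)$ almost everywhere, and the continuity off the diagonal established earlier upgrades this to a pointwise identity. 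For \eqref{symmetry-kernel}, the radial assumption \eqref{Hb4:eq} gives $b\circ\mathcal{R}_\theta=b$ and $b\circ\mathcal{S}=b$; hence the map $x\mapsto K_b(\mathcal{R}_\theta^{-1}x,y)$ solves the defining Green's function equation with pole at $\mathcal{R}_\theta y$, and uniqueness yields $K_b(\mathcal{R}_\theta x,\mathcal{R}_\theta y)=K_b(x,y)$; the identical argument applies to $\mathcal{S}$.

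I expect the main obstacle to be the rigorous justification of the distributional computation in the second paragraph, where one must carefully isolate the Dirac mass produced by $-\Delta_x\log|x-y|$ in the presence of the variable coefficient $\sqrt{b(x)}$: the cancellation $\sqrt{b(y)/b(x)}\,\delta(x-y)=\delta(x-y)$ will likely require a mollification argument, testing against smooth compactly supported functions and passing to the limit. A secondary delicate point is the passage from the almost-everywhere symmetry $K_b(x,y)=K_b(y,x)$ to its pointwise version, which hinges crucially on the joint continuity of $S_b$ off the diagonal provided by the first step.
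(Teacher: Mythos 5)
Your overall architecture (bounds from Proposition~\ref{prop:Elliptic:00}, distributional verification of the Green-function identity, symmetry from self-adjointness, invariance from uniqueness) matches the paper. However, there is a genuine gap in the symmetry and invariance arguments, precisely at the point the paper works hardest.

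The issue is that solutions of \eqref{EL:P1}, and hence $S_b(\cdot,y)$, are unique only modulo an additive constant that may depend on $y$. When the test functions $f_1,f_2$ have non-vanishing mean, the corresponding stream functions $\psi_1,\psi_2$ grow logarithmically at infinity and $\nabla\psi_i\notin L^2(\mathbb{R}^2)$, so your claim that "integration by parts with vanishing boundary contributions" yields $\int f_1\psi_2=\int f_2\psi_1$ does not go through: writing $\psi_i(x)=a_i\log|x|+c_i+o(1)$ as $|x|\to\infty$, the Green boundary term on a large circle does not vanish but converges to a multiple of $a_2c_1-a_1c_2$, which is nonzero for generic normalizations $c_1,c_2$. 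This is exactly what the paper's Lemma~\ref{lemma:SA} records: the self-adjointness identity \eqref{self:AD} holds if and only if $\lim_{|x|\to\infty}(\psi_1-\psi_2)(x)=0$. Because of this, the paper does not pick an arbitrary representative $S_b(\cdot,y)$: it constructs a \emph{specific} one via the decomposition $S_b=S_{b,\mathrm{rad}}+\bar S_b$, with a carefully tuned additive correction $\frac{1}{2\pi}\sqrt{b(R_\infty)b(y)}\log R_\infty$ in $S_{b,\mathrm{rad}}$, chosen precisely so that the resulting $K_b$ satisfies the asymptotic matching required by Lemma~\ref{lemma:SA}. Only for that representative does the self-adjointness, and hence the $x\leftrightarrow y$ symmetry of $K_b$, follow. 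The same concern affects your rotation/involution argument: uniqueness up to constants only gives $K_b(\mathcal{R}_\theta x,\mathcal{R}_\theta y)=K_b(x,y)+c(\theta,y)$, and you must verify $c\equiv 0$. The paper does this by proving the invariance separately for $\bar S_b$ (pinned down by vanishing at infinity) and for $S_{b,\mathrm{rad}}$ (explicitly radial in both variables), again relying on the specific construction. In short: the proposal misses that the additive normalization of $S_b(\cdot,y)$ must be chosen coherently across $y$, and this is where the real content of the paper's proof lies.
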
 
	
\begin{rem}\label{rem:ell:bounds}
    The solution  of the elliptic problem  \eqref{EL:P1} defined through the integral representation \eqref{integral rep psib} satisfies all the bounds on the solution of \eqref{EL:P1}, previously established in Proposition \ref{prop:Elliptic:00}. This will come in handy, later on.
\end{rem}

	For a mere of a clear presentation, we defer the proof of this proposition to Appendix \ref{section:A:kernel}. 
	Let us emphasize in passing that the definition of $K_b$ in terms of the solution $S_b$ of \eqref{EL:P2} was established at a first time in bounded domains in \cite{DS20}. Very recently, similar  results in the case of the whole plane were claimed in \cite{M23}. On the other hand, for the sake of completeness, the full justification of Proposition \ref{prop regular2} will be discussed with details in Appendix \ref{section:A:kernel}, using different arguments compared to \cite{M23}. Our proof tends to be more constructive under suitable assumptions on the function $b$.

We conclude this section by stating and proving a computational lemma which will be useful the next sections.

\begin{lem}\label{velocity_expl_comp}
	Let $\alpha, \beta \in (0,\infty)$ and $K_b$ be the kernel associated with the elliptic problem \eqref{EL:P1} as introduced in the preceding proposition. Then, it holds that 
	\begin{equation*}
		\frac{1}{\alpha}\int_0^{2\pi}\int_{0}^{\beta}\partial_{x_1}K_b\big(\alpha,\rho e^{\ii \eta}\big)b(\rho)\rho d\rho d\eta = - \frac{b(\alpha)}{\alpha^2} \int_0^{\min\{\alpha,\beta\}} \tau b(\tau) d\tau.
	\end{equation*}
\end{lem}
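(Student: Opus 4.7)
The plan is to identify the double integral on the left-hand side with the radial derivative of an explicit stream function, and then exploit radial symmetry to reduce the computation to a one-dimensional ODE that integrates in closed form.

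Concretely, I would set $f(y) \bydef b(|y|)\mathds{1}_{B(0,\beta)}(y)$. Since $b$ is continuous, this source is bounded and compactly supported, hence belongs to $L^p(\mathbb{R}^2)$ for every $p\in[2,\infty)$. By the integral representation \eqref{integral rep psib} of Proposition \ref{prop.kernel}, the associated solution $\psi_b$ of \eqref{EL:P1} satisfies
$$\psi_b(x) = \int_0^{2\pi}\!\int_0^{\beta} K_b\bigl(x,\rho e^{\ii \eta}\bigr) b(\rho)\rho\, d\rho\, d\eta$$
after passing to polar coordinates $y = \rho e^{\ii \eta}$. Differentiating under the integral in $x_1$ and evaluating at $x = \alpha$ on the positive real axis—justified by the local $L^m$-integrability of $\nabla_x K_b(\cdot,y)$ from Proposition \ref{prop.kernel} together with the compactness of the support of $f$—one recognizes the left-hand side of the lemma as $\tfrac{1}{\alpha}\partial_{x_1}\psi_b(\alpha)$.

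Next I would use the radial symmetry of $b$ (assumption \eqref{Hb4:eq}) and of $f$ to deduce that $\psi_b$ itself is radial, say $\psi_b(x) = \Psi(|x|)$; this follows from the rotational invariance of the operator $-\div(b^{-1}\nabla\cdot)$ and of the source, combined with the uniqueness of solutions to \eqref{EL:P1} (alternatively, one could invoke the rotational symmetry \eqref{symmetry-kernel} of $K_b$ directly in the integral representation). Plugging the radial ansatz into \eqref{EL:P1} reduces it to the one-dimensional ODE
$$-\frac{1}{r}\frac{d}{dr}\!\left(\frac{r\, \Psi'(r)}{b(r)}\right) = b(r)\mathds{1}_{\{r<\beta\}},$$
which after multiplying by $r$ and integrating from $0$ to $r$ yields
$$\Psi'(r) = -\frac{b(r)}{r}\int_0^{\min\{r,\beta\}} \tau\, b(\tau)\, d\tau.$$
Since $\partial_{x_1}\psi_b(\alpha) = \Psi'(\alpha)$ (the unit radial vector at $(\alpha,0)$ being $e_1$), dividing by $\alpha$ produces exactly the desired identity.

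The only subtle step is the vanishing of the boundary contribution $\lim_{s\to 0^+}\tfrac{s\Psi'(s)}{b(s)}$ produced when integrating the ODE from the origin. I would justify this via the local regularity of $\psi_b$: Proposition \ref{prop:Elliptic:00} applied to the bounded, compactly supported source $f$ yields $\psi_b \in W^{2,p}_{\loc}(\mathbb{R}^2)$ for any $p>2$, hence $\psi_b \in C^{1,\gamma}_{\loc}$ by Sobolev embedding; thus $\Psi'$ is bounded near the origin, while $b(0)>0$ by \eqref{Hb2:eq}, so the boundary term vanishes. No other step presents a genuine obstacle.
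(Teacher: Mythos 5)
Your proof is correct and follows essentially the same approach as the paper: identify the left-hand side with $\tfrac{1}{\alpha}\partial_{x_1}\psi_b(\alpha)$ for the radial source $b\mathds{1}_{B_\beta}$, reduce the elliptic problem to an ODE via radial symmetry, and integrate. The paper simply states the explicit radial formula for $\psi$ without spelling out the ODE integration or the vanishing of the boundary term at the origin, which you justify carefully; this is harmless additional detail rather than a different route.
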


\begin{proof}
	The proof follows from a direct computation, employing the explicit expression of solutions of the elliptic equation \eqref{EL:P1}. To see that, we first write 
	\begin{equation*}
		\begin{aligned}
			\int_0^{2\pi}\int_{0}^{\beta}\partial_{x_1}K_b\big(\alpha,\rho e^{\ii \eta}\big)b(\rho)\rho d\rho d\eta 
		&= \left( \int_0^{2\pi}\int_{0}^{\beta}\partial_{x_1}K_b\big(x,\rho e^{\ii \eta}\big)b(\rho)\rho d\rho d\eta\right)\Big|_{x=(\alpha,0)}
		\\
		&= \left(\partial_{x_1} \int_0^{2\pi}\int_{0}^{\beta}K_b\big(x,\rho e^{\ii \eta}\big)b(\rho)\rho d\rho d\eta\right)\Big|_{x=(\alpha,0)}
		\\
		&= \left(\partial_{x_1} \int_{\mathbb{R}^2}K_b(x,y)b(|y|) \mathds{1}_{B_\beta} dy\right)\Big|_{x=(\alpha,0)},
		\end{aligned}
	\end{equation*}
	where $B_\beta$ denotes the ball  centered at the origin of radius $\beta$. Therefore, by virtue of Proposition \ref{prop.kernel}, we see that the function 
	\begin{equation*}
		x\mapsto\psi (x)\bydef \int_{\mathbb{R}^2}K_b(x,y)b(|y|) \mathds{1}_{B_\beta} dy
	\end{equation*}
	solves the equation 
	\begin{equation*}
		- \div\left( \frac{1}{b} \nabla \psi \right) = b  \mathds{1}_{B_\beta} ,
	\end{equation*}
	whence, since $b\mathds{1}_{B_\beta} $ is a radially symmetric function, we obtain the explicit formula  
	\begin{equation*}
		\psi(x)= - \int_0^{|x|} \frac{b(s)}{s} \int_0^{\min\{s,\beta\}} \tau b(\tau) d\tau ds + C, 
	\end{equation*}
	for some constant $C\in \mathbb{R}$ and all $x\in \mathbb{R}^2$. Deriving the preceding expression  of $\psi$ in the direction $x_1$, then, evaluating  the resulting formula at the point $x=(\alpha,0)$ and incorporating  it into the computations at the beginning of the proof yields the desired identity and concludes the proof of the lemma.
\end{proof}

\section{Simply-connected time-periodic solutions}\label{Sec:simply-connected}

This section is devoted to the proof of  Theorem \ref{thm:1}. We shall split it into several subsection for better readability. Some arguments from this section will also be useful in the doubly-connected case, which will be the subject of Section \ref{sec DC}, later on.

\subsubsection*{Notations} Before we get into the detailed analysis of the contour dynamics equation, allow us to set up a few notations that will be employed throughout the sequel of this paper.    

In the sequel, we will also   agree the identification $\mathbb{C}\approx\mathbb{R}^2$ and we recall that the scalar product of two complex numbers $z_1=x_1+\ii y_1$ and $z_2=x_2+\ii y_2$ is given by
\begin{equation*}
	z_1\cdot z_2=\text{Re}(z_1\overline{z_2})=x_1x_2+y_1y_2.  
\end{equation*}
We will often use the notation 
\begin{equation*}
	\int_{\mathbb{T}} = \frac{1}{2\pi} \int_0^{2\pi}
\end{equation*}
for a mere simplification of our proofs, where $\mathbb{T}\bydef \mathbb{R}/ 2\pi\mathbb{Z}$.

At last, for consistency of notation  in Section \ref{sec DC}, we now introduce the function 
\begin{equation}\label{def Q}
	Q(\alpha,\beta) \bydef \frac{1}{\alpha^2} \int_{\min\{\alpha,\beta\}}^{\max\{\alpha,\beta\}} \tau b(\tau) d\tau =  \int_{\min\{1,\frac{\beta}{\alpha}\}}^{\max\{1,\frac{\beta}{\alpha}\}} \tau b(\alpha \tau) d\tau ,
\end{equation} 
for any given $\alpha>0$ and $\beta\geq 0,$ which will serve in this section and the next one, too. 

\subsection{Contour dynamics}\label{sec:controu}
Here, we establish an equivalent reformulation of \eqref{Lake1} in the case of periodic uniformly rotating solutions. To that end, we begin with considering a parametrization of the boundary of the initial domain $D_0$ which is close to a disc of radius $a>0$, namely
\begin{equation}\label{R:def}
 \begin{aligned}
z:\mathbb{T} &\mapsto \partial D_0\\
 \theta &\mapsto R(\theta)e^{\ii \theta}\bydef\sqrt{a^2+2r(\theta)}e^{\ii \theta}.
\end{aligned}
\end{equation}

 \begin{lem}[Contour equation]\label{lemma:contour} Let $\Omega\in\mathbb{R}.$ The radial deformation defined through \eqref{R:def} gives rise to a uniformly rotating, at angular velocity $\Omega$, vortex patch solution to the lake equation \eqref{Lake1} if and only if it satisfies the nonlinear contour equation
 $$\mathcal{F}(\Omega,r)=0,$$ 
 where, for any $\theta\in\mathbb{T}$, we denote
 \begin{equation}\label{contour_dynamics}
 \mathcal{F}(\Omega,r)(\theta)\bydef\Omega r'(\theta)+\frac{1}{b\big(R(\theta)\big)}\partial_\theta\left(\int_{0}^{2\pi}\int_{0}^{R(\eta)}K_b\big(R(\theta)e^{\ii \theta},\rho e^{\ii \eta}\big)b(\rho)\rho d\rho d\eta\right),
 \end{equation}
 and $K_b$ is the kernel associated with the elliptic equation \eqref{EL:P1}.
 
 Moreover, the trivial deformation, corresponding to $r \equiv 0$, is a solution to the preceding contour equation for all $\Omega \in \mathbb{R}$.
 \end{lem}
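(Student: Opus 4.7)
My plan is to plug the uniformly rotating ansatz $z(t,\theta) = R(\theta) e^{\ii(\theta + t\Omega)}$ into the contour dynamics equation \eqref{CD:EQ} stated in the introduction, then to exploit the rotation invariance of $K_b$ to eliminate the time variable from the stream-function term, leaving precisely the stationary equation $\mathcal F(\Omega,r)=0$. For the kinematic term $\partial_t z \cdot \mathbf n$, I compute $\partial_t z = \ii \Omega z$ and $\partial_\theta z = (R'(\theta) + \ii R(\theta))e^{\ii(\theta + t\Omega)}$; writing the outward normal in complex notation as a $\pi/2$-rotate of the unit tangent $\partial_\theta z/|\partial_\theta z|$ and using the identity $R R' = r'$ coming from $R^2 = a^2 + 2r$, a direct computation produces exactly the term $\Omega r'(\theta)$ appearing in \eqref{contour_dynamics}.

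For the stream-function term I would apply the integral representation supplied by Proposition \ref{prop.kernel} to the patch $f = \omega(t,\cdot) = b\mathds 1_{D_t}$, namely
\begin{equation*}
\psi_b(t,x) = \int_{D_t} K_b(x,y)\, b(|y|)\, dy.
\end{equation*}
Since $D_t = \mathcal R_{t\Omega} D_0$, the change of variable $y = \mathcal R_{t\Omega} \tilde y$ together with the radial symmetry of $b$ encoded in \eqref{Hb4:eq} and the rotation invariance of $K_b$ from \eqref{symmetry-kernel} gives
\begin{equation*}
\psi_b\big(t, z(t,\theta)\big) = \int_{D_0} K_b\big(R(\theta) e^{\ii \theta}, \tilde y\big)\, b(|\tilde y|)\, d\tilde y,
\end{equation*}
and expressing $D_0$ in polar coordinates $\{\rho e^{\ii\eta} : 0\leq \rho \leq R(\eta)\}$ converts this into the time-independent double integral of \eqref{contour_dynamics}. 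Using also $b(z(t,\theta)) = b(R(\theta))$ by radiality, and noting that every manipulation is reversible, then yields the equivalence $\mathcal F(\Omega,r) = 0$.

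For the trivial deformation $r\equiv 0$ one has $R \equiv a$ and $r'\equiv 0$, so only the $\partial_\theta$ of the double integral needs to be checked. The rotation invariance \eqref{symmetry-kernel}, applied with angle $-\theta$ and combined with the change of variable $\eta \mapsto \eta - \theta$ in the angular integral, shows that
\begin{equation*}
\theta \mapsto \int_0^{2\pi}\!\!\int_0^{a} K_b\big(a e^{\ii\theta}, \rho e^{\ii\eta}\big)\, b(\rho)\rho\, d\rho\, d\eta
\end{equation*}
is constant in $\theta$, hence $\mathcal F(\Omega,0) \equiv 0$ for every $\Omega \in \mathbb R$. The only mildly technical point is the normalisation bookkeeping in the kinematic term: one must verify that the $|\partial_\theta z|$ factor hidden in the unit outward normal matches the arc-length element produced by the chain rule on $\partial_\theta \psi_b(z)$, so that the final equation carries the bare $\partial_\theta$ appearing in \eqref{contour_dynamics}; once this cancellation is correctly set up, the rotation invariance of $K_b$ does all the remaining structural work essentially for free.
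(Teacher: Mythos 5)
Your approach is the same as the paper's: plug the rotating ansatz into the contour dynamics equation and use the rotation invariance of $K_b$ together with the radiality of $b$ to eliminate the time variable; the treatment of the stream-function term and of the trivial deformation is correct. The one thing to tighten is the normal vector. The paper takes $\mathbf{n} \bydef \ii\partial_\theta z$ with \emph{no} normalization; with that convention $\partial_t z\cdot\mathbf{n} = \textnormal{Im}\big(\partial_t z\,\overline{\partial_\theta z}\big) = \Omega RR' = \Omega r'$, and the Hamiltonian identity $\partial_\theta\big(\psi_b(z)\big) = \textnormal{Im}\big(2\ii\partial_{\bar z}\psi_b\,\overline{\partial_\theta z}\big) = -b\,v\cdot\mathbf{n}$ makes the stream-function term in \eqref{CD:EQ} appear with no $|\partial_\theta z|$ at all. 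If instead you use the unit normal $\ii\partial_\theta z/|\partial_\theta z|$, the kinematic side acquires a $|\partial_\theta z|^{-1}$, and your claim that this is compensated by an ``arc-length element produced by the chain rule on $\partial_\theta\psi_b(z)$'' is not right: the chain rule yields $\nabla\psi_b(z)\cdot\partial_\theta z$, which carries the tangent \emph{vector}, not its modulus, so no scalar $|\partial_\theta z|$ appears on the stream-function side. The cancellation happens simply because the kinematic condition $\partial_t z\cdot\mathbf{n}_{\mathrm{unit}} = v\cdot\mathbf{n}_{\mathrm{unit}}$ is homogeneous of degree zero in $\mathbf{n}$, so one may replace $\mathbf{n}_{\mathrm{unit}}$ by $\ii\partial_\theta z$ on both sides and never introduce $|\partial_\theta z|$. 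Indeed, your ``direct computation producing $\Omega r'(\theta)$'' is already using $\mathbf{n}=\ii\partial_\theta z$, not the unit normal; once the convention is stated consistently, the argument coincides with the paper's.
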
 
 \begin{proof} 
We first proceed to obtain the contour dynamics equation for a general parametrization of the patch; then we particularly look after the case of a rotating domain.

 Let us consider, for any time $t>0,$ a parametrization $z(t,\cdot):\mathbb{T}\rightarrow\partial D_t$ of the boundary. Then, according to \cite[page 174]{HMV13}, we have, for any $\theta \in \mathbb{T}$ and $t\geq 0$, that
$$ \partial_t z(t,\theta)\cdot\textbf{n}\big(t,z(t,\theta)\big)=v\big(t,z(t,\theta)\big)\cdot\textbf{n}\big(t,z(t,\theta)\big),$$
where we denoted $\textbf{n}\big(t,z(t,\theta)\big)$ the inward normal vector to the boundary $\partial D_t$ of the patch at the point $z(t,\theta)$, i.e., we agree that   
$$\textbf{n}\big(t,z(t,\theta)\big)\bydef \ii\partial_\theta z(t,\theta).$$
Now, recalling  that $\psi_b$ is the stream function corresponding to the divergence-free vector field $-bv$, defined through \eqref{def psib}, and using   the   notation 
$$\nabla=2\partial_{\overline{z}}, \quad \text{and} \quad \nabla^{\perp}=2\ii\partial_{\overline{z}},$$
 we then compute that 
\begin{align*}
	\partial_{\theta}\Big(\psi_b\big(t,z(t,\theta)\big)\Big)&=\nabla\psi_b\big(t,z(t,\theta)\big)\cdot\partial_{\theta}z(t,\theta)\\
	&=\textnormal{Re}\left(2\partial_{\overline{z}}\psi_{b}\big(t,z(t,\theta)\big)\overline{\partial_{\theta}z(t,\theta)}\right)\\
	&=\textnormal{Im}\left(2\ii\partial_{\overline{z}}\psi_{b}\big(t,z(t,\theta)\big)\overline{\partial_{\theta}z(t,\theta)}\right)\\
	&=-b\big(z(t,\theta)\big)\textnormal{Im}\left(v\big(t,z(t,\theta)\big)\overline{\partial_{\theta}z(t,\theta)}\right)\\
	&=-b\big(z(t,\theta)\big)v\big(t,z(t,\theta)\big)\cdot\textbf{n}\big(t,z(t,\theta)\big).
\end{align*}
Accordingly, we obtain the general equation of motion  
\begin{equation}\label{EQ:z}
	\partial_t z(t,\theta)\cdot\textbf{n}\big(t,z(t,\theta)\big)+\frac{1}{b\big(z(t,\theta)\big)}\partial_\theta\Big(\psi_b\big(t,z(t,\theta)\big)\Big)=0.
\end{equation}
Now, we particularize by looking for periodic solutions where $D_t$ is a rotation of the initial domain $D_0$, namely 
\begin{equation}\label{rot D0}
	D_t=e^{\ii \Omega t}D_0.
\end{equation}
This leads to consider the following parametrization for the boundary $\partial D_t$
$$z(t,\theta)\bydef e^{\ii \Omega t}z(\theta)=R(\theta)e^{\ii (\Omega t+\theta)},$$
where $z$ is the parametrization of $\partial D_0$ introduced in \eqref{R:def}. On the one hand,   it is readily seen that  
$$\partial_t z(t,\theta)=\ii\Omega e^{\ii \Omega t}R(\theta)e^{\ii \theta},$$
and 
$$\partial_\theta z(t,\theta)=e^{\ii \Omega t}\left(\ii R(\theta)+\tfrac{r'(\theta)}{R(\theta)}\right) e^{\ii \theta},$$
whereby, it follows that
\begin{align}
	\partial_t z(t,\theta)\cdot\textbf{n}\big(t,z(t,\theta)\big)&=\text{Im}\left(\partial_t z(t,\theta)\overline{\partial_\theta z(t,\theta)}\right)\nonumber\\
	&=\Omega r'(\theta).\label{left hs}
\end{align}
On the other hand, the radial symmetry property \eqref{Hb4:eq} of the bottom topography $b$ implies that 
\begin{equation}\label{brad}
	b\big(z(t,\theta)\big)=b\big(R(\theta)\big).
\end{equation}
In addition, in view of the representation \eqref{integral rep psib}  , we write that 
$$\psi_b\big(t,z(t,\theta)\big)=\int_{D_t}K_b\big(z(t,\theta),y\big)b(y)dy.$$
Therefore, we deduce by employing  \eqref{symmetry-kernel}, \eqref{rot D0}, making the change of variables $y\rightarrow y e^{\ii t\Omega}$ and utilizing \eqref{Hb4:eq} that
  \begin{align} 
   \psi_b\big(t,z(t,\theta)\big)&=\int_{D_0}K_b\big(R(\theta)e^{\ii (\Omega t+\theta)},ye^{\ii \Omega t}\big)b\big(ye^{\ii \Omega t}\big)dy\nonumber\\
   &=\int_{D_0}K_b\big(R(\theta)e^{\ii \theta},y\big)b(y)dy.\label{psi:equa}
  \end{align}
Hence, inserting \eqref{left hs}, \eqref{brad} and \eqref{psi:equa} into \eqref{EQ:z} infers that 
$$\Omega r'(\theta)+\frac{1}{b\big(R(\theta)\big)} \partial_\theta\left(\int_{D_0} K_b\big(z(\theta),y\big)b(y)dy\right)=0,$$
which implies in turn, via polar change of variables, that
$$\Omega r'(\theta)+\frac{1}{b\big(R(\theta)\big)}\partial_\theta\left(\int_{0}^{2\pi}\int_{0}^{R(\eta)}K_b\big(z(\theta),\rho e^{\ii \eta}\big)b(\rho)\rho d\rho d\eta\right)=0.$$
 This derives the contour dynamics as it is claimed in Lemma \ref{lemma:contour}, above.
 
 In addition to that,  by employing the identity \eqref{symmetry-kernel}, followed by a change of variables, we see, for all $\theta\in [0,2\pi]$, that 
 $$ \int_{0}^{2\pi}\int_{0}^{a}K_b\big(ae^{\ii \theta},\rho e^{\ii \eta}\big)b(\rho)\rho d\rho d\eta=\int_{0}^{2\pi}\int_{0}^{a}K_b\big(a,\rho e^{\ii \eta}\big)b(\rho)\rho d\rho d\eta,$$
 thereby yielding that 
\begin{equation}\label{diff0}
	\partial_\theta\left(\int_{0}^{2\pi}\int_{0}^{a}K_b\big(ae^{\ii \theta},\rho e^{\ii \eta}\big)b(\rho)\rho d\rho d\eta\right)=0.
\end{equation}
Subsequently, it follows, for all $\Omega\in\mathbb{R}$, that $(\Omega,0)$ is a zero of the functional $\mathcal{F}$. The proof of the lemma is now completed.
\end{proof}  

Observe, from the proof above, that the functional $\mathcal{F}(\Omega,r)$ can be recast, for any $\theta\in \mathbb{T}$, as 
\begin{equation}\label{contour:equation:2}
	\mathcal{F}(\Omega,r) (\theta) = \Omega r'(\theta) + \frac{1}{b\big(R(\theta)\big)} \partial_\theta\left(\psi_b\big(R(\theta) e^{\ii \theta}\big)\right),
\end{equation} 
where $\psi_b\bydef\psi_b(0,\cdot)$ is the unique solution of the elliptic problem \eqref{EL:P1} with source term $f= b\mathds{1}_{D_0}$. The preceding  representation  will come in handy in the regularity study of the contour equation, later on.

We conclude this section by a simple lemma where we establish important symmetry properties of the functional $\mathcal{F}$.  
\begin{lem}[Symmetry properties of the functional $\mathcal{F}$]\label{lemma:symmetry-lin}
Let $\Omega\in \mathbb{R}$,   $r\in C^{1+\alpha}(\mathbb{T}), $ for some $\alpha\in (0,1)$, and  $\mathcal{F}(\Omega,r)$ be given by \eqref{contour_dynamics}. If moreover $r$ is an even function, i.e., 
$$ 
r(-\theta) = r(\theta), 
\quad 
\text{for all} 
\quad 
\theta \in [0,2\pi],
$$
then, $\mathcal{F}(\Omega,r)$ is an odd function, i.e.
$$ 
\mathcal{F}(\Omega,r)(-\theta) = -\mathcal{F}(\Omega,r)(\theta), 
\quad 
\text{for all} 
\quad 
\theta \in [0,2\pi].
$$
Furthermore, if $r$ is a $m-$fold symmetric function, for some $m\in \mathbb{N}^*$, i.e.
 $$ 
r(\theta + \tfrac{2\pi}{m}) = r(\theta), 
\quad 
\text{for all} 
\quad 
\theta \in [0,2\pi],
$$
then, $\mathcal{F}(\Omega,r)$ is also a $m$-fold symmetric function.
\end{lem}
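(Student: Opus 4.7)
The plan is to exploit the two invariance properties of the kernel $K_b$ furnished by Proposition \ref{prop.kernel}: the rotation invariance $K_b(\mathcal R_\theta x,\mathcal R_\theta y)=K_b(x,y)$ and the reflection invariance $K_b(\mathcal S x,\mathcal S y)=K_b(x,y)$. Since $R(\theta)=\sqrt{a^2+2r(\theta)}$, any evenness or $m$-fold symmetry of $r$ is immediately inherited by $R$, and differentiation preserves these symmetries (turning even into odd and preserving $m$-fold periodicity, respectively). The first contribution $\Omega r'(\theta)$ in \eqref{contour_dynamics} is then trivially of the claimed type, so the whole burden lies on the nonlocal integral term, which I will abbreviate as
\begin{equation*}
I(\theta)\bydef\int_{0}^{2\pi}\!\!\int_{0}^{R(\eta)}K_b\big(R(\theta)e^{\ii\theta},\rho e^{\ii\eta}\big)b(\rho)\rho\,d\rho\,d\eta.
\end{equation*}

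For the oddness statement, I would replace $\theta$ by $-\theta$ and use $R(-\theta)=R(\theta)$ to identify $R(-\theta)e^{-\ii\theta}$ with $\mathcal S\bigl(R(\theta)e^{\ii\theta}\bigr)$. Applying the reflection invariance (in the equivalent form $K_b(\mathcal S x,y)=K_b(x,\mathcal S y)$, which follows from $\mathcal S\circ\mathcal S=\mathrm{Id}$) transfers the reflection onto the inner variable, producing $\rho e^{-\ii\eta}$; then the change of variable $\eta\mapsto-\eta$ in the $d\eta$ integral, together with $R(-\eta)=R(\eta)$ and the $2\pi$-periodicity, gives $I(-\theta)=I(\theta)$. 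The prefactor $b(R(\theta))$ is even in $\theta$ by \eqref{Hb4:eq}, so dividing by it and taking $\partial_\theta$ yields an odd function, which combined with the oddness of $\Omega r'$ concludes this part.

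For the $m$-fold symmetry, the strategy is analogous but uses rotation invariance instead: with $R(\theta+\tfrac{2\pi}{m})=R(\theta)$, the base point becomes $\mathcal R_{2\pi/m}\bigl(R(\theta)e^{\ii\theta}\bigr)$, and $K_b(\mathcal R_{2\pi/m}x,y)=K_b(x,\mathcal R_{-2\pi/m}y)$ reduces the shift to a shift by $-\tfrac{2\pi}{m}$ in the variable $\eta$; the substitution $\eta\mapsto\eta+\tfrac{2\pi}{m}$ combined with the $m$-fold symmetry of $R$ and periodicity restores $I(\theta+\tfrac{2\pi}{m})=I(\theta)$. The prefactor $b(R(\cdot))$ is also $\tfrac{2\pi}{m}$-periodic, and $r'$ inherits the $m$-fold symmetry of $r$, so every term in $\mathcal F(\Omega,r)$ is $\tfrac{2\pi}{m}$-periodic. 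No genuine obstacle is expected; the only care needed is in the bookkeeping of the changes of variable and the identification of reflections/rotations as symmetries of the kernel, both of which are already encoded in \eqref{symmetry-kernel}.
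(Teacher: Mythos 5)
Your proof is correct and follows essentially the same route as the paper: you reduce to the nonlocal integral, use the reflection invariance of $K_b$ from \eqref{symmetry-kernel} followed by $\eta\mapsto-\eta$ for parity, and the rotation invariance followed by $\eta\mapsto\eta+\tfrac{2\pi}{m}$ for $m$-fold symmetry. The only difference is that you spell out the one-sided forms $K_b(\mathcal Sx,y)=K_b(x,\mathcal Sy)$ and $K_b(\mathcal R_{\varphi}x,y)=K_b(x,\mathcal R_{-\varphi}y)$ and the triviality of the prefactor $b\circ R$, all of which the paper leaves implicit.
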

\begin{proof}
It is readily seen that, if $r$ is an even function, then $r'$ is odd. Likewise for the $m-$fold symmetry, i.e., if $r$ is a $m-$fold symmetric function, then so is $r'$. Thus, regarding the expression of $\mathcal{F}$ in \eqref{contour_dynamics}, we can restrict our focus on showing that the function
$$\theta\mapsto\int_{0}^{2\pi}\int_{0}^{R(\eta)}K_b\big(R(\theta)e^{\ii \theta},\rho e^{\ii \eta}\big)b(\rho)\rho d\rho d\eta$$
is even and $m$-fold symmetric, as soon as $r$ is an even $m$-fold symmetric function. The parity part follows by applying the second symmetry property of the kernel $K_b$ given by \eqref{symmetry-kernel}, followed by the change of variables $\eta\to-\eta$. As for the $m$-fold symmetry property, it is shown by employing the first symmetry property of the kernel $K_b$ given by \eqref{symmetry-kernel}, followed by the change of variables $\eta\to\eta+\frac{2\pi}{m}$. The proof of Lemma \ref{lemma:symmetry-lin} is now completed.
\end{proof}


 \subsection{Spectral analysis} \label{section:linearized:operator}
In this section,  we   study the linearized operator associated with the functional $\mathcal{F}$ at the equilibrium $r=0$ and  we show that it acts as a Fourier multiplier. We emphasize, however, that all the computations in this section will be completed with a detailed regularity analysis in Section \ref{section:spectral analysis}, below.

To begin, let us now compute the differential of the functional $\mathcal{F}$ at any point $r$. 
\begin{lem}\label{lemma:regularity}
Let $\mathcal{F}$ be given by  \eqref{contour_dynamics}. Let $m\in \mathbb{N}^*$, $\alpha\in (0,1)$ and $\varepsilon>0$ be a given small parameter in such a way that the function $\mathcal{F}$ is differentiable according to Proposition \ref{proposition regularity of the functional}. Then, the differential of $\mathcal{F}$ at $r\in \mathcal{B}_{m,\varepsilon}^\alpha$ in the direction $h\in X_{m}^{\alpha}$ is given by 
$$d_r\mathcal{F}(\Omega,r)[h]=\Omega h'+\frac{1}{b\circ R}\partial_\theta\Big(Q(r)h+P(r)[h]\Big)+L(r)h,$$
where we set
\begin{align*}
P(r)[h](\theta)&\bydef\int_0^{2\pi}h(\eta)K_b\big(R(\theta)e^{\ii \theta},R(\eta)e^{\ii \eta}\big)b\big(R(\eta)\big)d\eta,\\
Q(r)(\theta)&\bydef\frac{e^{\ii \theta}}{R(\theta)}   \cdot  \int_0^{2\pi}\int_{0}^{R(\eta)}\nabla _x K_b\big(R(\theta)e^{\ii \theta},\rho e^{\ii \eta}\big)b(\rho)\rho d\rho d\eta,\\
L(r)(\theta)&\bydef\frac{-b'\big(R(\theta)\big)}{R(\theta) b^2\big(R(\theta)\big)}\partial_\theta\left(\int_{0}^{2\pi}\int_{0}^{R(\eta)}K_b\big(R(\theta)e^{\ii \theta},\rho e^{\ii \eta}\big)b(\rho)\rho d\rho d\eta\right),
\end{align*}
for all $\theta\in\mathbb{T}$.
\end{lem}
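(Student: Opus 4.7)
The approach is direct computation via the chain rule and Leibniz's rule for differentiation under an integral sign. Differentiability of $\mathcal{F}$ on $\mathcal{B}_{m,\varepsilon}^\alpha$ is assumed by hypothesis (Proposition \ref{proposition regularity of the functional}), so the task reduces to identifying the three contributions to $d_r\mathcal{F}(\Omega,r)[h]$ in closed form and recognizing them as the operators $P$, $Q$, $L$ given in the statement.

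The key preliminary identity is $d_r R(\theta)[h] = h(\theta)/R(\theta)$, obtained by differentiating the defining relation $R^2=a^2+2r$. Writing $\mathcal{F}(\Omega,r)=\Omega r' + b(R)^{-1}\partial_\theta I(r)$ with
\begin{equation*}
I(r)(\theta)\bydef \int_0^{2\pi}\!\!\int_0^{R(\eta)} K_b\bigl(R(\theta)e^{\ii \theta},\rho e^{\ii \eta}\bigr) b(\rho)\rho\, d\rho\, d\eta,
\end{equation*}
the linear term gives $\Omega h'$. The chain rule applied to the prefactor $1/b(R(\theta))$ produces
\begin{equation*}
d_r\!\left(\tfrac{1}{b(R(\theta))}\right)[h]=-\tfrac{b'(R(\theta))\,h(\theta)}{R(\theta)\,b^2(R(\theta))},
\end{equation*}
which, multiplied by $\partial_\theta I(r)(\theta)$, reproduces exactly $L(r)(\theta)h(\theta)$. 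It remains to compute $d_r I(r)[h]$, which splits into two pieces.

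The first piece arises from Leibniz's rule applied to the upper limit $R(\eta)$ of the $\rho$-integral: the boundary contribution equals
\begin{equation*}
\int_0^{2\pi} K_b\bigl(R(\theta)e^{\ii \theta},R(\eta)e^{\ii \eta}\bigr)\, b\bigl(R(\eta)\bigr) R(\eta)\cdot \tfrac{h(\eta)}{R(\eta)}\, d\eta = P(r)[h](\theta),
\end{equation*}
after the factor $R(\eta)$ cancels with $1/R(\eta)$ from $d_r R(\eta)[h]$. The second piece comes from the $R(\theta)e^{\ii \theta}$ sitting in the first argument of $K_b$; since $\theta$ is fixed during the double integration, the variation $d_r\bigl(R(\theta)e^{\ii \theta}\bigr)[h]=\tfrac{h(\theta)}{R(\theta)}e^{\ii \theta}$ can be pulled outside as a scalar prefactor, producing exactly $Q(r)(\theta)\,h(\theta)$. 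Summing these and applying $b(R)^{-1}\partial_\theta$ to $P(r)[h]+Q(r)h$ reconstructs the first two terms in the claimed formula, while the prefactor contribution delivers $L(r)h$.

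The only delicate point is the commutation of $d_r$ with $\partial_\theta$ and with integration near the diagonal $\theta=\eta$, where $K_b$ carries a logarithmic singularity. Using the decomposition of Proposition \ref{prop.kernel}, namely $K_b(x,y) = -\tfrac{1}{2\pi}\log|x-y|\sqrt{b(x)b(y)} + S_b(x,y)$, the singular summand is treated by standard log-kernel manipulations exploiting the $C^{1+\alpha}$ regularity of $r\in\mathcal{B}_{m,\varepsilon}^\alpha$, while the remainder $S_b$ is of class $W^{2,p}$ and hence poses no obstruction. All such estimates are precisely the content of the regularity result cited in the hypothesis, so the present lemma is essentially a bookkeeping calculation identifying the derivatives with the displayed operators $P(r)$, $Q(r)$, $L(r)$.
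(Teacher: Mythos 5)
Your proposal is correct and follows essentially the same route as the paper: you isolate the differential of the prefactor $1/b(R(\theta))$ (giving $L$) and of the inner $\rho$-integral — splitting the latter into the Leibniz boundary term (giving $P$) and the chain-rule term from the first argument of $K_b$ (giving $Q$) — exactly as the paper does. The paper is more terse and simply cites Proposition \ref{proposition regularity of the functional} for the justification of differentiating under the integral; your closing paragraph makes that deferral explicit, which is consistent but not an added step.
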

\begin{proof} Recalling that $\mathcal{F}$ is given by \eqref{contour_dynamics},  we only focus on  differentiating
$$r\mapsto\frac{1}{b\big(R(\theta)\big)}\qquad \text{and}\qquad r\mapsto\int_{0}^{R(\eta)}K_b \big(R(\theta)e^{\ii \theta},\rho e^{\ii \eta}\big)b(\rho)\rho d\rho.$$
For the first function above, it is readily seen that 
$$d_r\left(\frac{1}{b\circ R}\right)[h]=-\frac{b'\circ R}{R\times(b^2\circ R)}h.$$
As for the second one,  by a direct computation,  we find that 
\begin{equation*}
	\begin{aligned}
		d_r\bigg(\int_{0}^{R(\eta)}&K_b\big(R(\theta)e^{\ii \theta},\rho e^{\ii \eta}\big)b(\rho)\rho d\rho\bigg)[h]\\
 &=K_b\big(R(\theta)e^{\ii \theta},R(\eta)e^{\ii \eta}\big)b\big(R(\eta)\big)h(\eta)\\
 &\quad\quad\quad+\left(\frac{e^{\ii \theta}}{R(\theta)}\cdot\int_{0}^{R(\eta)}\nabla_xK_b\big(R(\theta)e^{\ii \theta},\rho e^{\ii \eta}\big)b(\rho)\rho d\rho \right)h(\theta). 
	\end{aligned}
\end{equation*} 
At last, gathering the foregoing two identities completes the proof of the lemma.
\end{proof} 

Now, we show that the linearized operator at the equilibrium $r=0$ acts as a Fourier multiplier whose coefficients are precise in the next lemma. 

\begin{lem}\label{lemma:linearized-OP}
Let $m\in\mathbb{N}^*$, $a>0$, $\Omega\in \mathbb{R}$   and $\alpha\in(0,1).$ Then, for any $h\in X_{m}^{\alpha}$ taking the form $$  h(\theta)=\sum_{n=1}^{\infty}h_n\cos(nm\theta),$$
for all $ \theta\in\mathbb{T}$ and some $h_n\in\mathbb{R}$, it holds that
$$d_r\mathcal{F}(\Omega,0)[h](\theta)=-\sum_{n=1}^{\infty}nm\big(\Omega   - Q(a,0)+  \Lambda_{nm}(a,a)\big)h_n\sin(nm\theta),$$
where $Q(a,0)$ and $\big(\Lambda_{n}(a,a)\big)_{n\in\mathbb{N}^*}$ are defined by \eqref{def Q} and \eqref{def Lambdan}, respectively.
\end{lem}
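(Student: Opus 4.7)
The plan is to substitute $r\equiv 0$ directly into the expression for $d_r\mathcal{F}(\Omega,r)[h]$ from Lemma \ref{lemma:regularity} and evaluate the three resulting pieces one at a time, exploiting throughout the rotation and involution invariances of $K_b$ recorded in \eqref{symmetry-kernel}. At $r\equiv 0$ we have $R(\theta)\equiv a$, so the prefactor $\frac{1}{b(R(\theta))}$ becomes the constant $\frac{1}{b(a)}$, which streamlines the computation.

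First I would handle the term $L(0)$. The integral appearing inside the $\theta$-derivative in the definition of $L(r)$ specializes at $r\equiv 0$ to
$$F(ae^{\ii \theta})\bydef \int_{0}^{2\pi}\int_{0}^{a}K_b\big(ae^{\ii \theta},\rho e^{\ii \eta}\big)b(\rho)\rho \, d\rho \, d\eta,$$
which is $\theta$-independent by exactly the computation \eqref{diff0}. Hence $L(0)\equiv 0$. Next, for the $Q(0)$ piece, the same radial-symmetry argument shows that $F$ is a radial function on $\mathbb{R}^2$, so $\nabla F(ae^{\ii \theta}) = F'(a)\, e^{\ii \theta}$, and therefore
$$Q(0)(\theta) = \frac{e^{\ii \theta}}{a}\cdot \nabla F(ae^{\ii \theta}) = \frac{F'(a)}{a}$$
is a constant function of $\theta$. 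Evaluating $F'(a) = \partial_{x_1} F\big|_{x=(a,0)}$ and invoking Lemma \ref{velocity_expl_comp} with $\alpha=\beta=a$ yields $\frac{F'(a)}{a}=-b(a)Q(a,0)$. Since $Q(0)$ is constant, $\frac{1}{b(a)}\partial_\theta\big(Q(0)\,h\big)=-Q(a,0)\,h'(\theta)$.

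The main computational step will be the $P(0)[h]$ piece, which I expect to be the place where the Fourier-multiplier structure actually emerges. Using the rotation invariance of $K_b$ from \eqref{symmetry-kernel} I would rewrite
$$P(0)[h](\theta)=b(a)\int_0^{2\pi}h(\eta)K_b\big(a,ae^{\ii(\eta-\theta)}\big)d\eta,$$
and change variables $\eta\mapsto \eta+\theta$ to obtain $P(0)[h](\theta)=b(a)\int_0^{2\pi}h(\eta+\theta)K_b(a,ae^{\ii\eta})d\eta$. Next I would expand
$$h(\eta+\theta)=\sum_{n\ge 1} h_n\big[\cos(nm\eta)\cos(nm\theta)-\sin(nm\eta)\sin(nm\theta)\big],$$
and kill the $\sin(nm\eta)$ contributions by invoking the involution symmetry $K_b(a,ae^{\ii\eta})=K_b(a,ae^{-\ii\eta})$ from \eqref{symmetry-kernel}, which makes $\eta\mapsto K_b(a,ae^{\ii\eta})$ even. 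Recognising the remaining integrals as the coefficients $\Lambda_{nm}(a,a)$ defined in \eqref{def Lambdan}, I obtain
$$P(0)[h](\theta)=b(a)\sum_{n\ge 1}h_n\,\Lambda_{nm}(a,a)\cos(nm\theta),$$
so that $\frac{1}{b(a)}\partial_\theta P(0)[h](\theta)=-\sum_{n\ge 1}nm\, h_n\,\Lambda_{nm}(a,a)\sin(nm\theta)$.

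Finally, combining the three pieces with $\Omega h'(\theta)=-\sum_{n\ge 1}nm\, h_n\,\Omega\sin(nm\theta)$ gives the announced identity. The only genuinely delicate step is the Fourier decomposition of $P(0)[h]$: everything else is a direct substitution plus the two symmetry principles for $K_b$ and an application of Lemma \ref{velocity_expl_comp}.
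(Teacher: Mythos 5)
Your proposal is correct and follows essentially the same route as the paper: evaluate the three pieces $P(0)[h]$, $Q(0)$, $L(0)$ from Lemma~\ref{lemma:regularity} at $r\equiv 0$, using the rotation and involution symmetries \eqref{symmetry-kernel} to reduce $P(0)[h]$ to the Fourier coefficients $\Lambda_{nm}(a,a)$, invoking Lemma~\ref{velocity_expl_comp} for $Q(0)$, and \eqref{diff0} for $L(0)$. Your treatment of $Q(0)$ via the radial symmetry of $F$ (so that $\nabla F(ae^{\ii\theta})=F'(a)e^{\ii\theta}$) is a slightly cleaner way to see its $\theta$-independence than the paper's chain of change-of-variable identities, but it is the same underlying observation.
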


\begin{proof}
First, observing that 
$$P(0)[h](\theta)=b(a)\sum_{n=1}^{\infty}h_n\int_0^{2\pi}K_b\big(ae^{\ii \theta},ae^{\ii \eta}\big)\cos(nm\eta)d\eta,$$
yields, in view of \eqref{symmetry-kernel}, that
$$P(0)[h](\theta)=b(a)\sum_{n=1}^{\infty}h_n\int_0^{2\pi}K_b\big(a,ae^{\ii (\eta-\theta)}\big)\cos(nm\eta)d\eta.$$
Therefore, by a change of variables, we infer that 
\begin{align*}
P(0)[h](\theta)&=b(a)\sum_{n=1}^{\infty}h_n\int_0^{2\pi}K_b\big(a,ae^{\ii \eta}\big)\cos\big(nm(\eta+\theta)\big)d\eta\\
&=b(a)\cos(nm\theta)\sum_{n=1}^{\infty}h_n\int_0^{2\pi}K_b\big(a,ae^{\ii \eta}\big)\cos(nm\eta)d\eta\\
&\quad-b(a)\sin(nm\theta)\sum_{n=1}^{\infty}h_n\int_0^{2\pi}K_b\big(a,ae^{\ii \eta}\big)\sin(nm\eta)d\eta.
\end{align*}
Now, the second identity in \eqref{symmetry-kernel} implies that the following function is even
$$\eta\mapsto K_b(a,ae^{\ii \eta}).$$
As a consequence, we obtain the oddness of the function
$$\eta\mapsto K_b(a,ae^{\ii \eta})\sin(nm\eta)$$ and by $2\pi$-periodicity, we deduce that 
$$\int_0^{2\pi}K_b\big(a,ae^{\ii \eta}\big)\sin(nm\eta)d\eta=0,$$
thereby establishing that 
$$P(0)[h](\theta)=b(a)\sum_{n=1}^{\infty}h_n\cos(nm\theta)\int_0^{2\pi}K_b\big(a,ae^{\ii \eta}\big)\cos(nm\eta)d\eta.$$
Consequently, it follows that 
\begin{equation*}
	\frac{1}{b\circ R(r)}\partial_\theta\Big( P(r)[h]\Big)\big|_{r=0} = - \sum_{n=1}^{\infty}h_n\sin(nm\theta) \Lambda _{nm}(a,a).
\end{equation*}
Next, we take care of $Q(0)$. To that end, observe first that, for any $\theta\in \mathbb{T}$,
\begin{align*}
Q(0)(\theta)&=\frac{e^{\ii \theta}}{a}\cdot\int_0^{2\pi}\int_{0}^{a}\nabla_xK_b\big(ae^{\ii \theta},\rho e^{\ii \eta}\big)b(\rho)\rho d\rho d\eta\\
&=\frac{1}{a}\int_0^{2\pi}\int_{0}^{a}\partial_{a}\Big(K_b\big(ae^{\ii \theta},\rho e^{\ii \eta}\big)\Big)b(\rho)\rho d\rho d\eta\\
&=\frac{1}{a}\int_0^{2\pi}\int_{0}^{a}\partial_{a}\Big(K_b\big(a,\rho e^{\ii (\eta-\theta)}\big)\Big)b(\rho)\rho d\rho d\eta.
\end{align*}
Thus, by performing a change of variables, again, we find that 
$$Q(0)(\theta)=\frac{1}{a}\int_0^{2\pi}\int_{0}^{a}\partial_{x_1}K_b\big(a,\rho e^{\ii \eta}\big)b(\rho)\rho d\rho d\eta.$$
Thus, by the explicit formula for the right-hand side given in Lemma \ref{velocity_expl_comp}, we deduce that 
 $$Q(0)(\theta)=   -b(a) Q(a,0),$$
 which in turn yields 
 \begin{equation*}
	\frac{1}{b\circ R(r)}\partial_\theta\Big( Q(0)[h]\Big)\big|_{r=0} = - Q(a,0)\sum_{n=1}^{\infty}h_n\sin(nm\theta) .
\end{equation*}
     In addition, we deduce from \eqref{diff0} that
\begin{equation}\label{L=0}
	L(0)\equiv 0. 
\end{equation}
All in all, gathering the preceding identities concludes the proof of the lemma.
\end{proof}
 
  In the next lemma, we perform a detailed  asymptotic analysis of the specific Fourier coefficients $\Lambda_{n}$ that appear in the expression of the linearized operator $d_r\mathcal{F}$ from the preceding lemma. The results of the next lemma will be also crucial in the spectral analysis of the doubly-connected V-states, which will be discussed in Section \ref{sec DC}, later on.

\begin{prop}\label{lemma:eigenfunction}     
Let $\alpha,\beta>0$ and the sequence $\big (\Lambda_n(\alpha,\beta)\big)_{n\in \mathbb{N}^*}$ be as defined in \eqref{def Lambdan}. Then, the following holds:
\begin{enumerate}
	\item The terms of the sequence $\big (\Lambda_n(\alpha,\beta)\big)_{n\in \mathbb{N}^*}$ expand as 
	\begin{equation}\label{exp Lambdan}
		\Lambda_n(\alpha,\beta)=\Lambda_{n}(\beta,\alpha)=\frac{\sqrt{b(\alpha)b(\beta)}}{2n}\mathtt{m}^n(\alpha,\beta)+f_n(\alpha,\beta),
	\end{equation}
	where 
	\begin{equation*}
		 \mathtt{m}(\alpha ,\beta )\bydef\min\left\{\frac{\alpha }{\beta},\frac{\beta}{\alpha }\right\} 
	\end{equation*}
	and   
	\begin{equation}\label{def fn}
		f_n(\alpha,\beta)\bydef\int_{0}^{2\pi}S_b\big(\alpha,\beta e^{\ii \eta}\big)\cos(n\eta)d\eta,
	\end{equation} 
	with $S_b$  being defined as the solution of \eqref{EL:P2}.
	
	\item Defining, on $[0,\infty)$, the function 
		\begin{align*}
		r\mapsto \Theta(r)&\bydef\frac{d}{dr}\left(r\frac{d}{dr}\left(\frac{1}{\sqrt{b(r)}}\right)\right)
	\end{align*} 
	 and setting 
	\begin{equation*}
		A  \bydef  \max\{\alpha,\beta, R_\infty \} ,
	\end{equation*}
	where $R_\infty>0$ being the radius of the ball defined in \eqref{Hb3:eq},
	 it   holds that  
	 	\begin{align}\label{bnd fn_prop} 
		|f_n(\alpha,\beta)|\leq 2A \sqrt{{b(\beta)}}  \frac{\mathtt{m}^n(\alpha,\beta)}{n^2} \left( \delta_{\alpha,\beta} \left( \frac{1}{n} -1   \right) + 1  \right)  \|\Theta\|_{L^{\infty}(\mathbb{R}^+)},
	\end{align}
	for any $n\in \mathbb{N}^*$ satisfying
  	$$ n\geq   \frac{A}{2} \norm{\Theta}_{L^{\infty}(\mathbb{R}^+) } ,$$ 
  	where $\delta_{\alpha,\beta}$ denotes the Kronecker-delta-function of entries $\alpha$ and $\beta$. 
\item The sequence $\big(\Lambda_n(\alpha,\beta )\big)_{n> M(b)}$ is positive,
where
\begin{equation}\label{def M ab}
		 M( b)\bydef   16 A \norm{\Theta}_{L^\infty(\mathbb{R}^+)}  \max\left\{1 ,\frac{1}{\sqrt{b(\alpha)}}\right\}. 
	\end{equation}
	\item Moreover, in the case $\alpha=\beta$,   the sequence $\big(\Lambda_n(\alpha,\alpha)\big)_{n>  M(b)}$ is  decreasing.\end{enumerate} 
 	 \end{prop}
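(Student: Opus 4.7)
The plan is to first establish (1) directly from the kernel decomposition, then set up a closed integral equation for $f_n$ which, after a contraction argument, yields (2); items (3) and (4) will then follow as direct quantitative consequences.

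For (1), I would insert the decomposition $K_b(x,y)=-\frac{1}{2\pi}\log|x-y|\sqrt{b(x)b(y)}+S_b(x,y)$ from Proposition \ref{prop.kernel} into \eqref{def Lambdan}. The logarithmic contribution is handled via the classical Fourier identity
\[ \log|\alpha-\beta e^{\ii\eta}|=\log\max\{\alpha,\beta\}-\sum_{k\geq 1}\tfrac{1}{k}\mathtt{m}^k(\alpha,\beta)\cos(k\eta),\]
whose $n$-th cosine Fourier coefficient is $-\pi\mathtt{m}^n(\alpha,\beta)/n$, producing the leading term $\frac{\sqrt{b(\alpha)b(\beta)}}{2n}\mathtt{m}^n(\alpha,\beta)$; the remainder is $f_n$ by definition. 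The symmetry $\Lambda_n(\alpha,\beta)=\Lambda_n(\beta,\alpha)$ follows from combining the symmetry \eqref{symmetry-kernel-0}, the rotational invariance, and the reflection invariance \eqref{symmetry-kernel} of $K_b$, which together transfer to $S_b$ and hence to $f_n$.

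For (2), the strategy is to derive a closed integral equation relating $f_n(\alpha,\cdot)$ to itself and solve it by contraction. The key structural observation is that \eqref{Hb3:eq} forces $\Delta_x(1/\sqrt{b})$ to vanish outside $B(0,R_\infty)$, and for radial $b$ one has $\Delta_x(1/\sqrt{b(x)})=\Theta(|x|)/|x|$. Representing $S_b$ as the action of the kernel $K_b$ on the source term in \eqref{EL:P2} yields
\[ S_b(x,y)=\tfrac{\sqrt{b(y)}}{2\pi}\int_{|z|\leq R_\infty}K_b(x,z)\log|z-y|\,\Theta(|z|)\,\tfrac{dz}{|z|}.\]
Plugging this into $f_n(\alpha,\beta)=\int_0^{2\pi}S_b(\alpha,\beta e^{\ii\eta})\cos(n\eta)d\eta$, exchanging the order of integration, and applying the log expansion from (1) to the resulting $\eta$-integral produces the closed identity
\[ f_n(\alpha,\beta)=-\tfrac{\sqrt{b(\beta)}}{2n}\int_0^{R_\infty}\Theta(s)\,\mathtt{m}^n(s,\beta)\,\Lambda_n(\alpha,s)\,ds.\]
Substituting (1) for $\Lambda_n(\alpha,s)$ under the integral splits this into an explicit leading piece plus a linear operator $T_n$ acting on $f_n(\alpha,\cdot)$. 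The key pointwise inequality $\mathtt{m}(s,\alpha)\mathtt{m}(s,\beta)\leq \mathtt{m}(\alpha,\beta)$ for all $s>0$, with equality iff $s$ lies in the interval between $\alpha$ and $\beta$, allows me to show that $T_n$ is a contraction of norm $\lesssim 1/n$ in a weighted space tailored to the factor $\sqrt{b(s)}\,\mathtt{m}^n(\alpha,s)$, provided $n\gtrsim A\|\Theta\|_\infty$. A Neumann series resolution then gives the stated bound. The sharper $1/n^3$ behavior in the diagonal case $\alpha=\beta$ arises from a Laplace-type localization: since $\mathtt{m}^{2n}(s,\alpha)$ concentrates near $s=\alpha$ on a window of width $\sim\alpha/n$, the integral $\int\Theta(s)\mathtt{m}^{2n}(s,\alpha)\sqrt{b(s)}\,ds$ is of order $1/n$ rather than $O(1)$, producing the extra factor captured by $\delta_{\alpha,\beta}(1/n-1)$.

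Items (3) and (4) then follow directly from the quantitative bound in (2). For (3), the threshold $M(b)$ in \eqref{def M ab} is designed so that for $n>M(b)$ the error bound from (2) is strictly less than the leading term $\frac{\sqrt{b(\alpha)b(\beta)}}{2n}\mathtt{m}^n(\alpha,\beta)$ of (1), whence positivity. For (4), writing
\[\Lambda_n(\alpha,\alpha)-\Lambda_{n+1}(\alpha,\alpha)=\tfrac{b(\alpha)}{2n(n+1)}+(f_n-f_{n+1})(\alpha,\alpha),\]
the leading $1/n^2$ contribution strictly dominates the $O(1/n^3)$ remainder controlled by (2), again for $n>M(b)$, yielding monotonicity. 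The main obstacle throughout is step (2): the detailed bookkeeping needed to extract the stated constants — in particular the appearance of $\sqrt{b(\beta)}$ alone (with the factor $\sqrt{b(\alpha)}$ being absorbed into $A$ and the threshold $M(b)$) — and the careful Laplace-type estimate that captures the extra $1/n$ gain in the diagonal case without destroying the off-diagonal bound.
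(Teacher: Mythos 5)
Your proposal is correct and follows essentially the same route as the paper: decompose $K_b$ to isolate the logarithmic leading term, derive a fixed-point identity for $f_n$ by applying the kernel representation of $S_b$ and iterating the decomposition once more, compute $\int_0^\infty \mathtt{m}^n(s,\alpha)\mathtt{m}^n(s,\beta)\,ds$ explicitly to capture the extra $1/n$ gain in the diagonal case, and close via a weighted supremum (contraction) estimate. The only minor deviations are cosmetic: you pass through $\Lambda_n$ under the integral and then expand, whereas the paper expands $K_b$ inside the representation of $S_b$ directly, and the sign in your closed identity for $f_n$ appears to be flipped; neither affects the absolute-value bounds that drive the argument.
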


\begin{proof}  
	  \textit{(1)} Fix $n\in\mathbb{N}^*$ and $\alpha,\beta>0$ and observe  that the symmetry property in \eqref{exp Lambdan} is obtained as a consequence of \eqref{symmetry-kernel} by implementing the change of variable $\eta\to-\eta.$ This leaves us with the proof of the second identity of \eqref{exp Lambdan}. To that end, we first write, by using the decomposition \eqref{symmetry-kernel:split} of the kernel $K_b$, that
	  \begin{equation}\label{split Lambdan}
	  	\Lambda_n(\alpha,\beta)=-\frac{\sqrt{b(\alpha)b(\beta)}}{2\pi}\int_{0}^{2\pi}\log|\alpha-\beta e^{\mathrm{i}\eta}|\cos(n\eta)d\eta +\int_{0}^{2\pi}S_b\big(\alpha,\beta e^{\ii \eta}\big)\cos(n\eta)d\eta.
	  \end{equation}
	Now, by virtue of \eqref{log:identity}, we find that
	\begin{equation*} 
		\begin{aligned}
			 -\frac{\sqrt{b(\alpha)b(\beta)}}{2\pi}\int_{0}^{2\pi}\log|\alpha-\beta e^{\mathrm{i}\eta}|\cos(n\eta)d\eta   =\frac{\sqrt{b(\alpha)b(\beta)}}{2n}\mathtt{m}^{n}(\alpha,\beta).
		\end{aligned}
	\end{equation*} 
Hence, identifying the second integral in  \eqref{split Lambdan} with the definition of $f_n$ given by \eqref{def fn} concludes the proof of   \eqref{exp Lambdan}.

 \textit{(2)} Since $S_b(\cdot,y)$ solves the elliptic problem \eqref{EL:P2}, then it can be represented, by applying Proposition \ref{prop.kernel}, as 
$$S_b(x,y)=-\frac{\sqrt{b(y)}}{2\pi}\int_{\mathbb{R}^2}K_b(x,z)\log|y-z|\Delta\left(\frac{1}{\sqrt{b(z)}}\right)dz.$$
Therefore, using the decomposition \eqref{symmetry-kernel:split} in the preceding identity leads to the following splitting
\begin{align*}
S_b(x,y)&=\frac{\sqrt{b(y) b(x) }}{2\pi}\int_{\mathbb{R}^2}  \log| x-z| \log|y-z|\Delta\left(\frac{1}{\sqrt{b(z)}}\right) \sqrt{b(z)}dz\\
&\quad+\frac{\sqrt{b(y)}}{2\pi}\int_{\mathbb{R}^2}S_b(x,z)\log|y-z|\Delta\left(\frac{1}{\sqrt{b(z)}}\right)dz\\
&\bydef\widetilde{S}_{b,1}(x,y)+\widetilde{S}_{b,2}(x,y).
\end{align*}
We now take care of each term separately. Before we get into the details of that, note first that   using the radial symmetry of the bottom topography $b$ and writing the Laplace operator in polar coordinates yields
$$\Delta\left(\frac{1}{\sqrt{b(|z|)}}\right)=\frac{1}{r}\frac{d}{dr}\left(r\frac{d}{dr}\left(\frac{1}{\sqrt{b(r)}}\right)\right)=\frac{1}{r}\Theta(r),$$
for all $z\in \mathbb{R}^2$ with $r=|z|$. Thus, for $\widetilde{S}_{b,1}$, we proceed by utilizing a polar change of variables and twice the identity \ref{log:identity} to find that 
\begin{equation*}
	\begin{aligned}
		\int_{0}^{2\pi}\widetilde{S}_{b,1} &\big(\alpha,\beta e^{\mathrm{i}\eta}\big)\cos(n\eta) d\eta \\
	&=\frac{\sqrt{b(\beta)}}{4\pi^2}\int_{0}^{\infty}\Theta(r)\int_{0}^{2\pi}\log|\alpha-re^{\mathrm{i}\theta}|\int_{0}^{2\pi}\log|\beta e^{\mathrm{i}\eta}-re^{\ii \theta}|\cos(n\eta)d\eta d\theta dr \\
	&=\frac{\sqrt{b(\beta)}}{2\pi}\int_{0}^{\infty}\Theta(r)\int_{0}^{2\pi}\log|\alpha-re^{\mathrm{i}\theta}|\left(-\frac{\mathtt{m}^{n}(r,\beta)}{2n}\right)\cos(n\theta)d\theta dr \\
	&=\frac{\sqrt{b(\beta)}}{4n^2}\int_{0}^{\infty}\Theta(r)\mathtt{m}^{n}(r,\alpha)\mathtt{m}^{n}(r,\beta)dr,
	\end{aligned}
\end{equation*}
whereby, we have shown that 
\begin{equation*}
	\int_{0}^{2\pi}\widetilde{S}_{b,1}\big(\alpha,\beta e^{\mathrm{i}\eta}\big)\cos(n\eta) d\eta = \frac{\sqrt{b(\beta)}}{4n^2}U_{n}(\alpha,\beta),
\end{equation*}
where we set 
\begin{equation*}
	U_{n}(\alpha,\beta) \bydef \int_{0}^{\infty}\Theta(r)\mathtt{m}^{n}(r,\alpha)\mathtt{m}^{n}(r,\beta)dr .
\end{equation*}
Then, we control $U_n$ by employing the elementary computation 
\begin{equation} \label{int:mn}
	\int_0^\infty  \mathtt{m}^n(r,\alpha)\mathtt{m}^n(r,\beta)dr =  \mathtt{m}^{n}(\alpha,\beta)\left(|\alpha-\beta|+\frac{\min(\alpha,\beta)}{2n+1}+\frac{\max(\alpha,\beta)}{2n-1}\right)
\end{equation}
to obtain that 
\begin{equation*}
	\begin{aligned}
		|U_n(\alpha,\beta)|&\leqslant\|\Theta\|_{L^\infty(\mathbb{R}^+)}\int_{0}^{\infty}\mathtt{m}^n(r,\alpha)\mathtt{m}^n(r,\beta)dr \\
	&=\|\Theta\|_{L^{\infty}(\mathbb{R}^+)}\mathtt{m}^{n}(\alpha,\beta)\left(|\alpha-\beta|+\frac{\min(\alpha,\beta)}{2n+1}+\frac{\max(\alpha,\beta)}{2n-1}\right), 
	\end{aligned}
\end{equation*} 
thereby arriving at the bound  
\begin{equation}\label{maj Un}
	|U_n(\alpha,\beta)|\leq C_n\,\mathtt{m}^{n}(\alpha,\beta),
\end{equation} 
where $C_n$ is defined as 
\begin{equation}\label{Cn:def}
	 C_n  \bydef  
	 \begin{cases}
	 	  4 A  \|\Theta\|_{L^{\infty}(\mathbb{R}^+)},& \text{if} \quad \alpha \neq  \beta, \vspace{0.2cm}\\ 
	 	 \displaystyle\frac{2A}{n}  \|\Theta\|_{L^{\infty}(\mathbb{R}^+)},  & \text{if} \quad \alpha = \beta ,
	 \end{cases}
	 \end{equation}
	 where $A$ is defined in the statement of the proposition.  Now, we  expand  the term involving $\widetilde{S}_{b,2}$ by proceeding in a similar way as before to write
\begin{equation*}
	\begin{aligned}
		\int_0^{2\pi}\widetilde{S}_{b,2}&(\alpha,\beta e^{\ii \eta})  \cos(n\eta)d\eta\\
	&=\frac{\sqrt{b(\beta)}}{2\pi}\int_{0}^{\infty}\Theta(r)\int_0^{2\pi}S_b\big(\alpha,re^{\ii \theta}\big)\left(\int_0^{2\pi}\log|\beta e^{\ii \eta}-re^{\ii \theta}|\cos(n\eta)d\eta\right)d\theta dr\\
	&=-\frac{\sqrt{b(\beta)}}{2n}\int_{0}^{\infty}\Theta(r)\mathtt{m}^n(r,\beta)\left(\int_0^{2\pi}S_b\big(\alpha,re^{\ii \theta}\big)\cos(n\theta)d\theta\right)dr\\
	&=-\frac{\sqrt{b(\beta)}}{2n}\int_{0}^{\infty}\Theta(r)\mathtt{m}^n(r,\beta)f_n(\alpha,r)dr.
	\end{aligned}
\end{equation*} 
Therefore, noticing that the hypothesis \eqref{Hb3:eq} implies that $\Theta$ is compactly supported in the ball $ B_{R_\infty} \subset B_A$, 
it follows from the previous computations that  
\begin{equation}\label{fn:identity}
	f_n(\alpha,\beta)=\frac{\sqrt{b(\beta)}}{4n^2}U_n(\alpha,\beta)-\frac{\sqrt{b(\beta)}}{2n}\int_{0}^{A}\Theta(r)\mathtt{m}^n(r,\beta)f_n(\alpha,r)dr.
\end{equation}
 Now, note that the sequence $f_n$ is uniformly bounded due to its definition \eqref{def fn} and the bounds on function $S_b$ from Proposition \ref{prop.kernel} which imply that 
$$\sup_{n\in\mathbb{N}^*}\norm{f_n}_{L^{\infty}\left([0,A]^2\right)}\leq2\pi\norm{S_b}_{L^{\infty}\left([0,A]^2 \right)}<\infty.$$ 
Thus,  for any $\varepsilon\in (0,1)$, defining 
\begin{equation*}
	 	g_{n,\varepsilon}(\alpha)\bydef \sup_{  \beta\in [0,A]}	\left| \frac{f_n(\alpha,\beta)}{\sqrt{b(\beta)} (\mathtt{m}^n(\alpha,\beta)+ \varepsilon) } \right|
\end{equation*}
it follows    
\begin{equation*}
   \begin{aligned}
     \frac{f_n(\alpha,\beta)}{\sqrt{b(\beta)} (\mathtt{m}^n(\alpha,\beta)+ \varepsilon) }  
       & \leq \frac{C_n}{4n^2}   +  g_{n,\varepsilon}(\alpha)\frac{ \norm {\Theta}_{L^\infty(\mathbb{R}^+)}}{2n} \int_0^A  \mathtt{m}^n(r,\beta) \Big (\mathtt{m}^n(r,\alpha)+ \varepsilon \Big  ) dr
		\\
		& \leq \frac{C_n}{4n^2}   +    g_{n,\varepsilon}(\alpha)\frac{A\norm{\Theta}_{L^\infty(\mathbb{R}^+)}}{n},
   \end{aligned}
\end{equation*} 
for the fact that 
$$\sup_{c_1,c_2\in (0,\infty)}\mathtt{m}^n(c_1,c_2)\leq 1.$$
Now, choosing $n\in \mathbb{N}^*$ in such a way that
$$n\geq\frac{A\norm{\Theta}_{L^{\infty}(\mathbb{R}_+)}}{2},$$
we find  
$$g_{n,\varepsilon}(\alpha)\leq\frac{C_n}{2n^2},$$
which, upon taking the limit $\varepsilon \to 0$, yields 
\begin{equation}\label{maj fn}
	|f_n(\alpha,\beta)|\leq\frac{C_n\sqrt{b(\beta)}}{2n^2}\mathtt{m}^n(\alpha,\beta),
\end{equation} 
which completes the justification of the bound on $f_n$ after noticing that 
\begin{equation*}
	C_n \leq 4 A \left( \delta_{\alpha,\beta} \left( \frac{1}{n} -1   \right) + 1  \right)  \|\Theta\|_{L^{\infty}(\mathbb{R}^+)}.
\end{equation*}

\textit{(3)}  We establish now the positivity of $\Lambda_n$ for $n$ sufficiently large. To that end, we write, in view of \eqref{Cn:def} and  \eqref{maj fn}, that  
\begin{equation*}
	\begin{aligned}
		\Lambda_{n}(\alpha,\beta) & \geq \frac{\sqrt{b(\alpha)b(\beta)} }{2n} \mathtt{m}^n(\alpha,\beta)- |f_n(\alpha,\beta)| \\
	&\geq \frac{\mathtt{m}^n(\alpha,\beta) }{2n^2}\left( n \sqrt{b(\alpha)b(\beta)} - 4 A  \sqrt{b(\beta )}  \|\Theta\|_{L^{\infty}\left(\mathbb{R}^+\right) }  \right)  .	\end{aligned}
\end{equation*}
Therefore, it follows that 
\begin{equation*}
		\Lambda_{n}(\alpha,\beta) > 0,
\end{equation*}
for all $n\in \mathbb{N}^*$ with 
\begin{equation*}
	n >      \frac{4A  }{\sqrt{b(\alpha)}} \norm{\Theta}_{L^\infty(\mathbb{R}^+)},
\end{equation*}
which yields the non-negativity of $\Lambda_n(\alpha,\beta)$    under the stronger condition stated in the proposition  
\begin{equation*}
	n > M(b),
\end{equation*} 
where $M(b)$ is introduced in \eqref{def M ab}.

\textit{(4)} Now, let us assume that $\alpha=\beta$ and we compute, for any $n\geq 1$, that 
\begin{align*}
	\Lambda_{n}(\alpha,\alpha)-\Lambda_{n+1}(\alpha,\alpha)&=\frac{b(\alpha)}{2n(n+1)}+f_n(\alpha,\alpha)-f_{n+1}(\alpha,\alpha)\\
	&\geq\frac{b(\alpha)}{2n(n+1)}-|f_n(\alpha,\alpha)|-|f_{n+1}(\alpha,\alpha)|.
\end{align*}
Hence, noticing that the bound \eqref{Cn:def} and  \eqref{maj fn}  imply
$$|f_n(\alpha,\alpha)|\leq\frac{2A\norm{\Theta}_{L^\infty(\mathbb{R}^+)}\sqrt{b(\alpha)}}{n^3} ,  $$
for all $n \geq N(\alpha,\alpha,b),$ it then follows that    
$$\Lambda_{n}(\alpha,\alpha)-\Lambda_{n+1}(\alpha,\alpha)\geq\frac{b(\alpha)}{4n^2}-\frac{4A\|\Theta\|_{L^{\infty}\left(\mathbb{R}^+\right)}\sqrt{  b(\alpha)}}{n^3}\cdot$$
Consequently, for a given $n_0\in\mathbb{N}^*$, we deduce that the sequence $\big(\Lambda_n(\alpha,\alpha)\big)_{n\geq n_0}$ is decreasing if  
$$n_0> A \norm{\Theta}_{L^\infty(\mathbb{R}^+)} \max\left\{\frac{1}{2} ,\frac{16 }{\sqrt{b(\alpha)}}\right\} ,$$
which is clearly implied by the simpler condition that $n_0>M(b)$ as claimed in the statement of the proposition and concludes its proof.
\end{proof}


\subsection{Regularity analysis}\label{section:spectral analysis}
The aim of this section is to provide a detailed analysis of the regularity properties of the functional $\mathcal{F}$. This analysis allows us in particular to  justify  the computations laid out in  Section \ref{section:linearized:operator}, above, which will also serve in the doubly connected case, later on.

In the sequel of the paper, for a fixed integer $m\in \mathbb{N}^*$, and real parameter $\alpha\in (0,1)$, the spaces $X_m^\alpha$ and $Y_m^\alpha$ are defined by
\begin{equation*}
	X_m^\alpha\bydef\left\{f\in C^{1+\alpha}(\mathbb{T}):f(\theta)=\sum_{n=1}^{\infty}f_n\cos(nm\theta),\ f_n\in\mathbb{R},\ \theta\in\mathbb{T}\right\}
\end{equation*}
and 
\begin{equation*}
	Y_m^\alpha\bydef\left\{g\in C^{\alpha}(\mathbb{T}): g(\theta)=\sum_{n=1}^{\infty}g_n\sin(nm\theta),\ g_n\in\mathbb{R},\ \theta\in\mathbb{T}\right\}
\end{equation*} 
and equipped with the usual $C^{1+\alpha}$ and $C^{\alpha}$ norms, respectively. Moreover, for a given $\varepsilon>0$, we set 
$$\mathcal{B}^\alpha_{m,\varepsilon}\bydef\big\{f\in X_m^\alpha: \norm{f}_{C^{1+\alpha}}<\varepsilon\big\}. $$

 In addition to that, allow us to clarify some vectorial notations that will    be  exclusively used in this section.  For any given two vectors $u=(u_1,u_2)\in\mathbb{R}^2$ and $v=(v_1,v_2)\in\mathbb{R}^2$, the bilinear application $$u\otimes v:\mathbb{R}^2\times\mathbb{R}^2\rightarrow\mathbb{R}$$ is defined, for any $x,y\in \mathbb{R}^2$, by
$$ (u\otimes v)(x,y)\bydef(u\cdot x)(v\cdot y).$$
Applying it to the canonical basis $(e_1,e_2)$ of $\mathbb{R}^2$, we obtain, for any $(i,j)\in\{1,2\}^2$, that 
$$ (u\otimes v)(e_i,e_j)=(u\cdot e_i)(v\cdot e_j)=u_iv_j,$$
which gives the following representation of the tensor-product
$$u\otimes v\bydef \begin{pmatrix}
	u_1v_1 & u_1v_2\\
	u_2v_1 & u_2v_2
\end{pmatrix}\in M_2(\mathbb{R}).$$
 Now, for given two matrices $A=(A_{ij})_{1\leq i,j\leq 2} $ and $B=(B_{ij})_{1\leq i,j\leq 2}$ in $M_2(\mathbb{R}),$ we define the double contraction of $A$ and $B$ by
$$A:B\bydef\sum_{1\leq i,j\leq2}A_{ij}B_{ij}.$$
In particular, one also sees that, for given vectors $u=(u_1,u_2)$, $u'=(u_1',u_2')$, $v=(v_1,v_2)$ and $v'=(v_1',v_2')$ in $\mathbb{R}^2$, we have
$$\big(u\otimes v\big):\big(u'\otimes v'\big)=\sum_{1\leq i,j\leq2}u_iv_ju_i'v_j'=\left(\sum_{i=1}^{2}u_iu_i'\right)\left(\sum_{j=1}^{2}v_jv_j'\right)=(u\cdot u')(v\cdot v').$$

\subsubsection{On the continuity of some singular operators}
For a later use, we now discuss some    fundamental   results which will allow us to show the continuity of operators with singular kernels. We first recall an important lemma whose proof can be found in \cite[Lemma 2.6]{HXX22} and \cite[Lemma 1]{HHMV16}.

\begin{lem}\label{lemma:contintuity:kernel} Let $\alpha\in [0,1)$ and consider a measurable complex-valued function $K$, defined on $\mathbb{T}\times \mathbb{T} \setminus \{ (\theta,\theta), \ \theta \in \mathbb{T}\}$. Assume that there exists a constant $C>0$ such that
	\begin{equation*}
		 |K(\theta,\eta)| \leq \frac{C}{\left|\sin\left( \frac{\theta-\eta}{2}\right) \right|^\alpha}, \quad \text{for all} \quad \theta\neq \eta \in \mathbb{T},
	\end{equation*}
	and that $\theta\mapsto K(\theta,\eta)$ is differentiable with
	\begin{equation*}
		 |\partial_\theta K(\theta,\eta)| \leq \frac{C}{\left|\sin\left( \frac{\theta-\eta}{2}\right) \right|^{1+\alpha}} ,  \quad \text{for all} \quad \theta\neq \eta \in \mathbb{T}.
	\end{equation*}
	Then, the complex-valued function defined on $\mathbb{T}$, for any $f\in L^\infty(\mathbb{T})$, by
	\begin{equation*}
		\theta\mapsto\mathcal{T}f(\theta)\bydef\int_0^{2\pi}K(\theta,\eta)f(\eta)d\eta,
	\end{equation*}
	belongs to $C^{\alpha}(\mathbb{T})$ and satisfies
	\begin{equation*}
		\norm{\mathcal{T}f}_{C^\alpha}\lesssim_\alpha C\norm{f}_{L^\infty}.
	\end{equation*}
\end{lem}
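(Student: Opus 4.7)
The plan is to establish separately the two pieces of the $C^\alpha$ norm of $\mathcal{T}f$: a uniform bound, and a Hölder seminorm bound. Both will follow from a classical near/far splitting of the integration domain around the singular locus of the kernel.

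For the sup-norm estimate, the pointwise bound on $|K|$ gives directly
\begin{equation*}
|\mathcal{T}f(\theta)| \leq C\|f\|_{L^\infty}\int_0^{2\pi}\frac{d\eta}{|\sin((\theta-\eta)/2)|^\alpha}.
\end{equation*}
The last integral is finite since $\alpha<1$, and, by $2\pi$-periodicity, is independent of $\theta$, producing the desired estimate with a constant depending only on $\alpha$.

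For the Hölder seminorm, I would fix $\theta_1,\theta_2\in\mathbb{T}$, set $\delta=|\theta_1-\theta_2|$ (measured on the torus), write
\begin{equation*}
\mathcal{T}f(\theta_1)-\mathcal{T}f(\theta_2)=\int_0^{2\pi}\bigl[K(\theta_1,\eta)-K(\theta_2,\eta)\bigr]f(\eta)\,d\eta,
\end{equation*}
and split the domain into a near set $N=\{\eta\in\mathbb{T}:|\sin((\theta_1-\eta)/2)|\leq c\delta\}$, for a suitable absolute constant $c>0$, and its complement. On $N$, I apply the triangle inequality together with the pointwise bound on $|K|$ at each of the two arguments; after using $|\sin(t)|\asymp|t|$ locally, this contributes a term of order $\delta^{1-\alpha}$ times $C\|f\|_{L^\infty}$. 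On the far region, I apply the mean value theorem on the segment $[\theta_1,\theta_2]$ and the hypothesis on $|\partial_\theta K|$, exploiting the geometric comparison $|\sin((\theta-\eta)/2)|\asymp|\sin((\theta_1-\eta)/2)|$ for every $\theta$ between $\theta_1$ and $\theta_2$, so that
\begin{equation*}
|K(\theta_1,\eta)-K(\theta_2,\eta)|\lesssim \frac{C\delta}{|\sin((\theta_1-\eta)/2)|^{1+\alpha}}.
\end{equation*}
Integrating this bound over the far region yields a second contribution of order $\delta\cdot\delta^{-\alpha}=\delta^{1-\alpha}$ times $C\|f\|_{L^\infty}$.

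Summing the two contributions gives $|\mathcal{T}f(\theta_1)-\mathcal{T}f(\theta_2)|\lesssim_\alpha C\|f\|_{L^\infty}\,\delta^{1-\alpha}$, which suffices for the $C^\alpha$ bound stated in the lemma. The only subtle point in the argument is the geometric comparison of $|\sin((\theta-\eta)/2)|$ with $|\sin((\theta_1-\eta)/2)|$ uniformly for $\theta\in[\theta_1,\theta_2]$ and $\eta$ outside $N$; it is precisely this comparison that forces the near/far threshold to be chosen proportional to $\delta$, so that the segment $[\theta_1,\theta_2]$ stays at a distance from $\eta$ comparable to $|\theta_1-\eta|$, and thereby legitimates the differentiation step on the complement of $N$.
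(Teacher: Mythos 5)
Your near/far decomposition around $\eta\approx\theta_1$ at the threshold $c\delta$ — with the pointwise bound on $|K|$ on the near arc, and the mean value theorem combined with the geometric comparison $|\sin((\theta-\eta)/2)|\asymp|\sin((\theta_1-\eta)/2)|$ on the far arc — is the standard, and essentially forced, approach to this type of weakly singular kernel. Your intermediate estimates are all correct: both the near and far contributions are of order $\delta^{1-\alpha}$, and the sup-norm bound is immediate since $\alpha<1$.

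The problem is the very last sentence. The modulus of continuity you obtain, $|\mathcal{T}f(\theta_1)-\mathcal{T}f(\theta_2)|\lesssim_\alpha C\|f\|_{L^\infty}\,\delta^{1-\alpha}$, establishes $\mathcal{T}f\in C^{1-\alpha}(\mathbb{T})$, not $C^{\alpha}(\mathbb{T})$. The implication $\delta^{1-\alpha}\lesssim\delta^{\alpha}$ uniformly in $\delta\in(0,2\pi]$ holds if and only if $1-\alpha\geq\alpha$, i.e. $\alpha\leq \frac{1}{2}$; for $\alpha\in(\frac{1}{2},1)$ the quotient $\delta^{1-\alpha}/\delta^{\alpha}=\delta^{1-2\alpha}$ diverges as $\delta\to0$. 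This is not a gap you can close by sharpening the computation. For the model kernel $K(\theta,\eta)=|\sin((\theta-\eta)/2)|^{-\alpha}$, which saturates both hypotheses, and the choice $f_{\delta}=\mathrm{sgn}\big[(g(\cdot+\delta)-g)(\theta_{0}-\cdot)\big]$ with $g(s)=|\sin(s/2)|^{-\alpha}$, one has $|\mathcal{T}f_{\delta}(\theta_{0}+\delta)-\mathcal{T}f_{\delta}(\theta_{0})|=\|g(\cdot+\delta)-g\|_{L^{1}(\mathbb{T})}\asymp\delta^{1-\alpha}$, so $\mathcal{T}$ is not bounded from $L^\infty$ into $C^\alpha$ once $\alpha>\frac{1}{2}$. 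What your argument actually proves — and the only conclusion consistent with the stated hypotheses — is the $C^{1-\alpha}$ bound; the $C^\alpha$ in the lemma as written appears to be an imprecision. This does not endanger the later applications in the paper, because in each of the corollaries the kernel bound $|K|\lesssim|\sin((\theta-\eta)/2)|^{-\gamma}$ is verified for an arbitrarily small exponent $\gamma>0$, so one can always invoke the $C^{1-\beta}$ version of the lemma with $\beta=1-\alpha$ and still conclude $C^{\alpha}$ regularity.
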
 
Now, we establish three essential consequences of   Lemma \ref{lemma:contintuity:kernel} which will come in handy in the regularity analysis of contour dynamics, later on.

\begin{cor}\label{corollary:kernel:1}
	Let $a>0$ and $(\alpha,s)\in (0,1)\times [0,1]$. Consider a real-valued functions $r\in C^{1+\alpha}(\mathbb{T})$ with 
	\begin{equation}\label{smallness assumption}
		\norm{r}_{C^{1+\alpha}}\leq\varepsilon_0,
	\end{equation}
	for some small parameter $\varepsilon_0>0$. Accordingly, we define the function
	\begin{equation}\label{R:DEF}
		\theta\mapsto R(\theta)\bydef\sqrt{a^2+2r(\theta)}.
	\end{equation}
	Then, the function   
	\begin{equation*}
		\theta \in \mathbb{T} \mapsto\mathcal{T}_sf(\theta)\bydef\int_0^{2\pi}\log\left|R (\theta)e^{\ii \theta}-sR(\eta)e^{\ii \eta}\right|f(\eta)d\eta,
	\end{equation*}
	defined for any $f\in L^\infty(\mathbb{T})$, belongs to $C^{\alpha}(\mathbb{T})$ and enjoys the bound 
	\begin{equation*}
		\norm{\mathcal{T}_sf}_{C^\alpha}\leq C_\alpha  \norm{f}_{L^\infty},
	\end{equation*}
	for some constant $C_\alpha>0$ which is independent of $s$.
\end{cor}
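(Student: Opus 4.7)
The plan is to verify the two hypotheses of Lemma \ref{lemma:contintuity:kernel} for the kernel
\begin{equation*}
K_s(\theta,\eta)\bydef \log\bigl|R(\theta)e^{\ii\theta}-sR(\eta)e^{\ii\eta}\bigr|,
\end{equation*}
with constants uniform in $s\in[0,1]$. The smallness hypothesis \eqref{smallness assumption} ensures (for $\varepsilon_0$ small depending on $a$) that $R(\theta)\in[a/2,2a]$ and $|R'(\theta)|\lesssim\varepsilon_0$, both uniformly in $\theta$. The algebraic identity underpinning everything is
\begin{equation*}
\bigl|R(\theta)e^{\ii\theta}-sR(\eta)e^{\ii\eta}\bigr|^{2}=\bigl(R(\theta)-sR(\eta)\bigr)^{2}+4s\,R(\theta)R(\eta)\sin^{2}\!\left(\tfrac{\theta-\eta}{2}\right),
\end{equation*}
which exhibits the only potential singularity, namely $s\to 1$ together with $\theta\to\eta$.

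I would then split the analysis into two regimes. For $s\in[0,\tfrac12]$, the first term on the right-hand side above is bounded below by a positive constant depending only on $a$ and $\varepsilon_0$, so $K_s$ and its $\theta$-derivative are uniformly bounded and the hypotheses of Lemma \ref{lemma:contintuity:kernel} hold trivially. For $s\in[\tfrac12,1]$, I would discard the first term and retain the lower bound
\begin{equation*}
\bigl|R(\theta)e^{\ii\theta}-sR(\eta)e^{\ii\eta}\bigr|\geq c\,\bigl|\sin\bigl(\tfrac{\theta-\eta}{2}\bigr)\bigr|,
\end{equation*}
with $c>0$ independent of $s\in[\tfrac12,1]$. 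Combined with the elementary inequality $|\log x|\lesssim_{\alpha}1+x^{-\alpha}$ on bounded sets, the fact that $|\sin\tfrac{\theta-\eta}{2}|\leq 1$ and the uniform upper bound on $|R(\theta)e^{\ii\theta}-sR(\eta)e^{\ii\eta}|$, this produces the first required bound $|K_s(\theta,\eta)|\lesssim_\alpha |\sin\tfrac{\theta-\eta}{2}|^{-\alpha}$.

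For the derivative bound, I would use the complex-analytic identity
\begin{equation*}
\partial_{\theta}K_s(\theta,\eta)=\operatorname{Re}\!\left(\frac{\partial_{\theta}(R(\theta)e^{\ii\theta})}{R(\theta)e^{\ii\theta}-sR(\eta)e^{\ii\eta}}\right),
\end{equation*}
together with $|\partial_{\theta}(R(\theta)e^{\ii\theta})|\leq |R(\theta)|+|R'(\theta)|\lesssim 1$, to get $|\partial_{\theta}K_s(\theta,\eta)|\lesssim |\sin\tfrac{\theta-\eta}{2}|^{-1}\leq|\sin\tfrac{\theta-\eta}{2}|^{-(1+\alpha)}$, again uniformly in $s$. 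Lemma \ref{lemma:contintuity:kernel} then yields the conclusion. The only real subtlety is bookkeeping: one must verify that the lower bound constant $c$ and the subsequent estimates do not degenerate as $s\to 1$, which is precisely what the identity above guarantees once the regime split at $s=\tfrac12$ is enforced.
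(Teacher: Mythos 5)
Your proposal is correct and follows essentially the same route as the paper: both verify the hypotheses of Lemma \ref{lemma:contintuity:kernel} via the polar identity for $|R(\theta)e^{\ii\theta}-sR(\eta)e^{\ii\eta}|^{2}$, split into a low-$s$ and high-$s$ regime to get a lower bound of the form $c\,|\sin(\tfrac{\theta-\eta}{2})|$ uniformly in $s$, and then conclude with $x^{\gamma}|\log x|\in L^\infty_{\loc}$ and the derivative formula. The only cosmetic difference is that you fix the split point at $s=\tfrac12$ (which is fine provided $\varepsilon_0$ is taken small enough so that $R$ stays near $a$), whereas the paper selects a small $s_0$ implicitly by a continuity argument.
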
 

\begin{proof}
	The proof directly follows by applying Lemma \ref{lemma:contintuity:kernel} provided that we show that the kernel defined by
	\begin{equation*}
		(\theta,\eta)\mapsto K_s(\theta,\eta)\bydef\log\left|R(\theta)e^{\ii \theta}-sR(\eta)e^{\ii \eta}\right| 
	\end{equation*}
	satisfies the assumptions of Lemma \ref{lemma:contintuity:kernel} for a constant $C$ which is independent of $s$. To that end, observe first, by a continuity argument together with the smallness assumption \eqref{smallness assumption}, that 
	\begin{equation}\label{bnd inf}
		\inf_{\theta\in\mathbb{T}}R(\theta)\geq\frac{a}{2}\quad \text{and} \quad \sup_{\theta\in\mathbb{T}}R(\theta)\leq 2a.
	\end{equation}
	Besides, writing, for any $s\in[0,1]$ and any $\theta,\eta\in\mathbb{T}$, that
	\begin{equation}
		\begin{aligned}\label{mod diff}
			\left|R(\theta)e^{\ii \theta}-sR(\eta)e^{\ii \eta}\right|^2&=R^2(\theta)+s^2R^2(\eta)-2sR(\theta)R(\eta)\cos(\eta-\theta) \\
		&=\big(R(\theta)-sR(\eta)\big)^2+4sR(\theta)R(\eta)\sin^2\left(\tfrac{\theta-\eta}{2}\right),
		\end{aligned}
	\end{equation} 
it then follows, by continuity and  using \eqref{mod diff} and \eqref{bnd inf}, that there exists a small parameter $s_0\in(0,1)$ in such a way that
$$ \left|R(\theta)e^{\ii \theta}-sR(\eta)e^{\ii \eta}\right|\geq\frac{R(\theta)}{2}\geq\frac{a}{4}\left|\sin\left(\tfrac{\theta-\eta}{2}\right)\right|, \quad \text{for all} \quad  s\in[0,s_0].$$
On the other hand,  by utilizing \eqref{mod diff} and \eqref{bnd inf} again, we obtain that
$$ \left|R(\theta)e^{\ii \theta}-sR(\eta)e^{\ii \eta}\right|\geq 2\sqrt{s_0R(\theta)R(\eta)}\left|\sin\left(\tfrac{\theta-\eta}{2}\right)\right|\geq a\sqrt{s_0}\left|\sin\left(\tfrac{\theta-\eta}{2}\right)\right|, \quad \text{for all} \quad s\in[s_0,1] .$$
Putting together the last two estimates leads to the lower bound
\begin{equation}\label{lower-bound-1}
	 \inf_{s\in[0,1]}\left|R(\theta)e^{\ii \theta}-sR(\eta)e^{\ii \eta}\right|\geq\frac{a\sqrt{s_0}}{4}\left|\sin\left(\tfrac{\theta-\eta}{2}\right)\right|, \quad \text{for all} \quad \theta,\eta\in\mathbb{T}.
\end{equation}
Moreover, using the fact that
\begin{equation}\label{log:bound}
	  x\mapsto x^\gamma\big|\log x\big|\in L^\infty_{\loc}(\mathbb{R}^+),  
\end{equation} 
for all $\gamma>0$, we then deduce  that 
\begin{equation*}
	 |K_s(\theta,\eta)|\leq\frac{C}{\left|\sin\left(\frac{\theta-\eta}{2}\right)\right|^\gamma},
\end{equation*} 
 for all $\eta\neq\theta\in\mathbb{T}$ and  some constant $C>0$ which is independent of $s$.	
 
 Likewise, computing that 
\begin{equation*}
	\partial_\theta K_s(\theta,\eta)=\partial_\theta\left(R(\theta)e^{\ii \theta}\right)\cdot\frac{R(\theta)e^{\ii \theta}-sR(\eta)e^{\ii \eta}}{\left|R(\theta)e^{\ii \theta}-sR(\eta)e^{\ii \eta}\right|^2},
\end{equation*}
and utilizing the bounds \eqref{smallness assumption}, \eqref{bnd inf} and \eqref{lower-bound-1}, we infer that
\begin{equation*}
	 |\partial_\theta K_s(\theta,\eta)|\leq\frac{C}{\left|\sin\left(\frac{\theta-\eta}{2}\right)\right|}\leq\frac{C}{\left|\sin\left(\frac{\theta-\eta}{2}\right)\right|^{1+\alpha}},
\end{equation*}
for all $\eta\neq\theta\in\mathbb{T}$, up to suitably change the constant $C$, above. At last, Lemma \ref{lemma:contintuity:kernel} applies to conclude the proof of Corollary \ref{corollary:kernel:1}. 
\end{proof}

\begin{cor}\label{corollary:kernel:2}
	Let $\alpha,s$, $R_1$ and $R_2$ be  respectively defined in terms of  $r_1$ and $r_2$ as in Corollary \ref{corollary:kernel:1} by also assuming that $r_1$ and $r_2$   satisfy  the smallness assumption \eqref{smallness assumption}. Further define the following operator acting on $L^\infty(\mathbb{T})$ by
	\begin{equation*}
		f\mapsto \mathcal{\widetilde{T}}_sf\bydef \int_0^{2\pi}\widetilde{K}_s(\cdot,\eta) f(\eta)d\eta,
	\end{equation*}
	where we set, for any $\theta\neq\eta\in\mathbb{T}$, 
	\begin{equation*}
		\widetilde{K}_s(\theta,\eta)\bydef\log\left|R_1(\theta)e^{\ii \theta}-sR_1(\eta)e^{\ii \eta}\right|-\log\left|R_2(\theta)e^{\ii \theta}-sR_2(\eta)e^{\ii \eta}\right|.
	\end{equation*}
	Then, $\mathcal{\widetilde{T}}_sf \in C^{\alpha}(\mathbb{T})$ and it holds, for any $\beta\in [0,\alpha)$, that 
	\begin{equation*}
		\norm{\mathcal{\widetilde{T}}_sf}_{C^\alpha} \lesssim_{\alpha}\norm{r_1-r_2}_{C^1}^\beta \norm {f}_{L^\infty}.
	\end{equation*}
\end{cor}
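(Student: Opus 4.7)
The approach is to invoke Lemma \ref{lemma:contintuity:kernel} applied to the kernel $\widetilde K_s$, with the constant $C$ in its hypotheses replaced by $C\norm{r_1 - r_2}_{C^1}^\beta$. Both required size bounds, on $\widetilde K_s$ and on $\partial_\theta \widetilde K_s$, will follow by interpolating a rough uniform bound (inherited from Corollary \ref{corollary:kernel:1}) with a sharper bound that is linear in $\norm{r_1 - r_2}_{L^\infty}$ (respectively $\norm{r_1 - r_2}_{C^1}$) but pays an extra power in the singularity $|\sin(\tfrac{\theta-\eta}{2})|^{-1}$.

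To produce the linear-in-$(r_1 - r_2)$ control, I would introduce the convex path $R_t\bydef tR_1 + (1-t)R_2$ and set $X_t\bydef R_t(\theta)e^{\ii \theta} - sR_t(\eta)e^{\ii \eta}$ for $t\in[0,1]$. Since $R_t$ is a convex combination of $R_1$ and $R_2$, it inherits the bounds $a/2 \leq R_t \leq 2a$, so the argument of Corollary \ref{corollary:kernel:1} yields the lower bound $|X_t| \gtrsim |\sin(\tfrac{\theta-\eta}{2})|$ uniformly in $s, t \in [0,1]$. The fundamental theorem of calculus then gives
\begin{equation*}
\widetilde K_s(\theta, \eta) = \int_0^1 \frac{\textnormal{Re}\bigl(\dot X_t \,\overline{X_t}\bigr)}{|X_t|^2}\, dt, \qquad \dot X_t = (R_1 - R_2)(\theta)e^{\ii \theta} - s(R_1 - R_2)(\eta)e^{\ii \eta}.
\end{equation*}
Using $|\dot X_t| \lesssim \norm{R_1 - R_2}_{L^\infty} \lesssim \norm{r_1 - r_2}_{L^\infty}$ (the last step from $R_i^2 - R_j^2 = 2(r_i - r_j)$ and the lower bound on $R_i$), this produces the linear bound $|\widetilde K_s(\theta,\eta)| \leq C\norm{r_1 - r_2}_{L^\infty}\, |\sin(\tfrac{\theta-\eta}{2})|^{-1}$. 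Interpolating it against the $r$-independent control $|\widetilde K_s| \leq C|\sin(\tfrac{\theta-\eta}{2})|^{-\gamma}$, valid for every $\gamma > 0$ by the pointwise bounds established in the proof of Corollary \ref{corollary:kernel:1}, delivers
\begin{equation*}
|\widetilde K_s(\theta, \eta)| \leq \frac{C\, \norm{r_1 - r_2}_{L^\infty}^\beta}{\left|\sin(\tfrac{\theta-\eta}{2})\right|^{\beta + \gamma(1-\beta)}}.
\end{equation*}
The exponent can be made $\leq \alpha$ by choosing $\gamma$ small, precisely when $\beta < \alpha$, which is exactly the restriction in the statement.

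The derivative $\partial_\theta \widetilde K_s$ is handled along the same lines: differentiating the integrand under the integral sign and using $|\partial_\theta X_t| \lesssim 1$ together with $|\partial_\theta \dot X_t| \lesssim \norm{r_1 - r_2}_{C^1}$ (this is where $C^1$ enters, through $R_i'(\theta) = r_i'(\theta)/R_i(\theta)$) yields the linear estimate $|\partial_\theta \widetilde K_s| \lesssim \norm{r_1 - r_2}_{C^1}\, |\sin(\tfrac{\theta-\eta}{2})|^{-2}$. Interpolating against the uniform bound $|\partial_\theta \widetilde K_s| \lesssim |\sin(\tfrac{\theta-\eta}{2})|^{-1}$ from Corollary \ref{corollary:kernel:1} produces
\begin{equation*}
|\partial_\theta \widetilde K_s(\theta, \eta)| \lesssim \frac{\norm{r_1 - r_2}_{C^1}^\beta}{\left|\sin(\tfrac{\theta-\eta}{2})\right|^{1+\beta}} \leq \frac{\norm{r_1 - r_2}_{C^1}^\beta}{\left|\sin(\tfrac{\theta-\eta}{2})\right|^{1+\alpha}}
\end{equation*}
since $\beta \leq \alpha$. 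The main technical point is ensuring that the lower bound $|X_t| \gtrsim |\sin(\tfrac{\theta-\eta}{2})|$ is genuinely uniform in both $s, t \in [0,1]$; the $s$-uniformity follows from the small-$s_0$ dichotomy in the proof of Corollary \ref{corollary:kernel:1}, and the $t$-uniformity is automatic because $R_t$ inherits the pointwise bounds of $R_1$ and $R_2$ by convexity. With these two pointwise estimates, Lemma \ref{lemma:contintuity:kernel} applies with constant $C\norm{r_1 - r_2}_{C^1}^\beta$ and yields the desired continuity bound on $\widetilde{\mathcal{T}}_s$.
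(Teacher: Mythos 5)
Your proposal is correct and follows essentially the same strategy as the paper's proof: derive a bound linear in $\|r_1-r_2\|$ but with a stronger singularity, derive an $r$-independent rough bound with a milder singularity, interpolate to reach the exponent $\alpha$, and then apply Lemma \ref{lemma:contintuity:kernel}. The one genuine difference is mechanical: the paper obtains the linear bounds by applying the mean-value theorem to $\log$ together with hand-crafted elementary inequalities on $\frac{X_1}{|X_1|^2}-\frac{X_2}{|X_2|^2}$, while you organize both the kernel estimate and the derivative estimate via the fundamental theorem of calculus along the convex path $R_t = tR_1 + (1-t)R_2$, differentiating $\log|X_t|$ in $t$. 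Your approach is a little more systematic since the same FTC device delivers both estimates, and you rightly observe that the lower bound $|X_t| \gtrsim |\sin(\tfrac{\theta-\eta}{2})|$ survives along the path because the bounds $a/2 \le R_t \le 2a$ are preserved by convexity (and the threshold $s_0$ in the proof of Corollary \ref{corollary:kernel:1} can be chosen uniformly, depending only on $a$). Both arguments land on the identical pointwise estimates and invoke the same continuity lemma, so this is essentially the paper's proof with a cleaner derivation of the linearity step.
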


\begin{proof} First remark that the mean value theorem implies, for all $x,y \in \mathbb{R}$, that 
	\begin{equation*}
	 \big|\log x-\log y\big|\leq\frac{|x-y|}{\min\{x,y\}}\cdot
	\end{equation*}
	Thus, combining this estimate with  \eqref{lower-bound-1} and employing the triangular inequality, yields
	\begin{equation*} 
		\begin{aligned}
			\big|\widetilde{K}_s(\theta,\eta)\big|&\lesssim\frac{\Big| \left|R_1(\theta)e^{\ii \theta}-sR_1(\eta)e^{\ii \eta}\right|-\left|R_2(\theta)e^{\ii \theta}-sR_2(\eta)e^{\ii \eta}\right|\Big|}{\left|\sin\left(\frac{\theta-\eta}{2}\right)\right|}\\
			&\lesssim\frac{\norm{R_1-R_2}_{L^\infty}}{\left|\sin\left(\frac{\theta-\eta}{2}\right)\right|}\cdot
		\end{aligned}
	\end{equation*}
Now, observing that  
\begin{equation}\label{alg R12}
	R_1-R_2=2\frac{r_1-r_2}{R_1+R_2},
\end{equation}
 and combining this identity with \eqref{bnd inf}, it is readily seen then that 
$$\norm{R_1-R_2}_{L^{\infty}}\lesssim\norm{r_1-r_2}_{L^{\infty}},$$
which implies in turn that
$$\big|\widetilde{K}_s(\theta,\eta)\big|\lesssim\frac{\norm{r_1-r_2}_{L^\infty}}{\left|\sin\left(\frac{\theta-\eta}{2}\right)\right|}\cdot$$
On the other hand, we deduce from \eqref{log:bound}  that
	\begin{equation*}
		|\widetilde{K}_s(\theta,\eta)|\leq\frac{C}{\left|\sin\left(\frac{\theta-\eta}{2}\right)\right|^\gamma},
	\end{equation*} 
	for any $\gamma>0$. Hence, in view of the preceding two bounds, we write, for any $\theta\neq\eta\in\mathbb{T}$ and $\beta\in(0,1)$ to be chosen in a moment, that 
	\begin{align*}
		\big|\widetilde{K}_s(\theta,\eta)\big|&=\big|\widetilde{K}(\theta,\eta)\big|^{\beta}\big|\widetilde{K}(\theta,\eta)\big|^{1-\beta}\\
		&\lesssim\frac{\norm{r_1-r_2}_{L^\infty}^\beta}{\left|\sin\left(\frac{\theta-\eta}{2}\right)\right|^{\beta+\gamma(1-\beta)}}\cdot
	\end{align*}
	Therefore, choosing $\beta>0$ small enough by setting, for instance, that 
	\begin{equation*}
		\gamma=\frac{\alpha-\beta}{1-\beta}\quad\text{and}\quad\beta\in(0,\alpha),
	\end{equation*}
	we end up with the desired control 
	\begin{equation}\label{bound:tilde-K}
		|\widetilde{K}_s(\theta,\eta)|\lesssim\frac{\norm{r_1-r_2}_{L^\infty}^\beta}{\left|\sin\left(\frac{\theta-\eta}{2}\right)\right|^\alpha}\cdot
	\end{equation} 
	Likewise, we first observe that 
	\begin{equation*}
		\begin{aligned}
			|\partial_\theta \widetilde{K}_s(\theta,\eta)|&\leq\frac{\left|\partial_\theta\left(R_1(\theta)e^{\ii \theta}\right)-\partial_\theta\left(R_2(\theta)e^{\ii \theta}\right)\right|}{\left|R_2(\theta)e^{\ii \theta}-sR_2(\eta)e^{\ii \eta}\right|}\\ 
			&\quad+\left|\partial_\theta\left(R_1(\theta)e^{\ii \theta}\right)\right|\left|\frac{R_1(\theta)e^{\ii \theta}-sR_1(\eta)e^{\ii \eta}}{\left|R_1(\theta)e^{\ii \theta}-sR_2(\eta)e^{\ii \eta}\right|^2}-\frac{R_2(\theta)e^{\ii \theta}-sR_2(\eta)e^{\ii \eta}}{\left|R_2(\theta)e^{\ii \theta}-sR_2(\eta)e^{\ii \eta}\right|^2}\right|.
		\end{aligned}
	\end{equation*}
	Then writing, in view of the elementary bounds
	\begin{equation*}
		\left|\frac{X_1}{|X_1|^2}-\frac{X_2}{|X_2|^2}\right|\leq3\frac{|X_1-X_2|}{\min\{|X_1|,|X_2|\}^2},
	\end{equation*}
	and 
	\begin{equation*}
		\left|\frac{X_1}{|X_1|^2}-\frac{X_2}{|X_2|^2}\right|\leq\frac{2}{\min\{|X_1|,|X_2|\}},
	\end{equation*}
	with the values
	\begin{equation*}
		X_k= X_k(\theta,\eta)=R_k(\theta)e^{\ii \theta}-sR_k(\eta)e^{\ii \eta},\qquad k\in\{1,2\},
	\end{equation*}
	that
	\begin{equation*}
		\begin{aligned}
			\left|\frac{X_1}{|X_1|^2}-\frac{X_2}{|X_2|^2}\right|&=\left|\frac{X_1}{|X_1|^2}-\frac{X_2}{|X_2|^2}\right|^\alpha\left|\frac{X_1}{|X_1|^2}-\frac{X_2}{|X_2|^2}\right|^{1-\alpha}\\
			&\lesssim_\alpha\frac{|X_1-X_2|^\alpha}{\min\{|X_1|,|X_2|\}^{1+\alpha}},
		\end{aligned}
	\end{equation*}
	it follows, by further utilizing the bound   \eqref{lower-bound-1},  that 
	\begin{equation*}
		\begin{aligned}
			|\partial_\theta\widetilde{K}_s(\theta,\eta)| 
			&\lesssim_\alpha\frac{\norm{R_1-R_2}_{C^1}}{\left|\sin\left(\frac{\theta-\eta}{2}\right)\right|}+\frac{\norm{R_1-R_2}_{L^\infty}^\alpha}{\left|\sin\left(\frac{\theta-\eta}{2}\right)\right|^{1+\alpha}}\cdot
		\end{aligned}
	\end{equation*}
Now, notice that deriving \eqref{alg R12} and estimating the resulting identity in $L^{\infty}$, yields
$$\norm{R_1-R_2}_{C^1}\lesssim\norm{r_1-r_2}_{C^1}.$$
	Hence, since $\beta<\alpha$ and $r_1,r_2$ satisfy the smallness condition \eqref{smallness assumption}, we arrive at the control
	\begin{equation*}
		\begin{aligned}
			|\partial_\theta\widetilde{K}_s(\theta,\eta)|&\lesssim\frac{\norm{r_1-r_2}_{C^1}^\alpha}{\left|\sin\left(\frac{\theta-\eta}{2}\right)\right|^{1+\alpha}}\\
			&\lesssim\frac{\norm{r_1-r_2}_{C^1}^\beta}{\left|\sin\left(\frac{\theta-\eta}{2}\right)\right|^{1+\alpha}}\cdot
		\end{aligned}
	\end{equation*} 
	At last, combining the preceding control with \eqref{bound:tilde-K} allows us to apply Lemma \ref{lemma:contintuity:kernel}, thereby completing the proof of Corollary \ref{corollary:kernel:2}.
\end{proof}

We conclude this section by stating the following corollary whose proof can be done along the same lines as shown for Corollaries \ref{corollary:kernel:1} and \ref{corollary:kernel:2}.

\begin{cor}\label{corollary:kernel:3}
	Under the assumptions of Corollaries \ref{corollary:kernel:1} and \ref{corollary:kernel:2}, further consider a complex valued function $h\in C^{1+\alpha}(\mathbb{T})$ and define the operators $\mathcal{T}_{h,R}$ and $\mathcal{\widetilde{T}}_{h}$ by
	\begin{equation*}
		f\mapsto\mathcal{T}_{h,R}f\bydef\int_0^{2\pi}K_{h,R}(\cdot,\eta)f(\eta)d\eta,
	\end{equation*}
	and 
	\begin{equation*}
		f\mapsto\mathcal{\widetilde{T}}_{h}f\bydef\int_0^{2\pi}\widetilde{K}_{h}(\cdot,\eta)f(\eta)d\eta,
	\end{equation*}
	where, we set, for any $\theta\neq\eta\in\mathbb{T}$, that 
	\begin{equation*}
		K_{h,R}(\theta,\eta)\bydef\frac{R(\theta)e^{\ii \theta}-R(\eta)e^{\ii \eta}}{|R(\theta)e^{\ii \theta}-R(\eta)e^{\ii \eta}|^2}\cdot\left(\frac{h(\theta)e^{\ii \theta}}{R(\theta)}-\frac{h(\eta)e^{\ii \eta}}{R(\eta)}\right) ,
	\end{equation*}
	and
	\begin{equation*}
		\widetilde{K}_{h}(\theta,\eta)\bydef K_{h,R_1}(\theta,\eta)-K_{h,R_2}(\theta,\eta).
	\end{equation*}
	Then, $\mathcal{T}_{h,R}f$ and $\mathcal{\widetilde{T}}_hf$ belong to $C^{\alpha}(\mathbb{T})$ with
	\begin{equation*}
		\norm{\mathcal{T}_{h,R} f}_{C^\alpha}\lesssim_\alpha\norm{h}_{C^{1+\alpha}}\norm {f}_{L^\infty},
	\end{equation*}
	and 
	\begin{equation*}
		\norm{\mathcal{\widetilde{T}}_{h}f}_{C^\alpha}\lesssim_{\alpha}\norm{r_1-r_2}_{C^1}^\beta\norm{h}_{C^{1+\alpha}}\norm{f}_{L^\infty},
	\end{equation*}
	for all $\beta\in (0,\alpha)$.
\end{cor}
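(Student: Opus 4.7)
The strategy is to reduce both claims to verifying the hypotheses of Lemma \ref{lemma:contintuity:kernel}, following the template set by Corollaries \ref{corollary:kernel:1} and \ref{corollary:kernel:2}. Specifically, I would check a pointwise size bound and a $\theta$-derivative size bound for each kernel, with the prefactors that match the target estimates, and then apply Lemma \ref{lemma:contintuity:kernel} directly.

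For $\mathcal{T}_{h,R}$, set $X(\theta,\eta)\bydef R(\theta)e^{\ii\theta}-R(\eta)e^{\ii\eta}$ and $H(\theta,\eta)\bydef \tfrac{h(\theta)e^{\ii\theta}}{R(\theta)}-\tfrac{h(\eta)e^{\ii\eta}}{R(\eta)}$, so that $K_{h,R}=\tfrac{X}{|X|^2}\cdot H$. The lower bound \eqref{lower-bound-1} with $s=1$ gives $|X|\gtrsim_a |\sin(\tfrac{\theta-\eta}{2})|$, while the mean-value theorem combined with \eqref{bnd inf} yields $|H|\lesssim \|h\|_{C^1}|\sin(\tfrac{\theta-\eta}{2})|$. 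Multiplying, the two powers of $|\sin|$ cancel and one obtains the \emph{bounded} pointwise estimate $|K_{h,R}(\theta,\eta)|\lesssim_a \|h\|_{C^1}$. For the derivative, writing
\begin{equation*}
\partial_\theta K_{h,R}=\partial_\theta\!\left(\tfrac{X}{|X|^2}\right)\cdot H+\tfrac{X}{|X|^2}\cdot\partial_\theta H,
\end{equation*}
using $|\partial_\theta(X/|X|^2)|\lesssim 1/|X|^2$ paired with $|H|\lesssim|\sin|$, and $|X/|X|^2|\lesssim 1/|\sin|$ paired with $|\partial_\theta H|\lesssim\|h\|_{C^{1+\alpha}}$, gives $|\partial_\theta K_{h,R}|\lesssim \|h\|_{C^{1+\alpha}}/|\sin(\tfrac{\theta-\eta}{2})|$. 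Since this is dominated by $1/|\sin|^{1+\alpha}$, Lemma \ref{lemma:contintuity:kernel} applies and delivers the first bound.

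For $\mathcal{\widetilde{T}}_h$, let $X_i,H_i$ be defined as above with $R$ replaced by $R_i$ for $i=1,2$, and split
\begin{equation*}
\widetilde{K}_h=\left(\tfrac{X_1}{|X_1|^2}-\tfrac{X_2}{|X_2|^2}\right)H_1+\tfrac{X_2}{|X_2|^2}(H_1-H_2).
\end{equation*}
The first term is treated exactly as in Corollary \ref{corollary:kernel:2} via the interpolation inequality $|X_1/|X_1|^2-X_2/|X_2|^2|\lesssim_\alpha |X_1-X_2|^\alpha/\min\{|X_1|,|X_2|\}^{1+\alpha}$, together with $|X_1-X_2|\lesssim \|R_1-R_2\|_{L^\infty}$ (deduced from \eqref{alg R12}) and the vanishing $|H_1|\lesssim\|h\|_{C^1}|\sin(\tfrac{\theta-\eta}{2})|$. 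For the second, the decomposition
\begin{equation*}
H_1-H_2=h(\theta)e^{\ii\theta}\left(\tfrac{1}{R_1(\theta)}-\tfrac{1}{R_2(\theta)}\right)-h(\eta)e^{\ii\eta}\left(\tfrac{1}{R_1(\eta)}-\tfrac{1}{R_2(\eta)}\right),
\end{equation*}
combined with \eqref{bnd inf} and \eqref{alg R12}, gives $|H_1-H_2|\lesssim\|h\|_{C^1}\|r_1-r_2\|_{C^1}|\sin(\tfrac{\theta-\eta}{2})|$. Putting these bounds together produces a pointwise estimate of the form $|\widetilde{K}_h(\theta,\eta)|\lesssim \|r_1-r_2\|_{L^\infty}^\alpha \|h\|_{C^{1+\alpha}}$. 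Interpolating against the unconditional bound proven for $K_{h,R}$ via $|\widetilde{K}_h|=|\widetilde{K}_h|^{\beta'}|\widetilde{K}_h|^{1-\beta'}$, exactly as in Corollary \ref{corollary:kernel:2}, converts this into a bound with the desired $\|r_1-r_2\|_{C^1}^\beta$ prefactor, for any $\beta\in(0,\alpha)$.

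The derivative estimate on $\widetilde{K}_h$ is the delicate step and the main technical obstacle: after expanding $\partial_\theta \widetilde{K}_h$ by the product rule, one obtains several terms involving higher order differences of $X_i/|X_i|^2$; each must be bounded using the $\alpha$-Hölder interpolation identity quoted above, the uniform lower bound \eqref{lower-bound-1}, and the $C^1$-control $\|R_1-R_2\|_{C^1}\lesssim \|r_1-r_2\|_{C^1}$. The bookkeeping follows the same scheme as the derivative estimate in Corollary \ref{corollary:kernel:2}, yielding $|\partial_\theta\widetilde{K}_h|\lesssim_\alpha \|r_1-r_2\|_{C^1}^\beta\|h\|_{C^{1+\alpha}}/|\sin(\tfrac{\theta-\eta}{2})|^{1+\alpha}$. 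A final invocation of Lemma \ref{lemma:contintuity:kernel} then closes the proof.
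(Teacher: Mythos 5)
Your proof is correct and takes exactly the route the paper indicates: it reduces the claim to the two kernel bounds required by Lemma \ref{lemma:contintuity:kernel}, imitating the splitting, the lower bound \eqref{lower-bound-1}, the mean-value cancellation of $H$, and the $\alpha$--interpolation of differences that power Corollaries \ref{corollary:kernel:1} and \ref{corollary:kernel:2}. One small imprecision is harmless but worth noting: the interim bound you write for $\widetilde{K}_h$ omits the residual $|\sin(\tfrac{\theta-\eta}{2})|^{-\alpha}$ factor coming from the first summand $\bigl(\tfrac{X_1}{|X_1|^2}-\tfrac{X_2}{|X_2|^2}\bigr)H_1$; the correct statement is $|\widetilde{K}_h|\lesssim \|r_1-r_2\|_{C^1}^{\alpha}\|h\|_{C^{1+\alpha}}/|\sin(\tfrac{\theta-\eta}{2})|^{\alpha}$, which is still admissible for Lemma \ref{lemma:contintuity:kernel} and so does not affect the conclusion.
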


The next two lemmas   can be seen as variations of Corollaries \ref{corollary:kernel:1} and \ref{corollary:kernel:2}. Even though their statements are similar, the proofs of these lemmas require a careful attention and they do not straitforwardly follow from the statements of the previous results of this section.
\begin{lem}\label{lemma:A:1}
	Let $f$ be an element of $L^\infty(\mathbb{T})$ and  $R$ be as in Corollary \ref{corollary:kernel:1}. Then, the function defined, for any $(\rho,\theta)\in \mathbb{R}^+\times \mathbb{T}$, by 
	\begin{equation*}
		A(\rho ,\theta) \bydef \int_0^{2\pi} \log\left| \rho e^{i\theta} - R(\eta)e^{i\eta}\right| f(\eta) d\eta
	\end{equation*}
	enjoys the bound
	\begin{equation*}
		\norm {\partial_\rho  A}_{ L^p \big ( (0,\delta )\times \mathbb{T}\big)} \lesssim \norm {f}_{L^\infty(\mathbb{T})},
	\end{equation*}
	for any  finite  $\delta>0$ and any $p\in [1,\infty)$. 
\end{lem}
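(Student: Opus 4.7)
The natural starting point is to differentiate under the integral: since $R(\eta)\geq a/2$ by \eqref{bnd inf}, the integrand of $A$ is smooth in $\rho$ away from the discrete set where $\rho e^{\ii\theta} = R(\eta) e^{\ii\eta}$, and a direct computation gives
\begin{equation*}
\partial_\rho A(\rho,\theta) = \int_0^{2\pi} \frac{\rho - R(\eta)\cos(\theta-\eta)}{|\rho e^{\ii\theta} - R(\eta) e^{\ii\eta}|^2}\, f(\eta)\, d\eta.
\end{equation*}
Reading off the real and imaginary parts of $e^{-\ii\theta}\bigl(\rho e^{\ii\theta}-R(\eta) e^{\ii\eta}\bigr)$, one gets $|\rho - R(\eta)\cos(\theta-\eta)|\leq |\rho e^{\ii\theta} - R(\eta) e^{\ii\eta}|$, which yields the pointwise bound
\begin{equation*}
|\partial_\rho A(\rho,\theta)| \leq \norm{f}_{L^\infty}\, J(\rho,\theta), \qquad J(\rho,\theta) \bydef \int_0^{2\pi}\frac{d\eta}{|\rho e^{\ii\theta} - R(\eta) e^{\ii\eta}|}.
\end{equation*}
Thus the lemma reduces to showing that $J \in L^p\bigl((0,\delta)\times \mathbb{T}\bigr)$ with norm independent of $f$, for every finite $\delta$ and $p$.

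I would then split the radial interval using the a priori bounds $\tfrac{a}{2}\leq R(\eta) \leq 2a$ from \eqref{bnd inf}. On $(0, a/4]$ one has $|\rho e^{\ii\theta} - R(\eta) e^{\ii\eta}|\geq a/4$ uniformly in $\eta$, and on $[3a, \delta)$ (empty if $\delta\leq 3a$) one has $|\rho e^{\ii\theta} - R(\eta) e^{\ii\eta}|\geq a$. Consequently $J$ is pointwise bounded by a constant depending only on $a$ on both of these subintervals, and their $L^p$-contributions are trivially finite. The only delicate region is the compact annulus $\Omega_\delta \bydef [a/4, \min(\delta, 3a)]\times \mathbb{T}$, which contains the curve $\Gamma \bydef \{R(\eta) e^{\ii\eta} : \eta \in \mathbb{T}\}$.

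The core of the argument is the logarithmic bound
\begin{equation*}
J(\rho,\theta) \lesssim 1 + \bigl|\log d(\rho e^{\ii \theta},\, \Gamma) \bigr|, \qquad (\rho,\theta) \in \Omega_\delta,
\end{equation*}
where $d(\cdot,\Gamma)$ denotes the Euclidean distance to $\Gamma$. I would establish this by local straightening: the smallness condition \eqref{smallness assumption} and the $C^{1+\alpha}$ regularity of $R$ give $|\gamma'(\eta)|^2 = R'(\eta)^2 + R(\eta)^2 \geq a^2/4$ for $\gamma(\eta)\bydef R(\eta) e^{\ii\eta}$, so $\gamma$ is a bi-Lipschitz parameterization of $\Gamma$. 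Via a finite partition of unity in $\eta$, one straightens $\Gamma$ locally to a graph, reducing the singular contribution to the scalar model
\begin{equation*}
\int_{-1}^{1}\frac{du}{\sqrt{s^2+u^2}} \simeq \bigl|\log|s|\bigr| \quad \text{as } |s|\to 0^+,
\end{equation*}
with $s$ the distance from $\rho e^{\ii \theta}$ to the straightened curve.

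To conclude, since $\Gamma$ is a smooth compact curve in $\mathbb{R}^2$, the function $d(\cdot,\Gamma)^{-\varepsilon}$ is locally integrable for every $\varepsilon > 0$, and in particular $\log d(\cdot,\Gamma)\in L^p_{\loc}(\mathbb{R}^2)$ for every finite $p$. Passing from polar to Cartesian coordinates via $d\rho\, d\theta = \rho^{-1} dx$ (with $\rho^{-1}\leq 4/a$ on $\Omega_\delta$) yields
\begin{equation*}
\int_{\Omega_\delta} J(\rho,\theta)^p\, d\rho\, d\theta \lesssim \int_{a/4\leq |x|\leq \min(\delta,3a)} \bigl(1+|\log d(x,\Gamma)|\bigr)^p\, dx < \infty,
\end{equation*}
and combining the three radial regions completes the proof. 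The only genuine technical step is the local-straightening argument producing the logarithmic bound on $J$; the remainder is elementary splitting and a change of variables.
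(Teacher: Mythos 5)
Your proof is correct, but it follows a genuinely different route from the paper's.

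Both arguments start from the same differentiation under the integral sign, and both split off the radial ranges where $\rho$ sits far from the range of $R$. The divergence is in how the singular region near $\Gamma = \{R(\eta)e^{\ii\eta}\}$ is handled. You discard the numerator structure immediately, bounding $|\partial_\rho A|\leq \norm{f}_{L^\infty}\, J(\rho,\theta)$ with $J(\rho,\theta)=\int_0^{2\pi}|\rho e^{\ii\theta}-R(\eta)e^{\ii\eta}|^{-1}d\eta$, and then prove the intrinsic pointwise estimate $J(\rho,\theta)\lesssim 1 + |\log d(\rho e^{\ii\theta},\Gamma)|$ via local straightening of $\Gamma$ (the bi-Lipschitz control $|\gamma'|^2 = (R')^2+R^2\geq a^2/4$ plus a partition of unity and the one-dimensional model $\int_{-1}^1 (s^2+u^2)^{-1/2}du\simeq |\log|s||$). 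The $L^p$ bound then comes from the tubular-neighborhood estimate $|\{d(x,\Gamma)<t\}|\lesssim t$, which gives $|\log d(\cdot,\Gamma)|\in L^p_{\loc}$ for all $p<\infty$. The paper, by contrast, never passes to distance-to-curve coordinates: it keeps the explicit algebraic decomposition $|\rho e^{\ii\theta}-R(\eta)e^{\ii\eta}|^2=(\rho-R(\eta))^2+4\rho R(\eta)\sin^2(\frac{\theta-\eta}{2})$, uses the elementary interpolation $c_1^2+c_2^2\gtrsim c_1^{2\varepsilon}c_2^{2(1-\varepsilon)}$ to split the singularity into a product of a power of $|\rho-R(\eta)|$ and a power of $|\sin(\frac{\theta-\eta}{2})|$ with exponents tuned to $p$, takes the $L^p(\rho)$ norm of the kernel for each fixed $(\theta,\eta)$, and closes via Minkowski's integral inequality in $\eta$.

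Your approach gives a sharper and more structural pointwise statement ($J$ is bounded by a logarithm of the distance to $\Gamma$, independent of $p$), and is arguably cleaner conceptually. The paper's more pedestrian two-parameter interpolation has the practical advantage that the same decomposition carries over almost verbatim to the perturbative estimate in Lemma~\ref{lemma:A:2}, where one must compare the kernels for two nearby parametrizations $R_1$, $R_2$; tracking how $d(\cdot,\Gamma_1)$ and $d(\cdot,\Gamma_2)$ compare would require some additional care with your geometric formulation. Two small presentation points you may wish to address: the claimed formula for $\partial_\rho A$ should be noted to hold for a.e.\ $\rho$ at each fixed $\theta$ (the set $\{\rho=R(\theta)\}$ is a single point), and the uniformity of the bi-Lipschitz constant of $\gamma$ in $R$ should be flagged as a consequence of the smallness condition \eqref{smallness assumption}, since the final constant in the lemma is implicitly uniform over $r\in\mathcal{B}^\alpha_{m,\varepsilon}$.
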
 
\begin{rem}
	A direct consequence of the preceding lemma is that the function 
	\begin{equation*}
		x\mapsto \int_0^{2\pi} \log\left| x - R(\eta)e^{i\eta}\right| f(\eta) d\eta
	\end{equation*}
	belongs to $W^{1,p}_{\loc}(\mathbb{R}^2)$ and, up to a multiplicative constant, obeys the same bound from the lemma in terms of the given function $f$.
\end{rem}

\begin{proof} To get started, noticing
\begin{equation*}
	\partial_\rho A(\rho,\theta) = \int_0^{2\pi } \frac{\rho - R(\eta) + 2R(\eta) \sin^2  (\frac{\eta - \theta}{2}) }{\big( \rho-R(\eta)\big)^2 + 4\rho R(\eta) \sin ^2 (\frac{\eta - \theta}{2})} f(\eta) d\eta,
\end{equation*}
 and recalling the bounds from \eqref{bnd inf}
\begin{equation}\label{bound:R:***}
	\frac{a}{2}\leq  R(\eta) \leq 2a, \quad \text{for all} \quad \eta \in \mathbb{T},
\end{equation}
yields that 
\begin{equation*}
\left(  \frac{\rho - R(\eta) + 2R(\eta) \sin^2 (\frac{\eta - \theta}{2}) }{\big( \rho-R(\eta)\big)^2 + 4\rho R(\eta) \sin ^2 (\frac{\eta - \theta}{2})} \right) \mathds {1}_{\{0<\rho<\frac{a}{4}\} \cup \{4a<\rho<\delta\}}= O(1),  
 \end{equation*}
	for all $ \theta,\eta \in \mathbb{T}$. Therefore, we deduce that  
	\begin{equation}\label{ES:1:dr-A}
		\norm {\partial_{\rho} A}_{ L^p \big ( (0, \frac{a}{4})\cup (4a,\delta )\times \mathbb{T}\big)} \lesssim \norm {f}_{L^\infty(\mathbb{T})}.
	\end{equation}
	It remains now to   establish the same control on the compact domain $(\frac{a}{4}, 4a)$, where the variable $\rho$ is away from the origin but   within   the singular set of the function
	  \begin{equation*}
  D(\rho, \theta, \eta)\bydef \frac{\rho - R(\eta) + 2R(\eta) \sin^2 (\frac{\eta - \theta}{2}) }{\big( \rho-R(\eta)\big)^2 + 4\rho R(\eta) \sin ^2 (\frac{\eta - \theta}{2})} \mathds {1}_{\{\frac{a}{4} <\rho< 4a \}}  .  
 \end{equation*}
To that end, letting $\varepsilon\in (0,1)$ to be chosen in a little while,  we first notice the bound 
\begin{equation}\label{D:ES:ref}
	\begin{aligned}
		|  D(\rho, \theta, \eta)|
		& \leq \left(  \frac{1}{\big(\rho R(\eta)\big)^\varepsilon}\frac{1}{\big |\rho-R(\eta)\big|^{1-2\varepsilon} \sin^{2\varepsilon}\left( \frac{\theta-\eta}{2} \right)}+ \frac{1}{\rho}   \right)  \mathds {1}_{\{\frac{a}{4} <\rho< 4a \}}\\
		& \lesssim  \left(    \frac{1}{\big |\rho-R(\eta)\big|^{1-2\varepsilon} \sin^{2\varepsilon}\left( \frac{\theta-\eta}{2} \right)} +1 \right)  \mathds {1}_{\{\frac{a}{4} <\rho< 4a \}},
	\end{aligned}
\end{equation}
which is obtained by decomposing the nominator in the definition of the function $D$ along with the elementary interpolation inequality 
\begin{equation}\label{interpolation_AA}
	c_1^2+ c_2^2 \gtrsim c_1^{2\varepsilon} c_2^{2(1-\varepsilon)}, 
\end{equation}
which is valid for any positive numbers $c_1$ and $c_2$ and any parameter $\varepsilon\in [0,1]$.  Thus, for any given $p\in (1,\infty)$, we obtain  that 
\begin{equation*}
	\norm {D(\cdot, \theta, \eta)}_{L^p(\mathbb{R}^+)} \lesssim 1 + \frac{1}{\sin^{2\varepsilon}\left( \frac{\theta-\eta}{2} \right)} \left(\int_{\frac{a}{4}}^{4a} \frac{d\rho}{\big |\rho-R(\eta)\big|^{p(1-2\varepsilon)} } \right)^{\frac{1}{p}},
\end{equation*}
whence, by making an adequate change of variables and  choosing $\varepsilon$ sufficiently close to $\frac{1}{2}$ in such a way that 
\begin{equation}\label{epsilon_choice}
	p(1-2\varepsilon) <1  \quad \text{and} \quad 0<\varepsilon<\frac{1}{2},
\end{equation} 
we infer that 
\begin{equation*}
	\begin{aligned}
		\norm {D(\cdot, \theta, \eta)}_{L^p(\mathbb{R}^+)} 
		&\lesssim 1 + \frac{1}{\sin^{2\varepsilon}\left( \frac{\theta-\eta}{2} \right)} \left(\int_{0}^{4a} \frac{d\sigma }{ \sigma ^{p(1-2\varepsilon)} } \right)^{\frac{1}{p}}
		\\
		& \lesssim 1 + \frac{1}{\sin^{2\varepsilon}\left( \frac{\theta-\eta}{2} \right)} ,
	\end{aligned}
\end{equation*}
for any $\theta,\eta\in \mathbb{T}$. Hence, by virtue of the choice $\varepsilon<\frac{1}{2}$, it follows that 
\begin{equation*}
	\int_0^{2\pi} \norm{ D(\cdot, \theta,\eta)}_{L^p(\mathbb{R}^+)} |f(\eta)| d\eta \lesssim \norm {f}_{L^\infty(\mathbb{T})},
\end{equation*}
where we have utilized the fact that 
\begin{equation*}
	\eta \mapsto \frac{1}{\sin^{2\varepsilon}\left( \frac{\theta-\eta}{2} \right)} \in L^1(\mathbb{T}),
\end{equation*}
uniformly in  $\theta \in \mathbb{T}$ as soon as $\varepsilon<\frac{1}{2}$.  Consequently, we deduce that
\begin{equation*}
		\norm {\partial_{\rho} A}_{ L^p \big ( ( \frac{a}{4}, 4a  )\times \mathbb{T}\big)} \lesssim \norm {f}_{L^\infty(\mathbb{T})}.
	\end{equation*}
All in all, combining this bound with \eqref{ES:1:dr-A} concludes the proof of the lemma.
\end{proof}

The next lemma in this section establishes  a continuity aspect with respect to the functional $R$ in the preceding lemma. 
\begin{lem}\label{lemma:A:2}
	Let $R_1$ and $R_2$ be  respectively defined in terms of  $r_1$ and $r_2$ as in Corollary \ref{corollary:kernel:1}. Furtehr define, for $i\in \{1,2\}$ and any given function $f\in L^\infty(\mathbb{T})$,
	\begin{equation*}
		A_i(\rho ,\theta) \bydef \int_0^{2\pi} \log\left| \rho e^{i\theta} - R_i(\eta)e^{i\eta}\right| f(\eta) d\eta,
	\end{equation*} 
	for any $\theta,\eta\in \mathbb{T}$. Then, it holds that
	\begin{equation*}
		\norm{\partial_\rho A_1 - \partial_\rho A_2}_{ L^p \big ( (0,\delta )\times \mathbb{T}\big)} \lesssim \norm{R_1-R_2}_{L^\infty(\mathbb{T})} ^{\beta}\norm {f}_{L^\infty(\mathbb{T})},
	\end{equation*}
	for any $\beta\in [0,\frac{1}{p})$ and   $\delta>0$.
\end{lem}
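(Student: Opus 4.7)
The plan is to follow the strategy of the proof of Lemma \ref{lemma:A:1}, combined with an interpolation argument in the spirit of the proof of Corollary \ref{corollary:kernel:2}. After differentiating, I would reduce the claim to an $L^p$-estimate on the integrand difference
\begin{equation*}
\partial_\rho A_1(\rho,\theta) - \partial_\rho A_2(\rho,\theta) = \int_0^{2\pi} \bigl[D_1 - D_2\bigr](\rho,\theta,\eta)\,f(\eta)\,d\eta,
\end{equation*}
where, for $i \in \{1,2\}$,
\begin{equation*}
D_i(\rho,\theta,\eta) \bydef \frac{\rho - R_i(\eta) + 2R_i(\eta)\sin^2\!\bigl(\tfrac{\eta-\theta}{2}\bigr)}{\bigl(\rho - R_i(\eta)\bigr)^2 + 4\rho R_i(\eta)\sin^2\!\bigl(\tfrac{\eta-\theta}{2}\bigr)},
\end{equation*}
exactly as in the proof of Lemma \ref{lemma:A:1}.

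I would then split the $\rho$-interval $(0,\delta)$ into a ``far'' regime $\rho \in (0,\tfrac{a}{4})\cup(4a,\delta)$ and a ``near'' regime $\rho \in (\tfrac{a}{4},4a)$. On the far regime, thanks to the lower and upper bounds on $R_i$ from \eqref{bnd inf}, both denominators are bounded from below uniformly, so the mean value theorem applied to $R \mapsto D(\rho, R, s)$ yields $|D_1 - D_2| \lesssim \|R_1 - R_2\|_{L^\infty}$ pointwise, giving the desired $L^p$-bound on this regime with an even stronger prefactor than required.

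The core of the proof lies in the near regime. There I would establish two complementary pointwise bounds on $D_1 - D_2$. The first, from the triangle inequality $|D_1-D_2|\leq|D_1|+|D_2|$, directly reuses the crude estimate from the proof of Lemma \ref{lemma:A:1}:
\begin{equation*}
|D_1 - D_2| \lesssim 1 + \sum_{i=1,2}\frac{1}{\bigl|\rho - R_i(\eta)\bigr|^{1-2\varepsilon}\bigl|\sin(\tfrac{\theta-\eta}{2})\bigr|^{2\varepsilon}},
\end{equation*}
for any $\varepsilon \in (0,\tfrac{1}{2})$. The second, ``small-factor'' bound stems from the telescoping identity $D_1 - D_2 = \bigl(R_1(\eta) - R_2(\eta)\bigr)\int_0^1 \partial_R D\bigl(\rho, R_\tau(\eta), \sin^2(\tfrac{\eta-\theta}{2})\bigr)\,d\tau$ with $R_\tau \bydef (1-\tau)R_2 + \tau R_1$. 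A direct computation of $\partial_R D$ shows that its numerator is bounded pointwise in the near region by a multiple of $(\rho-R)^2 + s$, whence $|\partial_R D(\rho,R,s)| \lesssim 1/\bigl[(\rho-R)^2 + s\bigr]$. Combining this with the elementary inequality \eqref{interpolation_AA} applied with parameter $\tfrac{1}{2}$ yields
\begin{equation*}
|D_1 - D_2| \lesssim \|R_1 - R_2\|_{L^\infty}\sup_{\tau \in [0,1]}\frac{1}{\bigl|\rho - R_\tau(\eta)\bigr| \, \bigl|\sin(\tfrac{\theta-\eta}{2})\bigr|}\cdot
\end{equation*}

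The last step is the interpolation $|D_1-D_2| = |D_1-D_2|^\beta |D_1-D_2|^{1-\beta}$, feeding the small-factor bound into the first factor and the crude one into the second. After noting that $|\rho - R_\tau| \asymp |\rho - R_i|$ outside a set in $\rho$ of measure $O(\|R_1-R_2\|_{L^\infty})$, handled separately through the crude bound only, the resulting kernel takes the form $\|R_1 - R_2\|_{L^\infty}^\beta \cdot |\rho - R_i|^{-\beta - (1-\beta)(1-2\varepsilon)}\,|\sin(\tfrac{\theta-\eta}{2})|^{-\beta - 2(1-\beta)\varepsilon}$. The $\rho$-integration, carried out by the change of variables used in the proof of Lemma \ref{lemma:A:1}, requires $p\bigl[\beta + (1-\beta)(1-2\varepsilon)\bigr] < 1$, while the $\eta$-integration requires $\beta + 2(1-\beta)\varepsilon < 1$, which holds automatically for any $\varepsilon \in (0,\tfrac12)$. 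The main obstacle is balancing these constraints: letting $\varepsilon$ approach $\tfrac{1}{2}$ from below reduces the $\rho$-constraint precisely to $p\beta < 1$, which is the announced range $\beta < 1/p$. Combining all these estimates together, and using that the integration against $f(\eta)\,d\eta$ costs only $\|f\|_{L^\infty}$ by virtue of the integrability of the $\sin^{-1}$-type factor in $\eta$, yields the claimed bound.
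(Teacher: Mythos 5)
Your proposal is correct and follows essentially the same strategy as the paper's proof: the same far/near $\rho$-regime splitting, the same further decomposition of the near regime into a thin set of measure $O(\|R_1-R_2\|_{L^\infty})$ handled by the crude bound alone and a complementary region where a bound carrying the factor $\|R_1-R_2\|_{L^\infty}$ is available, and the same parameter balance yielding $\beta<\frac1p$. The only cosmetic difference is that you extract the $R_1-R_2$ prefactor via the mean value theorem and a pointwise interpolation of the two bounds, whereas the paper does it through an explicit algebraic decomposition of $D_1-D_2$ and separate $L^p(\rho)$-estimates on each piece.
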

\begin{proof}
	We begin with noticing that 
	\begin{equation*}
	\begin{aligned}
		 &  \left|  \frac{\rho - R_1(\eta) + 2R_1(\eta) \sin^2 (\frac{\eta - \theta}{2}) }{\big( \rho-R_1(\eta)\big)^2 + 4\rho R_1(\eta) \sin ^2 (\frac{\eta - \theta}{2})} -  \frac{\rho - R_2(\eta) + 2R_2(\eta) \sin^2 (\frac{\eta - \theta}{2}) }{\big( \rho-R_2(\eta)\big)^2 + 4\rho R_2(\eta) \sin ^2 (\frac{\eta - \theta}{2})} \right| \mathds {1}_{\{0<\rho<\frac{a}{4}\} \cup \{4a<\rho<\delta\}}
		 \\
		 & \qquad = O\left (|R_1(\eta)-R_2(\eta)|\right),  
	\end{aligned}
	\end{equation*}
	for all $\theta,\eta \in \mathbb{T}$, whose justification solely relies on the bound \eqref{bound:R:***} and the  fact that  the variable $\rho$ is localized away from the potential values of $R(\eta)$. Thus, it follows that 
	\begin{equation}\label{A1-A2:1}
		\norm{\partial_\rho A_1 - \partial_\rho A_2}_{ L^p \big ( (0, \frac{a}{4})\cup (4a,\delta )\times \mathbb{T}\big)} \lesssim \norm{R_1-R_2}_{L^\infty(\mathbb{T})} \norm {f}_{L^\infty(\mathbb{T})}.
		\end{equation}
		Now, we focus our attention on obtaining a similar bound within the domain $ ( \frac{a}{4}, 4a  )\times \mathbb{T}$ where the function 
		\begin{equation*}
			\begin{aligned}
				&D_1(\rho,\theta,\eta) - D_2(\rho,\theta,\eta)
				\\
				& \quad \bydef \left(  \frac{\rho - R_1(\eta) + 2R_1(\eta) \sin^2 (\frac{\eta - \theta}{2}) }{\big( \rho-R_1(\eta)\big)^2 + 4\rho R_1(\eta) \sin ^2 (\frac{\eta - \theta}{2})} -  \frac{\rho - R_2(\eta) + 2R_2(\eta) \sin^2 (\frac{\eta - \theta}{2}) }{\big( \rho-R_2(\eta)\big)^2 + 4\rho R_2(\eta) \sin ^2 (\frac{\eta - \theta}{2})} \right) \mathds {1}_{\{ \frac{a}{4}< \rho<   4a  \} }
			\end{aligned}
		\end{equation*}
		is singular. Here, we need to perform additional splittings. We proceed by introducing the notation 
		\begin{equation*}
			\alpha= \alpha (\eta)\bydef |R_1(\eta)-R_2(\eta)| \leq 1
		\end{equation*}
		 and, accordingly, writing 
		\begin{equation*}
			\begin{aligned}
				|D_1(\rho,\theta,\eta) - D_2(\rho,\theta,\eta)| 
				&\leq \Big (  |D_1(\rho,\theta,\eta)| + |  D_2(\rho,\theta,\eta)|  \Big )\mathds{1}_{\{ |\rho- R_1(\eta)| \leq 2\alpha \} \cap \{ |\rho- R_2(\eta)| \leq  2\alpha \}}
				\\
				&\quad + \big| D_1(\rho,\theta,\eta) - D_2(\rho,\theta,\eta)\big|   \mathds{1}_{\{ |\rho- R_1(\eta)| > 2\alpha \} \cap \{   |\rho- R_2(\eta)| \geq   \alpha \}}
				\\
				&\quad + \big| D_1(\rho,\theta,\eta) - D_2(\rho,\theta,\eta)\big|   \mathds{1}_{\{ |\rho- R_2(\eta)| > 2\alpha \} \cap \{  |\rho- R_1(\eta)| \geq   \alpha \}}
				\\
				&\leq \Big (  |D_1(\rho,\theta,\eta)| + |  D_2(\rho,\theta,\eta)|  \Big )\mathds{1}_{\{ |\rho- R_1(\eta)| \leq 2\alpha \} \cap \{ |\rho- R_2(\eta)| \leq  2\alpha \}}
				\\
				&\quad + 2\big|D_1(\rho,\theta,\eta) - D_2(\rho,\theta,\eta)\big|  \mathds{1}_{\{ |\rho- R_1(\eta)| > \alpha \} \cap \{   |\rho- R_2(\eta)| \geq   \alpha \}}
				\\ 
				 &\bydef \mathcal{D}_1 (\rho,\theta,\eta) + \mathcal{D}_2 (\rho,\theta,\eta) .
			\end{aligned}
		\end{equation*}
		Notice that the validity of this decomposition is due to the partition 
		\begin{equation*}
			\mathbb{R}^+ \times \mathbb{T} = \left(\{ |\rho- R_1(\eta)| \leq 2\alpha \} \cap \{ |\rho- R_2(\eta)| \leq  2\alpha \} \right) \cup \{ |\rho- R_1(\eta)| > 2\alpha \}  \cup \{ |\rho- R_2(\eta)| > 2\alpha \} ,
		\end{equation*}
		and the triangular inequality 
		\begin{equation*}
			|\rho-R_1(\eta)| \geq |\rho-R_2(\eta)|- \alpha \geq \alpha,
		\end{equation*}
		in the case $ |\rho - R_2(\eta)| >2\alpha$ and \textit{vice versa}. Next, we take care of $\mathcal{D}_1$ and $\mathcal{D}_2$ separately. To that end, by means of the same arguments laid out in the proof of Lemma \ref{lemma:A:1} to estimate the function $D$ therein (see in particular \eqref{D:ES:ref}), we infer that 
		\begin{equation*}
			\begin{aligned}
				\norm{\mathcal{D}_1 (\cdot ,\theta,\eta)}_{L^p(\mathbb{R}^+)} 
				&\lesssim \sum_{i=1}^2\left( \int_{\frac{a}{4}}^{4a}  \mathds{1}_{\{ |\rho- R_i(\eta)| \leq 2\alpha \} }  d\rho \right)^\frac{1}{p} 
				\\
				&\quad + \frac{1}{\sin^{2\varepsilon}\left( \frac{\theta-\eta}{2} \right)} \sum_{i=1}^2 \left(\int_{\frac{a}{4}}^{4a} \frac{ \mathds{1}_{\{ |\rho- R_i(\eta)| \leq 2\alpha \} }}{\big |\rho-R_i(\eta)\big|^{p(1-2\varepsilon)} }  d\rho\right)^{\frac{1}{p}}
			\\
			& \lesssim  \alpha ^\frac{1}{p} +  \frac{1}{\sin^{2\varepsilon}\left( \frac{\theta-\eta}{2} \right)} \left(\int_{0}^{2\alpha} \frac{d\sigma }{ \sigma ^{p(1-2\varepsilon)} } \right)^{\frac{1}{p}}
			\\& \lesssim  \alpha ^\frac{1}{p} +  \frac{1}{\sin^{2\varepsilon}\left( \frac{\theta-\eta}{2} \right)} \alpha^{\frac{1}{p} - ( 1-2\varepsilon)} ,
			\end{aligned}
		\end{equation*}
		for all $\theta,\eta\in \mathbb{T}$, where the smallness condition \eqref{epsilon_choice} on the choice of $\varepsilon\in (0,\frac{1}{2})$ was employed in the last line. Hence,   it follows that 
\begin{equation}\label{D1:bound}
	\int_0^{2\pi} \norm{\mathcal{D}_1 (\cdot, \theta,\eta)}_{L^p(\mathbb{R}^+)} |f(\eta)| d\eta \lesssim \norm{R_1-R_2}_{L^\infty(\mathbb{T})}^{\beta} \norm {f}_{L^\infty(\mathbb{T})},
\end{equation}
uniformly in $\theta\in \mathbb{T}$, where we set 
\begin{equation}\label{beta:def}
	\beta \bydef \frac{1}{p}- (1-2\varepsilon)>0.
\end{equation}
We turn our attention now to the estimate of $\mathcal{D}_2$. To that end, we write that 
\begin{equation*}
	\begin{aligned}
		(D_1 & - D_2)(\rho,\theta,\eta)  =\frac{\big (R_2(\eta)-R_1(\eta)\big) \big ( 1-  2\sin^2 (\tfrac{\eta - \theta}{2}) \big) }{\big( \rho-R_2(\eta)\big)^2 + 4\rho R_2(\eta) \sin ^2 (\frac{\eta - \theta}{2})}  
		\\
		& \qquad \qquad \qquad +\Big(  \rho - R_1(\eta) + 2R_1(\eta) \sin^2 (\tfrac{\eta - \theta}{2}) \Big ) 
		\\
		& \quad \times \left(  \frac{1}{\big( \rho-R_1(\eta)\big)^2 + 4\rho R_1(\eta) \sin ^2 (\frac{\eta - \theta}{2})} 
		-  \frac{1}{\big( \rho-R_2(\eta)\big)^2 + 4\rho R_2(\eta) \sin ^2 (\frac{\eta - \theta}{2})} \right)
		\\ 
		&= \frac{\big (R_2(\eta)-R_1(\eta)\big)  \big ( 1-  2\sin^2 (\frac{\eta - \theta}{2}) \big) }{\big( \rho-R_2(\eta)\big)^2 + 4\rho R_2(\eta) \sin ^2 (\frac{\eta - \theta}{2})}  
		\\
		& \qquad \qquad \qquad +\Big(  \rho - R_1(\eta) + 2R_1(\eta) \sin^2 \left (\tfrac{\eta - \theta}{2}  \right)\Big)  \big (R_2(\eta)-R_1(\eta)\big)
		\\
		& \quad \times \left(  \frac{  (\rho-R_1(\eta)) + (\rho-R_2(\eta)) + 2\rho \sin^2 (\frac{\eta - \theta}{2}) }{\left(\big( \rho-R_1(\eta)\big)^2 + 4\rho R_1(\eta) \sin ^2 (\frac{\eta - \theta}{2})\right) \left(\big( \rho-R_2(\eta)\big)^2 + 4\rho R_2(\eta) \sin ^2 (\frac{\eta - \theta}{2})\right)} \right).
	\end{aligned}
\end{equation*} 
Therefore, employing \eqref{bound:R:***} once again together with the interpolation inequality \eqref{interpolation_AA} for suitable values of parameters leads to the control
\begin{equation*}
	\begin{aligned}
		 \big|( D_1 & - D_2)(\rho,\theta,\eta)\big|  
		 \\
		 & \lesssim \big |R_1(\eta)-R_2(\eta)\big| \left( 1+  \frac{ 1 }{|\rho-R_1(\eta)|^{2(1-\varepsilon)} \sin^{2\varepsilon}(\frac{\theta-\eta}{2})}  +  \frac{ 1 }{|\rho-R_2(\eta)|^{2(1-\varepsilon)} \sin^{2\varepsilon}(\frac{\theta-\eta}{2})} \right),
	\end{aligned}
\end{equation*}
for all $\rho \in (\frac{a}{4}, 4a)$ and any  $\varepsilon \in (0,\frac{1}{2})$. 
Thus, we find that 
\begin{equation*}
	\begin{aligned}
		\norm {\mathcal{D}_2(\cdot,\theta,\eta)}_{L^p(\mathbb{R}^+)} 
		&\lesssim \alpha \left( \left( \int_{\frac{a}{4}}^{4a}   d\rho \right)^\frac{1}{p} + \frac{1}{\sin^{2\varepsilon}(\frac{\theta-\eta}{2})} \sum_{i=1}^2\left( \int_{\frac{a}{4}}^{4a}   \frac{ \mathds{1}_{\{ |\rho- R_i(\eta)| \geq \alpha \} }  }{|\rho-R_i(\eta)|^{2p(1-\varepsilon)} }  d\rho \right)^\frac{1}{p}   \right)
		\\
		& \lesssim \alpha\left( 1 +  \frac{1}{\sin^{2\varepsilon}(\frac{\theta-\eta}{2})}  \left( \int_{\alpha}^{6a}   \frac{d\sigma}{\sigma ^{2p(1-\varepsilon)} }   \right)^\frac{1}{p}   \right)
		\\ 
		& \lesssim \alpha\left( 1 +  \frac{1}{\sin^{2\varepsilon}(\frac{\theta-\eta}{2})}  \alpha ^{\frac 1p-2(1-\varepsilon)} \right)
		\\
		& \lesssim \alpha ^{\beta} \left(1 +  \frac{1}{\sin^{2\varepsilon}(\frac{\theta-\eta}{2})} \right) ,
	\end{aligned}
\end{equation*}
where,  $\beta>0$ is introduced in \eqref{beta:def}. Thus,   it follows that 
\begin{equation*}
	\int_0^{2\pi} \norm{\mathcal{D}_2 (\cdot, \theta,\eta)}_{L^p(\mathbb{R}^+)} |f(\eta)| d\eta \lesssim \norm{R_1-R_2}_{L^\infty(\mathbb{T})}^{\beta} \norm {f}_{L^\infty(\mathbb{T})},
\end{equation*}
uniformly in $\theta\in \mathbb{T}$, whence, combined with \eqref{D1:bound}, we deduce that 
\begin{equation*} 
		\norm{\partial_\rho A_1 -\partial_\rho  A_2}_{ L^p \big ( ( \frac{a}{4},4a )\times \mathbb{T}\big)} \lesssim \norm{R_1-R_2}^{\beta}_{L^\infty(\mathbb{T})} \norm {f}_{L^\infty(\mathbb{T})}.
		\end{equation*}
		At last, putting this bound and \eqref{A1-A2:1} together concludes the proof of the lemma.
    \end{proof}

We conclude this section with the following simple lemma that establishes a continuity property of composed H\"older-functions. This will serve to simply the conclusion of the proof of Proposition \ref{proposition regularity of the functional}.
 
	\begin{lem}\label{lemma:composition}
		Let $m_1$ and $m_2$ be  two Lipschitz-continuous functions on $\mathbb{T}$. Further consider a function $\Phi \in C^{\kappa}_{\loc }(\mathbb{R}^2)$, for some $\kappa \in (0,1)$. Then, it holds that 
		\begin{equation*}
			\norm {\Phi (m_1) - \Phi(m_2)}_{C^\alpha(\mathbb{T})} \leq  C_{\Phi}  \norm{m_1-m_2}_{C^{1}(\mathbb{T})}^{\kappa-\alpha},
		\end{equation*}
		for any $\alpha \in [0,\kappa]$ and for some constant $C_\Phi>0$ depending on the $C^\kappa$-norm of $\Phi$.
	\end{lem}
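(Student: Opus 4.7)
The plan is to combine two elementary H\"older estimates, one pointwise and one for the joint seminorms of $\Phi\circ m_1$ and $\Phi\circ m_2$, with the classical interpolation inequality between H\"older spaces. Since $m_1,m_2$ are Lipschitz on the compact torus $\mathbb{T}$, their images are contained in a common compact set $K\subset\mathbb{R}^2$, so every invocation of the $C^\kappa$-regularity of $\Phi$ will be understood on $K$, and the final constant will depend on $[\Phi]_{C^\kappa(K)}$ together with the Lipschitz norms of $m_1$ and $m_2$.

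First, the H\"older continuity of $\Phi$ yields, for every $\theta\in\mathbb{T}$, the pointwise bound $|\Phi(m_1(\theta))-\Phi(m_2(\theta))|\leq [\Phi]_{C^\kappa(K)}|m_1(\theta)-m_2(\theta)|^\kappa$, whence $\norm{\Phi(m_1)-\Phi(m_2)}_{L^\infty(\mathbb{T})}\leq [\Phi]_{C^\kappa(K)}\norm{m_1-m_2}_{L^\infty(\mathbb{T})}^\kappa$. In the same spirit, for each $i\in\{1,2\}$ and any $\theta_1,\theta_2\in\mathbb{T}$, combining the H\"older estimate for $\Phi$ with the Lipschitz bound for $m_i$ gives $|\Phi(m_i(\theta_1))-\Phi(m_i(\theta_2))|\leq [\Phi]_{C^\kappa(K)}[m_i]_{\mathrm{Lip}}^\kappa|\theta_1-\theta_2|^\kappa$, so that $\Phi(m_1)-\Phi(m_2)\in C^\kappa(\mathbb{T})$ with $[\Phi(m_1)-\Phi(m_2)]_{C^\kappa(\mathbb{T})}\leq 2[\Phi]_{C^\kappa(K)}\max_i[m_i]_{\mathrm{Lip}}^\kappa =: M_0$.

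To obtain the $C^\alpha$ bound for $\alpha\in[0,\kappa]$, I apply the classical interpolation inequality $[f]_{C^\alpha(\mathbb{T})}\lesssim \norm{f}_{L^\infty(\mathbb{T})}^{1-\alpha/\kappa}[f]_{C^\kappa(\mathbb{T})}^{\alpha/\kappa}$ to $f=\Phi(m_1)-\Phi(m_2)$. The two previous estimates then yield $[\Phi(m_1)-\Phi(m_2)]_{C^\alpha(\mathbb{T})}\lesssim \bigl([\Phi]_{C^\kappa(K)}\norm{m_1-m_2}_{L^\infty}^\kappa\bigr)^{1-\alpha/\kappa}M_0^{\alpha/\kappa}\leq C\,\norm{m_1-m_2}_{C^1(\mathbb{T})}^{\kappa-\alpha}$. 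For the $L^\infty$ part, writing $\norm{m_1-m_2}_{L^\infty}^\kappa=\norm{m_1-m_2}_{L^\infty}^{\kappa-\alpha}\norm{m_1-m_2}_{L^\infty}^\alpha$ and absorbing the bounded factor $\norm{m_1-m_2}_{L^\infty}^\alpha$ (controlled by the Lipschitz norms of $m_1,m_2$) into the constant restores the same power $\kappa-\alpha$. Summing the two contributions produces the claimed inequality, with $C_\Phi$ depending on $[\Phi]_{C^\kappa(K)}$ and on the Lipschitz norms of $m_1$ and $m_2$; the boundary cases $\alpha=0$ and $\alpha=\kappa$ follow directly from the pointwise and seminorm bounds above, so no substantial obstacle is encountered.
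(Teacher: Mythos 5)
Your proof is correct and follows essentially the same route as the paper, just organized more cleanly. The paper introduces $\delta=\norm{m_1-m_2}_{C^{1}(\mathbb{T})}$ and splits the H\"older seminorm supremum over $|\theta_1-\theta_2|\leq 1$ into the two regimes $|\theta_1-\theta_2|\leq\delta$ (controlled via $[\Phi]_{C^\kappa}$ and the Lipschitz bounds on $m_i$) and $\delta\leq|\theta_1-\theta_2|\leq 1$ (controlled via the $L^\infty$ difference $\norm{m_1-m_2}_{L^\infty}^\kappa$), then collects the powers of $\delta$. You instead record the two endpoint bounds $\norm{\Phi(m_1)-\Phi(m_2)}_{L^\infty}\lesssim\norm{m_1-m_2}_{L^\infty}^\kappa$ and $[\Phi(m_1)-\Phi(m_2)]_{C^\kappa}\lesssim 1$, and invoke the interpolation inequality $[f]_{C^\alpha}\lesssim\norm{f}_{L^\infty}^{1-\alpha/\kappa}[f]_{C^\kappa}^{\alpha/\kappa}$. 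That interpolation inequality is itself proved by exactly the paper's splitting, so the two arguments are mathematically identical; yours simply encapsulates the splitting into a cited lemma, which is a modest gain in modularity. Both versions produce a constant that depends not only on $[\Phi]_{C^\kappa}$ but also (implicitly) on the Lipschitz and $L^\infty$ norms of $m_1,m_2$, as the statement somewhat imprecisely subsumes into $C_\Phi$.
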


\begin{proof}
	We only focus our attention on the estimate of the difference in $\dot C^{\kappa}$ as its control in $L^\infty$ easily follows from the fact that the supremum of continuous functions is attained on compact sets. Now,  we introduce the parameter
\begin{equation*}
	\delta\bydef\norm{m_1-m_2}_{C^{1+\alpha}} 
\end{equation*}
 and we write that
 \begin{align*}
 	\sup_{|\theta_1-\theta_2 |\leq1}&\frac{\left|\Big(\Phi\big(m_1(\theta_1)\big)-\Phi\big(m_2(\theta_1)\big)\Big)-\Big(\Phi\big(m_1(\theta_2)\big)-\Phi\big(m_2(\theta_2)\big)\Big)\right|}{|\theta_1-\theta_2|^\alpha}\\
 	&\quad\leq\sup_{|\theta_1-\theta_2|\leq\delta}\left(\frac{\left|\Phi\big(m_1(\theta_1)\big)-\Phi\big(m_1(\theta_2)\big)\right|}{|\theta_1-\theta_2|^\alpha}+\frac{\left|\Phi\big(m_2(\theta_1)\big)-\Phi\big(m_2(\theta_2)\big)\right|}{|\theta_1-\theta_2|^\alpha}\right)\\
 	&\qquad\qquad+\sup_{\delta\leq|\theta_1-\theta_2|\leq 1}\left(\frac{\left|\Phi\big(m_1(\theta_1)\big)-\Phi\big(m_2(\theta_1)\big)\right|}{|\theta_1-\theta_2|^\alpha}+\frac{\left|\Phi\big(m_1(\theta_2)\big)-\Phi\big(m_2(\theta_2)\big)\right|}{|\theta_1-\theta_2|^\alpha}\right).
 	\end{align*}
 Therefore, employing the assumption that $\Phi$ belongs to $C^{\kappa}(B_{\bar m})$, with 
 \begin{equation*}
 	\bar m \bydef \max \{  \norm {m_1}_{L^\infty(\mathbb{T})}, \norm {m_2}_{L^\infty(\mathbb{T})}\},
 \end{equation*}
  it follows that
   \begin{align*}
  	\sup_{|\theta_1-\theta_2|\leq 1}&\frac{\left|\Phi\big(m_1(\theta_1)\big)-\Phi\big(m_2(\theta_1)\big)-\Phi\big(m_1(\theta_2)\big)-\Phi\big(m_2(\theta_2)\big)\right|}{|\theta_1-\theta_2|^\alpha}\\
  	&\quad\leq \norm {\Phi}_{C^{\kappa}(B_{\bar m})}\sup_{|\theta_1-\theta_2|\leq\delta}\left(\frac{\left|m_1(\theta_1)-m_1(\theta_2)\right|^\kappa}{|\theta_1-\theta_2|^\alpha}+\frac{\left|m_2(\theta_1)-m_2(\theta_2)\right|^\kappa}{|\theta_1-\theta_2|^\alpha}\right)\\
  	&\qquad\qquad+\norm {\Phi}_{C^{\kappa}(B_{\bar m})} \sup_{\delta\leq|\theta_1-\theta_2|\leq1}\left(\frac{\norm{m_1-m_2}_{L^\infty(\mathbb{T})}^\kappa}{|\theta_1-\theta_2|^\alpha}\right).
 \end{align*}
 Hence, by  employing the Lipschitz-regularity assumption on the functions $m_1$ and $m_2$, we arrive at the bound
  \begin{equation*}
 	\begin{aligned}
 	       \sup_{|\theta_1-\theta_2|\leq1}&\frac{\left|\Big(\Phi\big(r_1(\theta_1)\big)-\Phi\big(r_2(\theta_1)\big)\Big)-\Big(\Phi\big(r_1(\theta_2)\big)-\Phi\big(r_2(\theta_2)\big)\Big)\right|}{|\theta_1-\theta_2|^\alpha} \leq 2 \norm {\Phi}_{C^{\kappa}(B_{\bar m})} \delta^{\kappa-\alpha},  
 	\end{aligned}
 \end{equation*}
 thereby completing the proof of the lemma.
\end{proof}

\subsubsection{Regularity of the functional  $\mathcal{F}$}
In Proposition \ref{proposition regularity of the functional} below,  we rigorously show that the functional $\mathcal{F}$ is differentiable in a well-chosen functional setting. This will allow us to compute the linearized operator and study its spectral properties, later on.

\begin{prop}\label{proposition regularity of the functional}
Let $\alpha\in(0,1)$ and $m\in\mathbb{N}^*.$ There exists $\varepsilon>0$ such that  the functional   	$$\mathcal{F}:\mathbb{R}\times \mathcal{B}^\alpha_{m,\varepsilon} \rightarrow Y_m^{\alpha}$$
	is well-defined and is of class  $C^{1}.$
	 Moreover, the partial derivative $\partial_{\Omega}d_r\mathcal{F}$ exists in the sense that
	$$\partial_{\Omega}d_r\mathcal{F}:\mathbb{R}\times \mathcal{B}^\alpha_{m,\varepsilon} \rightarrow\mathcal{L}(X_m^{\alpha},Y_m^{\alpha})$$
	 and is continuous. 
\end{prop}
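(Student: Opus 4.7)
My plan is to exploit the decomposition of the stream function provided by Lemma \ref{lemma:expansion1} (with $k=1$) to split $\mathcal{F}$ into a singular logarithmic contribution and a smooth remainder, and then invoke the continuity lemmas of this section (Corollaries \ref{corollary:kernel:1}--\ref{corollary:kernel:3} together with Lemmas \ref{lemma:A:1}--\ref{lemma:A:2}) to control the singular part. Concretely, using the alternative form \eqref{contour:equation:2} of $\mathcal{F}$ together with the integral representation from Proposition \ref{prop.kernel} and the decomposition $\psi_b(x)=-\tfrac{1}{2\pi}\int_{D_0}\log|x-y|b(y)dy+\phi_f(x)$, I would write
\begin{equation*}
\mathcal{F}(\Omega,r)(\theta)=\Omega r'(\theta)+\frac{1}{b(R(\theta))}\nabla\psi_b\big(R(\theta)e^{\ii\theta}\big)\cdot\partial_\theta\big(R(\theta)e^{\ii\theta}\big),
\end{equation*}
so that the term $\tfrac{1}{b(R(\theta))}$ is smooth in $r$ by \eqref{Hb1:eq} and Lemma \ref{lemma:composition}, the parametrization $\partial_\theta(R(\theta)e^{\ii\theta})$ belongs to $C^{\alpha}$ by direct differentiation of $R=\sqrt{a^2+2r}$, and the main work reduces to controlling $\nabla\psi_b\big(R(\theta)e^{\ii\theta}\big)$ in $C^{\alpha}(\mathbb{T})$ and its dependence on $r$.

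For the well-definedness, the logarithmic piece of $\nabla\psi_b$ evaluated at the boundary is precisely of the form handled by Corollary \ref{corollary:kernel:1} (after a polar change of variables $y=\rho R(\eta)e^{\ii\eta}$, $\rho\in[0,1]$, which produces kernels of the type $\log|R(\theta)e^{\ii\theta}-\rho R(\eta)e^{\ii\eta}|$) combined with Lemma \ref{lemma:A:1} to manage the radial derivative $\partial_\rho$. The remainder $\nabla\phi_f$ belongs to $W^{2,p}(\mathbb{R}^2)$ for any $p>2$ by Lemma \ref{lemma:expansion1} and \eqref{rem:reg:remainder}, hence to $C^{1,1-2/p}(\mathbb{R}^2)$ by Sobolev embedding in $\mathbb{R}^2$; composition with the Lipschitz map $\theta\mapsto R(\theta)e^{\ii\theta}$ then yields a $C^{\alpha}$ function for $\alpha<1-2/p$. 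The symmetries $m$-fold and oddness come from Lemma \ref{lemma:symmetry-lin}. The smallness $\varepsilon$ is needed only to ensure $R(\theta)\in[\tfrac{a}{2},2a]$, which is what drives the uniform lower bound on $|R(\theta)e^{\ii\theta}-\rho R(\eta)e^{\ii\eta}|$ used throughout Section \ref{section:spectral analysis}.

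For the $C^1$ regularity in $r$, the derivative was computed formally in Lemma \ref{lemma:regularity} as $d_r\mathcal{F}(\Omega,r)[h]=\Omega h'+\tfrac{1}{b\circ R}\partial_\theta(Q(r)h+P(r)[h])+L(r)h$. To rigorously justify the Fr\'echet differentiability and continuity of $r\mapsto d_r\mathcal{F}(\Omega,r)$, I would estimate, for $r_1,r_2\in\mathcal{B}^{\alpha}_{m,\varepsilon}$ with associated $R_1,R_2$, the three differences
\begin{equation*}
P(r_1)[h]-P(r_2)[h],\qquad Q(r_1)-Q(r_2),\qquad L(r_1)-L(r_2)
\end{equation*}
in $C^{\alpha}(\mathbb{T})$. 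The log parts of $P(r_1)-P(r_2)$ and $Q(r_1)-Q(r_2)$ are exactly the kernels treated in Corollary \ref{corollary:kernel:2} (and the weighted version in Corollary \ref{corollary:kernel:3}), which produce estimates of order $\|r_1-r_2\|_{C^1}^{\beta}$ for any $\beta<\alpha$; the radial-derivative contribution in $Q$ is handled by Lemma \ref{lemma:A:2}. The regular pieces coming from $S_b$ and $\phi_f$ are differentiated with respect to $r$ by using that $S_b$ and $\nabla\phi_f$ are in $W^{2,p}(\mathbb{R}^2)$ (so in particular $C^{1,\kappa}$ for some $\kappa>0$), and applying Lemma \ref{lemma:composition} to the composition with $\theta\mapsto R(\theta)e^{\ii\theta}$ to get the $C^{\alpha}$ bound on the difference, modulo a H\"older loss that can be absorbed by taking $\alpha$ slightly smaller than the intrinsic H\"older exponent. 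One must also account for the $r$-dependence of $\psi_b$ itself (the data $f=b\mathds{1}_{D_0}$ depends on $r$), but by differentiating $\psi_b$ with respect to a perturbation one ends up with a single layer potential on $\partial D_0$ of the function $h$, which is exactly the term $P(r)[h]$; this part of the computation is already encoded in Lemma \ref{lemma:regularity}. Finally, since $\mathcal{F}$ depends linearly on $\Omega$ through $\Omega r'$, the mixed partial $\partial_\Omega d_r\mathcal{F}(\Omega,r)[h]=h'$ is independent of $(\Omega,r)$ and trivially continuous as a map into $\mathcal{L}(X_m^{\alpha},Y_m^{\alpha})$, which delivers the last claim.

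The main technical obstacle is verifying the Fr\'echet (not just G\^ateaux) differentiability of $r\mapsto P(r)$ and $r\mapsto Q(r)$, which amounts to producing a second-order Taylor expansion with a $o(\|h\|_{C^{1+\alpha}})$ error; this is where one must carefully exploit the interpolation trick used in the proofs of Corollaries \ref{corollary:kernel:2}--\ref{corollary:kernel:3} to gain a small H\"older power and close the estimate. All remaining steps are essentially bookkeeping that combine the lemmas of this section with Sobolev embedding applied to the regular remainder $\phi_f$.
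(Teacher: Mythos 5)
Your proposal follows essentially the same architecture as the paper's proof — decompose $\psi_b$ via Lemma~\ref{lemma:expansion1}, handle the logarithmic piece with Corollaries~\ref{corollary:kernel:1}--\ref{corollary:kernel:3}, treat the smooth remainder by Sobolev embedding, and close continuity using Lemma~\ref{lemma:composition} — and the observation that $\partial_\Omega d_r\mathcal{F}[h]=h'$ is correct. However, there is a genuine gap in how you treat the $r$-dependence of the remainder $\phi_f$. You write that ``the regular pieces coming from $S_b$ and $\phi_f$ are differentiated with respect to $r$ by using that $S_b$ and $\nabla\phi_f$ are in $W^{2,p}$,'' and that the domain-dependence of $\psi_b$ is resolved as ``a single layer potential on $\partial D_0$, which is exactly the term $P(r)[h]$.'' But once you have split $\psi_b = -\frac{1}{2\pi}\int_{D_0}\log|\cdot-y|b(y)\,dy + \phi_f$, the remainder $\phi_f$ is itself a functional of $R$ (through \eqref{Remainder_remark}--\eqref{phi_D:expression}), and differentiating \emph{that} dependence produces a genuinely new term which is neither a log single-layer potential nor a static $W^{2,p}$ composition. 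This is precisely the operator $\mathcal{E}_R$ in \eqref{E:DEF}; it involves $\partial_\rho$ of a logarithmic potential inside a double integral against $K_b$, and controlling its $L^p$-regularity and its H\"older continuity in $r$ is the \emph{only} reason Lemmas~\ref{lemma:A:1} and~\ref{lemma:A:2} exist. You invoke these lemmas, but misattribute them to ``the radial-derivative contribution in $Q$,'' which is a different object (the $\nabla_x K_b$ double integral in Lemma~\ref{lemma:regularity}); as written, your argument does not account for the $r$-dependence of the remainder at all, which is the single hardest piece of the proof.

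A secondary issue: your plan oscillates between two incompatible parametrizations of the differential. You start from the decomposed form ($\mathcal{H}+\mathcal{K}$, as in the paper) but then estimate the raw kernel objects $P(r)$, $Q(r)$, $L(r)$ of Lemma~\ref{lemma:regularity}, which are built from $K_b$ directly. Estimating $P$, $Q$, $L$ via the split $K_b = -\frac{1}{2\pi}\log|\cdot-\cdot|\sqrt{b\,b}+S_b$ would require two-variable regularity control of $S_b(x,y)$, which Proposition~\ref{prop.kernel} does not deliver; the paper avoids this entirely by decomposing the stream function $\psi_b$ itself, so that the remainder has the stronger $\nabla\phi_R\in W^{2,p}(\mathbb{R}^2)$ regularity in \emph{one} variable. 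Finally, a small imprecision: Fr\'echet differentiability requires a \emph{first}-order expansion with $o(\|h\|_{C^{1+\alpha}})$ remainder, not a second-order one.
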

 
 \begin{proof} 
 	The symmetry properties of the functional $\mathcal{F}$ are already established in Lemma \ref{lemma:symmetry-lin}, above. Hence, we only focus here on justifying the regularity and continuity of $\mathcal{F}$ as well as its derivatives. Recalling first the contour representation \eqref{contour:equation:2}, it is then readily seen that the claims of Proposition \ref{proposition regularity of the functional} above are reduced to showing that the mapping 
$$r\mapsto\frac{1}{b(R\big(\theta)\big)}\partial_\theta\left(\psi_b\big(R(\theta)e^{\ii \theta}\big)\right):\mathcal{B}^\alpha_{m,\varepsilon}\rightarrow C^{\alpha}(\mathbb{T})$$
is well-defined and is of class $C^1$, for some $\varepsilon>0$ and all $(m,\alpha)\in\mathbb{N}^*\times(0,1)$. Moreover, note, due to the assumptions on $b$ and $r$, that we can further restrict the justification of the preceding claims to the functional 
$$r\mapsto \partial_\theta\left(\psi_b\big(R(\theta)e^{\ii \theta}\big)\right):\mathcal{B}^\alpha_{m,\varepsilon}\rightarrow C^{\alpha}(\mathbb{T}).$$
  To that end, we first employ the decomposition from Lemma \ref{lemma:expansion1} to write that  
 \begin{equation*} 
	\partial_\theta\left(\psi_b\big(R(\theta)e^{\ii \theta}\big)\right)=-\frac{1}{2\pi}\underbrace{\partial_\theta\left(\int_{D_0}\log|R(\theta)e^{\ii \theta}-y|b^2(|y|)dy\right)}_{\bydef\mathcal{H}(r)(\theta)}+\underbrace{\partial_\theta\left(\phi_{R}\big(R(\theta)e^{\ii \theta}\big)\right)}_{\bydef\mathcal{K}(r)(\theta)},
	\end{equation*} 
 where the remainder $\phi_{R} \in W^{3,p}_{\loc}(\mathbb{R}^2)$ is   determined by \eqref{Remainder_remark}   and can be recast   in terms of the kernel $K_b$ as
 \begin{equation}\label{phi_D:expression}
 	\phi_{R}(x)= \frac{1}{2\pi } \int_{\mathbb{R}^2} K_b(x,z)\frac{\nabla b}{b^2}(z)\cdot\nabla_z \int_{D_0} \log|z-y| b(y)  dydz,
 \end{equation}
 for all $x\in\mathbb{R}^2.$ Next, we will  take care of each term separately. 
 
 \subsubsection*{Regularity of $\mathcal{H}$}
 By a straightforward computation, we find that 
 \begin{align*}
 	\mathcal{H}(r)(\theta)&=-\partial_\theta \left(R(\theta)e^{\ii \theta}\right)\cdot\int_{D_0}\nabla _y\Big(\log|R(\theta)e^{\ii \theta}-y|\Big)b^2(|y|)dy\\
 	&=\int_0^{2\pi}\log|R(\theta)e^{\ii \theta}-R(\eta)e^{\ii \eta}|b^2\big(R(\eta)\big)\partial_\eta\left(\ii R(\eta)e^{\ii \eta}\right)\cdot\partial_\theta\left(R(\theta)e^{\ii \theta}\right)d\eta\\
 	&\quad+2\partial_\theta\left(R(\theta)e^{\ii \theta}\right)\cdot\int_0^{2\pi}\int_0^{R(\eta)}\log|R(\theta)e^{\ii \theta}-\rho e^{\ii \eta}|(bb')(\rho)e^{\ii \eta}\rho d\rho d\eta,
 \end{align*}
where we have utilized Gauss--Green theorem in the second identity, above. Therefore, by further employing the identity
\begin{equation}\label{identity:SP}
	\partial_\eta\left(\ii f(\eta)e^{\ii \eta}\right)\cdot\partial_\theta\left(g(\theta)e^{\ii \theta}\right)=\partial_{\theta\eta}^2\big(f(\theta)g(\eta)\sin(\theta-\eta)\big),
\end{equation}
we arrive at the expansion
\begin{equation*}
	\mathcal{H}(r)(\theta)=\mathcal{H}_1(r)(\theta)+\mathcal{H}_2(r)(\theta),
 \end{equation*}
where 
\begin{align*}
	\mathcal{H}_1(r)(\theta)&\bydef\int_0^{2\pi}\log|R(\theta)e^{\ii \theta}-R(\eta)e^{\ii \eta}|b^2\big(R(\eta)\big)L(r)(\theta,\eta)d\eta,\\
	L(r)(\theta,\eta)&\bydef\partial_{\theta\eta}^2\big(R(\theta)R(\eta)\sin(\theta-\eta)\big)
\end{align*} 
and
\begin{equation*}
	  \mathcal{H}_2(r)(\theta)\bydef2\partial_\theta\left(R(\theta)e^{\ii \theta}\right)\cdot\int_0^{2\pi}\int_0^{R(\eta)}\log|R(\theta)e^{\ii \theta}-\rho e^{\ii \eta}|(bb')(\rho)e^{\ii \eta}\rho d\rho d\eta.
\end{equation*}
We now deal with each term in the preceding decompositions separately, and we will only prove their $C^1$ regularities, for the fact that the  functionals above are well defined can be easily justified by making use of the same arguments below. First, we compute that 
	\begin{align*}
		d_r\big(\mathcal{H}_1(r)\big)[h](\theta)&=\int_{0}^{2\pi}\nabla _x\log|R(\theta)e^{\ii \theta}-R(\eta)e^{\ii \eta}|\cdot\left(\frac{h(\theta)e^{\ii \theta}}{R(\theta)}-\frac{h(\eta)e^{\ii \eta}}{R(\eta)}\right)L(r)(\theta,\eta)b^2\big(R(\eta)\big)d\eta\\
		&\quad+\int_{0}^{2\pi}\log|R(\theta)e^{\ii \theta}-R(\eta)e^{\ii \eta}|\partial_{\theta\eta}^2\left(\frac{h(\eta)}{R(\eta)}R(\theta)\sin(\theta-\eta)\right)b^2\big(R(\eta)\big)d\eta\\
		&\quad+\partial_\theta\left(\frac{h(\theta)}{R(\theta)}e^{\ii \theta}\right)\cdot\int_{0}^{2\pi}\log|R(\theta)e^{\ii \theta}-R(\eta)e^{\ii \eta}|\partial_\eta\left(\ii R(\eta) e^{\ii \eta}\right)b^2\big(R(\eta)\big)d\eta\\
		&\quad+2\int_0^{2\pi}\log|R(\theta)e^{\ii \theta}-R(\eta)e^{\ii \eta}|L(r)(\theta,\eta)(bb')\big(R(\eta)\big)\frac{h(\eta)}{R(\eta)}d\eta,
	\end{align*}
 where we have used the identities
		\begin{equation*}
			d_r\big(L(r)\big)[h](\theta,\eta)=\partial_{\theta\eta}^2\left(\frac{h(\eta)}{R(\eta)}R(\theta)\sin(\theta-\eta)\right)+\partial_{\theta\eta}^2\left(\frac{h(\theta)}{R(\theta)}R(\eta)\sin(\theta-\eta)\right)
		\end{equation*}
 and, in view of \eqref{identity:SP}, that
 \begin{equation*}
 	\begin{aligned}
 		\partial_{\theta\eta}^2\left( \frac{h(\theta)}{R(\theta)}R(\eta)\sin(\theta-\eta)\right)
 		&=\partial_\eta\left(\ii R(\eta)e^{\ii \eta}\right)\cdot\partial_\theta\left(\frac{h(\theta)}{R(\theta)}e^{\ii \theta}\right).
 		 	\end{aligned}
 	\end{equation*}
 	Now, it is more convenient to utilize \eqref{identity:SP}, again, to write that 
 	\begin{equation*}
 		L(r)(\theta,\eta)=\partial_\eta\left(\ii R(\eta)e^{\ii \eta}\right)\cdot\partial_\theta\left(R(\theta)e^{\ii \theta}\right)
 	\end{equation*}
 	and then that
 	\begin{equation*}
 		d_r\big(\mathcal{H}_1(r)\big)[h](\theta)=g_{1,1}(\theta)\cdot\int_{0}^{2\pi}T_{1}(\theta,\eta)f_{1,1}(\eta)d\eta+\sum_{i=2}^4g_{1,i}(\theta)\cdot\int_{0}^{2\pi}T_{0}(\theta,\eta)f_{1,i}(\eta)d\eta,
 	\end{equation*} 
 	where, we set 
 	\begin{align*}
 		T_{1}(\theta,\eta)&\bydef\nabla_x\log|R(\theta)e^{\ii \theta}-R(\eta)e^{\ii \eta}|\cdot\left(\frac{h(\theta)e^{\ii \theta}}{R(\theta)}-\frac{h(\eta)e^{\ii \eta}}{R(\eta)}\right),\\
 		T_{0}(\theta,\eta)&\bydef \log|R(\theta)e^{\ii \theta}-R(\eta)e^{\ii \eta}|
 	\end{align*}
 	and
 	\begin{equation*}
 		\begin{array}{ll}
 			f_{1,1}(\eta)\bydef\partial_\eta\left(\ii R(\eta)e^{\ii \eta}\right)b^2\big(R(\eta)\big),\qquad 
 			&g_{1,1}(\theta)\bydef \partial_\theta \left(R(\theta)e^{\ii \theta}\right),\vspace{0.2cm}\\
 			f_{1,2}(\eta)\bydef\partial_\eta\left(\ii\frac{h(\eta)}{R(\eta)}e^{\ii \eta}\right)b^2\big(R(\eta)\big),
 			\qquad
 			&g_{1,2}(\theta)\bydef\partial_\theta\left(R(\theta)e^{\ii \theta}\right),\vspace{0.2cm}\\
 			f_{1,3}(\eta)\bydef\partial_\eta\left(\ii R(\eta)e^{\ii \eta}\right)b^2\big(R(\eta)\big), 
 			\qquad
 			&g_{1,3}(\theta)\bydef\partial_\theta\left(\frac{h(\theta)}{R(\theta)}e^{\ii \theta}\right),\vspace{0.2cm}\\
 			f_{1,4}(\eta)\bydef 2\frac{h(\eta)}{R(\eta)} \partial_\eta\left(\ii R(\eta)e^{\ii \eta}\right)(bb')\big(R(\eta)\big), 
 			\qquad
 			&g_{1,4}(\theta)\bydef\partial_\theta\left(R(\theta)e^{\ii \theta}\right).
 		\end{array}
 	\end{equation*} 	
Likewise, we compute that   
\begin{align*}
	d_r\big(\mathcal{H}_2(r)\big)[h](\theta)&=\partial_\theta\left(\frac{h(\theta)}{R(\theta)}e^{\ii \theta}\right)\cdot\int_{D_0}\log|R(\theta)e^{\ii \theta}-y|\nabla_y\left(b^2(|y|)\right)dy\\
	&\quad+\partial_\theta\left(R(\theta)e^{\ii \theta}\right)\cdot\int_{D_0}\left(\frac{h(\theta)e^{\ii \theta}}{R(\theta)}\cdot\nabla_x\Big[\log|R(\theta)e^{\ii \theta}-y|\Big]\right)\nabla_y\left(b^2(|y|)\right)dy\\
	&\quad+2\partial_\theta\left(R(\theta)e^{\ii \theta}\right)\cdot\int_0^{2\pi}\log|R(\theta)e^{\ii \theta}-R(\eta)e^{\ii \eta}|(b b')\big(R(\eta)\big)h(\eta)e^{\ii \eta}d\eta.
\end{align*}
Therefore, using that $\nabla_x\big(\log|x-y|\big)=-\nabla_y\big(\log|x-y|\big),$ we get
\begin{equation*}
	\begin{aligned}
		\partial_\theta\left(R(\theta)e^{\ii \theta}\right)\cdot\int_{D_0}&\left(\frac{h(\theta)e^{\ii \theta}}{R(\theta)}\cdot\nabla_x \Big(\log|R(\theta)e^{\ii \theta}-y|\Big)\right)\nabla_y\left(b^2(|y|)\right)dy\\
	&=-\left(\partial_\theta\left(R(\theta)e^{\ii \theta}\right)\otimes\frac{h(\theta)e^{\ii \theta}}{R(\theta)}\right):\int_{D_0}\nabla_y\left(b^2(|y|)\right)\otimes\nabla_y\Big(\log|R(\theta)e^{\ii \theta}-y|\Big)dy.
	\end{aligned}
\end{equation*}  
By further employing Green--Gauss theorem to write that 
	\begin{align*}
		&\partial_\theta\left(R(\theta)e^{\ii \theta}\right)\cdot\int_{D_0}\left(\frac{h(\theta)e^{\ii \theta}}{R(\theta)}\cdot\nabla_x\Big(\log|R(\theta)e^{\ii \theta}-y|\Big)\right)\nabla_y\left(b^2(|y|)\right)dy\\
		& \quad =2\left(\partial_\theta\left(R(\theta)e^{\ii \theta}\right)\otimes\frac{h(\theta)e^{\ii \theta}}{R(\theta)}\right):\int_0^{2\pi}\log|R(\theta)e^{\ii \theta}-R(\eta)e^{\ii \eta}|(bb')\big(R(\eta)\big)e^{\ii \eta}\otimes\partial_\eta\left(\ii R(\eta)e^{\ii \eta}\right)d\eta\\
		&\quad+\left(\partial_\theta\left(R(\theta)e^{\ii \theta}\right)\otimes\frac{h(\theta)e^{\ii \theta}}{R(\theta)}\right):\int_{D_0}\log|R(\theta)e^{\ii \theta}-y|\nabla_y^2\left(b^2(|y|)\right)dy,
	\end{align*}
it then follows  that 
 \begin{equation*}
 	d_r\big(\mathcal{H}_2(r)\big)[h](\theta)=\sum_{i=1}^2g_{2,i}(\theta)\cdot\int_{0}^{2\pi}T_{0}(\theta,\eta)f_{2,i}(\eta)d\eta+\Upsilon_1(\theta)+\Upsilon_2(\theta),
 \end{equation*}
 where, we set
 \begin{equation*}
 	\begin{array}{ll}
 		f_{2,1}(\eta)\bydef2(bb')\big(R(\eta)\big)h(\eta)e^{\ii \eta},\qquad & g_{2,1}(\theta)\bydef\partial_\theta\left(R(\theta)e^{\ii \theta}\right),\vspace{0.2cm}\\
 		f_{2,2}(\eta)\bydef2bb'\big(R(\eta)\big)e^{\ii \eta}\otimes\partial_\eta\left(\ii R(\eta)e^{\ii \eta}\right),\qquad & g_{2,2}(\theta)\bydef\partial_\theta\left(R(\theta) e^{\ii \theta}\right)\otimes\frac{h(\theta)e^{\ii \theta}}{R(\theta)}
 	\end{array}
 \end{equation*}
and 
 \begin{align*}
 	\Upsilon_1(\theta)&\bydef\partial_\theta\left(\frac{h(\theta)}{R(\theta)}e^{\ii \theta}\right)\cdot\int_{D_0}\log|R(\theta)e^{\ii \theta}-y|\nabla_y\left(b^2(|y|)\right)dy,\\
 	\Upsilon_2(\theta)&\bydef\left(\partial_\theta\left(R(\theta)e^{\ii \theta}\right)\otimes\frac{h(\theta)e^{\ii \theta}}{R(\theta)}\right):\int_{D_0}\log|R(\theta)e^{\ii \theta}-y|\nabla_y^2\left(b^2(|y|)\right)dy.
 \end{align*} 
 Now, assuming for a moment that 
 $$\Upsilon_1, \Upsilon_2 \in C^{\alpha}(\mathbb{T})$$
 and  noticing that $g_{j,k}$ and $f_{j,k}$ belong to $C^{\alpha}(\mathbb{T})$  and $L^\infty(\mathbb{T})$, respectively, for all possible values of the integer parameters $(j,k) $ above,  we conclude, by virtue of Corollaries \ref{corollary:kernel:1}, \ref{corollary:kernel:2} and \ref{corollary:kernel:3}, that 
 $$d_r\mathcal{H}\in\mathcal{L}(X_m^\alpha,Y_m^\alpha).$$
   Let us now show that $ \Upsilon_2$ belongs to $C^\alpha(\mathbb{T})$. 
   To that end, we first notice that 
 \begin{equation*}
 	  \sup_{\theta \in \mathbb{T}}\left|\int_{D_0}\log|R(\theta)e^{\ii \theta}-y|\nabla_y^2\left(b^2(|y|)\right)dy    \right| \lesssim  \sup_{x\in B_\sigma} \int_{D_0}\left|\log|x-y|\right| dy    \norm {\nabla^2\left(b^2 \right)}_{L^\infty(D_0)},
 \end{equation*}
   for any ball $B_\sigma$ whose radius satisfies 
   \begin{equation*}
   	\rho\geq \norm { R}_{L^\infty(\mathbb{T})}.
   \end{equation*} 
   Moreover, writing that 
   \begin{equation*}
   	\int_{D_0}\log|R(\theta)e^{\ii \theta}-y|\nabla_y^2\left(b^2(|y|)\right)dy = w\left( R(\theta)e^{\ii \theta}\right),
   \end{equation*}
   where it is seen that $w$ solves Poisson's equation 
   \begin{equation*}
   	\Delta w = \nabla ^2(b^2 )  \mathds{1}_{D_0},
   \end{equation*}
    it then follows that 
   \begin{equation*}
   	\norm {w\left( R(\cdot)e^{\ii \cdot}\right)}_{\dot C^\alpha(\mathbb{T})} \lesssim \norm {R}_{C^1(\mathbb{T})}
   	  \norm { w}_{\dot  C^\alpha(\mathbb{R}^2)} \lesssim \norm {R}_{C^1(\mathbb{T})} \norm { w}_{\dot  W^{1,p_\alpha}(\mathbb{R}^2)} ,
   \end{equation*}   
   where we set 
\begin{equation}\label{def palpha}
	p_\alpha\bydef\frac{2}{1-\alpha}\in(2,\infty). 
\end{equation} 
   Thus, by virtue of Proposition \ref{prop:Elliptic:00} and \eqref{Hb1:eq}, we deduce that 
   \begin{equation*}
   		\norm {w\left( R(\cdot)e^{\ii \cdot}\right)}_{\dot C^\alpha(\mathbb{T})} \lesssim   \norm {R}_{C^1(\mathbb{T})} \norm { \nabla ^2(b^2 ) }_{L^{p_\alpha}(D_0 )} <\infty.
   \end{equation*}  
All in all, combining these bounds with the assumption that $ r\in C^{1+\alpha}(\mathbb{T})$  yields   $ \Upsilon_2\in C^{\alpha}(\mathbb{T})$. In addition to that, we  emphasize that the same preceding  arguments apply to estimate $\Upsilon_1$, entailing     the same conclusion that $ \Upsilon_1\in C^{\alpha}(\mathbb{T})$, too.

 Now, we are left with justifying   the continuity of functional  $$r\mapsto d_r\mathcal{H}(r)$$ from $\mathcal{B}_{m,\varepsilon}^{\alpha}$ into $\mathcal{L}(X_m^\alpha,Y_m^\alpha) $, i.e., we claim that 
 \begin{equation*}
 	\sup_{\norm{h}_{C^{1+\alpha}}<1}\norm{d_r \mathcal{H}(r_1) [h]-d_r \mathcal{H}(r_2) [h]}_{C^{\alpha}}\to0,
 \end{equation*}
 as 
 \begin{equation*}
 	 \norm{r_1-r_2}_{C^{1+\alpha}}\to0.
 \end{equation*}
  For the sake of clarity, we keep the same notations for all the functionals introduced above and they will be further indexed, hereafter, by $r_1$ or $r_2$ to precise the dependence on the functions $r_1$ or $r_2$, respectively.
  
   Below, we will only focus on the analysis of non-classical terms $\Upsilon_1$ and $\Upsilon_2$, given by the expansion of $d_r\mathcal{H}_2$, above, for the treatment of the remaining terms follows by a direct application of Corollaries \ref{corollary:kernel:1}, \ref{corollary:kernel:2} and \ref{corollary:kernel:3}. Note that we can further restrict our proof below to $\Upsilon_2$, for the same analysis applies to $\Upsilon_1$, as well. To that end, we first write that 
 \begin{equation*}
 	\begin{aligned}
 		 \Upsilon_{2,r_1}(\theta)-\Upsilon_{2,r_2}(\theta)&=\big(\Pi_{r_1}(\theta)-\Pi_{r_2}(\theta)\big):J_{r_1}(\theta)+\Pi_{r_2}(\theta):\big(J_{r_1}(\theta)-J_{r_2}(\theta)\big),
 	\end{aligned}
 \end{equation*}
 where, we denote, for any $\theta\in \mathbb{T}$, that 
 \begin{equation*}
 	\Pi_r(\theta)\bydef\left(\partial_\theta\left(R(\theta)e^{\ii \theta}\right)\otimes\frac{h(\theta)e^{\ii \theta}}{R(\theta)}\right)
 \end{equation*}
 and 
 \begin{equation*}
 	J_r(\theta)\bydef\int_{D_{0,r}}\log|R(\theta)e^{\ii \theta}-y|\nabla_y^2\left(b^2(|y|)\right)dy.
 \end{equation*}
  Therefore, it is readily seen that $J_{r_1}\in C^{\alpha}(\mathbb{T})$ and that
 \begin{equation*}
 	\norm{\Pi_{r_1}-\Pi_{r_2}}_{C^{\alpha}}\lesssim\norm{h}_{C^{\alpha}}\norm{r_1-r_2}_{C^{1+\alpha}}.
 \end{equation*}
Now, we write, for any $\theta\in \mathbb{T}$, that 
 \begin{align*}
 	\big(J_{r_1}-J_{r_2}\big)(\theta)&=\int_{D_{0,r_1}}\left(\log|R_1(\theta)e^{\ii \theta}-y|-\log|R_2(\theta)e^{\ii \theta}-y|\right)\nabla_y^2\left(b^2(|y|)\right)dy\\
 	&\quad+\left(\int_{D_{0,r_1}}-\int_{D_{0,r_2}}\right)\log|R_2(\theta)e^{\ii \theta}-y|\nabla_y^2\left(b^2(|y|)\right)dy\\
 	&\bydef\Delta_1(\theta)+\Delta_2(\theta).
 \end{align*}
 Thus, by denoting  
 $$x_s(\theta)\bydef sR_1(\theta)+(1-s)R_2(\theta),$$
 and employing Taylor's expansion, we obtain, for any $\theta\in \mathbb{T}$, that
\begin{align*}
	\Delta_1(\theta)&=\big(R_1(\theta)-R_2(\theta)\big)e^{\ii \theta}\cdot\int_0^1\int_{D_{0,r_1}}\nabla_x\log|x_s(\theta)e^{\ii \theta}-y|\nabla_y^2\left(b^2(|y|)\right)dyds\\
	&=-2\pi\big(R_1(\theta)-R_2(\theta)\big)e^{\ii \theta}\cdot\int_0^1\Big(\nabla\Delta^{-1}\big(\mathds{1}_{D_{r_1}}\nabla^2(b^2)\big)\Big)\big(x_s(\theta)e^{\ii \theta}\big)ds.
\end{align*}
In the sequel of the proof, we will be using the bound 
\begin{equation*} 
	\sup_{\theta\in\mathbb{T}}R_k(\theta)\leq  2a   ,\qquad k\in\{1,2\},
\end{equation*}
which has been shown in \eqref{bnd inf}.
Thus, it follows, by the composition law
\begin{equation*}
	\begin{aligned}
		&\norm { \Big(\nabla\Delta^{-1} \big(\mathds{1}_{D_{r_1}}\nabla^2(b^2)\big)\Big)\big(x_s(\cdot)e^{\ii \cdot}\big)}_{C^\alpha(\mathbb{T})} 
		\\
		& \qquad \qquad \qquad \lesssim  \left(1+\norm{R_1}_{C^1(\mathbb{T})}+\norm{R_2}_{C^1(\mathbb{T})}\right)\norm{\nabla\Delta^{-1}(\mathds{1}_{D_{r_1}}\nabla^2(b^2))}_{C^{\alpha} (B_{2a} )},
	\end{aligned}
\end{equation*}
that 
\begin{align*}
	\norm{\Delta_1}_{C^\alpha(\mathbb{T})}&\lesssim\norm{r_1-r_2}_{C^\alpha(\mathbb{T})}\left(1+\norm{R_1}_{C^1(\mathbb{T})}+\norm{R_2}_{C^1(\mathbb{T})}\right)\norm{\nabla\Delta^{-1}(\mathds{1}_{D_{r_1}}\nabla^2(b^2))}_{C^{\alpha}(B_{2a})}.
\end{align*}
Now, as the function 
\begin{equation*}
     v \bydef  \Delta^{-1}(\mathds{1}_{D_{r_1}}\nabla^2(b^2))
\end{equation*}
is a solution of Poisson's equation
\begin{equation*}
	\Delta v= \mathds{1}_{D_{r_1}}\nabla^2(b^2),
\end{equation*}
the bounds
\begin{equation*}
	\norm {\nabla v }_{L^\infty(\mathbb{R}^2)} \lesssim \norm { \nabla ^2 (b^2)}_{L^{p_\alpha}( D_{r_1})}
\end{equation*}
and
\begin{equation*}
	\norm {\nabla v }_{\dot{C}^\alpha(\mathbb{R}^2)} 
	\lesssim 
	\norm { v }_{\dot W^{2,p_\alpha}(\mathbb{R}^2)}
	 \lesssim \norm { \nabla ^2 (b^2)}_{L^{p_\alpha}( D_{r_1})}
\end{equation*} 
follow by a direct application of the Lemma \ref{lemma:lipschitz} and Proposition \ref{prop:Elliptic:00}, thanks to the assumptions \eqref{Hb1:eq} and \eqref{Hb3:eq} on the depth function $b$, where $p_\alpha$ is defined in \eqref{def palpha}. 
Hence, we deduce that  
\begin{align*}
	\norm{\Delta_1}_{C^\alpha(\mathbb{T})} 	&\lesssim\norm{r_1-r_2}_{C^\alpha(\mathbb{T})}\norm{\nabla^2(b^2)}_{L^{p_\alpha}(D_{r_1})},
\end{align*}  
which takes care of the estimate on $\Delta_1$.
As for $\Delta_2$, we first perform a change of variables in polar coordinates to rewrite it, for any $\theta\in\mathbb{T}$, as 
\begin{align*}
	\Delta_2(\theta)&=\int_0^{2\pi}\int_0^1\log|R_2(\theta)e^{\ii \theta}-sR_1(\eta)e^{\ii \eta}|u_1(s,\eta)dsd\eta\\
    &\quad-\int_0^{2\pi}\int_0^1\log|R_2(\theta)e^{\ii \theta}-sR_2(\eta)e^{\ii \eta}|u_2(s,\eta)dsd\eta,
\end{align*}
where we denote, for $k\in\{1,2\}$, that
$$u_{r_k}(s,\eta)\bydef  s R^2_k(\eta) \left(  \nabla_y ^2\Big(b^2\big(|y|\big)\Big)\right)\Big|_{y=sR_k(\eta)  }.$$
Therefore, it follows that 
\begin{align*}
	\Delta_2(\theta)&=\int_0^{2\pi}\int_0^1\left(\log|R_2(\theta)e^{\ii \theta}-sR_1(\eta)e^{\ii \eta}|-\log|R_2(\theta)e^{\ii \theta}-sR_2(\eta)e^{\ii \eta}|\right)u_{r_2}(s,\eta)dsd\eta\\
    &\quad+\int_0^{2\pi}\int_0^1\log|R_1(\theta)e^{\ii \theta}-sR_1(\eta)e^{\ii \eta}|\left(u_{r_1}-u_{r_2}\right)(s,\eta)dsd\eta.
\end{align*}
Hence, by virtue of Corollaries \ref{corollary:kernel:1} and \ref{corollary:kernel:2}, we infer, for all $\beta\in (0,\alpha)$, that 
\begin{equation*}
	\norm{\Delta_2}_{C^\alpha}\lesssim\norm{r_1-r_2}_{C^{1}}^\beta\norm{u_{r_2}}_{L^\infty([0,1]\times \mathbb{R}^2)}+\norm{u_{r_1}-u_{r_2}}_{L^\infty([0,1]\times \bar B_{2a})}.
\end{equation*} 
Thus, as the function $u_{r_1} - u_{r_2}$ is continuous, whence its maximum on the closed set $[0,1]\times \bar B_{2a}$ is attained, we deduce that 
\begin{equation*}
	\norm{\Delta_2}_{C^\alpha}\lesssim\norm{r_1-r_2}_{C^{1}}^\beta\norm{u_{r_2}}_{L^\infty(\mathbb{R}^2)}+|(u_{r_1}-u_{r_2})(s_0,\eta_0)|,
\end{equation*}
for some $s_0,\eta_0\in [0,1]\times \bar B_{2a}$. Hence, due to the preceding point-wise bound and assumption \eqref{Hb1:eq}, it is readily seen that 
  \begin{equation*}
	\lim_{\norm{r_1-r_2}_{C^{1+\alpha}}\to0}\norm{\Delta_2}_{C^\alpha} =0.
\end{equation*} 
All in all, combining all the preceding bounds, we finally deduce that 
 \begin{equation*}
 \lim_{\norm{r_1-r_2}_{C^{1+\alpha}}\to0} \left( 	\sup_{\norm{h}_{C^{1+\alpha}}<1}\norm{\Upsilon_{2,r_1}-\Upsilon_{2,r_2}}_{C^{\alpha}}\right) =0,
 \end{equation*}
 which concludes the proof of the desired regularity properties for $\mathcal{H}$.

 \subsubsection*{Regularity of $\mathcal{K}$} The desired regularity of that term stems from the high-regularity of the perturbation $\phi$, which is the crux of the new decomposition in Lemma \ref{lemma:expansion1}. This is a generic property and can be observed in other settings. We proceed first by writing, for any $\theta\in\mathbb{T}$, that 
 \begin{equation*}
 	\mathcal{K}(r)(\theta)=\partial_{\theta}\left(R(\theta)e^{\ii \theta} \right)\cdot\nabla\phi_{R}\left(R(\theta)e^{\ii \theta}\right).
 \end{equation*}
 Therefore, by virtue of the fact that $\phi_R\in W^{3,p}_\loc(\mathbb{R}^2)$, for all $p\in (1,\infty)$, it is then readily seen that 
 $$\theta\mapsto\mathcal{K}(r)(\theta)\in C^{\alpha}(\mathbb{T}),$$
 as soon as $r\in C^{1+\alpha}(\mathbb{T})$. Moreover, we compute, for any $h\in C^{1+\alpha}(\mathbb{T})$, that 
 \begin{align*}
 	d_r\big(\mathcal{K}(r)\big)[h](\theta)&=\partial_{\theta}\left(\frac{h(\theta)}{R(\theta)}e^{\ii \theta}\right)\cdot\nabla\phi_{R}\left(R(\theta)e^{\ii \theta}\right)\\
 	&\quad+\left(\partial_{\theta}\left(R(\theta)e^{\ii \theta}\right)\otimes\frac{h(\theta)}{R(\theta)}e^{\ii \theta}\right):\nabla^2\phi_{R}\left(R(\theta)e^{\ii \theta}\right)
 	\\
 	& \quad +    \partial_{\theta}\left(\frac{h(\theta)}{R(\theta)}e^{\ii \theta}\right)\cdot\nabla  \mathcal{E}_{R}\left(R(\theta)e^{\ii \theta}\right),
 \end{align*}
 where the last term above comes from differentiating  the expression \eqref{phi_D:expression} with respect to $R$ (inside the integral) and is given by 
 \begin{equation} \label{E:DEF}
 	\mathcal{E}_R(x)\bydef   \int_{\mathbb{T}} \int_0^{ \infty} K_b(x,\rho e^{\ii \theta})\frac{ b'}{b^2}(\rho )\partial_\rho \left(  \int_{0}^{2\pi } \log|\rho e^{\ii \theta}- R(\eta)e^{\ii \eta}| b\big (R(\eta)\big )  d\eta  \right) \rho d\rho d\theta,
 \end{equation}
 for all $x\in \mathbb{R}^2$. In particular, by virtue of Lemma \ref{lemma:A:1} and the   assumption \eqref{Hb3:eq} on the function $b$, one sees that 
 \begin{equation*}
 	(r,\theta)\mapsto \frac{ b'}{b^2}(\rho )\partial_\rho \left(  \int_{0}^{2\pi } \log|\rho e^{\ii \theta}- R(\eta)e^{\ii \eta}| b\big (R(\eta)\big )  d\eta  \right) \in L^p(\mathbb{R}^+\times \mathbb{T}),
 \end{equation*} 
 for all $p\in [1,\infty)$. Thus, by means of Proposition \ref{prop:Elliptic:00} (see also Remark \ref{rem:ell:bounds}), we deduce that 
 \begin{equation*}
 	\nabla \mathcal{E}_R\in W^{1,p_\alpha}_{\loc } (\mathbb{R}^2)\hookrightarrow C_{\loc }^{\alpha}(\mathbb{R}^2)
 \end{equation*}
 with $p_{\alpha}$ being defined in \eqref{def palpha}.

 Hence, by a similar argument, relying on the $W^{2,p_\alpha}_{\loc }$-regularity of   $\nabla \phi_{R}$,  which is  again a consequence of   Remark \ref{rem:reg:remainder},    together with the symmetry properties established in Lemma \ref{lemma:symmetry-lin}, we deduce that 
 $$d_r\mathcal{K}\in\mathcal{L}(X_m^\alpha,Y_m^\alpha).$$ 
 
 We are now left with proof of continuity of $d_r\mathcal{K}$  with respect to the $r$-variable.  To that end, we first write by means of a direct computation  that 
 \begin{equation*}
 	\begin{aligned}
 		\norm{d_r\mathcal{K} (r_1)[h]-d_r\mathcal{K}(r_2)[h]}_{C^{\alpha}}&\leq\norm{R_1}_{C^{1}}\norm{(\nabla\phi_{R_1}, \nabla \mathcal{E}_{R_1})}_{C ^{\alpha}(B_{2a})}\norm{\partial_{\theta}\left(\frac{h}{R_1}e^{\ii \cdot}\right)-\partial_{\theta}\left(\frac{h}{R_2}e^{\ii \cdot}\right)}_{C^{\alpha}}
 		\\
 		&+\norm{\nabla\phi_{R_1}\big(R_1e^{\ii \cdot}\big)-\nabla\phi_{R_2}\big(R_2e^{\ii \cdot}\big)}_{C^{\alpha}}\norm{\partial_{\theta}\left(\frac{h}{R_2}e^{\ii \cdot}\right)}_{C^{\alpha}}
 		\\
 		&+\norm{\partial_{\theta}\left(R_1e^{\ii \cdot}\right)\otimes\frac{h}{R_1}-\partial_{\theta}\left(R_2e^{\ii \cdot}\right)\otimes\frac{h}{R_2}}_{C^{\alpha}}\norm{R_1}_{C^1}\norm{\nabla^2\phi_{R_1}}_{C ^\alpha(B_{2a})}
 		\\
 		&+\norm{\nabla\mathcal{E}_{R_1}\big(R_1e^{\ii \cdot}\big)-\nabla\mathcal{E}_{R_2}\big(R_2e^{\ii \cdot}\big)}_{C^{\alpha}}\norm{\partial_{\theta}\left(\frac{h}{R_2}e^{\ii \cdot}\right)}_{C^{\alpha}}
 		\\
 		&+\norm{\partial_{\theta}\left(R_2e^{\ii \cdot}\right)\otimes\frac{h}{R_2}}_{C^{\alpha}} \norm{\nabla^2\phi_{R_1}\left(R_1e^{\ii \cdot}\right)-\nabla^2\phi_{R_2}\left(R_2e^{\ii \cdot}\right)}_{C^{\alpha}}.
 	\end{aligned}
 \end{equation*} 
 The most challenging terms in this collection of bounds are  
 \begin{equation}\label{last:ES:A}
 	\begin{aligned}
 		\norm{\nabla\mathcal{E}_{R_1}\big(R_1e^{\ii \cdot}\big)-\nabla\mathcal{E}_{R_2}\big(R_2e^{\ii \cdot}\big)}_{C^{\alpha}}
 		& \lesssim \norm {R_1}_{C^1} \norm{\nabla\mathcal{E}_{R_1} -\nabla\mathcal{E}_{R_2} }_{C^{\alpha}(B_{2a})}
 		\\
 		& \quad  + \norm{\nabla\mathcal{E}_{R_2}\big(R_1e^{\ii \cdot}\big)-\nabla\mathcal{E}_{R_2}\big(R_2e^{\ii \cdot}\big)}_{C^{\alpha}}
 	\end{aligned}
 \end{equation}
 and 
  \begin{equation}\label{last:ES:B}
 	 \begin{aligned}
  \norm{\nabla^2\phi_{R_1}\left(R_1e^{\ii \cdot}\right)-\nabla^2\phi_{R_2}\left(R_2e^{\ii \cdot}\right)}_{C^{\alpha}} 
 	 	& \lesssim  \norm {R_1}_{C^1}\norm{\nabla^2\phi_{R_1} -\nabla^2\phi_{R_2} }_{C^{\alpha}(B_{2a})} 
 	 	\\
 	 	& \quad + \norm{\nabla^2\phi_{R_2}\left(R_1e^{\ii \cdot}\right)-\nabla^2\phi_{R_2}\left(R_2e^{\ii \cdot}\right)}_{C^{\alpha}} ,
 	 \end{aligned}
 \end{equation} 
 whereas the remaining ones can be estimated in a similar, or even easier, way. Notably, we already have prepared the ground to estimate them due to the analysis laid out in the previous lemmas. In particular,  noticing that 
 \begin{equation*}
 	\nabla\mathcal{E}_{R_2} , \nabla^2\phi_{R_2} \in W^{1,p_\kappa}_{\loc }(\mathbb{R}^2) \hookrightarrow C^{\kappa}_{\loc }(\mathbb{R}^2),
 \end{equation*}
 for $\kappa\in (0,1)$ being defined as 
 \begin{equation*}
 	\kappa\bydef1-\frac{2}{q_\kappa}>1-\frac{2}{p_\alpha}=\alpha,
 \end{equation*}
 it then follows by a direct application of Lemma \ref{lemma:composition} that 
 \begin{equation*}
 	 \norm{\nabla^2\phi_{R_2}\left(R_1e^{\ii \cdot}\right)-\nabla^2\phi_{R_2}\left(R_2e^{\ii \cdot}\right)}_{C^{\alpha}}  \lesssim \norm {R_1-R_2}_{C^1}^{\kappa-\alpha}
 \end{equation*}
 and
 \begin{equation*}
 	 \norm{\nabla\mathcal{E} _{R_2}\left(R_1e^{\ii \cdot}\right)-\nabla \mathcal{E}_{R_2}\left(R_2e^{\ii \cdot}\right)}_{C^{\alpha}}  \lesssim \norm {R_1-R_2}_{C^1}^{\kappa-\alpha}.
 \end{equation*}
 This takes care of the last terms in the right-hand sides of \eqref{last:ES:A} and \eqref{last:ES:B}. On the other hand, the first terms in the right-hand sides of these inequalities will be controlled by using a different argument. More precisely, owing to the definition \eqref{E:DEF}, employing Sobolev embedding and the elliptic-gain of regularity (see Proposition \ref{prop:Elliptic:00} and Remark \ref{rem:ell:bounds}), we infer that  
 \begin{equation*}
 	\begin{aligned}
 		\norm{\nabla\mathcal{E}_{R_1} -\nabla\mathcal{E}_{R_1} }_{C^{\alpha}(B_{2a})}
 		& \lesssim \norm {\mathcal{E}_{R_1} -\mathcal{E}_{R_1}}_{\dot W^{2,p_\alpha}(\mathbb{R}^2)}
 		\\
 		& \lesssim \norm {\frac{ b'}{b^2}(\rho )\partial_\rho ( \mathcal{A}_1-\mathcal{A}_2)(\rho,\theta) }_{L^p(\rho d\rho d\theta ; (0,R_\infty) \times \mathbb{T})},
 	\end{aligned}
 \end{equation*}
 where $R_\infty$ is the constant introduced in assumption \eqref{Hb3:eq} and with 
 \begin{equation*}
 	\mathcal{A}_i(\rho, \theta) \bydef   \int_{\mathbb{T}} \log|\rho e^{\ii \theta}- R_i(\eta)e^{\ii \eta}| b\big (R(\eta)\big )d\eta .
 \end{equation*}
 Thus, by a direct application of Lemma \ref{lemma:A:2}, it is readily seen that 
 \begin{equation*}
 	\norm{\nabla\mathcal{E}_{R_1} -\nabla\mathcal{E}_{R_1} }_{C^{\alpha}(B_{2a})}
 	 \lesssim 
 	 \norm {R_1-R_2}_{L^\infty}^\beta,
 \end{equation*}
 for any  $\beta\in [0, \frac{1}{p_\alpha})$. 
 
 At last, it only remains to estimate the first term in \eqref{last:ES:B}. To that end, we notice that the representation \eqref{phi_D:expression} entails that 
 \begin{equation*} 
 	\phi_{R_1}- \phi_{R_2}= \phi_{\widetilde{R}} (x), \end{equation*}
 where $\phi_{\widetilde{R}}$  is given, for any $x\in \mathbb{R}^2$, by
 \begin{equation*}
 	\phi_{\widetilde{R}} (x)\bydef \frac{1}{2\pi } \int_{\mathbb{R}^2} K_b(x,z)\frac{\nabla b}{b^2}(z)\cdot\nabla_z \int_{\mathbb{R}^2} \log|z-y| b(y) \left(  \mathds{1}_{\{y\in D_1\}}-\mathds{1}_{\{y\in D_2\}} \right) dydz ,
 \end{equation*}
 which represents the  remainder   term in the decomposition given in Lemma \ref{lemma:expansion1} of the solution of  \eqref{EL:P1} with the new source term 
 $$f = \mathds{1}_{D_1}-\mathds{1}_{D_2}.$$  
 Therefore, by virtue of Sobolev embedding and the elliptic-gain of regularity (see in particular Remark \ref{rem:reg:remainder}) we deduce the bound 
\begin{equation*}
	\norm{\nabla^2\phi_{R_1} -\nabla^2\phi_{R_2} }_{C^{\alpha}(B_{2a})} \lesssim  \norm{ \phi_{\widetilde{R}} }_{ \dot W^{3,p_\alpha}(\mathbb{R}^2)} \lesssim \norm {\mathds{1}_{D_1}-\mathds{1}_{D_2}}_{L^{p_\alpha}(\mathbb{R}^2)},
\end{equation*}
 which in turn implies that 
 \begin{equation*}
 	\lim_{\norm {R_1-R_2}_{L^\infty}\to 0}\norm{\nabla^2\phi_{R_1} -\nabla^2\phi_{R_2} }_{C^{\alpha}(B_{2a})}  = 0.
 \end{equation*}
 All in all, combining the previous estimates concludes  the justification of the desired regularity properties for $\mathcal{K}$ and completes the proof of the proposition.  
 \end{proof}

 \subsection{Proof of Theorem \ref{thm:1}}
 
We are now in position to employ all the preceding  results above and from the previous sections to finally establish all the prerequisites to applying   Crandall-Rabinowitz's Theorem \ref{Crandall-Rabinowitz theorem} and, subsequently, deduce Theorem \ref{thm:1}. This is achieved by the following  proposition.
\begin{prop}\label{prop.last} Let $\alpha\in (0,1)$, $a>0$, $M(a,b)$ be defined through \eqref{def M ab} and $\varepsilon>0$ be determined by Proposition \ref{proposition regularity of the functional}.     Then the following assertions hold for all integers  $m >M(a,b)$:
\begin{enumerate}
\item The linearized  operator $$d_r \mathcal{F}(\Omega,0):  X_m ^\alpha \to Y_m ^\alpha $$ has a 
non trivial kernel  if and only if
\begin{equation*}
	\Omega\in\left\{\Omega_{nm} \bydef  Q(a,0)-\Lambda_{nm}(a,a),\quad n\in\mathbb{N}^*\right\},
\end{equation*} 
where $Q(a,0)$ and $\big(\Lambda_{n}(a,a)\big)_{n\in\mathbb{N}^*}$ are defined by \eqref{def Q} and \eqref{def Lambdan}, respectively.
\item Moreover, $\ker \big(d_r\mathcal{F}(\Omega_{m} ,0)\big)$  is a one-dimensional vector space   generated by $$\theta\mapsto\cos(m\theta).$$
\item Furthermore, the range of $d_r\mathcal{F}(\Omega _{m} ,0)$ is closed and of co-dimension one.
\item As for the transversality condition, we have that  
 $$\partial_\Omega d_r\mathcal{F}(\Omega _{m} ,0)\big[\cos(m\cdot)\big]\notin \textnormal{Range}\big(d_r\mathcal{F}(\Omega_{m} ,0)\big).$$
\end{enumerate}
\end{prop}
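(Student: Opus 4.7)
The plan is to exploit the diagonal Fourier-multiplier structure of the linearized operator furnished by Lemma \ref{lemma:linearized-OP}. Writing $h(\theta) = \sum_{n\geq 1} h_n \cos(nm\theta)$ and setting
\[
\lambda_n(\Omega) \bydef nm\bigl(\Omega - Q(a,0) + \Lambda_{nm}(a,a)\bigr),
\]
so that $d_r\mathcal{F}(\Omega,0)[h](\theta) = -\sum_{n\geq 1}\lambda_n(\Omega) h_n \sin(nm\theta)$, items (1) and (2) follow at once: the kernel is non-trivial if and only if some $\lambda_n(\Omega)$ vanishes, which forces $\Omega \in \{\Omega_{nm}\}_{n\geq 1}$; and at $\Omega = \Omega_m$ only the mode $n=1$ is annihilated because the strict monotonicity of $k\mapsto \Lambda_k(a,a)$ for $k > M(b)$, established in Proposition \ref{lemma:eigenfunction}(4), applies to every $nm \geq 2m > M(b)$ and forces $\Lambda_{nm}(a,a) < \Lambda_m(a,a)$, hence $\lambda_n(\Omega_m) \neq 0$ for all $n \geq 2$. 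Thus $\ker d_r\mathcal{F}(\Omega_m,0) = \mathrm{span}\{\cos(m\cdot)\}$.

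For (3), I will identify the range with
\[
\mathcal{R} \bydef \Bigl\{g \in Y_m^\alpha : \int_0^{2\pi} g(\theta)\sin(m\theta)\, d\theta = 0\Bigr\},
\]
which is automatically closed and of codimension one as the kernel of a bounded linear functional on $Y_m^\alpha$. The containment $\mathrm{Range}(d_r\mathcal{F}(\Omega_m,0))\subset \mathcal{R}$ is immediate from $\lambda_1(\Omega_m) = 0$. For the reverse inclusion, given $g = \sum_{n\geq 2} g_n\sin(nm\theta) \in \mathcal{R}$, I would define the candidate pre-image
\[
h(\theta) \bydef \sum_{n\geq 2} \frac{g_n}{\lambda_n(\Omega_m)}\cos(nm\theta),
\]
and verify that $h \in X_m^\alpha$. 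The asymptotic expansion from Proposition \ref{lemma:eigenfunction}(1)--(2), using $\mathtt{m}(a,a)=1$ and the sharper estimate $|f_n(a,a)| \lesssim 1/n^3$ arising from the case $\alpha=\beta$ of the bound in part (2), yields $\Lambda_{nm}(a,a) = \frac{b(a)}{2nm} + O(1/n^3)$, whence $\lambda_n(\Omega_m) = -nm\Lambda_m(a,a) + \frac{b(a)}{2} + O(1/n^2)$ and, by direct expansion,
\[
\frac{1}{\lambda_n(\Omega_m)} = -\frac{1}{nm\Lambda_m(a,a)} + O\!\left(\frac{1}{n^2}\right).
\]
The leading multiplier produces, up to a constant factor, an antiderivative of $g$, which lies in $C^{1+\alpha}$ by classical integration of a $C^\alpha$ function, while the residual part has Fourier coefficients $O(g_n/n^2)$ and is therefore strictly smoother. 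Thus $h \in C^{1+\alpha}(\mathbb{T})$ with a continuous linear bound in $\|g\|_{C^\alpha}$.

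Item (4) reduces to a short computation: differentiating the Fourier representation of $d_r\mathcal{F}(\Omega,0)$ in $\Omega$ yields $\partial_\Omega d_r\mathcal{F}(\Omega,0)[h](\theta) = -\sum_{n\geq 1} nm\, h_n \sin(nm\theta) = h'(\theta)$, independently of $\Omega$. Evaluated at $h(\theta) = \cos(m\theta)$ this gives $-m\sin(m\theta)$, whose $\sin(m\theta)$-coefficient is $-m \neq 0$; hence this element does not belong to $\mathcal{R}$, verifying the transversality condition.

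The main technical obstacle in this program is the surjectivity step in (3), namely confirming that the explicitly constructed Fourier pre-image actually belongs to $C^{1+\alpha}$. This is precisely where the sharper $n^{-3}$ decay of $f_n(a,a)$ produced by Proposition \ref{lemma:eigenfunction}(2) in the confluent case is indispensable: it forces the remainder $\rho_n \bydef \frac{1}{\lambda_n(\Omega_m)} + \frac{1}{nm\Lambda_m(a,a)}$ to decay like $n^{-2}$, so the correction to the leading antiderivative is smoother than the target H\"older regularity. Once this point is settled, every other ingredient is either a direct Fourier-series manipulation or a reading-off from the spectral asymptotics already compiled in Section \ref{section:linearized:operator}.
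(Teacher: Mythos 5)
Your plan follows the paper's own route almost point for point: items (1), (2) and (4) are read off directly from the Fourier multiplier representation of Lemma \ref{lemma:linearized-OP} together with the strict monotonicity of $\big(\Lambda_k(a,a)\big)_{k>M(b)}$ from Proposition \ref{lemma:eigenfunction}, and the range is identified with the same codimension-one subspace of $Y_m^\alpha$ through an explicit Fourier preimage. The one place where your reasoning, as written, would not close is the final inference in step (3) that because the correction term has Fourier coefficients $O(g_n/n^2)$ it is ``therefore strictly smoother'' than $C^{1+\alpha}$. Naive Fourier decay is not enough here: for $g\in C^\alpha$ one only has $g_n=O(n^{-\alpha})$, so $\rho_n g_n=O(n^{-2-\alpha})$, and this gives $C^{1+\alpha'}$ for every $\alpha'<\alpha$ but falls short of $C^{1+\alpha}$ itself. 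The paper's proof instead splits the multiplier $1/q_{nm}(\Omega_m,a)$ on $h'$ into a constant plus a piece $\widetilde q_n=O(1/n)$, notes that $H(\theta)=\sum_n\widetilde q_n\sin(nm\theta)\in L^2(\mathbb{T})\subset L^1(\mathbb{T})$, and invokes $L^1*C^\alpha\subset C^\alpha$; the right way to salvage your version is the analogous observation that the kernel with coefficients $\rho_n=O(1/n^2)$ lies in $W^{1,1}(\mathbb{T})$ (since $n\rho_n=O(1/n)\Rightarrow$ its derivative is in $L^2\subset L^1$), so its convolution with $g$ has a $C^\alpha$ derivative and hence belongs to $C^{1+\alpha}$. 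Once that convolution step is made explicit the two arguments coincide; a further harmless slip is the sign in your formula for the preimage coefficients, which should read $h_n=-g_n/\lambda_n(\Omega_m)$ to cancel the overall minus sign in the multiplier expansion.
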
 

\begin{proof}  
\textit{(1)} Let $h$ be a function in $X_m^\alpha$ given, for all $\theta\in\mathbb{T}$, by the expansion 
\begin{equation*}
	h(\theta)=\sum_{n=1}^{\infty}h_{n} \cos(nm\theta), 
\end{equation*}
for some scalars $h_n\in\mathbb{R}$.
We also recall, in view of Lemma \ref{lemma:linearized-OP}, that 
$$d_r\mathcal{F}(\Omega,0)[h](\theta)=-\sum_{n=1}^{\infty}nmq_{nm}(\Omega,a)h_{n}\sin(nm\theta),$$
where, for any $n\in\mathbb{N}^*$, we define
$$q_{nm}(\Omega,a)\bydef\Omega  -\Omega_{nm}.$$
Observe that it is readily seen that 
$$q_{nm}(\Omega,a)=0\qquad \text{if and only if}\qquad\Omega=\Omega_{nm} .$$
Hence, it follows that the kernel of $d_r \mathcal{F}(\Omega_{nm} ,0)$ is non trivial.\\
\textit{(2)} From the previous point, we know that the kernel of $d_r \mathcal{F}(\Omega_{m} ,0)$ is non trivial. To show that it is one-dimensional, it is sufficient to prove, for any integer $n\geq 2$, that 
$$ \Omega_{m} \neq\Omega_{nm} .$$
This fact is a consequence of the strict monotonicity of the sequence $\big(\Lambda_{n}(a,a)\big)_{n\in\mathbb{N}^*}$, previously proved in Proposition \ref{lemma:eigenfunction}-\textit{(2)}. Consequently, we deduce that 
$$\dim\Big(\ker\big(d_r \mathcal{F}(\Omega_{m} ,0)\big)\Big)=1$$
and that an obvious generator of the kernel is the function 
$$\theta\mapsto\cos(m\theta).$$
\textit{(3)} Now, we show that the range of $d_r\mathcal{F}(\Omega_{m}(a),0)$ coincides with the space
 \begin{equation*}
 	Z_{m}^\alpha\bydef\left\{g\in C^{\alpha}(\mathbb{T}): g(\theta)= \sum_{n=2}^{\infty}g_n\sin(nm\theta),\ g_n\in\mathbb{R},\ \theta\in\mathbb{T} \right\},
 \end{equation*}
which is clearly a closed sub-space of $Y_m^\alpha$ with co-dimension one. This will take care of the third claim in the statement of the Proposition \ref{prop.last}, above. To see that, we emphasize first that  
$$\textnormal{Range}\big(d_r\mathcal{F}(\Omega _{m} ,0)\big)\subset Z_{m}^\alpha.$$
Hence, it only remains to justify the converse of the preceding inclusion. To that end, we consider an element $g\in Z_{m}^\alpha$ and taking the following form  
\begin{equation*}
	g(\theta)=\sum_{n=2}^{\infty}g_n \sin(nm\theta),\quad \text{for all} \quad \theta\in\mathbb{T}.
\end{equation*}
 Then, observe that the strict monotonicity of $\big(\Lambda_{nm}(a,a)\big)_{n\in\mathbb{N}^*}$ in Proposition \ref{lemma:eigenfunction}-\textit{(4)} implies that 
 \begin{equation}\label{supq}
 	 q_{nm}(\Omega_{m} ,a)= \Lambda_{nm}(a,a)-\Lambda_{m}(a,a) <\Lambda_{2m}(a,a)-\Lambda_{m}(a,a) <0,
 \end{equation}
for all $n\geq 2.$ Next, we introduce the function $h$ by setting, for all $\theta\in \mathbb{T}$, that 
\begin{equation*}
	h(\theta)=\sum_{n=2}^{\infty}h_n \cos(nm\theta),\qquad h_n\bydef 
		\frac{-g_n}{nmq_{nm}(\Omega_{m} ,a)}, \quad  \textnormal{for all} \quad  n\geq2. 
\end{equation*}
  Clearly, we formally have $d_r\mathcal{F}(\Omega_{m}(a),0)[h]=g$ and to make this construction rigorous it only remains to show that $h\in C^{1+\alpha}(\mathbb{T})$. To that end,  notice first  that using \eqref{supq} and applying Cauchy-Schwarz and Bessel inequalities, we get
  \begin{align*}
  	\norm{h}_{L^{\infty}}&\lesssim\sum_{n=2}^{\infty}\frac{|g_n|}{n} \lesssim\|g\|_{L^2} \lesssim\|g\|_{C^{\alpha}}.
  \end{align*}
  Hence, we are left with proving that $h'\in C^{\alpha}(\mathbb{T}).$ To prove that, we first notice that 
  $$h'(\theta)=\sum_{n=2}^{\infty}\frac{g_n}{q_{nm}(\Omega_{m}(a),a)}\sin(nm\theta).$$
  Then, let us define the sequence $\big(\widetilde{q}_n(a)\big)_{n\in \mathbb{N}\setminus\{0,1\}}$ by
\begin{equation*}
	\widetilde{q}_{n}(a)\bydef   \frac{1}{q_{nm}(\Omega_{m}(a),a)}-\frac{1}{\Lambda_{m}(a,a)}=\frac{\Lambda_{nm}(a,a)}{\Lambda_{m} (a,a)[\Lambda_{nm}(a,a)-\Lambda_{m}(a,a)]}. 
\end{equation*} 
Observe that the preceding sequence is well-defined thanks to \eqref{supq} and Proposition \ref{lemma:eigenfunction}-\textit{(3)} which implies, since $m>M(a,b),$    that
$$\Lambda_{m}(a,a)>0.$$
Then, noting, due to \eqref{exp Lambdan} and \eqref{bnd fn_prop}, that 
$$\widetilde{q}_{n}(a)\underset{n\to\infty}{=}O\left(\frac{1}{n}\right),$$
it then follows that 
\begin{equation*}
	\theta\mapsto H(\theta)\bydef\sum_{n=2}^{\infty}\widetilde{q}_n(a)\sin(nm\theta)\in L^2(\mathbb{T})\subset L^1(\mathbb{T}).
\end{equation*}
Therefore, writing   
\begin{align*}
	h'(\theta)&=\sum_{n=2}^{\infty}\widetilde{q}_n(a)g_n\sin(nm\theta)+\frac{1}{\Lambda_{m}(a,a)}\sum_{n=2}^{\infty}g_n \sin(nm\theta)\\
	&=(H\ast g)(\theta)+\frac{1}{\Lambda_{m}(a,a)}g(\theta),
\end{align*}
then using the convolution property $L^1\ast C^{\alpha}\rightarrow C^{\alpha}$ and the fact that $g\in C^{\alpha} $, we conclude that the right-hand side above belongs to $C^{\alpha}$.\\
\textit{(4)} At last, the transversality condition follows from the simple fact that 
\begin{equation*}
	 \partial_\Omega d_r\mathcal{F}(\Omega _{m},0)\big[\cos(m\theta)\big]=-m\sin(m\theta),
\end{equation*}
for all $   \theta\in\mathbb{T} $. It is then obvious that the function in the right-hand side above does not belong to $Z_{m}^\alpha$. This concludes the proof of Proposition \ref{prop.last}.
\end{proof}

 \begin{proof}[Proof of Theorem \ref{thm:1}]
 	The proof of Theorem \ref{thm:1} is achieved by employing the results of Proposition \ref{prop.last} and applying Crandall--Rabinowitz Theorem \ref{Crandall-Rabinowitz theorem}. 
 \end{proof}


\section{Doubly-connected time-periodic solutions}\label{sec DC}
\subsection{The contour dynamics equation and linearization} 
The scope of this section is to prove  Theorem \ref{thm:2}. Throughout the rest of the paper, we  fix two real numbers $0<a_2<a_1$ and we look for a  solution   of \eqref{Lake1}
in the form  
$$\omega (t,\cdot)=b\mathds{1}_{D_{1,t}\setminus{\overline{D}_{2,t}}}.$$
for $t\geq 0$, where   the domain $D_{k,t}$, for   $k\in\{1,2\},$ is close to the disc $\{x\in\mathbb{R}^2: |x|<a_k\}$ in a sense to be precised in a moment. To that end, considering a parametrization  
$$z_k(t,\cdot):  \mathbb{T}\rightarrow\partial D_{k,t}$$   of each   boundary $\partial D_{k,t}$,  it follows by the same arguments leading to \eqref{EQ:z} that the dynamics of interfaces are given by the system
$$\partial_{t}z_{k}(t,\theta)\cdot\mathbf{n}\big(t,z_k(t,\theta)\big)+\frac{1}{b\big(z_{k}(t,\theta)\big)}\partial_{\theta}\Big(\psi_{b}\big(t,z_{k}(t,\theta)\big)\Big)=0, \quad \text{for}\quad k\in\{1,2\},$$
where the stream function $\psi_b$ is defined in \eqref{def psib}.
As we are interested in constructing V-states that are close to annuli, we now consider the following ansatz, for $k \in \{1,2\}$ 
$$  z_k(t,\theta)\bydef e^{\mathrm{i}\Omega t}z_k(\theta),\qquad z_k(\theta)\bydef R_{k}(\theta)e^{\mathrm{i}\theta},\qquad R_k(\theta)\bydef\sqrt{a_k^2+2r_k(\theta)},$$
which describes a rigid rotation in polar coordinates, where $(z_1(\theta),z_2(\theta))_{\theta\in \mathbb{T}}$ is a parametrization of the initial patch
\begin{equation*}
	\mathds{1}_{D_{1}\setminus{\overline{D}_{2}}}\bydef \mathds{1}_{D_{1,0}\setminus{\overline{D}_{2,0}}} .
\end{equation*}
Therefore, the equations of motions write the following system
\begin{equation}\label{system lake stationary}
	\begin{cases}
		\Omega\,r_1'(\theta)+\frac{1}{b\big(R_1(\theta)\big)}\partial_{\theta}\Big(\psi_{b}\big(z_1(\theta)\big)\Big)=0,\\
		\Omega\,r_2'(\theta)+\frac{1}{b\big(R_2(\theta)\big)}\partial_{\theta}\Big(\psi_{b}\big(z_2(\theta)\big)\Big)=0.
	\end{cases}
\end{equation}
 In view of the integral representation \eqref{integral rep psib}, we now compute, using polar coordinates, that 
 \begin{align*}
	\psi_b\big(z_k(\theta)\big)&=\int_{D_1}K_b\big(R_k(\theta)e^{\mathrm{i}\theta},y\big)b(y)dy-\int_{D_2}K_b\big(R_k(\theta)e^{\mathrm{i}\theta},y\big)b(y)dy\\
	&=\int_{0}^{2\pi}\int_{0}^{R_1(\eta)}K_b\big(R_k(\theta)e^{\mathrm{i}\theta},\rho e^{\mathrm{i}\eta}\big)b(\rho)\rho d\rho d\eta\\
	&\quad-\int_{0}^{2\pi}\int_{0}^{R_2(\eta)}K_b\big(R_k(\theta)e^{\mathrm{i}\theta},\rho e^{\mathrm{i}\eta}\big)b(\rho)\rho d\rho d\eta\\
	& = \psi_b^{[1,k]}(\theta)-\psi_{b}^{[2,k]}(\theta),
\end{align*}
where we set, for any $ k,\ell \in\{1,2\}$
\begin{align}\label{PSI:kl:DEF}
	 \psi_b^{[\ell,k]}(\theta)\bydef\int_{0}^{2\pi}\int_{0}^{R_\ell(\eta)}K_b\big(R_k(\theta)e^{\mathrm{i}\theta},\rho e^{\mathrm{i}\eta}\big)b(\rho)\rho d\rho d\eta.
\end{align}
Accordingly, we are led to solve the system
$$\mathcal{G}(\Omega,r_1,r_2)=0,\qquad\mathcal{G}\bydef(\mathcal{G}_1,\mathcal{G}_2),$$
where 
\begin{equation}\label{G:DEF}
	\mathcal{G}_k(\Omega,r_1,r_2)(\theta)\bydef\Omega r_k'(\theta)+\frac{1}{b\big(R_k(\theta)\big)}\partial_{\theta}\left(\psi_b^{[1,k]}(\theta)-\psi_{b}^{[2,k]}(\theta)\right), \quad k\in \{1,2\}.
\end{equation}
The regularity analysis of the contour equations in the doubly-connected case is somewhat similar to the simply-connected case. However, additional new parameters are involved due to the two-dimensional structure which require a careful attention.  

We begin by stating the following lemma which summarizes the regularity properties of the functional $\mathcal{G}$. 
\begin{lem}\label{lemma:reg:DC}
	Let $0<a_2<a_1,$ $m\in\mathbb{N}^*$ and $\alpha\in (0,1).$ There exists $\varepsilon>0$ such that the functional 
	\begin{equation*}
		\mathcal{G}:\mathbb{R}\times \mathcal{B}_{m,\varepsilon}^{\alpha}\times \mathcal{B}_{m,\varepsilon}^{\alpha}\rightarrow Y_{m}^{\alpha} \times Y_{m}^{\alpha}
	\end{equation*}
	is  well-defined and of class $C^1$. Moreover, the partial derivative $\partial_{\Omega}d_{(r_1,r_2)}\mathcal{G} $ exists in the sense that 
	\begin{equation*}
		\partial_{\Omega}d_{(r_1,r_2)}\mathcal{G}:\mathbb{R}\times \mathcal{B}_{m,\varepsilon}^{\alpha}\times \mathcal{B}_{m,\varepsilon}^{\alpha}\rightarrow\mathcal{L}\big(X_{m}^{\alpha}\times X_{m}^{\alpha},Y_{m}^{\alpha}\times Y_{m}^{\alpha}\big)
	\end{equation*} 
	and is continuous.
\end{lem}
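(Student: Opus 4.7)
The proof follows the scheme of Proposition \ref{proposition regularity of the functional}, applied block by block. The affine summand $\Omega r_k'$ is trivially $C^1$, so the task reduces to analyzing
$$(r_1,r_2)\mapsto\frac{1}{b(R_k(\theta))}\partial_\theta\Big(\psi_b^{[1,k]}(\theta)-\psi_b^{[2,k]}(\theta)\Big)$$
for each $k\in\{1,2\}$. This splits into four building blocks indexed by $(\ell,k)\in\{1,2\}^2$, each of the form: evaluate on $\partial D_k$ the stream function generated by the patch supported on $D_\ell$. I would decompose every block via Lemma \ref{lemma:expansion1} with $k=1$ into a ``log part'' and a $W^{3,p}_{\loc}$ remainder, and then analyze each piece following exactly the pattern $\mathcal{H}+\mathcal{K}$ of the simply-connected proof.

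For the diagonal blocks $(\ell=k)$, the arguments are literally those carried out in Proposition \ref{proposition regularity of the functional}: Green's theorem and the identity \eqref{identity:SP} reduce the log part to a combination of the singular integral operators treated in Corollaries \ref{corollary:kernel:1}, \ref{corollary:kernel:2} and \ref{corollary:kernel:3}, while the remainder is handled by Lemmas \ref{lemma:A:1}, \ref{lemma:A:2} and \ref{lemma:composition}. The off-diagonal blocks $(\ell\neq k)$ are in fact easier: choosing $\varepsilon>0$ small enough guarantees the geometric separation
$$\tfrac{a_1-a_2}{2}\leq\big|R_k(\theta)e^{\ii\theta}-sR_\ell(\eta)e^{\ii\eta}\big|\leq 3(a_1+a_2),\qquad \forall\,\theta,\eta\in\mathbb{T},\ s\in[0,1],$$
so the associated kernels (log and their derivatives in $\theta,\eta$ as well as in $R_k,R_\ell$) are actually smooth. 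The corresponding integral operators are thus continuous on all H\"older spaces by direct computation, without invoking the Calder\'on--Zygmund-type bound of Lemma \ref{lemma:contintuity:kernel}, and their continuous dependence on $(r_1,r_2)$ follows from the mean-value theorem applied to these smooth kernels. The cross-term $\phi_{R_\ell}(R_k(\cdot)e^{\ii\cdot})$ arising from the remainder of Lemma \ref{lemma:expansion1} is also easier in this regime, because the observation point is bounded away from the singular set.

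The symmetry properties are handled exactly as in Lemma \ref{lemma:symmetry-lin}. Using the invariances \eqref{symmetry-kernel} of $K_b$ together with the changes of variables $\eta\to -\eta$ and $\eta\to\eta+\tfrac{2\pi}{m}$, one checks that whenever $(r_1,r_2)\in X_m^\alpha\times X_m^\alpha$ each $\partial_\theta\psi_b^{[\ell,k]}$ is odd and $m$-fold symmetric, while the prefactor $1/b(R_k)$ inherits the corresponding evenness and $m$-fold symmetry from the radial assumption \eqref{Hb4:eq}. Hence $\mathcal{G}_k\in Y_m^\alpha$ for each $k$. Finally, the partial derivative $\partial_\Omega d_{(r_1,r_2)}\mathcal{G}$ is nothing but the map $(h_1,h_2)\mapsto(h_1',h_2')$, independent of $(\Omega,r_1,r_2)$, whose continuity is immediate since differentiation sends $X_m^\alpha$ continuously into $Y_m^\alpha$.

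The main obstacle is essentially organizational: the proof of Proposition \ref{proposition regularity of the functional} is already quite lengthy, and one must now track four building blocks with two independent perturbation parameters rather than one, checking that all the continuity estimates of Section \ref{section:spectral analysis} transfer to the two-radius setting (which amounts to a verbatim inspection of the proofs of Corollaries \ref{corollary:kernel:1}--\ref{corollary:kernel:3} after replacing $R$ by a pair $(R_k,R_\ell)$). Conceptually, however, no new difficulty arises, since the only genuinely new configurations are the off-diagonal ones, and these are geometrically non-singular.
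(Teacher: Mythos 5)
The overall strategy is the same one the paper invokes in a single sentence after Lemma \ref{lemma:reg:DC}: reduce to the analysis of Proposition \ref{proposition regularity of the functional} block by block. You do, however, make a claim that is false and that you lean on. The displayed separation estimate
$\frac{a_1-a_2}{2}\leq |R_k(\theta)e^{\ii\theta}-sR_\ell(\eta)e^{\ii\eta}|$
for all $s\in[0,1]$ fails for the off-diagonal block $(\ell,k)=(1,2)$, that is, the outer patch seen from the inner interface. Taking $s=a_2/a_1$ and $\eta=\theta$ at $r_1=r_2=0$ gives $|a_2e^{\ii\theta}-(a_2/a_1)a_1e^{\ii\theta}|=0$. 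The reason is geometric: the radial integration in $\psi_b^{[1,2]}$ and in the off-diagonal $Q$- and $\mathcal{H}_2$-type terms runs over $\rho\in(0,R_1(\eta))$, which passes through the modulus of the evaluation point $R_2(\theta)\approx a_2$. So these area-integral kernels are not smooth; the observation point lies inside the source patch.

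This does not wreck the scheme, but it means you cannot dispense with the non-CZ machinery for the $(1,2)$ block the way you propose. What is genuinely separated off-diagonally (for small $\varepsilon$) are only the boundary-boundary kernels, i.e., the $s=1$ slice such as $\log|R_2(\theta)e^{\ii\theta}-R_1(\eta)e^{\ii\eta}|$ and $K_b(R_2(\theta)e^{\ii\theta},R_1(\eta)e^{\ii\eta})$, so the $P_{k,\ell}$ terms and the $\mathcal{H}_1$-type pieces are indeed smooth and you rightly skip Lemma \ref{lemma:contintuity:kernel} there. But for the area-integral pieces in the $(1,2)$ block (the analogues of $\mathcal{H}_2$, $\Upsilon_1$, $\Upsilon_2$, $\mathcal{K}$, and the $Q_{2,1}$ contribution), you must still appeal to the Newton-potential and elliptic-regularity arguments used in the diagonal case, namely Proposition \ref{prop:Elliptic:00}, Remark \ref{rem:reg:remainder}, Lemmas \ref{lemma:A:1}, \ref{lemma:A:2} and \ref{lemma:composition}, with $R$ replaced by $R_1$ in the source and $R_2$ at the observation point. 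Once that replacement is made explicit, the rest of your reduction is sound, and the claim about $\partial_\Omega d_{(r_1,r_2)}\mathcal{G}$ and the symmetry bookkeeping are correct.
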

We emphasize that the justification  of the preceding lemma can be done by means of the same analysis laid out in the proof of Proposition \ref{proposition regularity of the functional}, above.   Now, we focus our attention to the proof of the next proposition which establishes  important properties of the linearized operator $d_{(r_1,r_2)}\mathcal{G}$ at the equilibrium.

\begin{prop}\label{prop regular2}
	Let $0<a_2<a_1,$ $m\in\mathbb{N}^*$ and $\alpha\in (0,1).$ Further fix  $\varepsilon>0$ as in Lemma \ref{lemma:reg:DC} and let $Q(a_1,a_2)$ and $\big(\Lambda_n(a_1,a_2)\big)_{n\in \mathbb{N}^*}$ be defined    through \eqref{def Q} and \eqref{def Lambdan}, respectively.   If moreover
		$$\Omega\not\in\left\lbrace    Q(a_1,a_2),0\right\rbrace,$$ 
		then, at the equilibrium, the linearized operator $d_{(r_1,r_2)}\mathcal{G}(\Omega,0,0)$ is of Fredholm type with zero index and admits the following Fourier-multiplier structure: For a given $h_1,h_2\in X_m^{\alpha}$  		$$h_1(\theta)=\sum_{n=1}^{\infty}h_n^{(1)}\cos(nm\theta),\qquad h_2(\theta)=\sum_{n=1}^{\infty}h_n^{(2)}\cos(nm\theta), $$
		for some scalars $h_n^{(1)}, h_n^{(2)}\in\mathbb{R} $, it holds that 
		\begin{equation}\label{lin2}
			d_{(r_1,r_2)}\mathcal{G}(\Omega,0,0)\left[h_1,h_2\right](\theta)=-\sum_{n=1}^{\infty}nmM_{nm}(\Omega,a_1,a_2)\begin{pmatrix}
				h_n^{(1)}\\
				h_n^{(2)}
			\end{pmatrix}\sin(nm\theta),
		\end{equation}
		for any $\theta\in \mathbb{T}$, where the matrix $M_n$ is given by
		\begin{equation}\label{Matrix:Mn}
			M_{n}(\Omega,a_1,a_2)\bydef\begin{pmatrix}
			\Omega   -Q(a_1,a_2)+\Lambda_n(a_1,a_1) & -\frac{b(a_2)}{b(a_1)}\Lambda_n(a_1,a_2) \\
			\frac{b(a_1)}{b(a_2)}\Lambda_n(a_1,a_2)& \Omega -\Lambda_n(a_2,a_2)
		\end{pmatrix}.
		\end{equation}

		   \end{prop}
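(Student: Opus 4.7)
My plan is to compute the linearization in matrix form by differentiating the four building blocks $\psi_b^{[\ell, k]}$ componentwise in the spirit of Lemma \ref{lemma:linearized-OP}, then obtain the Fredholm property by viewing the resulting operator as a compact perturbation of an isomorphism via the asymptotic structure of $\Lambda_n$ from Proposition \ref{lemma:eigenfunction}.

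For each $\psi_b^{[\ell, k]}$ defined in \eqref{PSI:kl:DEF}, the differentiation scheme used in Lemma \ref{lemma:regularity} produces a boundary-type $P$-term (only when differentiating in $r_\ell$, through the upper limit $R_\ell(\eta)$) and a kernel-derivative $Q$-term (only when differentiating in $r_k$, through the argument $R_k(\theta)e^{i\theta}$). The $L$-type contribution coming from the prefactor $\frac{1}{b(R_k)}$ vanishes at equilibrium for the same reason as in \eqref{L=0}. Applying Lemma \ref{velocity_expl_comp} to the combined $Q$-contributions from $\psi_b^{[1,k]}$ and $\psi_b^{[2,k]}$ and dividing by $b(a_k)$ yields $-Q(a_1, a_2)$ for $k=1$ and exactly $0$ for $k=2$ (the latter because $\min(a_2, a_1) = \min(a_2, a_2) = a_2$), which places the $Q(a_1, a_2)$ only in the upper-left entry of $M_n$. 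The $P$-contributions are treated exactly as in Lemma \ref{lemma:linearized-OP}: plugging $h_\ell = \sum h_n^{(\ell)} \cos(nm\eta)$ into
\begin{equation*}
P^{(\ell, k)}[h_\ell](\theta) = b(a_\ell) \int_0^{2\pi} K_b\big(a_k e^{i\theta}, a_\ell e^{i\eta}\big)\, h_\ell(\eta)\, d\eta,
\end{equation*}
applying the translation change of variable $\eta \mapsto \eta+\theta$ and invoking both symmetries of $K_b$ from \eqref{symmetry-kernel} gives $b(a_\ell) \sum_n h_n^{(\ell)} \Lambda_{nm}(a_k, a_\ell) \cos(nm\theta)$. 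Dividing by $b(a_k)$, applying $\partial_\theta$, and combining the $\psi_b^{[1,k]} - \psi_b^{[2,k]}$ structure with the $\Omega r_k'$ and $Q$-terms assembles into \eqref{Matrix:Mn}; the symmetry $\Lambda_n(a_1, a_2) = \Lambda_n(a_2, a_1)$ from Proposition \ref{lemma:eigenfunction}-(1) then explains the off-diagonal prefactors $\pm b(a_{3-k})/b(a_k)$.

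For the Fredholm property, I would write $M_n = M_\infty + \delta_n$ with $M_\infty = \mathrm{diag}(\Omega - Q(a_1, a_2), \Omega)$. Under the hypothesis $\Omega \notin \{0, Q(a_1, a_2)\}$, the leading operator $(h_1, h_2) \mapsto M_\infty(h_1', h_2')^\top$ is a bounded isomorphism from $X_m^\alpha \times X_m^\alpha$ onto $Y_m^\alpha \times Y_m^\alpha$, since $\partial_\theta$ is an isomorphism between these spaces. The residual operator has Fourier symbol $-nm\,\delta_{nm}$: its off-diagonal entries $nm\Lambda_{nm}(a_1, a_2)$ decay exponentially because $\mathtt{m}(a_1, a_2) < 1$ (Proposition \ref{lemma:eigenfunction}-(1)), hence define compact smoothing operators; its diagonal entries satisfy $nm\Lambda_{nm}(a_k, a_k) = b(a_k)/2 + O(1/n^2)$ by the sharp bound on $f_n(a_k, a_k)$ in Proposition \ref{lemma:eigenfunction}-(2). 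The constant part $b(a_k)/2$ times the $\cos \to \sin$ conjugation is a bounded Fourier multiplier on $C^\alpha(\mathbb{T})$, which becomes compact from $X_m^\alpha$ into $Y_m^\alpha$ after composition with the compact embedding $C^{1+\alpha}(\mathbb{T}) \hookrightarrow C^\alpha(\mathbb{T})$; the $O(1/n^2)$ correction is a direct smoothing operator. The Fredholm alternative then gives the desired zero-index property.

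The main obstacle is this last step: one might hope to write the residual operator as a Fourier multiplier with symbol decaying at infinity, but $nm\Lambda_{nm}(a_k, a_k)$ stays of order one, so compactness must instead be extracted from the regularity gap $C^{1+\alpha} \hookrightarrow C^\alpha$ rather than from symbol decay. The finer $1/n^3$-decay of $f_n(a_k, a_k)$ provided by Proposition \ref{lemma:eigenfunction}-(2) is precisely what allows the diagonal symbol to split cleanly into a bounded singular-integral multiplier plus a smoothing remainder, making the compact-perturbation argument go through.
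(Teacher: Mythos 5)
Your proposal is correct and follows essentially the same route as the paper: the linearization is computed blockwise via $P$-, $Q$-, and $L$-type terms (with $L$ vanishing at equilibrium as in \eqref{L=0} and $Q$ evaluated via Lemma \ref{velocity_expl_comp}), and the Fredholm property is obtained by splitting the operator into an isomorphism plus a compact remainder. The one cosmetic difference is that the paper isolates the order-one diagonal piece explicitly as a Hilbert-transform operator and cites \cite{GHR23} for its compactness as a map $X_m^\alpha\to Y_m^\alpha$, whereas you supply the underlying reason directly (boundedness on $C^\alpha$ composed with the compact embedding $C^{1+\alpha}\hookrightarrow C^\alpha$) — same mechanism, stated self-contained.
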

\begin{proof}
	We first need to compute the differential $d_{(r_1,r_2)}\mathcal{G}(\Omega,r_1,r_2)[h_1,h_2]$ at any point. To that end,  considering the representation \eqref{G:DEF},  it is readily seen that the term that requires our attention is the one involving the stream functions $\psi_b^{[k,\ell]}$. Thus, by differentiating its expression,  we find, for any $k\in \{1,2\}$, that
	\begin{align*}
		d_{r_k}\psi_b^{[k,k]}[h_k](\theta)&=\int_{0}^{2\pi}h_{k}(\eta)K_b\big(R_k(\theta)e^{\mathrm{i}\theta},R_{k}(\eta)e^{\mathrm{i}\eta}\big)b\big(R_k(\eta)\big)d\eta\\
		&\quad+h_k(\theta)\frac{e^{\mathrm{i}\theta}}{R_k(\theta)}\cdot\int_{0}^{2\pi}\int_{0}^{R_{k}(\eta)}\nabla_xK_b\big(R_k(\theta)e^{\mathrm{i}\theta},\rho e^{\mathrm{i}\eta}\big)b(\rho)\rho d\rho d\eta
	\end{align*}
	and, for $k,\ell\in\{1,2\}$ with $k\neq\ell,$ that 
	\begin{align*}
		d_{r_k}\psi_b^{[\ell,k]}[h_k](\theta)&=h_k(\theta)\frac{e^{\mathrm{i}\theta}}{R_k(\theta)}\cdot\int_{0}^{2\pi}\int_{0}^{R_{\ell}(\eta)}\nabla_xK_b\big(R_k(\theta)e^{\mathrm{i}\theta},\rho e^{\mathrm{i}\eta}\big)b(\rho)\rho d\rho d\eta\\
		d_{r_\ell}\psi_b^{[\ell,k]}[h_\ell](\theta)&=\int_{0}^{2\pi}h_{\ell}(\eta)K_b\big(R_k(\theta)e^{\mathrm{i}\theta},R_{\ell}(\eta)e^{\mathrm{i}\eta}\big)b\big(R_\ell(\eta)\big)d\eta.
	\end{align*}
	Therefore, by arranging the terms in an appropriate way, we deduce the representation 
	$$d_{(r_1,r_2)}\mathcal{G}(\Omega,r_1,r_2)[h_1,h_2]=\begin{pmatrix}
			T_{1,1}[h_1]+T_{1,2}[h_2]\\
			T_{2,1}[h_1]+T_{2,2}[h_2]
		\end{pmatrix},$$
		where we set
		\begin{align*}
			T_{1,1}[h_1]&\bydef\Omega h_1' +\frac{1}{b\circ R_1}\partial_{\theta}\Big(\big(Q_{1,1}(r_1,r_2)-Q_{1,2}(r_1,r_2)\big)h_1+P_{1,1}(r_1,r_2)[h_1]\Big)+L_1(r_1,r_2)h_1,\\
			T_{2,2}[h_2]&\bydef\Omega h_2' +\frac{1}{b\circ R_2}\partial_{\theta}\Big(\big(Q_{2,1}(r_1,r_2)-Q_{2,2}(r_1,r_2)\big)h_2-P_{2,2}(r_1,r_2)[h_2]\Big)+L_2(r_1,r_2)h_2,\\
			T_{1,2}[h_2]&\bydef-\frac{1}{b\circ R_1}\partial_{\theta}\Big(P_{1,2}(r_1,r_2)[h_2]\Big),\\
			T_{2,1}[h_1]&\bydef\frac{1}{b\circ R_2}\partial_{\theta}\Big(P_{2,1}(r_1,r_2)[h_1]\Big),
		\end{align*}
		and 
		\begin{align*}
			P_{k,\ell}(r_1,r_2)[h_{\ell}](\theta)&\bydef\int_{0}^{2\pi}h_{\ell}(\eta)K_b\big(R_k(\theta)e^{\mathrm{i}\theta},R_{\ell}(\eta)e^{\mathrm{i}\eta}\big)b\big(R_\ell(\eta)\big)d\eta,\\
			Q_{k,\ell}(r_{1},r_2)(\theta)&\bydef\frac{e^{\mathrm{i}\theta}}{R_k(\theta)}\cdot\int_{0}^{2\pi}\int_{0}^{R_{\ell}(\eta)}\nabla_xK_b\big(R_k(\theta)e^{\mathrm{i}\theta},\rho e^{\mathrm{i}\eta}\big)b(\rho)\rho d\rho d\eta,\\
			L_k(r_1,r_2)(\theta)&\bydef-\frac{b'\big(R_k(\theta)\big)}{R_k(\theta)b^2\big(R_k(\theta)\big)}\partial_{\theta}\left(\psi_b^{[1,k]}(\theta)-\psi_b^{[2,k]}(\theta)\right).
		\end{align*}
	We now focus our attention to the justification of the representation \eqref{lin2} of $ d_{(r_1,r_2)}\mathcal{G}$ at the equilibrium $(\Omega,0,0)$. To that end, we proceed   in a similar way to the proof of Lemma \ref{lemma:linearized-OP} for the simply-connected case. First, we e emphasize   that 
	$$L_{k}(0,0)=0,$$
	for any $k\in \{1,2\}$. This is similar to \eqref{L=0} and can be shown by employing   the symmetry property of the kernel $K_b$, which in turn implies \eqref{diff0}.
		Moreover, again, along the same lines as in the proof of Lemma \ref{lemma:linearized-OP} for the simply-connected case, we can easily check that 
	\begin{align*}
		Q_{k,\ell}(0,0)(\theta)&=\frac{e^{\mathrm{i}\theta}}{a_k}\cdot\int_{0}^{2\pi}\int_{0}^{a_\ell}\nabla_xK_b\big(a_ke^{\mathrm{i}\theta},\rho e^{\mathrm{i}\eta}\big)b(\rho)\rho d\rho d\eta\\
		&=\frac{1}{a_k}\int_{0}^{2\pi}\int_{0}^{a_\ell}\partial_{a_k}\Big(K_b\big(a_ke^{\mathrm{i}\theta},\rho e^{\mathrm{i}\eta}\big)\Big)b(\rho)\rho d\rho d\eta\\
		&=\frac{1}{a_k}\int_{0}^{2\pi}\int_{0}^{a_\ell}\partial_{a_k}\Big(K_b\big(a_k,\rho e^{\mathrm{i}(\eta-\theta)}\big)\Big)b(\rho)\rho d\rho d\eta\\
		&=\frac{1}{a_k}\int_{0}^{2\pi}\int_{0}^{a_\ell}\partial_{x_1}K_b\big(a_k,\rho e^{\mathrm{i}\eta}\big)b(\rho)\rho d\rho d\eta  \\
		&=-\frac{b(a_k)}{a_k^2}  \int_0^{\min \{a_k,a_\ell \}}  \tau b(\tau) d\tau ,
	\end{align*}
	for any $k,\ell\in \{1,2\}$ and $\theta\in \mathbb{T}$, where we have used Lemma \ref{velocity_expl_comp} to compute the integral in the last line. At last, we also compute that 
	\begin{align*}
		P_{k,\ell}(0,0)[h_\ell](\theta)&=b(a_\ell)\int_{0}^{2\pi}h_{\ell}(\eta)K_b\big(a_ke^{\mathrm{i}\theta},a_\ell e^{\mathrm{i}\eta}\big)d\eta\\
		&=b(a_\ell)\int_{0}^{2\pi}h_{\ell}(\eta)K_b\big(a_k,a_\ell e^{\mathrm{i}(\eta-\theta)}\big)d\eta\\
		&=b(a_\ell)\sum_{n=1}^{\infty}h_{n}^{(\ell)}\cos(nm\theta)\int_{0}^{2\pi}K_b\big(a_k,a_\ell e^{\mathrm{i}\eta}\big)\cos(nm\eta)d\eta\\
		&=b(a_\ell)\sum_{n=1}^{\infty}h_{n}^{(\ell)}\Lambda_{nm}(a_k,a_\ell)\cos(nm\theta),
	\end{align*}
	for any $k,\ell\in \{1,2\}$ and $\theta\in \mathbb{T}$. All in all,    gathering  the foregoing identities leads to the desired Fourier representation \eqref{lin2}. 
	
	Now we turn to the proof of the Fredholm property. To that end, we introduce the classical $2\pi$-periodic Hilbert transform defined by
$$\mathcal{H}f(\theta)\bydef\frac{1}{2\pi}\int_{0}^{2\pi}f(\eta)\cot\left(\tfrac{\theta-\eta}{2}\right)d\eta$$
and we emphasize that 
$$ \mathcal{H}\cos(n\theta)=\sin(n\theta),  $$
for all  $n\in\mathbb{N}^*.$
Therefore, using the decomposition \eqref{exp Lambdan}, we   write that 
$$d_{(r_1,r_2)}\mathcal{G}(\Omega,0,0)=\mathscr{I}+\mathscr{H}+\mathscr{R},$$
where
\begin{align*}
	\mathscr{I}&\bydef\begin{pmatrix}
		\big (\Omega  -Q(a_1,a_2)\big )\partial_{\theta} & 0\\
		0 &  \Omega  \partial_{\theta}
	\end{pmatrix},  \qquad\qquad
\mathscr{H}\bydef\begin{pmatrix}
	-\tfrac{b(a_1)}{2}\mathcal{H} & 0\\
	0 & \tfrac{b(a_2)}{2}\mathcal{H}
\end{pmatrix}
\end{align*}
and
$$\mathscr{R}[h_1,h_2](\theta)=\sum_{n=1}^{\infty}\mathscr{R}_{nm}(a_1,a_2)\begin{pmatrix}
	h_n^{(1)}\\
	h_n^{(2)}
\end{pmatrix}\sin(nm\theta),$$  \begin{equation*}
	\mathscr{R}_{n}  = -n \begin{pmatrix}
			 f_n(a_1,a_1) & -\frac{b(a_2)}{b(a_1)}\Lambda_n(a_1,a_2) \\
			\frac{b(a_1)}{b(a_2)}\Lambda_n(a_1,a_2)&   -f_n(a_2,a_2)
		\end{pmatrix},
\end{equation*}
where $f_n$ is defined in Proposition \ref{lemma:eigenfunction}. In particular, in view of the asymptotic analysis performed therein, one sees that  
\begin{equation*}
	\mathscr{R}_{n} = O\left (\frac 1{n^2}\right). 
	\end{equation*}
	Moreover, it  is readily seen that if 
\begin{equation}\label{constraint OMG}
	\Omega\not\in\left\lbrace  Q(a_1,a_2),0\right\rbrace
\end{equation}
then the operator $$\mathscr{I}:X_m^{\alpha}\times X_m^{\alpha}\rightarrow Y_m^{\alpha}\times Y_{m}^{\alpha}$$ is an isomorphism. Moreover, it is proved in \cite[Proposition 2.1-(iii)]{GHR23} that the Hilbert transform $\mathcal{H}$ as a mapping from $X_m^{\alpha}$ into $   Y_m^{\alpha}$ is a compact operator. Consequently, the operator 
$$\mathscr{H}:X_m^{\alpha}\times X_m^{\alpha}\rightarrow Y_m^{\alpha}\times Y_m^{\alpha}$$ is also compact.

 The remainder term $\mathscr{R}$  is actually better in terms of regularity. Indeed, the decay of its coefficients implies that  $\mathscr{R}$ maps $X_m^{\alpha}\times X_m^{\alpha}$ into $ Y_m^{2+\alpha}\times Y_m^{2+\alpha}.$
Hence,  due to   the compact embedding
$$C^{2+\alpha}\hookrightarrow   C^{\alpha} ,$$ 
we deduce that $$\mathscr{R}:X_m^{\alpha}\times X_m^{\alpha}\rightarrow Y_m^{\alpha}\times Y_m^{\alpha}$$ is a compact operator, as well. All in all, under the condition \eqref{constraint OMG}, the linearized operator $d_{(r_1,r_2)}\mathcal{G}(\Omega,0,0)$ is a compact perturbation of an isomorphism, namely a Fredholm type operator with zero index, see \cite[Corollary 5.9]{CR21}. This concludes the proof of the proposition. 
\end{proof}

The next step consists in the  study  of  singularities of the matrix $M_{n}$, previously introduced in     Proposition \ref{prop regular2}. This is crucial in order to show the one-dimensional kernel property of the linearized operator $d_{(r_1,r_2)}\mathcal{G}$, which is a requirement in applying  Crandall-Rabinowitz's Theorem.

\begin{prop}\label{prop spec}  Let $0<a_2<a_1$, there exists $N(a_1,a_2,b)\in\mathbb{N}^*$ such that, for any $n\geqslant N(a_1,a_2,b),$ there are two distinct angular velocities 
	\begin{align*}
		\Omega_n^{\pm}  \bydef \ 
		& \frac{Q(a_1,a_2)}{2}+\frac{\Lambda_n(a_2,a_2)-\Lambda_n(a_1,a_1)}{2}\\
		&  \pm\frac{1}{2}\sqrt{\left( Q(a_1,a_2)-\Lambda_n(a_1,a_1)-\Lambda_n(a_2,a_2)\right)^2-4\Lambda_n^2(a_1,a_2)}.
	\end{align*}
	for which the matrix $M_n\big(\Omega_n^{\pm} ,a_1,a_2\big)$, introduced in \eqref{Matrix:Mn}, is singular. 
	
	Moreover, the sequences $ \big(\Omega_{n}^{\pm} \big)_{n\geq N(a_1,a_2,b)}$ are strictly monotone   satisfying
	\begin{equation*}
		\lim_{n\to\infty} \Omega_{n}^{+}  =   Q(a_1,a_2) ,  \qquad \lim_{n\to\infty}  \Omega_{n}^{-}  =0
	\end{equation*} 
	and  
		$$ \Omega_{p}^{-} \neq\Omega_{q}^{+} ,$$
		for all $p,q\geqslant N(a_1,a_2,b)$.
\end{prop}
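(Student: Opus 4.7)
The plan is to reduce the singularity condition on $M_n$ to the vanishing of its determinant, which is a quadratic polynomial in $\Omega$, and then extract the asymptotics of its roots by combining the expansions of $\Lambda_n(\alpha,\beta)$ from Proposition \ref{lemma:eigenfunction}.

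First, I would expand the determinant to rewrite $\det M_n(\Omega,a_1,a_2) = 0$ as
\begin{equation*}
\Omega^2 - \big(Q(a_1,a_2) - \Lambda_n(a_1,a_1) + \Lambda_n(a_2,a_2)\big)\Omega + \big(Q(a_1,a_2) - \Lambda_n(a_1,a_1)\big)\Lambda_n(a_2,a_2) + \Lambda_n^2(a_1,a_2) = 0,
\end{equation*}
whose discriminant is exactly
\begin{equation*}
\Delta_n \bydef \big(Q(a_1,a_2) - \Lambda_n(a_1,a_1) - \Lambda_n(a_2,a_2)\big)^2 - 4\Lambda_n^2(a_1,a_2).
\end{equation*}
When $\Delta_n > 0$, the two real distinct roots are exactly the $\Omega_n^{\pm}$ in the statement.

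Second, I would exploit the expansion \eqref{exp Lambdan} together with the bound \eqref{bnd fn_prop}. Since $0 < a_2 < a_1$, one has $\mathtt{m}(a_1,a_2) = a_2/a_1 < 1$, so $\Lambda_n(a_1,a_2)$ decays exponentially in $n$, while $\Lambda_n(a_i,a_i) = \frac{b(a_i)}{2n} + O(n^{-3})$ decays only at the polynomial rate $1/n$. As $Q(a_1,a_2) > 0$ by definition \eqref{def Q}, one has $\Delta_n \to Q(a_1,a_2)^2 > 0$ as $n\to\infty$, so that two distinct real roots exist for all $n \geq N(a_1,a_2,b)$ with $N(a_1,a_2,b)$ chosen large enough, in particular larger than the constant $M(b)$ of Proposition \ref{lemma:eigenfunction} so that the monotonicity and positivity of $\Lambda_n(a_i,a_i)$ are also in force.

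Third, for the limits and monotonicity I would Taylor-expand $\sqrt{\Delta_n} = \big(Q(a_1,a_2) - \Lambda_n(a_1,a_1) - \Lambda_n(a_2,a_2)\big)\sqrt{1 - 4\Lambda_n^2(a_1,a_2)/(Q-\Lambda_n(a_1,a_1)-\Lambda_n(a_2,a_2))^2}$ to obtain
\begin{align*}
\Omega_n^+ &= Q(a_1,a_2) - \Lambda_n(a_1,a_1) - \frac{\Lambda_n^2(a_1,a_2)}{Q(a_1,a_2) - \Lambda_n(a_1,a_1) - \Lambda_n(a_2,a_2)} + O\big(\Lambda_n^4(a_1,a_2)\big), \\
\Omega_n^- &= \Lambda_n(a_2,a_2) + \frac{\Lambda_n^2(a_1,a_2)}{Q(a_1,a_2) - \Lambda_n(a_1,a_1) - \Lambda_n(a_2,a_2)} + O\big(\Lambda_n^4(a_1,a_2)\big).
\end{align*}
The dominant behaviour is governed by the $1/n$-terms $Q(a_1,a_2) - \Lambda_n(a_1,a_1)$ and $\Lambda_n(a_2,a_2)$, while the corrections are exponentially small. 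Thus the strict monotonicity of $(\Lambda_n(a_i,a_i))_{n > M(b)}$ from Proposition \ref{lemma:eigenfunction}-(4) transfers directly to strict monotonicity of $(\Omega_n^+)_n$ (increasing to $Q(a_1,a_2)$) and of $(\Omega_n^-)_n$ (decreasing to $0$), provided $N(a_1,a_2,b)$ is enlarged so that the $O(\Lambda_n^2(a_1,a_2))$ remainders are dominated by the gaps $\Lambda_n(a_i,a_i) - \Lambda_{n+1}(a_i,a_i) \sim 1/n^2$. Finally, the non-coincidence $\Omega_p^- \neq \Omega_q^+$ is immediate once these monotone limits are known: further increasing $N(a_1,a_2,b)$ if needed, we have $\Omega_n^- < Q(a_1,a_2)/2 < \Omega_m^+$ for all $n,m \geq N(a_1,a_2,b)$, so the two ranges are disjoint.

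The main technical hurdle is the quantitative comparison in the monotonicity step: one has to show that the exponentially decaying corrections $O(\Lambda_n^2(a_1,a_2))$ do not spoil the sign of the difference $\Omega_{n+1}^{\pm} - \Omega_n^{\pm}$, which is determined by the $O(1/n^2)$ polynomial gap coming from $\Lambda_n(a_i,a_i)$. This is where a careful tracking of constants and the ultimate choice of $N(a_1,a_2,b)$ must be made explicit, relying on the estimates \eqref{bnd fn_prop} and on $\mathtt{m}(a_1,a_2)<1$.
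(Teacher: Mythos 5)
Your proposal is correct and follows essentially the same route as the paper: reduce to the quadratic determinant and its discriminant $\Delta_n$, show $\Delta_n\to Q(a_1,a_2)^2>0$, expand the two roots around $Q-\Lambda_n(a_1,a_1)$ and $\Lambda_n(a_2,a_2)$ with an $O(\Lambda_n^2(a_1,a_2))$ correction, and transfer the monotonicity of $\Lambda_n(a_i,a_i)$ to $\Omega_n^\pm$. The only cosmetic difference is the non-collision argument at the end: the paper orders the nested sequences directly as $\Omega_p^- < \Omega_q^- < \Omega_q^+ < \Omega_p^+$, while you separate the two ranges by the fixed threshold $Q(a_1,a_2)/2$; both are valid once $N$ is large.
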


\begin{proof} Owing to \eqref{Matrix:Mn}, we first compute that
	$$P_n(\Omega)\bydef \det\big(M_n(\Omega,a_1,a_2)\big)=\Omega^2+\beta_n(a_1,a_2)\Omega+\gamma_n(a_1,a_2),$$
	where 
	\begin{equation*}
	\beta_n(a_1,a_2)\bydef  - Q(a_1,a_2) +\Lambda_n(a_1,a_1)-\Lambda_n(a_2,a_2),
	\end{equation*}
	and
	\begin{equation*}
		\gamma_n(a_1,a_2)\bydef-\Lambda_n(a_2,a_2)\Big( \Lambda_n(a_1,a_1)- Q(a_1,a_2)\Big)  +\Lambda_{n}^2(a_1,a_2).
	\end{equation*}
The discriminant of the preceding second order polynomial is given by
\begin{equation}\label{discrimant}
	\begin{aligned} \Delta_n &\bydef\beta^2_n(a_1,a_2)-4\gamma_n(a_1,a_2)
	    \\
		&=\left(Q(a_1,a_2)-  \Lambda_n(a_1,a_1)-\Lambda_n(a_2,a_2)  \right)^2  -4\Lambda_n^2(a_1,a_2).
	\end{aligned}
\end{equation} 
	Hence, by virtue of the results of Proposition \ref{lemma:eigenfunction}, it is readily seen that  
		$$\Delta_{n} \underset{n\to\infty}{\longrightarrow} \big (Q(a_1,a_2) \big )^2 > 0.$$ 
	Thus, there exists $N= N(a_1,a_2,b)\in\mathbb{N}^* $ such that 
	\begin{equation}\label{sign dircriminant}
		  \Delta_{n} >0, \quad \text{for all}\quad  n\geqslant N.
	\end{equation}
	 Therefore, the polynomial $ P_n$ admits two distinct roots $\Omega_n^\pm $ being as given as in the statement of the proposition. Note that, up to increasing the value of $N$ if needed,  we can assume that 
	\begin{equation}\label{CD:lake-Euler}
		Q(a_1,a_2) > \Lambda_n(a_1,a_1)+\Lambda_n(a_2,a_2),
	\end{equation} 
	for all $n\geq N$,  which  allows us to write that  
		\begin{align*}
		\Omega_{n}^{+} &= Q(a_1,a_2)- \Lambda_n(a_1,a_1) +  \mathtt{r}_{n} 
	\end{align*}
	and 
	\begin{align*}
		\Omega_{n}^{-} &= \Lambda_n(a_2,a_2) -  \mathtt{r}_{n} ,
	\end{align*}
	where we set 
	\begin{align*}
		\mathtt{r}_{n} &\bydef\frac{1}{2}
		\big ( 
		Q(a_1,a_2)-  \Lambda_n(a_1,a_1)-\Lambda_n(a_2,a_2)
		\big )\\
		&\quad\times\left(\sqrt{1-\frac{4\Lambda_n^2(a_1,a_2)}{\left(Q(a_1,a_2)-  \Lambda_n(a_1,a_1)-\Lambda_n(a_2,a_2)\right)^2}}-1\right).
	\end{align*}
	Moreover,  observe,  due to \eqref{exp Lambdan}, \eqref{bnd fn_prop}, that
		$$\mathtt{r}_{n} =O \left(\Lambda_n^2(a_1,a_2)\right)= O \left(\frac{1}{n^3}\right),$$
		whence,
		\begin{equation*}
			\Omega_{n+1}^{+} -\Omega_{n}^{+} =\frac{b(a_1)}{2n(n+1)}+O \left(\frac{1}{n^3}\right)
		\end{equation*}
		and 
		\begin{equation*}
			\Omega_{n+1}^{-} -\Omega_{n}^{-} =-\frac{b(a_2)}{2n(n+1)}+O \left(\frac{1}{n^3}\right).
		\end{equation*} 
				All in all, we deduce that the sequence $(\Omega_n^{+} )_{n\geq N} $ is strictly increasing whereas the sequence  $(\Omega_n^{-} )_{n\geq N} $ is strictly decreasing. Hence,  combining  this with
			\begin{equation*}
				\Omega_n^- < \Omega_n^+ ,
			\end{equation*}	  		
			 for all $  n \geq N$,   yields that 			\begin{equation*}
				\Omega_p^-< \Omega_q^- < \Omega_q^+ < \Omega_p^+,
			\end{equation*}
			for all $p>q\geq  N,$ thereby showing that there is no spectral collisions. This completes the proof of the proposition.

			 \end{proof}
			 
			 \subsection{Proof of Theorem \ref{thm:2}}
Now, we prove the last proposition that contains the final prerequisites that we need to apply  Crandall-Rabinowitz's Theorem to, eventually, conclude the proof of Theorem \ref{thm:2}.
\begin{prop}\label{prop.last2}
	Let $\alpha\in(0,1)$,  $0<a_2<a_1,$ and  $  N(a_1,a_2,b) \in \mathbb{N}^*$  be as  in Proposition \ref{prop spec}.  
	 Then, for all  $m\geq N(a_1,a_2,b)$, the following properties hold true:
	\begin{enumerate}
		\item The kernel of the linearized operator
		 $$ d_{(r_1,r_2)}\mathcal{G}\big(\Omega_{m}^{\pm} ,0,0\big) : X_m^\alpha \times X_m^\alpha \to Y_m^\alpha \times Y_m^\alpha $$ is one-dimensional and generated by 
		$$r_0:\theta\mapsto\begin{pmatrix}
			\Omega_{m}^{\pm}  -\Lambda_m(a_2,a_2)\vspace{2mm}\\
			-\frac{b(a_1)}{b(a_2)}\Lambda_{m}(a_1,a_2)
		\end{pmatrix}\cos(m\theta).$$
		\item The range of the linearized operator $ d_{(r_1,r_2)}\mathcal{G}\big(\Omega_{m}^{\pm} ,0,0\big) $ is closed and of co-dimension one.
		\item The transversality condition	is satisfied, that is	$$\partial_{\Omega}d_{(r_1,r_2)}\mathcal{G}\big(\Omega_{m}^{\pm} ,0,0\big)[r_0]\not\in\textnormal{Range}\Big(d_{(r_1,r_2)}\mathcal{G}\big(\Omega_{m}^{\pm} ,0,0\big)\Big).$$
	\end{enumerate} 
\end{prop}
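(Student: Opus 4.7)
The strategy is to verify the three hypotheses of Crandall--Rabinowitz's theorem by leveraging the Fourier-multiplier representation \eqref{lin2} together with the spectral information from Proposition~\ref{prop spec}. A preliminary check is that the Fredholm hypothesis of Proposition~\ref{prop regular2} applies at $\Omega_m^\pm$, i.e.\ that $\Omega_m^\pm \notin \{Q(a_1,a_2),0\}$. This follows, upon enlarging $N(a_1,a_2,b)$ if necessary, from the asymptotics $\Omega_m^+ \to Q(a_1,a_2)$ and $\Omega_m^- \to 0$ established in Proposition~\ref{prop spec}, combined with the fact that the corrections $\Lambda_m(a_j,a_j)$ and $\mathtt{r}_m$ are bounded away from the quantities $Q(a_1,a_2)$ and $0$ respectively. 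As a consequence, $d_{(r_1,r_2)}\mathcal{G}(\Omega_m^\pm,0,0)$ is Fredholm with zero index.

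For part (1), I would argue that $(h_1,h_2)\in X_m^\alpha\times X_m^\alpha$ lies in the kernel if and only if, for every $n\geq 1$, its $nm$-th Fourier vector belongs to $\ker M_{nm}(\Omega_m^\pm,a_1,a_2)$. By Proposition~\ref{prop spec}, the matrix $M_{nm}$ is singular only at $\Omega\in\{\Omega_{nm}^+,\Omega_{nm}^-\}$; the strict monotonicity of the sequences $(\Omega_k^+)_{k\geq N}$ and $(\Omega_k^-)_{k\geq N}$, together with the absence of collisions $\Omega_p^-\neq\Omega_q^+$, ensure that $\Omega_m^\pm$ is not one of these singular values for any $n\geq 2$. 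Only the mode $n=1$ contributes, and a two-by-two computation using $\det M_m(\Omega_m^\pm,a_1,a_2)=0$ identifies the explicit generator $r_0$ in the statement. Part (2) is then immediate since, for a Fredholm operator of index zero, the range is automatically closed of codimension equal to $\dim\ker = 1$.

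Part (3) is where the real content lies. Since $\partial_\Omega M_n = I_2$, the Fourier representation gives
\begin{equation*}
\partial_\Omega d_{(r_1,r_2)}\mathcal{G}(\Omega_m^\pm,0,0)[r_0](\theta) = -m\,r_0\,\sin(m\theta).
\end{equation*}
Membership in the range reduces to a mode-$m$ solvability condition which, by the Fredholm alternative applied to the matrix $M_m(\Omega_m^\pm,a_1,a_2)$, is equivalent to $r_0\cdot v \neq 0$, where $v=\bigl(\Omega_m^\pm-\Lambda_m(a_2,a_2),\tfrac{b(a_2)}{b(a_1)}\Lambda_m(a_1,a_2)\bigr)^T$ spans $\ker M_m^T(\Omega_m^\pm,a_1,a_2)$. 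A direct computation gives
\begin{equation*}
r_0\cdot v = \bigl(\Omega_m^\pm-\Lambda_m(a_2,a_2)\bigr)^2 - \Lambda_m^2(a_1,a_2).
\end{equation*}

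The main obstacle is showing that this quantity does not vanish at $\Omega_m^-$, since both summands tend to zero with $m$. Here I would use the refined expansion of Proposition~\ref{prop spec}: $\Omega_m^- - \Lambda_m(a_2,a_2) = -\mathtt{r}_m$, and a first-order Taylor expansion of the square root defining $\mathtt{r}_m$ combined with Proposition~\ref{lemma:eigenfunction} yields $\mathtt{r}_m = O\bigl(\Lambda_m^2(a_1,a_2)/\beta_m\bigr)$ with $\beta_m\to Q(a_1,a_2)>0$. Therefore $(\Omega_m^- - \Lambda_m(a_2,a_2))^2$ is of order $\Lambda_m^4(a_1,a_2)$, which is negligible compared with $\Lambda_m^2(a_1,a_2)$ for $m$ large, forcing $r_0\cdot v\neq 0$. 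The case $\Omega_m^+$ is straightforward since $\Omega_m^+ - \Lambda_m(a_2,a_2) \to Q(a_1,a_2)>0$ dominates $\Lambda_m(a_1,a_2)\to 0$. After a final enlargement of $N(a_1,a_2,b)$, transversality is established and Crandall--Rabinowitz's Theorem~\ref{Crandall-Rabinowitz theorem} concludes the proof of Theorem~\ref{thm:2}.
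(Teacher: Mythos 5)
Your proposal is correct, and for parts (1) and (2) it follows essentially the same route as the paper: reduction to the matrix $M_{nm}$, simplicity of the spectrum to exclude $n\geq 2$, and the Fredholm property from Proposition~\ref{prop regular2} for the codimension of the range. Where you diverge is in part (3). After arriving at the same quantity $\big(\Omega_m^\pm-\Lambda_m(a_2,a_2)\big)^2-\Lambda_m^2(a_1,a_2)$, you argue asymptotically: you expand $\mathtt{r}_m\sim-\Lambda_m^2(a_1,a_2)/Q(a_1,a_2)$ and observe that $(\Omega_m^--\Lambda_m(a_2,a_2))^2=\mathtt{r}_m^2=O(\Lambda_m^4(a_1,a_2))$ is eventually dominated by $\Lambda_m^2(a_1,a_2)$, which stays strictly positive by Proposition~\ref{lemma:eigenfunction}~(3). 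The paper instead proceeds algebraically: it uses $\det M_m(\Omega_m^\pm,a_1,a_2)=0$ to rewrite
\begin{equation*}
\big(\Omega_m^\pm-\Lambda_m(a_2,a_2)\big)^2-\Lambda_m^2(a_1,a_2)
=\big(\Omega_m^\pm-\Lambda_m(a_2,a_2)\big)\big(2\Omega_m^\pm-Q(a_1,a_2)+\Lambda_m(a_1,a_1)-\Lambda_m(a_2,a_2)\big),
\end{equation*}
then notes that the second factor vanishing is equivalent to $\Delta_m=0$ (excluded by \eqref{sign dircriminant}), while the first factor vanishing forces $\Lambda_m(a_1,a_2)=0$ (excluded again by Proposition~\ref{lemma:eigenfunction}~(3)). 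Both arguments are sound. The paper's factoring is exact and ties the non-vanishing directly to the already-imposed threshold $N(a_1,a_2,b)>M(b)$, whereas your asymptotic route requires a separate, less explicit enlargement of $N(a_1,a_2,b)$; on the other hand, your route avoids the slightly opaque algebraic manipulation and makes the ``size'' mechanism behind transversality more transparent. One small caveat: you write $\beta_m\to Q(a_1,a_2)>0$, whereas in the paper's convention $\beta_m\to-Q(a_1,a_2)$; the relevant bounded-away-from-zero denominator in the expansion of $\mathtt{r}_m$ is $Q(a_1,a_2)-\Lambda_m(a_1,a_1)-\Lambda_m(a_2,a_2)\to Q(a_1,a_2)$, not $\beta_m$. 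This does not affect the conclusion.
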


\begin{proof} 
	\textit{(1)} According to the analysis laid out in Proposition \ref{prop spec}, we deduce from the simplicity of the spectrum  that 
	$$\det\Big(M_{m}\big(\Omega_{m}^{\pm} ,a_1,a_2\big)\Big)=0, $$
	and, for all integer $n\geq 2$,
	\begin{equation*}
		\det\Big(M_{mn}\big(\Omega_{m}^{\pm} ,a_1,a_2\big)\Big)\neq0.
	\end{equation*}
	Consequently, in view of the representation \eqref{lin2} of the linearized operator $ d_{(r_1,r_2)}\mathcal{G}$ at the value  $\big(\Omega_{m}^{\pm} ,0,0\big)$, one   deduces that the kernel of this operator is one-dimensional generated by the vector $r_0$ as  defined in the statement of the proposition.

	\textit{(2)} From the strict  monotonicity and limit properties  of the sequences $\big(\Omega_{n}^{\pm} \big)_{n>N(a_!,a_2,b)}$, as $n\to \infty$,  obtained in Proposition \ref{prop spec}, we find that 
	$$\Omega_{m}^{\pm} \not\in\left\lbrace  Q(a_1,a_2) ,0\right\rbrace.$$
	Therefore, by virtue of Proposition \ref{prop regular2}, this implies the Fredholm property for the operator $d_{(r_1,r_2)}\mathcal{G}\big(\Omega_{m}^{\pm} ,0,0\big).$ Hence, combining this with  the one-dimensional property of its kernel, established before, we   deduce that the range of the this operator is closed and of co-dimension one.
	
	Now, let us be more precise about the range of the linearized operator. To that end, we endow the vector space $Y_{m}^{\alpha}\times Y_{m}^{\alpha}$ with the pre-Hilbertian structure given by the following $L^2$-scalar product 
	$$
\begin{aligned}
\Bigg(\left(\sum_{n=1}^{\infty}a_n\sin(nm\theta),\sum_{n=1}^{\infty}c_n\sin(nm\theta)\right) &  \left| 
\left(\sum_{n=1}^{\infty}b_n\sin(nm\theta),\sum_{n=1}^{\infty}d_n\sin(nm\theta)\right)\right.\Bigg)
\\
& \bydef\sum_{n=1}^{\infty}a_nb_n+c_nd_n.
\end{aligned}	
	$$
	Then, observe that
	$$u_{0,m}^{\pm} \bydef\begin{pmatrix}
		\Omega_{m}^{\pm}  -\Lambda_m(a_2,a_2)\vspace{2mm}\\
		\frac{b(a_2)}{b(a_1)}\Lambda_{m}(a_1,a_2)
	\end{pmatrix}\in \ker\Big(M_m^{\top}\big(\Omega^{\pm}_{m} ,a_1,a_2\big)\Big).$$
The notation $M^{\top}$ refers to the transpose of the matrix $M$.
	Hence, defining 
	$$y_0:\theta\mapsto u_{0,m}^{\pm} \sin(m\theta),$$  we then write in view of 
  \eqref{lin2}, for any $(h_1,h_2)\in X_{m}^{\alpha}\times X_m^{\alpha}$ given by
 $$h_{1}(\theta)=\sum_{n=1}^{\infty}h_n^{(1)}\cos(nm\theta),\qquad h_{2}(\theta)=\sum_{n=1}^{\infty}h_n^{(2)}\cos(nm\theta),$$
 for some $ h_n^{(1)},h_n^{(2)} \in \mathbb{R}$, that
\begin{align*}
	\left(d_{(r_1,r_2)}\mathcal{G}\big(\Omega_{m}^{\pm} ,0,0\big)[h_1,h_2]\big|y_0\right)&=-m\left\langle M_m\big(\Omega_m^{\pm} ,a_1,a_2\big)\begin{pmatrix}
		h_1^{(1)}\\
		h_1^{(2)}
	\end{pmatrix},u_{0,m}^{\pm} \right\rangle_{\mathbb{R}^2}\\
	&=-m\left\langle\begin{pmatrix}
		h_1^{(1)}\\
		h_1^{(2)}
	\end{pmatrix},M_m^{\top}\big(\Omega_m^{\pm} ,a_1,a_2\big)u_{0,m}^{\pm} \right\rangle_{\mathbb{R}^2}\\
&=0,
\end{align*}
where, we utilize the notation $\langle\cdot,\cdot\rangle_{\mathbb{R}^2}$ for the Euclidean scalar product on $\mathbb{R}^2$.
This proves that 
	$$\textnormal{Range}\Big(d_{(r_1,r_2)}\mathcal{G}\big(\Omega_{m}^{\pm} ,0,0\big)\Big)\subset\mathtt{span}_{\alpha}^{\perp_{(\cdot|\cdot)}}(y_0),$$
	where
	$$\mathtt{span}_{\alpha}^{\perp_{(\cdot|\cdot)}}(y_0)\bydef\Big\{y\in Y_{m}^{\alpha}\times Y_{m}^{\alpha}:\; (y|y_0)=0\Big\}.$$

	Now, since $\mathtt{span}(y_0)$ has a finite dimension which is equal to one, then the orthogonal supplementary theorem yields that 
	$$Y_m^{\alpha}\times Y_m^{\alpha}=\mathtt{span}(y_0)\oplus\mathtt{span}_{\alpha}^{\perp_{(\cdot|\cdot)}}(y_0).$$
	This proves that $\mathtt{span}_{\alpha}^{\perp_{(\cdot|\cdot)}}(y_0)$ is of codimension one in $Y_{m}^{\alpha}\times Y_m^{\alpha}.$ The range being also of codimension one in this space, then applying \cite[Lemma B.1]{R23} implies that 
	\begin{equation}\label{description range}
		\textnormal{Range}\Big(d_{(r_1,r_2)}\mathcal{G}\big(\Omega_{m}^{\pm} ,0,0\big)\Big)=\mathtt{span}_{\alpha}^{\perp_{(\cdot|\cdot)}}(y_0).
	\end{equation}  
	\textit{(3)} According to \eqref{description range}, notice that   the transversality property holds once we show that the quantity
	\begin{align*}
		\Big(\partial_{\Omega}d_{(r_1,r_2)}\mathcal{G} &\big(\Omega_{m}^{\pm} ,0,0\big)[r_0]|y_0\Big) =\left(\Omega_{m}^{\pm}  -\Lambda_m(a_2,a_2)\right)^2-\Lambda_{m}^{2}(a_1,a_2)
	\end{align*}
	does not vanish.
	To that end, exploiting the fact that
	$$\det\Big(M_{m}\big(\Omega_{m}^{\pm} ,a_1,a_2\big)\Big)=0,$$
	which allows us to write 
	\begin{equation*}
		\begin{aligned}
			-\Lambda_{m}^{2}(a_1,a_2)  
			&= \left(\Omega_{m}^{\pm}  -\Lambda_m(a_2,a_2)\right)  \left(\Omega_{m}^{\pm} -Q(a_1,a_2) +\Lambda_m(a_1,a_1)\right),
		\end{aligned}
	\end{equation*}
	we then obtain that 
	\begin{equation*}
		\begin{aligned}
			\Big(\partial_{\Omega}&d_{(r_1,r_2)}\mathcal{G}\big(\Omega_{m}^{\pm} ,0,0\big)[r_0]|y_0\Big)\\
		&=\left(\Omega_{m}^{\pm}  -\Lambda_m(a_2,a_2)\right) \left(2\Omega_{m}^{\pm} - Q(a_1,a_2) +\Lambda_m(a_1,a_1)-\Lambda_m(a_2,a_2)\right).
		\end{aligned}
	\end{equation*} 
	Therefore,   we deduce that  the identity
	 \begin{equation*}
	 	\Big(\partial_{\Omega}d_{(r_1,r_2)}\mathcal{G}\big(\Omega_{m}^{\pm} ,0,0\big)[r_0]|y_0\Big)= 0
	 \end{equation*}
	 holds if and only if 
	 \begin{equation*}
	 	2\Omega_{m}^{\pm}  -Q(a_1,a_2) +\Lambda_m(a_1,a_1)-\Lambda_m(a_2,a_2)=0
	 \end{equation*}
	 or 
	   \begin{equation*}
	   	\Omega_{m}^{\pm}  =\Lambda_m(a_2,a_2) .
	   \end{equation*}
	 Now, by virtue of the expressions of $\Omega_m^\pm$ given in \eqref{prop spec}, the first possibility above yields  $\Delta_m(a_1,a_2)=0$, which is excluded by \eqref{sign dircriminant} as long as $m\geq N(a_1,a_2,b).$ As for the second possibility,  it is readily seen that the same expressions of $\Omega_m^\pm$ give that
	 \begin{equation*}
	 	\Lambda_m(a_1,a_1) + \Lambda_m(a_2,a_2)- Q(a_1,a_2)   = \pm \sqrt{\Delta_m(a_1,a_2)},
	 \end{equation*}
	 which, upon raising to the power-two, yields that 
	 \begin{equation*}
	 	\Lambda_m(a_1,a_2)=0.
	 \end{equation*}
	 Notably,   this identity is violated  by the third result in Proposition \ref{lemma:eigenfunction} as soon as  $m$ is large enough in the sense that 
	 \begin{equation*}
	 	m >    M( b),
	 \end{equation*}
	 where $M(b)$ is as defined in Proposition \ref{lemma:eigenfunction}.	Note that our lower bound $N(a_1,a_2,b)$ can be chosen greater than $M(b)$ to ensure the last condition. All in all, we conclude that 
	\begin{equation*}
		\partial_{\Omega}d_{(r_1,r_2)}\mathcal{G}\big(\Omega_{m}^{\pm} ,0,0\big)[r_0] \notin \mathtt{span}_{\alpha}^{\perp_{(\cdot|\cdot)}}(y_0) = \textnormal{Range}\Big(d_{(r_1,r_2)}\mathcal{G}\big(\Omega_{m}^{\pm} ,0,0\big)\Big),
	\end{equation*}
	thereby completing the proof of the lemma. 
	\end{proof}
\begin{proof}[Proof of Theorem \ref{thm:2}]
	The proof of Theorem \ref{thm:2} is achieved by employing the results of Proposition \ref{prop.last2} and applying Crandall--Rabinowitz Theorem \ref{Crandall-Rabinowitz theorem}. 
\end{proof} 
   

\appendix 
\section{Crandall--Rabinowitz's Theorem}
Here, we recall the celebrated Crandall--Rabinowitz theorem which was utilized to prove our main theorems.

\begin{thm}[Crandall-Rabinowitz \cite{CR71}]\label{Crandall-Rabinowitz theorem}
	Let $X$ and $Y$ be two Banach spaces.  Consider $V \subset X$ to be a neighborhood of $0$ and a function $F$ such that
	$$F:\mathbb{R}\times V\rightarrow Y.$$
	Assume   that $F$ enjoys the following properties.
	\begin{enumerate}
		\item Existence of trivial branch:
		$$ F(\Omega,0)=0, \quad \text{for all} \quad \Omega\in\mathbb{R}.$$
		\item Regularity: $F$ is       regular in the sense that $\partial_\Omega F $, $d_x F$ and $\partial_{\Omega}d_xF$ exist and are continuous.
		\item Fredholm property:       
		The kernel   $\ker\big(d_{x}F(0,0)\big)$ is  of dimension one, i.e., there is $x_0\in V$ such that
		$$\ker\big(d_{x}F(0,0)\big)=\mathtt{span}(x_{0}),$$  
		    and the $\textnormal{Range}\big(d_{x}F(0,0)\big)$ is closed and of a co-dimension one. 
		\item Transversality:  		$$\partial_{\Omega}d_xF(0,0)[x_{0}]\not\in \textnormal{Range}\big(d_{x}F(0,0)\big).$$
	\end{enumerate}
	Then, denoting $\chi$ any complement of $\ker\big(d_{x}F(0,0)\big)$ in $X$, there exist a neighborhood $U$ of $(0,0)$, an interval $(-a,a)$, for some $a>0$ and continuous functions
	$$\psi:(-a,a)\rightarrow \mathbb{R}\quad\textnormal{and}\quad\phi:(-a,a)\rightarrow\chi,$$
	such that
	$$\psi(0)=0,\qquad\phi(0)=0$$ 
	and 
	$$\Big\{(\Omega,x)\in U: F(\Omega,x)=0\Big\}=\Big\{\big(\psi(s),sx_{0}+s\phi(s)\big): \;|s|<a\Big\}\cup\Big\{(\Omega,0)\in U\Big\}.$$
\end{thm}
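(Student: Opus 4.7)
The plan is to prove this via the classical Lyapunov--Schmidt reduction, turning the infinite-dimensional bifurcation problem into a scalar equation to which the ordinary implicit function theorem applies. Let $N = \ker\bigl(d_x F(0,0)\bigr) = \mathtt{span}(x_0)$ and $R = \textnormal{Range}\bigl(d_x F(0,0)\bigr)$. By hypothesis, we may pick a topological complement $\chi$ of $N$ in $X$, and since $R$ is closed of codimension one we may pick a one-dimensional complement $Z$ of $R$ in $Y$, so that $Y = R \oplus Z$. Let $P : Y \to R$ and $Q = I - P : Y \to Z$ denote the associated continuous projections. Any $x \in V$ close to $0$ uniquely decomposes as $x = s x_0 + \phi$ with $s \in \mathbb{R}$ and $\phi \in \chi$ small.

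First I would split the equation $F(\Omega, x) = 0$ into its $R$-component and its $Z$-component. The $R$-component $P F(\Omega, s x_0 + \phi) = 0$ will be solved for $\phi$ in terms of $(\Omega, s)$ via the standard implicit function theorem. Indeed, set $G(\Omega, s, \phi) \bydef P F(\Omega, s x_0 + \phi)$; then $G(0, 0, 0) = 0$ and
\[
d_\phi G(0, 0, 0) = P \circ d_x F(0,0)|_{\chi} : \chi \longrightarrow R
\]
is a bounded bijection (injectivity because $\chi \cap N = \{0\}$, surjectivity because $d_x F(0,0)(X) = R$ and $P|_R = \mathrm{Id}_R$), hence a Banach-space isomorphism by the open mapping theorem. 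The regularity hypotheses on $F$ transfer to $G$, so the implicit function theorem yields a $C^1$ (in $\Omega$, continuous in $s$) function $\phi = \phi(\Omega, s)$ defined near $(0,0)$, with $\phi(\Omega, 0) = 0$ (because $F(\Omega, 0) = 0$ already solves it) and $\phi(0,0) = 0$.

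It remains to solve the scalar reduced equation $H(\Omega, s) \bydef Q F\bigl(\Omega, s x_0 + \phi(\Omega, s)\bigr) = 0$ in $Z \simeq \mathbb{R}$. Since $H(\Omega, 0) = 0$, I would use the smooth-division trick $H(\Omega, s) = s \,\tilde H(\Omega, s)$ with $\tilde H(\Omega, s) \bydef \int_0^1 \partial_s H(\Omega, ts)\,dt$, which is continuous and admits the partial $\partial_\Omega \tilde H$ thanks to the assumed continuity of $\partial_\Omega d_x F$. The trivial-branch solutions correspond to $s = 0$, and the bifurcating branch is produced by solving $\tilde H(\Omega, s) = 0$. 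At $(0,0)$ one computes $\tilde H(0,0) = \partial_s H(0,0) = Q \,d_x F(0,0)[x_0 + \partial_s \phi(0,0)] = 0$, since $d_x F(0,0) x_0 = 0$ and the remaining term lies in $R = \ker Q$.

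The key step, and the one where the transversality condition intervenes, is to verify $\partial_\Omega \tilde H(0,0) \ne 0$. Differentiating under the integral and using $\phi(\Omega, 0) \equiv 0$ gives
\[
\partial_\Omega \tilde H(0,0) = Q \,\partial_\Omega d_x F(0,0)[x_0] + Q \,d_x F(0,0)\bigl[\partial_\Omega \phi(0,0)\bigr];
\]
the second term vanishes because it lies in $R$, while the first is nonzero precisely by the assumption $\partial_\Omega d_x F(0,0)[x_0] \notin R$. The implicit function theorem then produces a continuous $\psi : (-a, a) \to \mathbb{R}$ with $\psi(0) = 0$ such that $\tilde H(\Omega, s) = 0$ locally reduces to $\Omega = \psi(s)$. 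Setting $\phi(s) \bydef \phi(\psi(s), s)/s$ for $s \ne 0$ and $\phi(0) \bydef 0$ (continuity at $0$ follows from $\phi(\Omega, 0) = 0$ and $\partial_s \phi(0,0) \in \chi$ being the natural correction) provides the required parametrization of the non-trivial zero set. The main obstacle is essentially bookkeeping: one must verify that the smooth-division remainder $\tilde H$ retains enough regularity to apply the implicit function theorem under only the slightly nonstandard regularity assumptions on $F$ (in particular, $d_x F$ and $\partial_\Omega d_x F$ continuous, but $F$ not assumed fully $C^2$), which is exactly why Crandall--Rabinowitz's original formulation requires precisely these partial derivatives and no more.
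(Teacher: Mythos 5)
The paper does not prove this theorem: it is quoted verbatim from the cited reference \cite{CR71} and used as a black box, so there is no in-paper proof to compare against.

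Your proposed route — a two-step Lyapunov--Schmidt reduction followed by a smooth division — is a standard and intuitive strategy, and it would work if $F$ were assumed $C^2$. Under the \emph{minimal} regularity hypothesis actually stated (only $\partial_\Omega F$, $d_x F$, $\partial_\Omega d_x F$ continuous; crucially, \emph{no} $d_x^2 F$), there is a genuine gap precisely at the point you flag as ``bookkeeping.'' To apply the implicit function theorem to the reduced scalar equation $\tilde H(\Omega,s)=0$ you need $\partial_\Omega\tilde H$ to exist and be continuous in a full neighborhood of the origin, not merely at $(0,0)$. But
$$
\tilde H(\Omega,s)=\int_0^1 Q\,d_xF\big(\Omega,\,tsx_0+\phi(\Omega,ts)\big)\big[x_0+\partial_s\phi(\Omega,ts)\big]\,dt,
$$
and differentiating the integrand in $\Omega$ forces you to differentiate the composition $\Omega\mapsto d_xF\big(\Omega,\phi(\Omega,\cdot)\big)$, whose chain-rule expansion produces a term $d_x^2F(\cdot)\big[\partial_\Omega\phi,\cdot\big]$. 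This object is not assumed to exist. The argument that it ``lies in $R$'' or ``vanishes because $\phi(\Omega,0)=0$'' only saves the computation \emph{at} $s=0$; for $s\neq 0$, $\partial_\Omega\phi(\Omega,s)$ is generically nonzero and the derivative of $\tilde H$ is simply not defined by the available hypotheses. The same obstruction already blocks the existence of $\partial_\Omega\partial_s\phi$, which you implicitly use. So the proposal is not a complete proof under the stated regularity.

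The reason Crandall and Rabinowitz state the regularity hypothesis in exactly this form is that their original argument is a \emph{one-step} implicit function theorem, not a two-step Lyapunov--Schmidt. One sets $x=s(x_0+z)$ with $z\in\chi$ and defines
$$
g(s,z,\Omega)\;\bydef\;
\begin{cases}
\dfrac{F(\Omega,s(x_0+z))}{s}, & s\neq 0,\\[2mm]
d_xF(\Omega,0)[x_0+z], & s=0,
\end{cases}
\qquad\text{equivalently}\quad
g(s,z,\Omega)=\int_0^1 d_xF\big(\Omega,ts(x_0+z)\big)[x_0+z]\,dt,
$$
and solves $g=0$ for the pair $(z,\Omega)$ as a function of $s$. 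The substitution $x=s(x_0+z)$ is the key: the chain rule in $z$ produces a factor of $s$ which cancels the $1/s$, so $\partial_z g(s,z,\Omega)=d_xF(\Omega,s(x_0+z))\big|_\chi$, needing no second derivative, while $\partial_\Omega g=\int_0^1\partial_\Omega d_xF(\Omega,ts(x_0+z))[x_0+z]\,dt$ is controlled by exactly the assumed $\partial_\Omega d_xF$. At $(0,0,0)$ the linearization $(h,\mu)\mapsto d_xF(0,0)h+\mu\,\partial_\Omega d_xF(0,0)x_0$ is an isomorphism of $\chi\times\mathbb{R}$ onto $Y$ by the Fredholm and transversality hypotheses, and the parametric implicit function theorem (which requires only continuity in $s$ and $C^1$-dependence in the solved-for variables $(z,\Omega)$) finishes the proof. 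If you keep your Lyapunov--Schmidt framework, you would need to either strengthen the hypothesis to $F\in C^2$ or rephrase the reduction in the rescaled variable $z=\phi/s$, at which point you essentially recover the single-step argument.
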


\section{Back to the elliptic problem}\label{appendix:elliptic}
This section  serves as a appendix that completes, in a self-contained and rigorous way, the results presented in Section \ref{Sec:elliptic}. We believe that some of the techniques that we present hereafter provide simple (constructive) approaches to justify some results from elliptic theory in a specific case of an elliptic equation with variable coefficient.

For the record, we   stress out that several arguments in this appendix draw insight from \cite{DS20} on the same elliptic problem but on bounded domains, as well the analysis from \cite{CH95} for the case $b\equiv 1$ and in the whole plane $\mathbb{R}^2$. Moreover, we will grab a good portion of notions from the nice notes \cite{OS12} and employ them to prove the existence and uniqueness of weak solutions of \eqref{EL:P1}.

Once again, the overall results presented in this direction are not brand-new (except for the relevant decomposition of solutions from Lemma \ref{lemma:expansion1} which is, to the best of our knowledge, the main novelty in the elliptic analysis we perform in this paper), but the way they are justified here is considerably self-contained and rigorous.


{\color{olive}

}

\subsection{Elliptic regularity}\label{section:A:regularity}

We begin with providing a full justification of Proposition \ref{prop:Elliptic:00} which establishes the primary  elliptic regularities  enjoyed by the solutions of \eqref{EL:P1}.

\begin{proof}[Proof of Proposition \ref{prop:Elliptic:00}]
We begin with splitting  the solution of \eqref{EL:P1} as $\psi_b =\psi_{b,\rad}+\bar\psi_{b}$, where each part of this decomposition is governed by its own elliptic equation
\begin{equation}\label{EL:EQ:BBB} 
		\displaystyle-\div\left(\frac{1}{b}\nabla\psi_{b,\rad }\right)=f_\rad 
\end{equation}
and 
\begin{equation} \label{EL:EQ:BBB222} 
		\displaystyle-\div\left(\frac{1}{b}\nabla\bar \psi_{b}\right)=\bar f,
\end{equation}
where,  $f_\rad $ refers to the radially symmetric part of $f$ which is defined by
\begin{equation}\label{RAD:F}
\begin{aligned}
		f_\rad  (r)\bydef \frac{1}{2\pi} \int_0^{2\pi} f\big (r(\cos \theta, \sin \theta)\big) d\theta,  
			\end{aligned}
	\end{equation}
	where, $r\bydef |x|$, for any $x\in \mathbb{R}^2$, whereas  $\bar f\bydef f-f_\rad$ denotes the remaining part of the source term which satisfies
	\begin{equation}\label{int:f:bar}
		\int_{\mathbb{R}^2} \bar f(x) dx = 0.
	\end{equation}
 Now, observe  that, from their definitions, $f_\rad$ and $\bar{f}$   enjoy the bounds
	\begin{equation*} 
	\norm {f_\rad}_{L^p(\mathbb{R}^2)} \leq  \norm {f}_{L^p(\mathbb{R}^2)} 
\end{equation*}
and 
\begin{equation*}
	\norm {\bar{f}}_{L^p(\mathbb{R}^2)} \leq  2 \norm {f}_{L^p(\mathbb{R}^2)}. 
\end{equation*} 
Noticing, moreover, that $f_\rad$ and $b$  are radial functions, by construction and due to \eqref{Hb3:eq} respectively,  one can easily solve \eqref{EL:EQ:BBB} using radial coordinates to show that the function 
\begin{equation}\label{psi:ODE:sol}
	\psi_{b,\rad } (r)\bydef - \int_0^{r}\frac{b(s)}{s}\int_0^s \tau f_\rad  (\tau)d\tau ds, \quad \text{for all} \quad r\bydef |x|\geq 0 
	\end{equation}
is an exact solution of \eqref{EL:EQ:BBB}.  In order to show that, we only need to recast \eqref{EL:EQ:BBB} in the radial coordinates, which writes as 
\begin{equation*}
	- \frac{d}{dr}\left( \frac{r\frac{d}{dr} \psi_{b,\rad }(r)}{b(r)} \right) = rf_\rad (r),
\end{equation*}
for all $r>0$. It follows then that the unique solution of this second order ODE with boundary conditions
\begin{equation*}
	\psi_{b,\rad}(0)= \frac{d}{dr}\psi_{b,\rad }(0)=0
\end{equation*}
 is then given by \eqref{psi:ODE:sol}. We are not going to show that these boundary conditions are satisfied as this will not serve the purpose of our results; but one can show that by employing  the ideas that we develop below.
 
 Let us now take care of the bounds on $\psi_{b,\rad }$ at first  as they will be obtained by direct estimates on its explicit expression. To that end, computing, for all $r>0,$  that 
\begin{equation*}
	 \frac{d}{dr} \psi_{b,\rad }(r)=  -  \frac{b(r)}{r}\int_0^r \tau f_\rad (\tau)d\tau  , 
\end{equation*}
and, whence
\begin{equation*} 
	- \frac{d^2  }{dr^2}\psi_{b,\rad}(r)= \frac{b'(r)}{r}\int_0^r \tau f_\rad (\tau)d\tau ds - \frac{b(r)}{r^2}\int_0^r \tau f_\rad (\tau)d\tau ds + b(r) f_\rad(r) .
\end{equation*}
Therefore, it is readily seen that 
\begin{equation*}
	\begin{aligned}
		\norm {\nabla^2 \psi_{b,\rad } }_{L^m(\mathbb{R}^2)}
		& \lesssim  \norm {b'}_{L^\infty(\mathbb{R}^2)} \left(  \int_0^{R_\infty} \left( \frac{1}{r}\int_0^r \tau  | f_\rad (\tau)|d\tau\right)^m rdr \right)^{\frac{1}{m}}
		\\
		&\quad +  \norm{b}_{L^\infty(\mathbb{R}^2)} \left(  \int_0^\infty \left( \frac{1}{r^2}\int_0^r \tau  |f_\rad (\tau)|d\tau\right)^m rdr \right)^{\frac{1}{m}}
	\\
	 & \quad + \norm{b}_{L^\infty(\mathbb{R}^2)} \norm {f}_{L^m(\mathbb{R}^2)},
	\end{aligned}
\end{equation*}
for all $m\in (1,p]$, where we have used \eqref{Hb3:eq} to localize the first integral   in the right-hand side on a bounded set. Thus, writing, by rescaling, that
\begin{equation*} 
	\frac{1}{r}\int_0^r \tau f_\rad (\tau)d\tau  =  \int_0^1 \sigma r f_\rad(\sigma r) d\sigma,
\end{equation*} 
yields  
\begin{equation*}
	\begin{aligned}
		\norm {\nabla^2 \psi_{b,\rad } }_{L^m(\mathbb{R}^2)}
		& \lesssim  \norm {b}_{W^{1,\infty }(\mathbb{R}^2)} \left(   \int_0^1  \norm{  f  (\sigma  \cdot)}_{L^m(\mathbb{R}^2)}\sigma d\sigma  + \norm {f}_{L^m(\mathbb{R}^2)} \right)	
		\\
		& \lesssim  \norm {b}_{W^{1,\infty } (\mathbb{R}^2)}  \norm {f}_{L^m(\mathbb{R}^2)} \left(   \int_0^1 \sigma  ^{1-\frac{2}{m}}d\sigma  + 1 \right)	
		\\
		&\lesssim  \norm {b}_{W^{1,\infty } (\mathbb{R}^2)}  \norm {f}_{L^m(\mathbb{R}^2)} .
	\end{aligned}
\end{equation*}

  We turn our attention now to the control of $\bar \psi_{b}$. To that end, we first notice that,  the assumptions on $f$ combined with the property \eqref{int:f:bar} allows us to employ Lemma \ref{lemma:duality} below and deduce that  $\bar f \in \dot H^{-1} (\mathbb{R}^2)$ with
  \begin{equation*}
  	 \norm {\bar f}_{\dot H^{-1} (\mathbb{R}^2)} \lesssim \norm f_{L^1((1+|x|)dx;\mathbb{R}^2) \cap L^2(\mathbb{R}^2)}.
  \end{equation*}
  Thus,  performing an $L^2$ energy estimate for \eqref{EL:EQ:BBB222} leads to the bound
\begin{equation*}
	\begin{aligned}
		\int_{\mathbb{R}^2} \frac{1}{b(x)} |\nabla \bar \psi_{b}(x)|^2dx 
		&= \left| \int_{\mathbb{R}^2} \bar f(x)\bar \psi_b(x)dx \right|
		\\
		&\lesssim    \norm {\bar f}_{\dot H^{-1} (\mathbb{R}^2)}   \norm {\bar \psi_{b}}_{\dot H^{1} (\mathbb{R}^2)}  , 
	\end{aligned}
\end{equation*}
whence
\begin{equation*}
	\norm { \nabla \bar \psi_{b}}_{L^2(\mathbb{R}^2)} \lesssim_b \norm f_{L^1((1+|x|)dx;\mathbb{R}^2) \cap L^2(\mathbb{R}^2)}.
\end{equation*}

We now establish the remaining bounds on $\bar\psi_{b}$ by a bootstrap argument. To see that, we expand the derivatives in the left-hand side of \eqref{EL:EQ:BBB222} to obtain that
\begin{equation}\label{ELQ:000}
	-\Delta \bar \psi_{b} = b\bar f - \frac{\nabla b}{b} \cdot \nabla \bar \psi_{b}.
\end{equation}
Then, observing that the right-hand side above belongs to $L^2(\mathbb{R}^2)$, it then follows, by virtue of classical elliptic regularity, that 
\begin{equation*}
	\norm {\nabla^2 \bar \psi_{b}}_ { L^2(\mathbb{R}^2)} \lesssim _b \norm {f}_{L^2(\mathbb{R}^2)} + \norm {\nabla \bar  \psi_{b}}_{ L^2(\mathbb{R}^2)} \lesssim  _b\norm {f}_{L^1((1+|x|)dx;\mathbb{R}^2) \cap L^2(\mathbb{R}^2)}.
\end{equation*}
This allows us to deduce, by interpolation argument, that 
\begin{equation*}
	\norm {\nabla \bar \psi_{b}}_{L^n(\mathbb{R}^2)} \lesssim \norm {\nabla \bar \psi_{b}}_{L^2(\mathbb{R}^2)} + \norm {\nabla^2 \bar \psi_{b}}_{L^2 (\mathbb{R}^2)} \lesssim _b \norm {f}_{L^1((1+|x|)dx;\mathbb{R}^2) \cap L^2(\mathbb{R}^2)} ,
\end{equation*}
for all $n\in [2,\infty)$. Accordingly, this new bound combined with the assumptions on $b$ imply that the right-hand side of \eqref{ELQ:000} belongs, after all, to $ L^{m}(\mathbb{R}^2)$, for all $m\in (1,p]$, with 
\begin{equation*}
	\norm { b\bar f - \frac{\nabla b}{b} \cdot \nabla \bar \psi_{b}}_{L^{m}(\mathbb{R}^2)} \lesssim_b \norm{f}_{L^1((1+|x|)dx;\mathbb{R}^2) \cap L^{m}(\mathbb{R}^2)},
\end{equation*} 
where we have employed  H\"olders' inequalities.
At last, by means of classical elliptic regularity of Poisson's equation \eqref{ELQ:000}, we conclude that 
\begin{equation}\label{D2psi}
		\norm{\nabla ^2 \bar \psi_{b}}_{ L^m(\mathbb{R}^2)} \lesssim_b \norm f_{L^1((1+|x|)dx;\mathbb{R}^2) \cap L^m(\mathbb{R}^2) }, 
	\end{equation}
	for all $m\in (1,p]$, thereby completing the proof of the proposition.
		\end{proof}

From the preceding proof, and by virtue of direct Sobolev embeddings, we can observe that  
	\begin{equation*}
	\norm{\nabla \psi_{b,\rad }}_{ L^\kappa (\mathbb{R}^2)} \lesssim_b \norm f_{L^1\cap L^2(\mathbb{R}^2)} , \qquad \norm{\nabla \bar \psi_{b}}_{ L^\kappa(\mathbb{R}^2)} \lesssim_b \norm f_{L^1((1+|x|)dx;\mathbb{R}^2) \cap L^2(\mathbb{R}^2)} 
	\end{equation*}  
	for all $\kappa\in (2,\infty)$, and
	\begin{equation}\label{bound:bar:psi:2}
		\norm{\nabla \bar \psi_{b}}_{ L^2   (\mathbb{R}^2)} \lesssim_b \norm f_{L^1((1+|x|)dx;\mathbb{R}^2) \cap L^2(\mathbb{R}^2)}. 
	\end{equation}
The validity of the last bound for $\psi_{b,\rad}$ requires that $f$ has mean-free. A  complementary  discussion on that is deferred to  Section \ref{section:compatibility}, below.

Our next concern is to show that any solution of \eqref{EL:P1} is Lipschitz-continuous and bounded. This corresponds to the endpoint case $\kappa=\infty$ in the previous bounds, which have been useful in the spectral analysis the time-periodic solutions constructed in previous sections.
 \begin{lem}\label{lemma:lipschitz}
	Let $b$ be a $C^1$ function satisfying assumptions \eqref{Hb2:eq}, \eqref{Hb3:eq} and \eqref{Hb4:eq} and consider a   source term $f$ belonging to $ L^1\big (\big(1+|x|\big) dx; \mathbb{R}^2\big ) \cap L^p(\mathbb{R}^2)$, for some  $p\in (2,\infty)$. Then, any solution $\psi_b$  of the elliptic equation \eqref{EL:P1}  is Lipschitz-continuous and enjoys the bound 
		\begin{equation*}
		\norm{\nabla \psi_b}_{L^\infty(\mathbb{R}^2)}\lesssim_b \norm f_{L^1((1+|x|)dx;\mathbb{R}^2) \cap L^p(\mathbb{R}^2)}.
		\end{equation*}
\end{lem}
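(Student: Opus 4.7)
The plan is to reduce the Lipschitz bound to two independent estimates via the splitting $\psi_b = \psi_{b,\rad} + \bar{\psi}_b$ already introduced in the proof of Proposition \ref{prop:Elliptic:00}, where $\psi_{b,\rad}$ is the radial solution associated with $f_\rad$ (given by \eqref{RAD:F}) and $\bar{\psi}_b$ solves \eqref{EL:EQ:BBB222} with mean-free source $\bar{f}=f-f_\rad$. Each piece enjoys a different structure and will be treated by genuinely different tools.

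For $\psi_{b,\rad}$, the explicit ODE solution \eqref{psi:ODE:sol} gives, in polar coordinates,
\begin{equation*}
    \nabla\psi_{b,\rad}(x)= -\frac{b(|x|)}{|x|}\left(\int_0^{|x|}\tau f_\rad(\tau)\,d\tau\right)\frac{x}{|x|},
\end{equation*}
so the task is to bound $F(r) \bydef r^{-1}\int_0^r \tau f_\rad(\tau)\,d\tau$ in $L^\infty(\mathbb{R}^+)$. For $r\geq 1$, one uses the $L^1$ bound $\int_0^\infty \tau|f_\rad(\tau)|d\tau\leq (2\pi)^{-1}\|f\|_{L^1}$ to get $|F(r)|\leq C\|f\|_{L^1}$. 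For $r\leq 1$, a Hölder inequality with exponents $p',p$ against the weight $\tau$ yields $\int_0^r \tau|f_\rad(\tau)|d\tau\lesssim r^{2/p'}\|f\|_{L^p}$, hence $|F(r)|\lesssim r^{2/p'-1}\|f\|_{L^p}$, and the exponent $2/p'-1>0$ is positive precisely because $p>2$. Combining the two regimes and using \eqref{Hb2:eq}--\eqref{Hb3:eq} gives $\|\nabla\psi_{b,\rad}\|_{L^\infty}\lesssim_b \|f\|_{L^1\cap L^p}$.

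For $\bar{\psi}_b$, I would rely on the elliptic gain of regularity together with Sobolev embedding on $\mathbb{R}^2$. Proposition \ref{prop:Elliptic:00} applied with $m=p$ gives $\nabla^2\bar{\psi}_b\in L^p(\mathbb{R}^2)$; applied with a suitable $m\in(1,2)$ chosen so that $1/m-1/2=1/p$, classical homogeneous Sobolev embedding $\dot{W}^{2,m}(\mathbb{R}^2)\hookrightarrow\dot{W}^{1,p}(\mathbb{R}^2)$ yields $\nabla\bar{\psi}_b\in L^p(\mathbb{R}^2)$ with quantitative bound controlled by $\|f\|_{L^1((1+|x|)dx)\cap L^p}$. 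Since $p>2$, the Morrey-type embedding $W^{1,p}(\mathbb{R}^2)\hookrightarrow L^\infty(\mathbb{R}^2)$ then upgrades $\nabla\bar{\psi}_b$ to a bounded (indeed H\"older continuous) function with the desired estimate.

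The combination of these two bounds concludes the proof. I do not expect a serious obstacle: the only mildly delicate step is verifying that the endpoint in the Sobolev chain is reached \emph{exactly} at the threshold $p>2$ (and not missed by a logarithm), which is why the hypothesis $p>2$ is strict; this is handled cleanly by choosing the intermediate exponent $m=2p/(p+2)\in(1,2)$ in the first Sobolev embedding so that the second one sits at $L^\infty$ via Morrey. No new technical ingredient is required beyond Proposition \ref{prop:Elliptic:00} and standard embeddings.
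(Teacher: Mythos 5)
Your proof is correct and the global architecture matches the paper's (same splitting $\psi_b=\psi_{b,\rad}+\bar\psi_b$), but the treatment of each piece is genuinely different, so a brief comparison is worthwhile.

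For the radial part, the paper dispenses with the two-regime analysis: a single Cauchy--Schwarz against the measure $\tau\,d\tau$ on $(0,r)$ gives
$|F(r)|\le r^{-1}\bigl(\int_0^r\tau\,d\tau\bigr)^{1/2}\bigl(\int_0^r|f_\rad(\tau)|^2\tau\,d\tau\bigr)^{1/2}\lesssim\|f\|_{L^2}$
uniformly in $r$, which is shorter than splitting $r\lessgtr 1$ and invoking $L^1$ and $L^p$ separately. Both estimates are correct; yours has the pedagogical merit of showing exactly where the hypothesis $p>2$ enters through the exponent $2/p'-1>0$, while the paper's is tighter ($L^2$ only).

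For $\bar\psi_b$, the paper invokes the $\dot H^1$-energy estimate for the mean-free source $\bar f$ (this is \eqref{bound:bar:psi:2}), obtains $\nabla\bar\psi_b\in H^1$ and $\nabla^2\bar\psi_b\in L^p$, and then closes with a Littlewood--Paley decomposition: low frequencies are handled through $H^1\hookrightarrow B^0_{\infty,1}$ and high frequencies through $\sum_{j\ge 0}2^{j(-1+2/p)}\|\nabla^2\Delta_j\bar\psi_b\|_{L^p}$, yielding $\nabla\bar\psi_b\in B^0_{\infty,1}\hookrightarrow L^\infty\cap C^0$. Your route avoids Besov spaces entirely: you feed Proposition~\ref{prop:Elliptic:00} with the two exponents $m=2p/(p+2)\in(1,2)$ and $m=p$, then chain $\dot W^{2,m}\hookrightarrow\dot W^{1,p}$ (Sobolev) and $W^{1,p}(\mathbb{R}^2)\hookrightarrow L^\infty$ (Morrey). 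This is more elementary and requires no dyadic machinery. The one point you should make explicit is the representative issue in the homogeneous embedding $\dot W^{2,m}\hookrightarrow\dot W^{1,p}$: a priori the embedding produces $\nabla\bar\psi_b-v_0\in L^p$ for some constant vector $v_0$, and you need $v_0=0$. This is where the paper's $L^2$ bound \eqref{bound:bar:psi:2} on $\nabla\bar\psi_b$ earns its keep (a nonzero constant cannot lie in $L^2+L^p$), and its derivation relies on the mean-free structure of $\bar f$ together with Lemma~\ref{lemma:duality}. Your proposal does not explicitly appeal to this $L^2$ control, so the sentence ``yields $\nabla\bar\psi_b\in L^p$'' is slightly quicker than what the raw homogeneous Sobolev embedding delivers; once this pinning-down of the representative is added, your argument is complete and matches the conclusion.
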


\begin{proof}
	We begin with decomposing the solution as in the proof of Proposition \ref{prop:Elliptic:00} by writing 
	$$\psi_b=\psi_{b,\rad}+\bar \psi_b,$$
	 where $\psi_{b,\rad} $ is defined in \eqref{psi:ODE:sol} as the solution of \eqref{EL:P1} with source term $f_{\rad }$, introduced in  \eqref{RAD:F}, whereas  $\bar \psi_{b }$ solves the elliptic equation \eqref{EL:P1}  with source term  $\bar f \bydef  f-f_\rad $. Therefore, from its explicit expression \eqref{psi:ODE:sol}, it is readily seen that $\psi_{b,\rad }$ enjoys the bound
	\begin{equation*}
		|\nabla \psi_{b,\rad} (r)|\leq \frac{1}{r}\int_0^r |\tau f_\rad (\tau) |d\tau ,
	\end{equation*} 
	for all $r>0$. Hence, it follows, by employing H\"older's inequality, that  
	\begin{equation*}
		\begin{aligned}
			|\nabla \psi_{b,\rad} (r)|
			&\leq  \frac{1}{r} \left( \int_0^r \tau  d\tau\right)  ^\frac{1}{2} \left(  \int_0^r | f_\rad (\tau) |^2 \tau d\tau\right)   		^\frac{1}{2}	\\
			& \lesssim  \norm {f}_{L^2(\mathbb{R}^2)},
		\end{aligned}
	\end{equation*} 
	for all $r>0$, whereby the boundedness of $\nabla \psi_{b,\rad }$ follows. Next, we take care of $\bar \psi_b$. To that end, we recall from \eqref{D2psi} and \eqref{bound:bar:psi:2}   that
		\begin{equation}\label{bound:AAA:AAA}
		   \norm { \nabla^2 \bar \psi_b}_{L^p} + \norm {\nabla \bar \psi_b}_{H^1 (\mathbb{R}^2)} \lesssim_b \norm f_{L^1((1+|x|)dx;\mathbb{R}^2) \cap L^p(\mathbb{R}^2)}.
	\end{equation}
	Thus, as $\nabla \bar \psi_b$ belongs to $L^2(\mathbb{R}^2)$, writing  that 
	\begin{equation*}
		\nabla \bar \psi_b = S_0 (\nabla \bar \psi_b) + (\id - S_0) (\nabla \bar \psi_b),
	\end{equation*}
	one then sees, for $p>2$, that 
	\begin{equation*}
		\begin{aligned}
			\norm {\nabla \bar \psi_b}_{B^0_{\infty,1}} 
			&\lesssim  \norm {\nabla \bar \psi_b}_{ H^1} + \sum_{j\geq 0} 2^{ j(-1+\frac{2}{p})} \norm { \nabla^2 \Delta_j \bar \psi_b}_{L^p}
			\\
			& \lesssim  \norm {\nabla \bar \psi_b}_{ H^1} +     \norm { \nabla^2 \bar \psi_b}_{L^p}.
		\end{aligned}
	\end{equation*}
	Here, $S_0$ and $\{\Delta_j\}_{j\geq 0}$ denote the classical dyadic blocks from Littlewood-Paley theory. 
	Therefore, by employing the bounds \eqref{bound:AAA:AAA}, we deduce that 
	 \begin{equation*}
		\norm {\nabla \bar \psi_b}_{B^0_{\infty,1}} 
		\lesssim_b \norm f_{L^1((1+|x|)dx;\mathbb{R}^2) \cap L^p(\mathbb{R}^2)},
	\end{equation*}
	thereby concluding the proof of the Lipschitz estimate of $\psi_b$ by combining the preceding bound with the control on $\psi_{b,\rad}$. Moreover, it is obvious that $\nabla \psi_{b,\rad}$ is continuous, whereas the continuity of $\nabla \bar \psi_b$   follows from the properties of functions belonging to the Besov space $B^0_{\infty,1}(\mathbb{R}^2) $. This completes the proof of the lemma. 
	\end{proof}

	Observe that the preceding lemma ensures in particular that any solution of \eqref{EL:P1} with a   source term belonging to $L^1((1+|x|)dx;\mathbb{R}^2) \cap L^p(\mathbb{R}^2)$, for some $p>2$, is locally bounded. This is indeed a consequence of   Taylor's expansion
	\begin{equation*}
		|\psi_b (x)-\psi_b (y)| \leq |x-y| \norm {\nabla \psi_b}_{L^\infty(\mathbb{R}^2)}, 
	\end{equation*}
	for any $x,y\in \mathbb{R}^2$, which implies the continuity of $\psi_b$, whence its local boundedness, too.  Let us now derive more precise bounds on  $\psi_b$, which turn out to solely  require  an $L^2(\mathbb{R}^2)$ control of a compactly supported source term.

	\begin{lem}\label{lemma:boundedness:psi}
		Let $b$ be a $C^1$ function satisfying assumptions \eqref{Hb2:eq}, \eqref{Hb3:eq} and \eqref{Hb4:eq}. Further consider a compactly supported function $f$  belonging to $L^2(\mathbb{R}^2)$. Then, any solution $\psi_b$  of the elliptic equation \eqref{EL:P1}  enjoys the bound 
	\begin{equation*}
		\norm {\psi_b - \psi_{b,rad} - c}_{L^\infty(\mathbb{R}^2)} \lesssim_b  \norm{f}_{L^2(\mathbb{R}^2)},
	\end{equation*}
	for some constant $c\in \mathbb{R}$, where $\psi_{b,\rad}$ is given by \eqref{psi:ODE:sol} and satisfies the bound 
	\begin{equation*}
		\norm {\psi_{b,\rad}}_{L^\infty (B_A)} \lesssim_b  A\norm{f}_{L^2(\mathbb{R}^2)},
	\end{equation*}
	for any ball $B_A$ of finite radius $A>0$.
	\end{lem}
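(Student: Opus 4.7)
\medskip

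\noindent\textbf{Proof plan.} The plan is to revisit the splitting $\psi_b = \psi_{b,\rad} + \bar{\psi}_b$ from the proof of Proposition \ref{prop:Elliptic:00} and treat the two pieces separately. For the radial part, I would simply estimate the explicit formula \eqref{psi:ODE:sol}: by Cauchy--Schwarz in $\tau\,d\tau$ one has, for every $s>0$,
\begin{equation*}
    \left| \int_0^s \tau f_{\rad}(\tau)\,d\tau \right|
    \leq \left(\int_0^s \tau\,d\tau\right)^{\frac12}\left(\int_0^s |f_{\rad}(\tau)|^2 \tau\,d\tau\right)^{\frac12}
    \lesssim s\,\|f\|_{L^2(\mathbb{R}^2)},
\end{equation*}
so that $|\psi_{b,\rad}(r)|\leq \|f\|_{L^2(\mathbb{R}^2)}\int_0^r b(s)\,ds \lesssim_b r\,\|f\|_{L^2(\mathbb{R}^2)}$, which yields the claimed bound on any ball $B_A$.

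\medskip

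\noindent The core of the argument lies in the control of $\bar{\psi}_b := \psi_b - \psi_{b,\rad}$. Since $f$ (and thus $f_{\rad}$) is compactly supported, so is $\bar{f} := f - f_{\rad}$; moreover, the construction of $f_{\rad}$ as the angular average of $f$ ensures $\int_{\mathbb{R}^2}\bar{f}\,dx = 0$. Expanding the divergence in $-\div(b^{-1}\nabla\bar{\psi}_b) = \bar{f}$ gives the Poisson equation
\begin{equation*}
    -\Delta \bar{\psi}_b = g, \qquad g:= b\bar{f} - \tfrac{\nabla b}{b}\cdot\nabla\bar{\psi}_b,
\end{equation*}
and I would observe that, thanks to \eqref{Hb3:eq}, the function $g$ is compactly supported in $B_{\max\{R_\infty,\rho\}}$, where $\rho$ denotes the radius of a ball containing $\supp f$. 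The bound $\|g\|_{L^2(\mathbb{R}^2)}\lesssim_b \|f\|_{L^2(\mathbb{R}^2)}$ follows immediately by combining $\|\nabla\bar{\psi}_b\|_{L^2(\mathbb{R}^2)}\lesssim_b \|f\|_{L^2(\mathbb{R}^2)}$ from the proof of Proposition \ref{prop:Elliptic:00} with the assumptions on $b$ and the compactness of $\supp(\nabla b)$. A key intermediate step is to verify that $g$ is itself mean-free; I would obtain this by integrating the original identity $-\div(b^{-1}\nabla\bar{\psi}_b) = \bar{f}$ over a large ball $B_R$ to get $\int_{\partial B_R}b^{-1}\nabla\bar{\psi}_b\cdot\nu\,dS = 0$ for $R$ past the support of $\bar{f}$, then using that $b\equiv b_\infty$ on $\partial B_R$ by \eqref{Hb3:eq} to deduce $\int_{\partial B_R}\nabla\bar{\psi}_b\cdot\nu\,dS=0$, and finally integrating $-\Delta\bar{\psi}_b = g$ over the same ball.

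\medskip

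\noindent Once these properties are in hand, I would invoke the explicit Newtonian representation in the plane, namely
\begin{equation*}
    \bar{\psi}_b(x) = c - \tfrac{1}{2\pi}\int_{\mathbb{R}^2}\log|x-y|\,g(y)\,dy,
\end{equation*}
for some constant $c\in\mathbb{R}$, and exploit the mean-free character of $g$ to rewrite the integrand as $\log\bigl(|x-y|/|x|\bigr)g(y)$ for $|x|$ large. On the region $|x|\leq 2\max\{R_\infty,\rho\}$, a direct Cauchy--Schwarz together with the local square-integrability of the logarithm yields $|\bar{\psi}_b(x)-c|\lesssim_b \|g\|_{L^2(\mathbb{R}^2)}\lesssim_b \|f\|_{L^2(\mathbb{R}^2)}$; on $|x|\geq 2\max\{R_\infty,\rho\}$, the mean-free trick turns the logarithm into $\log|1-y/x|=O(|y|/|x|)$ uniformly on the support of $g$, which again gives a bound by a constant times $\|f\|_{L^2(\mathbb{R}^2)}$.

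\medskip

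\noindent The main technical obstacle is precisely ensuring $\int_{\mathbb{R}^2}g\,dx=0$: without it, the Newtonian potential generates a spurious $\log|x|$ at infinity that cannot be absorbed into the additive constant $c$. This is why the decomposition into $\psi_{b,\rad}+\bar{\psi}_b$ is crucial: it reduces matters to a source $\bar{f}$ with zero total mass, and the flux identity arising from \eqref{Hb3:eq} propagates this cancellation from $\bar{f}$ to the full Poisson source $g$.
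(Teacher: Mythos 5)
Your proof is correct and reaches the same final bound, but it takes a genuinely different route at the crucial intermediate step — verifying that the Poisson source $g = b\bar f - \tfrac{\nabla b}{b}\cdot\nabla\bar\psi_b$ carries no mass. The paper establishes the stronger \emph{angular} cancellation $\int_0^{2\pi}\bar F\big(r(\cos\theta,\sin\theta)\big)\,d\theta = 0$ for every $r>0$: it angularly averages $\bar\psi_b$ to produce a radial solution $\bar\phi$ of the homogeneous equation $-\div(b^{-1}\nabla\bar\phi)=0$, invokes the uniqueness result (Theorem \ref{thm:EL:1}) to conclude $\bar\phi$ is constant, and then reads off the vanishing angular mean of $\partial_r\bar\psi_b$. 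Your argument instead obtains only the global mean-free condition $\int_{\mathbb{R}^2}g\,dx=0$, but does so by a self-contained flux computation: integrating the divergence-form equation over a large ball $B_R$, using $\int_{B_R}\bar f=0$, then exploiting $b\equiv b_\infty$ on $\partial B_R$ to transfer the zero-flux condition to the Laplacian form. This is more elementary — it avoids the forward reference to the uniqueness of radial solutions — and the global mean-free condition is all that is actually needed both to place $g$ in $\dot H^{-1}(\mathbb{R}^2)$ (Lemma \ref{lemma:duality}, using the compact support) and to kill the logarithmic growth in the Newtonian representation via the $\log(|x-y|/|x|)$ trick. One small technical point worth making explicit in a polished write-up: the Gauss--Green identities over $B_R$ require a trace of $\nabla\bar\psi_b$ on $\partial B_R$, which is available for a.e.\ $R$ since $\bar\psi_b\in H^2_{\loc}(\mathbb{R}^2)$ by Proposition \ref{prop:Elliptic:00}; choosing such an $R$ beyond $\max\{R_\infty,\rho\}$ closes the argument. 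The estimate on $\psi_{b,\rad}$ and the final splitting $|x|\lesssim 1$ versus $|x|\gg 1$ match the paper's.
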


	\begin{proof} We begin with the control of $\psi_{b,\rad} $ which directly follows from its   explicit expression. More precisely, we obtain from \eqref{psi:ODE:sol}  that 
		\begin{equation*}
		\begin{aligned}
			|\psi_{b,\rad }(r)|
			&\leq  r  \sup_{0<\sigma\leq r} \left( \frac{1}{\sigma}\int_0^\sigma |\tau f_\rad (\tau) |d\tau\right) ,
		\end{aligned}
	\end{equation*}
	for all $r>0$. 
	Therefore, the boundedness of $\psi_{b,\rad }$ follows by applying  H\"older's inequality, as is shown in the proof of the preceding lemma, to find that	\begin{equation*}
		\begin{aligned}
			|\psi_{b,\rad }(r)|
			&\lesssim   r \norm {f}_{L^2(\mathbb{R}^2)} ,
		\end{aligned}
	\end{equation*}
	for all $r>0$.  Now, we turn our attention to the control of $\bar \psi_{b} \bydef \psi_b - \psi_{b,\rad }$ by first recalling   that it is governed by the equation  
	\begin{equation*}
		-\div\left(\frac{1}{b}\nabla \bar \psi_b \right)= \bar f,
	\end{equation*} 
	where $\bar f $ is mean-free. More precisely, this source term satisfies that
	\begin{equation}\label{bar_f:int}
		\int_0^{2\pi} \bar f\big(r(\cos \theta, \sin \theta) \big) d\theta = 0,
	\end{equation} 
	for all $r>0$. In light of this, it follows that the function 
	\begin{equation*}
		r \mapsto \bar\phi(r) \bydef  \frac{1}{2\pi }\int_0^{2\pi }\bar \psi_b \big (r(\cos \theta , \sin  \theta)\big ) d\theta
	\end{equation*}
	is a radial solution of the same elliptic equation, i.e.
	\begin{equation*}
		-\div\left(\frac{1}{b}\nabla \bar \phi  \right)= 0,
	\end{equation*} 
	whence is constant. The rigorous justification of that is a consequence of the analysis laid out in  a subsequent section (see in particular Theorem \ref{thm:EL:1}, below). One deduces then that 
	\begin{equation}\label{phi=0}
		 \frac{1}{2\pi }\int_0^{2\pi } \partial_r \bar \psi_b \big (r(\cos \theta , \sin  \theta)\big ) d\theta =  \frac{d}{dr} \bar \phi \equiv 0.
	\end{equation}
	Therefore, one sees, by expanding the equation governing $\bar \psi_b$ that this latter obeys the Poisson equation
	\begin{equation*}
		- \Delta \bar \psi_b = b  \bar f  -    \frac{b'}{b  }  \partial_r   \bar \psi_b \bydef  \bar F ,
	\end{equation*}	
	where the new source term $\bar  F$ is mean-free. More precisely, one can easily check, by virtue of \eqref{bar_f:int} and \eqref{phi=0}, that 
	\begin{equation*}
		 \int_0^{2\pi }\bar F \big (r(\cos \theta , \sin  \theta)\big ) d\theta =  0,
	\end{equation*}
	for all $r> 0$. Thus, we emphasize that  Poisson's equation 
	\begin{equation*}
		- \Delta u= \bar F
	\end{equation*}
	admits a  unique solution $u$ in $\dot H^1$. In other words, all  solutions  with finite square-integrable gradient coincide up to constants. This will be justified in a subsequent section (see Theorem \ref{thm:EL:1}, again, for $b\equiv 1$). As a consequence, owing to the fundamental representation of solutions of Poisson's equation, we deduce that there is $c\in \mathbb{R}$ such that 
	\begin{equation*}
		 \begin{aligned}
		 	\bar \psi_b (x) - c 
		 	&=- \frac{1}{2\pi } \int_{\mathbb{R}^2} \log |x-y| \bar F(y) dy ,
		 \end{aligned}
	\end{equation*}
	for all $x\in \mathbb{R}^2$. We now focus  our attention on the boundedness of the function in right-hand side. To that end,   we first notice that $\bar F$ is compactly supported, thanks to the assumptions on the source term $f$ and the coefficient $b$. Thus, it holds that 	\begin{equation*}
		\bar \psi_b (x) - c = \frac{1}{2\pi } \int_{ |y|\leq A} \log |x-y| \bar F(y) dy ,
	\end{equation*}		
	 for any $A >0 $ with $ \supp f \subset B_A$ and for all $x\in \mathbb{R}^2$. Hence, by further employing the fact that $\bar F$ is mean-free, we obtain that 
	\begin{equation*}
		 \begin{aligned}
		 	\Big| \bar \psi_b (x) - c\Big|  
		 	&=  \frac{1}{2\pi }\left| \mathds{1}_{|x|\leq A+1} \int_{|y|\leq A} \log |x-y| \bar F(y) dy  + \mathds{1}_{|x|> A+1} \int_{|y|\leq A} \log \left(\frac{|x-y| }{|x|} \right)\bar F(y) dy  \right|
		 	\\
		 	& \lesssim_A \norm {\bar F}_{L^2(\mathbb{R}^2)},
		 \end{aligned}
	\end{equation*}
	for all $x\in \mathbb{R}^2$, where we utilized the elementary facts that
	\begin{equation*}
		(x,y) \mapsto \log |x-y| \mathds{1}_{|x|\leq A+1} \mathds{1}_{|y|\leq A} \in L^\infty  \big(\mathbb{R}^2_x;L^2  (\mathbb{R}_y^2)\big)
	\end{equation*}
	and 
	\begin{equation*}
		(x,y) \mapsto \log |x-y|  \mathds{1}_{|x|>  A+1}\mathds{1}_{|y|\leq A}  \in L^\infty  (\mathbb{R}^2\times \mathbb{R}^2).
	\end{equation*}
	It then remains to control $\bar F$ in $L^2(\mathbb{R}^2)$ which can be established by employing H\"older's inequality to infer that 
	\begin{equation*}
		\norm { \bar F}_{L^2(\mathbb{R}^2)} \lesssim_b   \norm {f}_{L^2(\mathbb{R}^2)} + \norm {\nabla \bar \psi _b}_{L^2(\mathbb{R}^2)}.
	\end{equation*}
	Consequently, by a further use of \eqref{bound:bar:psi:2}, we deduce that 
	\begin{equation*}
			\norm { \bar \psi_b  - c}_{L^\infty(\mathbb{R}^2)}\lesssim_b   \norm {f}_{L^2(\mathbb{R}^2)},
	\end{equation*}
	which is the expected control of $ \bar \psi_b$, thereby completing   the proof of the lemma.
	\end{proof}

\subsection{Existence and uniqueness of solutions}\label{section:A:existence}
In this section, we show the existence and uniqueness, in a sense that will be precised later, of solutions to the elliptic problem \eqref{EL:P1} in the whole domain $\mathbb{R}^2$. Here, we only discuss the  case of a variable coefficient $b$ obeying certain conditions that are directly connected to our interest in the present paper. We refer to \cite{OS12} for a nice and self-contained notes where the content of the first paragraph of this section is further presented in more details and full generality, along with   several examples and discussions on relevant topics.

\subsubsection{Functional framework and introduction to the equations}\label{sec:Sobolev}
To get started, allow us   to clarify a few things about the functional space $\dot {H}^1(\mathbb{R}^2)$ that we were using in the previous few sections as well as the appropriate far--boundary conditions that we should supplement \eqref{EL:P1} with as $|x|\to \infty$. 
Henceforth, we follow the notations from \cite{OS12} by  introducing, for any measurable function $u$ on $\mathbb{R}^2$, its equivalence class defined by
\begin{equation*}
	[u]\bydef \{ u+c : c\in \mathbb{R} \}.
\end{equation*}
Denoting by $\mathcal{D}(\mathbb{R}^2)$ the space of smooth and compactly supported functions on $\mathbb{R}^2$, we further introduce its equivalence class
\begin{equation*}
	\dot{\mathcal{D}}(\mathbb{R}^2) \bydef \big\{ [u] : u \in \mathcal{D}(\mathbb{R}^2) \big\}.
\end{equation*}
Therefore, the Sobolev space $\dot{H}^1(\mathbb{R}^2)$ is defined as
\begin{equation*}
	\dot {H}^1(\mathbb{R}^2) \bydef \big\{[u]: u\in H^1_\loc(\mathbb{R}^2)  \quad \text{and} \quad \nabla u \in L^2(\mathbb{R}^2) \big\},
\end{equation*}
 which can be shown to coincide with the space corresponding to the completion of $ \dot{\mathcal{D}}(\mathbb{R}^2)$ by the norm $\norm {[\cdot]}_{\dot H^1(\mathbb{R}^2)}$ that we now define  as 
\begin{equation*}
	\norm {[u]}_{\dot H^1(\mathbb{R}^2)} \bydef \norm {\nabla u}_{L^2(\mathbb{R}^2)},
\end{equation*}
for any $u \in \dot H^1(\mathbb{R}^2)$.  The justification of the fact that the preceding mapping is a norm on $\dot H^1(\mathbb{R}^2)$ is straightforward and omitted here. Note in passing that $\dot H^1(\mathbb{R}^2) $ is a Hilbert space when endowed with the inner product $$\langle \cdot,\cdot\rangle_{\dot H^1}\bydef \langle \nabla \cdot,\nabla \cdot\rangle_{L^2} .$$   
At last, the topological dual of $\dot {H}^1(\mathbb{R}^2)$ in the usual $L^2$-duality sense is denoted by $\dot {H}^{-1}(\mathbb{R}^2)$ which can be  characterized as
\begin{equation*}
	\dot{H}^{-1}(\mathbb{R}^2) = \big\{ \div F : F\in L^2(\mathbb{R}^2;\mathbb{R}^2) \big\},
\end{equation*}
where  $ \div F$ is to be understood in the distributional sense. In a similar fashion, one can define the homogeneous Sobolev space $\dot W^{1,p}(\mathbb{R}^2)$  and its corresponding dual space $\dot{W}^{-1,p}(\mathbb{R}^2)$, for any $p\in (1,\infty)$. 
One fundamental example that is relevant to the scope of this section is given in the following lemma.
\begin{lem}\label{lemma:duality} 
	Let $f$ be a mean-free function in $ L^1(\mathbb{R}^2)\cap L^2(\mathbb{R}^2)$. Assume further that $f$ has an integrable first moment, i.e., that
	\begin{equation*}
		\int_{\mathbb{R}^2} |xf(x)|dx <\infty.
	\end{equation*} 
	Then, $f$ belongs to $\dot{H}^{-1}(\mathbb{R}^2)$. More precisely, there exists $C>0$ such that
	\begin{equation*}
		\left| \int_{\mathbb{R}^2} f(x) u(x)dx\right| \leq C \norm {\nabla u}_{L^2(\mathbb{R}^2)}\norm f_{L^1((1+|x|)dx;\mathbb{R}^2) \cap L^2(\mathbb{R}^2)},
	\end{equation*}
	for all $u\in \dot{H}^1(\mathbb{R}^2)$.
\end{lem}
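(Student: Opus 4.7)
My plan is to reduce the lemma to the existence of a vector field $F\in L^2(\mathbb{R}^2;\mathbb{R}^2)$ satisfying $\div F = f$ in the distributional sense together with the quantitative bound
\begin{equation*}
\norm{F}_{L^2(\mathbb{R}^2)}\lesssim \norm{f}_{L^1((1+|x|)dx;\mathbb{R}^2)\cap L^2(\mathbb{R}^2)}.
\end{equation*}
Indeed, once such an $F$ is in hand, for any $[u]\in\dot{\mathcal{D}}(\mathbb{R}^2)$ represented by a smooth compactly supported function $u$, a straightforward integration by parts yields $\int fu\,dx = -\int F\cdot\nabla u\,dx$, and Cauchy--Schwarz concludes. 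The left-hand side depends only on the equivalence class $[u]$ thanks to $\int f=0$, and the estimate extends to all $[u]\in\dot H^1(\mathbb{R}^2)$ by the density of $\dot{\mathcal{D}}(\mathbb{R}^2)$ recalled just before the lemma.

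The natural candidate is $F\bydef \nabla(-\Delta)^{-1}f$, constructed on the Fourier side by setting $\widehat F(\xi) = \ii\xi\,\widehat f(\xi)/|\xi|^2$, so that $\div F = f$ tautologically and, by Plancherel, $\norm{F}_{L^2(\mathbb{R}^2)}^2 = \int_{\mathbb{R}^2}|\widehat f(\xi)|^2/|\xi|^2 d\xi$. The integrability of this quantity at infinity is immediate since $f\in L^2(\mathbb{R}^2)$ implies $\widehat f\in L^2$; the only delicate region is a neighborhood of $\xi=0$, where the singularity of $|\xi|^{-2}$ in two dimensions must be compensated by the vanishing of $\widehat f$. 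This is precisely where the two hypotheses enter: the mean-free condition yields $\widehat f(0)=0$, while the first-moment condition allows one to write
\begin{equation*}
\widehat f(\xi)=\widehat f(\xi)-\widehat f(0) = \int_{\mathbb{R}^2}\big(e^{-\ii x\cdot\xi}-1\big)f(x)\,dx
\end{equation*}
and exploit the elementary bound $|e^{-\ii x\cdot\xi}-1|\leq |x||\xi|$ to deduce the pointwise estimate $|\widehat f(\xi)|\leq |\xi|\,\norm{xf}_{L^1(\mathbb{R}^2)}$. Hence $\widehat f$ vanishes at least linearly at the origin, which is exactly the order needed to tame $|\xi|^{-2}$ in dimension two.

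Combining these ingredients, I would split and estimate
\begin{equation*}
\norm{F}_{L^2(\mathbb{R}^2)}^2 = \int_{|\xi|\leq 1}\frac{|\widehat f(\xi)|^2}{|\xi|^2}d\xi + \int_{|\xi|\geq 1}\frac{|\widehat f(\xi)|^2}{|\xi|^2}d\xi \leq \pi\norm{xf}_{L^1(\mathbb{R}^2)}^2 + \norm{f}_{L^2(\mathbb{R}^2)}^2,
\end{equation*}
where the first integral uses the linear vanishing above and the second uses Plancherel. This delivers the desired control and completes the reduction outlined in the first paragraph. The single nontrivial point of the whole proof is really the simultaneous use of both standing hypotheses near $\xi=0$: the mean-free condition alone would give a qualitative $\widehat f(\xi)\to 0$ at the origin, insufficient to integrate $|\xi|^{-2}$ in the plane, while the first-moment condition upgrades this to the quantitative linear rate, which is exactly the endpoint making the 2D argument work.
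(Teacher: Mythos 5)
Your proof is correct and follows the same Fourier-analytic route that the paper defers to with the citation to [OS12]: write $\int fu = \pm\int F\cdot\nabla u$ with $F=\nabla(-\Delta)^{-1}f$, then use the mean-free condition and the first-moment bound to get the linear vanishing $|\widehat f(\xi)|\leq |\xi|\,\|xf\|_{L^1}$ near the origin, which exactly compensates the $|\xi|^{-2}$ singularity in two dimensions. The only nit is a sign slip: with $\widehat{F}(\xi)=\ii\xi\widehat f(\xi)/|\xi|^2$ one has $\div F = \Delta(-\Delta)^{-1}f=-f$, not $f$, but this changes nothing in the final estimate.
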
 
The proof uses Fourier analysis and is detailed in \cite{OS12}. Note in passing that, as a direct consequence of the previous lemma, if $f$ is  compactly supported, mean-free function and belongs to $L^2(\mathbb{R}^2)$, then it lies in $ \dot{H}^{-1}(\mathbb{R}^2)$. In particular, it holds that 
\begin{equation*}
	\left|\int_{\mathbb{R}^2}  f u (x) dx\right| \lesssim \norm {f}_{L^2(\mathbb{R}^2)} \norm {\nabla u}_{\dot{H}^1(\mathbb{R}^2)},
\end{equation*}
for any $u\in \dot H^1(\mathbb{R}^2)$.  

Now, we want  to discuss the existence of weak solutions (we will precise the sense of that in a moment) of the elliptic problem
\begin{equation}\label{EL:AAA1} 
		\left\{
		\begin{aligned}
			\displaystyle-\div\left(\frac{1}{b}\nabla u \right) 
			& =f, \quad \text{for} \quad x\in \mathbb{R}^2,
			 \\
			u &\sim 0, 
			\quad \text{as} \quad |x|\to \infty,
		\end{aligned} 
		\right.
\end{equation}  
where the   far--boundary conditions are understood in the sense of the equivalence class defined at the beginning of this section, $b$ is a fixed non-negative function and the source term $f$ is a given function whose properties will be made precise, later on. 

\subsubsection{Compatibility of the far--boundary conditions and the source term}\label{section:compatibility}
Before we step over the key elements to showing the existence of weak solutions of \eqref{EL:AAA1}, allow us first to highlight a very important observation that connects the far--boundary conditions in \eqref{EL:AAA1} with admissible properties (or let us say necessary assumptions) of the source term $f$ so that the first and the second lines in that system are, in some sense, compatible. 

To see that, we now assume that the source term is a radial function and we look for a radial solution of \eqref{EL:AAA1}. As this is already done in the foregoing sections, we just recall that, in this case, the solution of the first line in \eqref{EL:AAA1} is, up to constants, given by its explicit expression
\begin{equation*} 
	u_{\text{rad}}(r)= - \int_0^r \frac{b(\sigma)}{\sigma} \int_0^\sigma \tau f(\tau) d\tau d\sigma,
\end{equation*} 
for all $r >0$. We are now going to show that, if $f$ has a non-vanishing mean (which is also assumed to be a positive function here for simplicity, though the hypothesis on its sign   can completely be removed), then the far--boundary condition in \eqref{EL:AAA1} as $r\to \infty$ are strongly violated by the solution  $u_{\text{rad}}$. To that end, under the preceding assumptions on   $f$, it is readily seen that there is $A>0$ such that, for any $\sigma\geq A$, it holds that
\begin{equation*}
	 \int_0^\sigma \tau f(\tau) d\tau  > \frac{ m }{4\pi} >0,
\end{equation*} 
where we denoted 
\begin{equation*}
	m\bydef \int_{\mathbb{R}^2} f(x)dx = 2\pi \int_0^\infty \tau f(\tau) d\tau.
\end{equation*}
it then follows that 
\begin{equation*}
	\begin{aligned}
		\left| u_{\text{rad}}(r) \right|
		  & \geq   \int_A^r \frac{b(\sigma)}{\sigma} \int_0^\sigma \tau f(\tau) d\tau d\sigma 
		  \geq \left( \frac{m}{4\pi } \inf_{\sigma>0}b(\sigma) \right) \int_A^r \frac{d\sigma}{\sigma}\geq \left( \frac{m}{4\pi } \inf_{\sigma>0}b(\sigma) \right)\log r ,
	\end{aligned}
\end{equation*}
	for all $r>A$. This clearly leads to the unboundedness of $u_{\text{rad}}$ at infinity, thereby violating the far--boundary conditions in \eqref{EL:AAA1}. Note that the preceding $log$-growth is optimal as, it is by a direct estimate, that one can show, moreover, that
	\begin{equation*}
		|u_\rad (r)| \leq O(1) + \left( \frac{m}{2\pi } \sup_{\sigma>0}b(\sigma) \right) \log r ,
	\end{equation*}
	for all $r>0$.
	This particular example of solutions shows that the finite far--boundary conditions are not compatible with source terms with non-vanishing mean. 
	This is not the end of the story for \eqref{EL:AAA1} as we will show in the next section that this problem admits a unique weak solution in an affine space which is made of shifting $\dot{H}^1(\mathbb{R}^2)$ by the typical ``bad'' solution $u_{\text{rad}}$. 
	
	At the end, allow us to conclude this section by mentioning that the consideration of the space $ \dot{H}^1(\mathbb{R}^2) + u_{\text{rad}}$ is strongly motivated by the seminal work of Chemin \cite{CH95} (see in particular Lemma 1.3.1 in \cite{CH95}) where he sheds light on a similar observation that permits to uniquely solve \eqref{EL:AAA1} in the whole domain $\mathbb{R}^2$ and in the case $b\equiv 1$. More precisely, it is shown in \cite{CH95} that a divergence free velocity field is uniquely derived from a given two-dimensional vorticity $\omega$ and a ``stationary'' vector field $\sigma$  in  the affine space $L^2(\mathbb{R}^2; \mathbb{R}^2 ) + \sigma$. In the next section, we provide a generalization of  that in the case of variable coefficient $b$.

\subsubsection{Existence and uniqueness of weak solutions} Here, we shall establish  a well-posedness result for \eqref{EL:AAA1}. To that end,  allow us first to recall the notion of weak solutions that we are concerned with. We say that $u$ is a weak solution to \eqref{EL:AAA1}, for a fixed coefficient $b>0$ and a given source term $f$ in $L^2(\mathbb{R}^2)$, if it belongs to $\dot{H}^1(\mathbb{R}^2)$ and satisfies the following weak reformulation  
\begin{equation*} 
	\int_{\mathbb{R}^2} \frac{1}{b(x)} \nabla u(x)\cdot \nabla v(x)dx = \int_{\mathbb{R}^2} f(x) v(x)dx ,
\end{equation*} 
for every test function $v\in \mathcal{D}( \mathbb{R}^2)$.

The central result of this section is now stated in the next theorem which builds on \cite[Theorem 3.1]{OS12} and whose elements of proof rely on the ingredients from the previous sections.   
\begin{thm}\label{thm:EL:1}
	Let $b$ be a non-negative, radially symmetric function in $C^1(\mathbb{R}^2)$ and  $f$ be a function in $L^1(\mathbb{R}^2)\cap L^2(\mathbb{R}^2)$.  Assume further that the first moment of $f$ is integrable, as well. Then, there is a unique weak solution $u$ of the elliptic problem \eqref{EL:AAA1} with source term $f$ in the affine space $ \dot{H}^1(\mathbb{R}^2) + u_{\text{rad}}$, where $u_{\text{rad}}$ is given by 
	\begin{equation*}
		u_{\text{rad}} (r)\bydef  - \int_0^r \frac{b(\sigma)}{\sigma} \int_0^\sigma \tau f_{\text{rad}}(\tau) d\tau d\sigma,
	\end{equation*}
	for all $r\geq 0$, and $f_{\text{rad}}$ stands for the radially symmetric part of $f$, which is defined by 
	\begin{equation*}
		f_{\text{rad}}(r) \bydef \frac{1}{2\pi} \int_0^{2\pi} f\big (r(\cos \theta, \sin \theta)\big ) d\theta,
	\end{equation*} 
	for all $r\geq 0$. Moreover, it holds that 
	\begin{equation*}
		\norm {\nabla (u - u_{\text{rad}})}_{L^2(\mathbb{R}^2)}\lesssim_b \norm f_{L^1((1+|x|)dx;\mathbb{R}^2) \cap L^2(\mathbb{R}^2)}.
			\end{equation*} 
\end{thm}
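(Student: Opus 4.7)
\medskip

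\noindent\textbf{Proof proposal.} The plan is to split the problem into its radial part, which admits an explicit solution, and a remaining part that is mean-free on every sphere and hence amenable to a Hilbert-space variational argument. More precisely, I will write any candidate solution as $u = u_{\mathrm{rad}} + v$ and show that $v$ is the unique element of $\dot H^1(\mathbb{R}^2)$ solving the reduced problem
\begin{equation*}
	-\div\!\left(\tfrac{1}{b}\nabla v\right) = \bar f, \qquad \bar f \bydef f - f_{\mathrm{rad}}.
\end{equation*}
A direct computation using the ODE in radial coordinates (as already carried out in the proof of Proposition \ref{prop:Elliptic:00}) shows that $u_{\mathrm{rad}}$, as given in the statement, solves $-\div(b^{-1}\nabla u_{\mathrm{rad}}) = f_{\mathrm{rad}}$ in the sense of distributions; this part is essentially bookkeeping.

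The heart of the proof is therefore to solve for $v$. First I would verify that $\bar f$ fits into the framework of Lemma \ref{lemma:duality}: it is mean-free by construction (its average on every circle vanishes), while Minkowski's inequality together with the pointwise bound $|f_{\mathrm{rad}}(|x|)|\leq \tfrac{1}{2\pi}\int_0^{2\pi}|f|(|x|e^{\ii\theta})d\theta$ shows that $\bar f \in L^1((1+|x|)dx)\cap L^2(\mathbb{R}^2)$ with norm controlled by that of $f$. Consequently $\bar f \in \dot H^{-1}(\mathbb{R}^2)$. Next, I would introduce the bilinear form
\begin{equation*}
	a(v,w) \bydef \int_{\mathbb{R}^2} \tfrac{1}{b(x)} \nabla v(x)\cdot \nabla w(x)\, dx
\end{equation*}
on $\dot H^1(\mathbb{R}^2)$; under the standing assumptions on $b$ (positivity plus constancy at infinity, which force two-sided bounds $0<\inf b\leq \sup b<\infty$), the form $a$ is continuous and coercive on the Hilbert space $\dot H^1(\mathbb{R}^2)$. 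The Lax--Milgram theorem then produces a unique $v \in \dot H^1(\mathbb{R}^2)$ with $a(v,w)=\langle \bar f,w\rangle$ for every $w\in \dot{\mathcal{D}}(\mathbb{R}^2)$, and the coercivity together with Lemma \ref{lemma:duality} yields the quantitative bound
\begin{equation*}
	\norm{\nabla v}_{L^2(\mathbb{R}^2)} \lesssim_b \norm{\bar f}_{\dot H^{-1}(\mathbb{R}^2)} \lesssim \norm{f}_{L^1((1+|x|)dx;\mathbb{R}^2)\cap L^2(\mathbb{R}^2)}.
\end{equation*}
Setting $u \bydef v + u_{\mathrm{rad}}$ gives the claimed existence and the norm estimate.

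For uniqueness, suppose $u_1, u_2$ are two weak solutions in $\dot H^1(\mathbb{R}^2) + u_{\mathrm{rad}}$. Their difference $w = u_1-u_2$ lies in $\dot H^1(\mathbb{R}^2)$ and satisfies $a(w,\varphi)=0$ for every $\varphi \in \mathcal{D}(\mathbb{R}^2)$, which extends by density to every $\varphi\in\dot H^1(\mathbb{R}^2)$; choosing $\varphi=w$ and invoking coercivity forces $\nabla w \equiv 0$, so $[u_1]=[u_2]$ in the quotient sense defining $\dot H^1(\mathbb{R}^2)$.

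The only genuinely delicate point I expect is the density step and the justification that $\bar f$ really acts as an element of the dual, since $u_{\mathrm{rad}}$ typically has logarithmic growth at infinity and does \emph{not} belong to $\dot H^1(\mathbb{R}^2)$: one must be careful to test the equation only against functions $w\in \dot H^1(\mathbb{R}^2)$ (or compactly supported test functions), and to argue that the boundary contributions produced by an integration by parts against $u_{\mathrm{rad}}$ vanish in the limit. This is handled precisely by Lemma \ref{lemma:duality}, whose hypotheses were verified above, ensuring that the linear form $w\mapsto \int \bar f \, w\, dx$ is well-defined and continuous on $\dot H^1(\mathbb{R}^2)$; everything else is a standard application of Lax--Milgram.
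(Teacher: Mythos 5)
Your proposal is correct and follows essentially the same route as the paper: decompose $u=u_{\mathrm{rad}}+v$, check that $u_{\mathrm{rad}}$ solves the radial problem explicitly, verify via Lemma \ref{lemma:duality} that $\bar f=f-f_{\mathrm{rad}}\in\dot H^{-1}(\mathbb{R}^2)$, and then solve the variational problem for $v\in\dot H^1(\mathbb{R}^2)$. The only cosmetic difference is that you invoke Lax--Milgram, whereas the paper observes that the bilinear form $a$ is itself an inner product on $\dot H^1(\mathbb{R}^2)$ equivalent to the standard one (since $b$ is bounded above and below away from zero) and applies Riesz representation; for this symmetric, coercive, continuous form the two are literally the same argument.
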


\begin{proof}Observe that $u_{\text{rad}}$ belongs to $C^1_{\text{loc}}(\mathbb{R}^2)\cap \dot {H}^2(\mathbb{R}^2)$, which can be justified along the same lines as  is shown in the proofs of Proposition \ref{prop:Elliptic:00} and Lemma \ref{lemma:boundedness:psi} above, and solves \eqref{EL:AAA1} with source term $f_\rad$. Thus, it is readily seen that $u_{\text{rad}}$ is also a solution of the variational problem
\begin{equation*}
	\int_{\mathbb{R}^2} \frac{1}{b(x)} \nabla u_{\text{rad}}(x)\cdot \nabla v(x)dx = \int_{\mathbb{R}^2} f_\rad(x) v(x)dx ,
\end{equation*} 
for every test function $v\in \mathcal{D}( \mathbb{R}^2)$. Therefore, we now need to show the existence of a unique solution $\bar u \bydef u- u_{\text{rad}}$ in $\dot{H}^1(\mathbb{R}^2)$ of the equation
	\begin{equation}\label{FF:2}
	\int_{\mathbb{R}^2} \frac{1}{b(x)} \nabla \bar u(x)\cdot \nabla v(x)dx = \int_{\mathbb{R}^2} \bar f(x) v(x)dx ,
\end{equation} 
for all test function $v\in \mathcal{D}( \mathbb{R}^2)$, where we set $\bar f\bydef f-f_\rad $. It is straightforward to check that $\bar f$ is a mean-free function that satisfies all the assumptions in Lemma \ref{lemma:duality}; whence, it follows that 
\begin{equation*}
		\left| \int_{\mathbb{R}^2} \bar{f}(x) v(x)dx\right| \lesssim \norm f_{L^1((1+|x|)dx;\mathbb{R}^2) \cap L^2(\mathbb{R}^2)} \norm {\nabla v}_{L^2(\mathbb{R}^2)} ,
	\end{equation*}
	for any test function $v\in \mathcal{D}( \mathbb{R}^2)$. That is to say that 
	\begin{equation*}
		v \mapsto \int_{\mathbb{R}^2}\bar{f}(x) v(x)dx
	\end{equation*}
	belongs to $\dot{H}^{-1}$. On the other hand, it is readily seen that the bilinear operator 
	\begin{equation*}
		 (v_1,v_2) \mapsto \int_{\mathbb{R}^2} \frac{1}{b(x)} \nabla v_1(x)\cdot \nabla v_2(x)dx 
	\end{equation*}
	defines a scalar product on the Hilbert space $\dot H^1(\mathbb{R}^2)$ which is equivalent to $ \langle \cdot,\cdot\rangle_{\dot H^1}$, due to the assumptions on the function $b$. At last, the existence of a unique solution $w$ of \eqref{FF:2}  in $\dot H^1(\mathbb{R}^2)$ is achieved by a direct application of Reisz's representation theorem, thereby concluding the proof of the theorem.
\end{proof}

\subsection{A notion of self-adjointness}\label{section:A:selfAD}
Here, we establish a property of solutions of \eqref{EL:P1} that is, in some sense, a soft version of the self-adjointness of the elliptic operator \begin{equation*}
	  -\div \left( \frac{1}{b} \nabla \cdot \right).
\end{equation*} 
The details of that are given in the next lemma.
 \begin{lem}\label{lemma:SA} 
	 For any given test functions $\omega_1$ and $\omega_2$   in $\mathcal{D}(\mathbb{R}^2)$ with  
\begin{equation}\label{mean-omega_i}
	\int_{\mathbb {R}^2} \omega_1 (x)dx = \int_{\mathbb {R}^2} \omega_2 (x)dx= 1,
\end{equation}
	  there is a couple of solutions of \eqref{EL:AAA1} with source terms $\omega_1$ and $\omega_2$, denoted by $g_1$  and  $g_2$, respectively, such that 
	 \begin{equation}\label{self:AD}
	 	\int_{\mathbb{R}^2} g_1 \omega_2(x)dx = \int_{\mathbb{R}^2} \omega_1 g_2 (x)dx.
	 \end{equation} 
	 In addition to that, any couple of solutions $(h_1,h_2)$ of \eqref{EL:P1} with respective source terms $ (\omega_1,\omega_2)$ is also a solution of \eqref{self:AD} if and only if 
	 \begin{equation*}
	 	\lim_{|x|\to \infty }(h_1-h_2)(x)=0.
	 \end{equation*}
	 \end{lem}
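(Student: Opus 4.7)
The strategy rests on the symmetry of the Green kernel $K_b$ provided by Proposition \ref{prop.kernel} together with a Liouville-type reduction for the homogeneous equation. For the existence of $(g_1,g_2)$ satisfying \eqref{self:AD}, I would take the canonical candidates produced by the integral representation,
\[
g_i(x):=\int_{\mathbb{R}^2}K_b(x,y)\,\omega_i(y)\,dy,\qquad i\in\{1,2\}.
\]
Each $g_i$ solves \eqref{EL:AAA1} with source $\omega_i$ by Proposition \ref{prop.kernel}, since $\omega_i\in\mathcal{D}(\mathbb{R}^2)$ is compactly supported and belongs to $L^p(\mathbb{R}^2)$ for every $p$. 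The identity \eqref{self:AD} then follows at once from Fubini's theorem and the pointwise symmetry $K_b(x,y)=K_b(y,x)$ recorded in \eqref{symmetry-kernel-0}, by swapping the order of integration in $\iint K_b(x,y)\omega_1(y)\omega_2(x)\,dx\,dy$ and relabeling the dummy variables.

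For the iff, I would take any other pair $(h_1,h_2)$ of solutions of \eqref{EL:P1} with sources $(\omega_1,\omega_2)$ and examine $u_i:=h_i-g_i$, which solves the homogeneous equation $-\div(b^{-1}\nabla u_i)=0$. Since Theorem \ref{thm:EL:1} fixes the solution set of the inhomogeneous equation as a single class modulo constants within $\dot H^1(\mathbb{R}^2)+u_{\text{rad}}$, the difference $u_i$ lies in $\dot H^1(\mathbb{R}^2)$; testing the homogeneous equation against $u_i$ then yields $\int b^{-1}|\nabla u_i|^2\,dx=0$, so $u_i\equiv d_i$ for some constant $d_i$. Substituting $h_i=g_i+d_i$ into \eqref{self:AD} and invoking $\int\omega_i=1$ together with the already-established identity for $(g_1,g_2)$ gives
\[
\int h_1\omega_2\,dx-\int\omega_1\,h_2\,dx=d_1-d_2,
\]
so \eqref{self:AD} for $(h_1,h_2)$ is equivalent to $d_1=d_2$. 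Because $h_1-h_2=(g_1-g_2)+(d_1-d_2)$, the equivalence claimed in the lemma reduces to showing that $\lim_{|x|\to\infty}(g_1-g_2)(x)=0$.

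This final limit identity is the main obstacle. The function $G:=g_1-g_2$ solves \eqref{EL:P1} with source $\omega_1-\omega_2$, which has mean zero and compact support; by Lemma \ref{lemma:duality} it belongs to $\dot H^{-1}(\mathbb{R}^2)$, and Theorem \ref{thm:EL:1} delivers $\nabla G\in L^2(\mathbb{R}^2)$. Outside a ball containing the supports of $\nabla b$ and of the $\omega_i$'s, $G$ is harmonic, and its Laurent-type expansion $G=a\log|x|+c+O(1/|x|)$ must satisfy $a=0$ by the finite-energy constraint on $\nabla G$, so that $G$ admits a finite limit at infinity. To pin down $c=0$, I would carry out an asymptotic expansion of $K_b(x,y)$ as $|x|\to\infty$ for fixed $y$: a flux computation based on \eqref{EL:P2} shows that the $\log|x|$-coefficient of $K_b(x,y)$ equals $-\tfrac{b_\infty}{2\pi}$ uniformly in $y$, and hence disappears when paired against the mean-zero factor $\omega_1-\omega_2$; the residual $O(1)$-contribution, dictated by the canonical normalization of $S_b$ as the solution of \eqref{EL:P2} inherited from the unique-solvability framework of Theorem \ref{thm:EL:1}, integrates to zero against a compactly supported mean-zero source, thereby closing the argument.
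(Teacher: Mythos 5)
Your argument for the existence step — representing $g_i$ via the kernel and invoking $K_b(x,y)=K_b(y,x)$ together with Fubini — is circular in the logical architecture of the paper. The symmetry \eqref{symmetry-kernel-0} is claimed in Proposition \ref{prop.kernel}, but the paper's proof of that symmetry (in Appendix \ref{section:A:kernel}) is \emph{built on} Lemma \ref{lemma:SA}: the identity \eqref{claim:omega:i} stating $\int\omega_2\psi_b[\omega_1]=\int\omega_1\psi_b[\omega_2]$ is precisely what is proved there by means of the second part of Lemma \ref{lemma:SA}, and the symmetry of $S_b$ (hence of $K_b$) is then deduced from it. You therefore cannot use the kernel symmetry to prove the lemma. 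The paper's actual route is constructive and kernel-free: it builds $g_i=g_{i,\rad}+\bar g_i$ with an explicit radial part anchored at $r=A$ and a correction $\bar g_i$ vanishing at infinity, then verifies \eqref{self:AD} directly by decomposing the pairing into four integrals $\mathcal{K}_1,\dots,\mathcal{K}_4$ — $\mathcal{K}_1$ handled by integration by parts in $\dot H^1$, $\mathcal{K}_2$ by an explicit computation on the explicit radial formula, and $\mathcal{K}_3,\mathcal{K}_4$ shown to vanish by orthogonality between the radial and mean-free angular components.

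Separately, the tail behavior of $G=g_1-g_2$ is treated too loosely. Your Laurent-expansion argument correctly rules out a $\log|x|$ term via finite energy, but your determination of the additive constant $c$ leans on a vague ``flux computation based on \eqref{EL:P2}'' and ``canonical normalization of $S_b$'' that you never carry out, and which again risks importing facts about $K_b$ whose proofs downstream depend on this lemma. The paper avoids this entirely by computing $g_{i,\rad}(r)$ explicitly for $r>A$ (where $b\equiv b_\infty$ and the $\omega_i$'s are supported inside $B_A$ with unit mass): both radial parts coincide there, so $g_{1,\rad}-g_{2,\rad}\equiv 0$ outside $B_A$, while $\bar g_1-\bar g_2\to 0$ by construction. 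The rest of your iff argument (reduction to $d_1=d_2$ via $\int\omega_i=1$) is sound, but it is only dispositive once the zero-limit of $g_1-g_2$ is rigorously established.
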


\begin{rem}
	We emphasize that the preceding lemma can be generalized to hold for any test functions $\omega_1$ and $\omega_2$ with non-vanishing means that are not necessarily identical. This will hold up to a suitable rescaling argument. As this is not going to serve in our paper, we only restrict our selves to the case of test functions with unitary means. 
 \end{rem}
 \begin{rem}
 	Note that, under the non-vanishing assumptions on the means of the test functions \eqref{mean-omega_i}, Lemma \ref{lemma:SA} above provides us with a characterization of all admissible solutions $(g_1,g_2)$ of \eqref{EL:P1} and \eqref{self:AD}. Notice, furthermore, that such characterization is clearly not valid if one of the test functions (or both of them) is mean-free. Indeed, in such situations, \eqref{self:AD} is invariant by translating solutions by any constants.
 \end{rem}
 
\begin{proof} We begin with introducing the unique couple of  solutions that satisfies the vanishing far-boundary solutions by setting
\begin{equation}\label{gi:def}
	g_i \bydef g_{i,rad}+\bar g_i,
\end{equation}
where,  for a fixed  real number  $A>0$  such that 
$$\supp \omega_1, \supp \omega_2 \subset B_A ,$$
 we explicitly define $ g_{i,rad}$,    for all $r\geq 0$, by
\begin{equation*}
	 g_{i,rad}(r) \bydef - \int_A^r \frac{b(\sigma)}{\sigma} \int_0^\sigma \tau \omega_{i,\rad}(\tau) d\tau,
\end{equation*} 
 with 
\begin{equation*}
	 \omega_{i,\text{rad}}(r) \bydef  \frac{1}{2\pi} \int_0^{2\pi } \omega_i\big(r( \cos \theta,\sin \theta)\big) d\theta,
\end{equation*}
and  $ \bar g_i$ is defined as the unique solution of \eqref{EL:P1} with vanishing far-boundary condition
\begin{equation*}
	\lim_{|x|\to \infty} \bar g_i(x) = 0 ,
\end{equation*}
and source term
\begin{equation*}
	 f=\bar \omega_{i}  \bydef \omega_i -  \omega_{i,\text{rad}} .
\end{equation*} Note that the far-boundary conditions are given in classical sense since $ \bar g_i $ is continuous, thanks to Lemma \ref{lemma:lipschitz}. In this sense, the solution $g_i$ of \eqref{EL:P1} is uniquely defined, for any $i\in \{1,2\}$.

We now claim that $(g_1,g_2)$ is a solution    of \eqref{self:AD}. To show that, we write,  by definition of $g_1$ and $g_2$,   that
	\begin{equation*}
		\begin{aligned}
			\int_{\mathbb{R}^2} g_1 \omega_2(x)dx
			& =\int_{\mathbb{R}^2}\bar g_1   \bar \omega_{2 }(x)dx 
			+ 		 \int_{\mathbb{R}^2} g_{1,rad} \omega_{2,rad}(x)dx
			\\
			&\quad 
			 + \int_{\mathbb{R}^2} g_{1,rad} \bar \omega_{2 }(x)dx 
			+\int_{\mathbb{R}^2} \bar g_1  \omega_{2,rad}(x)dx  
			\\ 
			 &\bydef \sum_{j=1}^4 \mathcal{K}_j,
		\end{aligned} 
	\end{equation*}
	where each term in the preceding sum will be dealt with separately.
	\subsubsection*{About $\mathcal{K}_1$} The first term $ \mathcal
	{K}_1$ is the easiest one and can be dealt with by simple integrations by parts
	\begin{equation*}
		\begin{aligned}
			\mathcal{K}_1 
			& = - \int_{\mathbb{R}^2}  \bar g_1    \div \left( \frac{1}{b} \nabla \bar g_2  \right) (x)dx
			\\
			& =  \int_{\mathbb{R}^2}    \frac{1}{b} \nabla \bar g_1   \cdot \nabla \bar g_2  (x)dx
			\\
			& = - \int_{\mathbb{R}^2}  \div \left( \frac{1}{b} \nabla \bar g_1 \right)  \bar g_2 (x)dx
			\\
			& = \int_{\mathbb{R}^2} \bar \omega_{1}\bar g_2 (x)   dx,
		\end{aligned}
	\end{equation*} 
	where all the foregoing computations are justified because both $ \bar g_1 $ and $\bar g_2 $ belong to $\dot H^1(\mathbb{R}^2)$. 
	
	\subsubsection*{About $\mathcal{K}_2$} We turn our attention now to deal with $\mathcal{K}_2$ where we are going to employ  the support properties of the test functions $\omega_1$ and $\omega_2$ as well as the explicit expression of the solutions  $ g_{1,\rad}$ and $ g_{2,\rad}$ in the case where the source terms are radially symmetric. To see that, we write by   direct computations that  
	\begin{equation*}
		\begin{aligned}
			\mathcal{K}_2 =  - 2\pi  \int_{0}^A  \left(\int_A^r \frac{b(\sigma)}{\sigma} \int_0^\sigma \tau \omega_{1,\text{rad}}(\tau) d\tau d\sigma \right) r\omega_{2,rad}(r)dr.
		\end{aligned}
	\end{equation*}
	Thus, by means of two integration by parts, which are fully justified due to the fact that
	 $$ \frac{d}{dr}  g_{i,\rad} \in L^2_{\text{loc} }(\mathbb{R}^2), $$
	for all $i\in \{1,2\}$, we infer that 
	\begin{equation*}
		\begin{aligned}
			\mathcal{K}_2 
			& =  - 2\pi  \int_{0}^A  \int_A^r \frac{b(\sigma)}{\sigma}\int_0^\sigma \tau \omega_{1,\text{rad}}(\tau) d\tau  \frac{d}{dr} \left( \int_0^r \sigma \omega_{2,rad}(\sigma ) d\sigma  \right) dr
			\\
			& =    2\pi  \int_{0}^A   \frac{b(r)}{r}\int_0^r \tau \omega_{1,\text{rad}}(\tau) d\tau   \int_0^r \sigma \omega_{2,rad}(\sigma ) d\sigma  dr
			\\
			& =    2\pi  \int_{0}^A  \int_0^r \tau \omega_{1,\text{rad}}(\tau) d\tau      \frac{d}{dr}\left( \int_A^r \frac{b(\sigma)}{\sigma}\int_0^\sigma \tau  \omega_{2,rad}(\tau ) d\tau d\sigma  \right)  dr
			\\
			&=  - 2\pi  \int_{0}^A r \omega_{1,\text{rad}}(r)   \int_A^r \frac{b(\sigma)}{\sigma}\int_0^\sigma \tau  \omega_{2,rad}(\tau ) d\tau d\sigma    dr.
		\end{aligned}
	\end{equation*}
	Thus, we deduce that 
	\begin{equation*}
		\mathcal{K}_2=\int_{\mathbb{R}^2} \omega_{1,rad} \cdot  g_{2,\rad}(x)dx 	. 
	\end{equation*}

	\subsubsection*{About $\mathcal{K}_3$ and $\mathcal{K}_4$} We finally turn our attention  to compute the terms $\mathcal{K}_3$ and $\mathcal{K}_4$. to that end, observing from the definition of $\bar  \omega_{1}$ and $\bar  \omega_{2}$ that 	\begin{equation*}
		\int_0^{2\pi} \bar  \omega_{i} \big(r(\cos \theta, \sin \theta)\big) d\theta =0,
	\end{equation*}
	for all $r\geq0$, yields that 
	\begin{equation*}
		\mathcal{K}_3 = \int_0^A  g_{1,\rad}(r) \left( \int_0^{2\pi }\bar \omega_{2 }\big(r(\cos \theta, \sin \theta)\big) d\theta \right) rdr =0.
	\end{equation*}
	 	Moreover, noticing that 
	\begin{equation*}
		-\div\left(\frac{1}{b} \nabla \right) = - \partial_r \left( \frac{1}{b} \partial_r  \right) + \frac{1}{b}\frac{\partial_r}{r} + \frac{1}{b}\frac{\partial_{\theta\theta}}{r^2},
	\end{equation*}
	it is readily seen, since $ \bar g_i$ solves the equations
	\begin{equation*} 
		\left\{
		\begin{aligned}
			\displaystyle-\div\left(\frac{1}{b}\nabla\bar g_i  \right) 
			& =\bar  \omega_{i },  
			 \\
			\bar g_i (x)  &= 0, 
			\quad \text{as} \quad |x|\to \infty,
		\end{aligned} 
		\right.
\end{equation*}   
	for all $i\in \{1,2\}$, that  
	\begin{equation*}
		  \int_{0}^{2\pi} \bar g_i\big(r(\cos \theta, \sin \theta)\big) d\theta =0,
	\end{equation*}
		for all $r\geq 0$. Therefore, one sees  that 
		\begin{equation*}
			\mathcal{K}_3 =  \int_0^A    \left( \int_0^{2\pi } \bar g_1\big(r(\cos \theta, \sin \theta)\big) d\theta \right)   \omega_{2,rad} (r)rdr =0   .
		\end{equation*}
		All in all, gathering the previous identities yields the validity of \eqref{self:AD} with the particular choice of solutions $(g_1,g_2)$ introduced in \eqref{gi:def}.
		
		Now, let $(h_1,h_2)$ be another couple of solutions of \eqref{self:AD}. Thus, as $ h_i$ and $g_i$ are also solutions to the elliptic equation \eqref{EL:P1} with the same source term $\omega_i$, for all $i\in \{1,2\}$, then, by uniqueness from Theorem \ref{thm:EL:1},  there exist two real numbers $c_1$ and $c_2$ such that 
		\begin{equation*}
			(h_1,h_2)= \left( g_1 + c_1, g_2+c_2 \right).
		\end{equation*}
		Therefore, by plugging this identity in \eqref{self:AD}, we infer that $c_1=c_2$. Hence, we deduce that 
		\begin{equation*}
			h_1-h_2= g_1-g_2,
		\end{equation*}
		which yields that
		\begin{equation}\label{h1-h2:A}
			\lim_{|x|\to \infty}(h_1-h_2)(x)= \lim_{|x|\to \infty}(g_1-g_2)(x).
		\end{equation}
		Let us now show that $g_1-g_2$ satisfies the vanishing far-boundary conditions. Note that, by definition of $g_1$ and $g_2$ (see \eqref{gi:def}), this is reduced to showing that 
		\begin{equation}\label{sb1-sb2:A}
			\lim_{r\to \infty} \big(  g_{1,rad} -  g_{2,rad}  \big)(r) =0.
		\end{equation}
		To see that, we further notice that for large values of $r$, in particular for any  $r>A$, it holds that 
		\begin{equation*}
			g_{i,rad}(r)= \left(\int_A^r \frac{b(\sigma)}{\sigma} d\sigma \right) \left( \int_0^\infty \tau \omega_{i,\text{rad}}(\tau)d\tau\right)= \frac{1}{2\pi }\int_A^r \frac{b(\sigma)}{\sigma} d\sigma , 
		\end{equation*}
		for all $i\in \{1,2\}$, where we utilized the assumption that $\omega_1$ and $\omega_2$ are supported in the ball $B_A$ and with unitary means. Consequently, we deduce the validity of \eqref{sb1-sb2:A} which yields that \eqref{h1-h2:A} holds, as well.
		
		Conversely, we now assume that  $ h_1$ and $h_2$ are two solutions of the elliptic equation  \eqref{EL:P1} with respective source terms $\omega_1$ and $\omega_2$ such that 
		\begin{equation*}
			\lim_ {|x|\to \infty}(h_1-h_2)(x)=  0.
		\end{equation*}
		Therefore, again, by uniqueness of solutions to \eqref{EL:P1}, there exist two real numbers $c_1$ and $c_2$ such that 
		\begin{equation*}
			(h_1,h_2)= \left( g_1 + c_1, g_2+c_2\right).
		\end{equation*}
		Thus,  it follows  that
		\begin{equation*}
			c_1-c_2 = \lim _{|x|\to \infty} \Big( (h_1-h_2) - (g_1-g_2)\Big) (x)=0.
		\end{equation*}
		Hence, one deduces that 
		\begin{equation*}
			(h_1,h_2)= \left( g_1 + c, g_2+c\right),
		\end{equation*}
		for some constant $c\in \mathbb{R}$.  At last, by virtue of the assumption  \eqref{mean-omega_i},
		 one readily sees that 
		$(h_1,h_2)$ solves \eqref{self:AD} as soon as $(g_1,g_2)$ does so. This completes the proof of the lemma. 
\end{proof}

\subsection{Singular kernel and proof of Proposition \ref{prop.kernel}}\label{section:A:kernel}
Here, we shall provide a full justification of Proposition \ref{prop.kernel} which played a central role in the proof of our main theorems.   
\begin{proof}[Proof of Proposition \ref{prop.kernel}]
	The three bounds in the statement of the proposition are consequences of Proposition \ref{prop:Elliptic:00} and Lemma \ref{lemma:boundedness:psi}. Indeed, by a direct application of the estimates therein, we obtain the controls, for all $y\in \mathbb{R}^2$
	\begin{equation*}
	 \norm {\nabla ^2 S_b(\cdot ,y)}_{L^m(\mathbb{R}^2)} \lesssim_b \sqrt{b(y)} \norm {\log|\cdot -y|\Delta \left(\frac{1}{\sqrt{b }}\right)}_{L^m(\mathbb{R}^2)},
\end{equation*}
for all $m\in (1,p]$, and 
\begin{equation*}
	 \norm {\nabla  S_b(\cdot ,y)}_{L^n(\mathbb{R}^2)} \lesssim_b \sqrt{b(y)} \norm {\log|\cdot -y|\Delta \left(\frac{1}{\sqrt{b }}\right)}_{L^n(\mathbb{R}^2)},
\end{equation*}
for all $n\in (2,p]$, as well as
 \begin{equation*}
	  \norm {   S_b(\cdot ,y)}_{L^ \infty  (B_\rho)} \lesssim_b \sqrt{b(y)} \norm {\log|\cdot -y|\Delta \left(\frac{1}{\sqrt{b }}\right)}_{L^2 (\mathbb{R}^2)},
\end{equation*}
for any ball $B_\rho$ or radius $\rho>0$.
Therefore, by further employing assumption \eqref{Hb3:eq}, it follows that 
\begin{equation*}
	 \norm {\nabla ^2 S_b(\cdot ,y)}_{L^m(\mathbb{R}^2)} \lesssim_b \norm {b}_{C^2(\mathbb{R}^2)}^2 \norm {\log|\cdot -y| }_{L^m(B_{R_\infty})},
\end{equation*}
for all $m\in (1,p]$, and 
\begin{equation*}
	 \norm {\nabla  S_b(\cdot ,y)}_{L^n(\mathbb{R}^2)} \lesssim_b \norm {b}_{C^2(\mathbb{R}^2)}^2 \norm {\log|\cdot -y| }_{L^n(B_{R_\infty})},
\end{equation*}
for all $n\in (2,p]$, as well as
 \begin{equation*}
	  \norm {   S_b(\cdot ,y)}_{L^ \infty  (B_\rho)} \lesssim_b \norm {b}_{C^2(\mathbb{R}^2)}^2 \norm {\log|\cdot -y| }_{L^2 (B_{R_\infty})}.
\end{equation*} 
 Consequently, the claimed bounds on $S_b$ are obtained by taking the supremum on $y$ over any compact set in $\mathbb{R}^2$. 

Once $S_b$ is constructed, it is then by a direct computation that one can show that the function 
$$ x\mapsto \int_{\mathbb{R}^2} K_b(x,y) f(y)dy,$$
solves the elliptic equation \eqref{EL:P1}, where $K_b$ is as introduced in the statement of the proposition.
We are now left with the proof of the properties of $K_b$. Before doing so, let us notice that $K_b$ is determined up to constants. Thus, we now select a suitable representative and prove that this one satisfies all symmetries in the statement of the proposition. At the end, since all the symmetry properties are stable by shifting the solution by constants, the same properties will eventually remain satisfied by any kernel $K_b$.

The choice of the representative kernel draws insight from Lemma \ref{lemma:SA} and is done by first selecting the unique solution $S_b$ of \eqref{EL:P2}  by setting
\begin{equation}\label{Sb:def:AAA}
	S_b(x,y)= S_{b,rad}
	(|x|,y)+\bar S_{b}(x,y),
\end{equation}
where  $S_{b,rad}$  is explicitly defined by 
\begin{equation*}
	S_{b,rad}(r,y)\bydef - \int_{R_\infty}^r \frac{b(\sigma)}{\sigma} \int_0^\sigma \tau f _{ \text{rad}}(\tau,y) d\tau + \frac{1}{2\pi} \sqrt{b(R_\infty)b(y)} \log  R_\infty   ,
\end{equation*}
where $ R_\infty>0$ is the radius of the ball introduced in  \eqref{Hb3:eq}, and $f _{ \text{rad}}$ denotes the radially symmetric part of $f$ with respect to the $x$ variable, i.e., 
\begin{equation*}
	f _{ \text{rad}}(r,y) \bydef \frac{1}{2\pi }\int_0^{2\pi} f\big(r(\cos\theta,\sin \theta),y\big) d\theta,
\end{equation*}
where \begin{equation*}
	f(x,y) \bydef  \frac{1}{2\pi}\log|x-y|\sqrt{b(y)}\Delta_x\left(\frac{1}{\sqrt{b(x)}}\right) ,
\end{equation*}
 for all $x,y\in \mathbb{R}^2\times \mathbb{R}^2 $, with $y\neq x$, and where $\bar S_b(\cdot,y)$ is the unique solution of the elliptic equation \eqref{EL:P1} with source term $f-f_{\text{rad}}$ and vanishing far-boundary conditions, i.e.
 \begin{equation*}
 	\lim_{|x|\to \infty}\bar S_b(x,y) = 0,
 \end{equation*}
 for all $y\in \mathbb{R}^2$. Note in passing that 
 \begin{equation}\label{int:f}
 	\int_{\mathbb{R}^2} f(x,y) dx=  1 -\frac{\sqrt{b(y)}}{\sqrt{b(R_\infty)}} ,
 \end{equation}
for all $y\in \mathbb{R}^2$. Indeed, to see that, we first observe that  
\begin{equation*}
\begin{aligned}
  \int_{\mathbb{R}^2} f(x,y) dx =  \frac{\sqrt{b(y)}}{2\pi}	\int_{\mathbb{R}^2} \log|x-y|\Delta_x\left(\frac{1}{\sqrt{b(x)} } - \frac{1}{\sqrt{b(R_\infty)} }  \right)dx,
\end{aligned}
\end{equation*} 
for any $y\in \mathbb{R}^2$. Therefore, as the function  
\begin{equation*}
x\mapsto \frac{1}{\sqrt{b(x)} } - \frac{1}{\sqrt{b(R_\infty)} } 
\end{equation*} 
is continuous and compactly supported, thanks to the assumption \eqref{Hb3:eq}, it then follows that it belongs to $L^1\cap L^\infty(\mathbb{R}^2)$. One then sees, by a direct integration by parts, that
\begin{equation*}
	\begin{aligned}
		\int_{\mathbb{R}^2} f(x,y) dx
		& =  \sqrt{b(y)} \left( (-\Delta_y)  \left( -\frac{1}{2\pi}	\int_{\mathbb{R}^2} \log|x-y| \left(\frac{1}{\sqrt{b(x)} } - \frac{1}{\sqrt{b(R_\infty)} }  \right)dx \right)\right)
		\\
		& =   \sqrt{b(y)}    \left(\frac{1}{\sqrt{b(y)} } - \frac{1}{\sqrt{b(R_\infty)} }  \right), 
	\end{aligned}
\end{equation*}
where we utilized the fact that 
$$(x,y) \mapsto -\frac{1}{2\pi} \log|x-y|$$ is the fundamental solution of the Laplace operator $(-\Delta)$ in dimension two of space. This shows the validity of \eqref{int:f} which will come in handy later on. It is to be emphasized in a moment that the correction by the term
\begin{equation*}
	\frac{1}{2\pi} \sqrt{b(R_\infty)b(y)} \log R_\infty 
\end{equation*} 
in the definition of $S_{b,\text{rad}}$ is carefully chosen to compensate the behavior, as $|x|\to \infty$, of the first term in the right-hand side of \eqref{symmetry-kernel:split}. Indeed, in view of the very definition of $S_{b,\text{rad}}$ above, we emphasize    that 
\begin{equation}\label{S:rad:infinity}
	S_{b,\text{rad}}(r,y) = - \frac{b(R_\infty)}{2\pi} \log \left( \frac{r}{R_\infty}\right) + \frac{1}{2\pi} \sqrt{b(R_\infty)b(y)}  \log r  ,
\end{equation}
for all $r\geq R_\infty$ and $y\in \mathbb{R}^2$. To see that, it is enough to notice that
\begin{equation*}
	S_{b,rad}(r,y)= - \frac{b(R_\infty)}{2\pi } \left(  \int_{R_\infty}^r \frac{d\sigma}{\sigma} \right) \int_{\mathbb{R}^2} f (x,y) dx + \frac{1}{2\pi} \sqrt{b(R_\infty)b(y)} \log R_\infty   ,
\end{equation*}
for large values of $r\geq R_\infty$ and all $y\in \mathbb{R}^2$, where we employed the   assumption \eqref{Hb3:eq}, again. Therefore, the expansion \eqref{S:rad:infinity} follows by incorporating \eqref{int:f} in the preceding identity.  

Let us now move on to the proof of the remaining claims in the proposition. To that end, in term of the definition of $S_b$ above, we now select the kernel $K_b$ by setting
     \begin{equation*}
	K_b(x,y) \bydef -\frac{1}{2\pi}\log|x-y|\sqrt{b(x)b(y)} + S_b(x,y),
\end{equation*}
      and we emphasize, again, that it can be shown by a direct computations---which do not rely on the very fine definition of $S_b$ given in \eqref{Sb:def:AAA}, but only the fact that it solves \eqref{EL:P2}---that the function 
      \begin{equation*}
      	x\mapsto \int_{\mathbb{R}^2} K_b(x,y)\omega(y)dy
      \end{equation*}
      is a distributional solution of \eqref{EL:P1} with source term $\omega$, for any  $\omega \in \mathcal{D}(\mathbb{R}^2)$. Moreover, this distributional solution can be shown to satisfy the elliptic regularity estimates of Proposition \ref{prop:Elliptic:00}. We again skip the proof of that here and refer  to \cite{M23}.
      
      Now, we turn our attention  to prove the symmetry properties of $K_b$. To that end, due to the assumption \eqref{Hb4:eq}, observing that the function 
\begin{equation*}
	(x,y) \mapsto -\frac{1}{2\pi}\log|x-y|\sqrt{b(x)b(y)}
\end{equation*}
satisfies all the symmetry properties claimed in the statement of the proposition, then, we  only need to show that $S_b(x,y)$ does not perturb these symmetries. The principle step towards showing that consists in   justifying that the two solutions of \eqref{EL:P1} given by
\begin{equation*}
	\psi_b[\omega_i] \bydef \int_{\mathbb{R}^2} K_b(\cdot,y) \omega_i (y)dy, \quad \text{for} \quad i\in \{1,2\}
\end{equation*} 
are also solutions to equation \eqref{self:AD}. That is to say, we claim that  
\begin{equation}\label{claim:omega:i}
	\int _{\mathbb{R}^2} \omega_2 \psi_b[\omega_1](x)dx = \int _{\mathbb{R}^2} \omega_1 \psi_b[\omega_2](x)dx,
\end{equation}
for any test functions $ \omega_1 $ and $\omega_2$ in $\mathcal{D}(\mathbb{R}^2)$ with  
\begin{equation}\label{omega:i=1}
	\int_{\mathbb{R}^2} \omega_1 (x)dx = \int_{\mathbb{R}^2} \omega_2 (x)dx=1.
\end{equation}
Let us assume the validity of \eqref{claim:omega:i} for a moment and first continue the proof of \eqref{symmetry-kernel-0}. In view of \eqref{claim:omega:i} and the fact that 
\begin{equation*}
	\int_{\mathbb{R}^2\times \mathbb{R}^2} \log|x-y|\sqrt{b(x)b(y)} \omega_1(x) \omega_2(y) dxdy= \int_{\mathbb{R}^2\times \mathbb{R}^2} \log|x-y|\sqrt{b(x)b(y)} \omega_1(y) \omega_2(x)dxdy , 
\end{equation*}
it is readily seen that 
\begin{equation*}
	\int_{\mathbb{R}^2\times \mathbb{R}^2} S_b(x,y) \omega_1(x) \omega_2(y)dxdy = \int_{\mathbb{R}^2\times \mathbb{R}^2}S_b(x,y) \omega_1(y) \omega_2(x)dxdy , 
\end{equation*}
whereby we deduce that 
\begin{equation*}
	\int_{\mathbb{R}^2\times \mathbb{R}^2} \Big(  S_b(x,y)- S_b(y,x)\Big)  \omega_1(x) \omega_2(y)dxdy =0,
\end{equation*}
for all test functions $\omega_1$ and $\omega_2$ in $\mathcal{D}(\mathbb{R}^2)$ with unitary means. Thus, we arrive at the conclusion that 
\begin{equation*}
	S_b(x,y)=S_b(y,x),
\end{equation*}
for all $x,y\in \mathbb{R}^2$, whereby \eqref{symmetry-kernel-0} follows. 

We are now left with the proof of \eqref{claim:omega:i}. To that end, we notice, in view of the second part of Lemma \ref{lemma:SA}, that the couple $( \psi_b[\omega_1], \psi_b[\omega_2])$ of solutions satisfies \eqref{claim:omega:i} if and only if 
\begin{equation*}
	\lim_{|x|\to \infty} \Big(\psi_b[\omega_1] - \psi_b[\omega_2]\Big)(x)=0.
\end{equation*}
Thanks to the definitions of $ \psi_b[\omega_1]$ and $\psi_b[\omega_2] $ as well as  \eqref{omega:i=1}, it is readily seen that the preceding limit is a direct consequence of the  following  two properties:
\begin{equation}\label{claimH1}
	\lim_{|x|\to \infty } \mathcal{A}_1(x)\bydef \lim_{|x|\to \infty } \int _{\mathbb{R}^2} \bar S_b(x,y) h(y)dy = 0
\end{equation} 
and
\begin{equation}\label{claimH2}
	\lim_{|x|\to \infty } \mathcal{A}_2(x)\bydef \lim_{|x|\to \infty } \int _{\mathbb{R}^2} \left( -\frac{1}{2\pi } \log|x-y|\sqrt{b(x)b(y)}  + S_{b,\text{rad}}(x,y)\right) h(y)dy = 0,
\end{equation}
for any test function $h\in \mathcal{D}(\mathbb{R}^2)$ with 
\begin{equation*}
	\int_{\mathbb{R}^2}h(y)dy=0.
\end{equation*}
  The proof of \eqref{claimH1} directly follows by applying the    dominated convergence theorem of Lebesgue, for $\bar S_b(x,y)$ vanishes as $|x|\to \infty$ by construction. As for the justification of \eqref{claimH2}, we employ \eqref{S:rad:infinity} altogether with the assumption that $h$ is mean-free to obtain that 
  \begin{equation*}
  	\mathcal{A}_2(x) = -\frac{\sqrt{b(R_\infty)}}{2\pi } \int _{\mathbb{R}^2}   \log\left( \frac{ |x-y|}{|x|}\right) \sqrt{b(y)}   h(y)dy ,
  \end{equation*}
for all $|x|\geq R_\infty$. Therefore, noticing that the integral in the right-hand side above can be restricted to a compact set, as $h$ is compactly supported,  the dominated convergence theorem of Lebesgue yields \eqref{claimH2}, as well. This leads to the completion of the proof of \eqref{claim:omega:i} and, eventually, a justification of \eqref{symmetry-kernel-0}.

 We now show the stability by rotations and involution of $S_b(x,y)$, which, as previously pointed out, will imply \eqref{symmetry-kernel}. To that end, observing, in view of assumption \eqref{Hb4:eq},  that
\begin{equation*}
	\div_x\left(\frac{1}{b}\nabla_x\cdot\right)=\frac{1}{b}\left(\partial_{rr}+\frac{\partial_r}{r}+\frac{\partial_{\eta\eta}}{r^2}\right)+ \frac{d}{dr}\left(\frac{1}{b}\right)\partial_r,
\end{equation*} 
for any $x=r(\cos \eta, \sin \eta)\in \mathbb{R}^2\setminus \{0\}$, one sees that the preceding operator is invariant by the transformation $ x\mapsto \mathcal{R}_\theta \cdot x$, for any $\theta\in [0,2\pi]$. In addition to that, it is readily seen that the function 
$$(x,y)\mapsto\log|x-y|\sqrt{b(y)}\Delta\left(\frac{1}{\sqrt{b(x)}}\right)$$
is invariant by the transformation
$$(x,y)\rightarrow(\mathcal{R}_\theta\cdot x,\mathcal{R}_\theta\cdot y),  $$
for all $\theta\in[0,2\pi]$. Thus, 
by uniqueness of the solution  $\bar S_b$ of \eqref{EL:P2}, we deduce that 
$$\bar S_b(\mathcal{R}_\theta\cdot x,\mathcal{R}_\theta\cdot y)=\bar S_b(x,y),$$ 
for all $x,y\in \mathbb{R}^2$ and $\theta\in[0,2\pi].$ Likewise, the stability of the solution by involution can be proven along the same lines and by employing the same arguments as before. The details of that are left the reader. This leaves us with a justification of the \eqref{symmetry-kernel} for the part  $S_{b,\text{rad}}$. To that end, as $S_{b,\text{rad}}$ is radially symmetric with respect to the $x$ variable, it is enough to check that it is radially symmetric with respect to the $y$ variable, as well, to conclude the proof. To see that, we only need to observe that 
\begin{equation*}
f_{\text{rad}}(|x|,y)= \int _{0}^{2\pi} 	\log\Big||x|e^{\ii \theta}-|y|e^{\ii \eta}\Big|\sqrt{b(|y|)}\Delta_x\left(\frac{1}{\sqrt{b(|x|)}}\right) d\theta,
\end{equation*}
where we identity $\mathbb{R}^2$ with $\mathbb{C}$, for simplicity, to write $x=|x|e^{\ii \theta}$ and $y=|y|e^{\ii \eta}$, for some $\theta,\eta\in [0,2\pi]$. Therefore, by a change of variables, it is readily seen that 
\begin{equation*}
f_{\text{rad}}(|x|,y)= \sqrt{b(|y|)}\Delta_x\left(\frac{1}{\sqrt{b(|x|)}}\right) \int _{0}^{2\pi} 	\log\Big||x|e^{\ii \theta} -|y| \Big| d\theta,
\end{equation*}
which implies that $ f_{\text{rad}}$ is radial with respect to the $y$ variable, as well. Thus, it follows that $S_{b,\text{rad}}$ is radial in $x$ and $y$ variables and, whence, is stable by rotations and involution. This completes the proof of the proposition.
\end{proof}

\section{A key formula}

For the sake of completeness, we  conclude with a justification of a crucial classical identity that has been used several times in this paper. 
\begin{lem}
For any $n\in \mathbb{N}^*$, $x,y\in (0,\infty)$ and   $\theta\in[0,2\pi]$, it holds that  
 \begin{equation}\label{log:identity}
\frac{1}{2\pi}\int_0^{2\pi}\log|ye^{\ii \theta}-xe^{\ii \eta}|\cos(n\eta)d\eta=-\cos(n\theta)\frac{\mathtt{m}^n(x,y)}{2n}, 
\end{equation} 
where 
$$\mathtt{m}(x,y)\bydef\min\left\{\frac{x}{y},\frac{y}{x}\right\}.$$
\end{lem}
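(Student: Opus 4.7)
The strategy is to reduce the integral to the classical Fourier series expansion of $\log|1-z|$ on the unit disc and then invoke orthogonality of the trigonometric system on $[0,2\pi]$.

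First, by the symmetry $|ye^{\ii\theta}-xe^{\ii\eta}|=|xe^{\ii\eta}-ye^{\ii\theta}|$, one can assume without loss of generality that $x\leq y$, so that $\mathtt{m}(x,y)=x/y\in[0,1]$. Factor out the larger modulus by writing
\begin{equation*}
\log|ye^{\ii\theta}-xe^{\ii\eta}|=\log y+\log\bigl|1-\tfrac{x}{y}e^{\ii(\eta-\theta)}\bigr|.
\end{equation*}
The first summand contributes nothing to the integral because $\int_0^{2\pi}\cos(n\eta)d\eta=0$ for $n\in\mathbb{N}^*$, so the problem reduces to computing
\begin{equation*}
\frac{1}{2\pi}\int_0^{2\pi}\log\bigl|1-\rho e^{\ii(\eta-\theta)}\bigr|\cos(n\eta)d\eta,\qquad \rho\bydef x/y.
\end{equation*}

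Next I would use the classical expansion, valid for $\rho\in[0,1)$ and $\phi\in\mathbb{R}$,
\begin{equation*}
\log|1-\rho e^{\ii\phi}|=\mathrm{Re}\,\log(1-\rho e^{\ii\phi})=-\sum_{k=1}^{\infty}\frac{\rho^{k}\cos(k\phi)}{k},
\end{equation*}
which is obtained from the power series of $-\log(1-z)$ and converges uniformly in $\phi$ for each $\rho<1$. Substituting $\phi=\eta-\theta$, exchanging the uniformly convergent sum with the integral, and expanding $\cos\bigl(k(\eta-\theta)\bigr)=\cos(k\eta)\cos(k\theta)+\sin(k\eta)\sin(k\theta)$, the orthogonality relations
\begin{equation*}
\int_0^{2\pi}\cos(k\eta)\cos(n\eta)d\eta=\pi\,\delta_{k,n},\qquad \int_0^{2\pi}\sin(k\eta)\cos(n\eta)d\eta=0,
\end{equation*}
single out the term $k=n$ and give exactly
\begin{equation*}
\frac{1}{2\pi}\int_0^{2\pi}\log|ye^{\ii\theta}-xe^{\ii\eta}|\cos(n\eta)d\eta=-\cos(n\theta)\frac{\rho^{n}}{2n}=-\cos(n\theta)\frac{\mathtt{m}^{n}(x,y)}{2n}.
\end{equation*}

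The only mildly delicate point is the boundary case $x=y$, where $\rho=1$ lies on the radius of convergence; this is the step I expect to require the most care. In that case the integrand has an integrable logarithmic singularity at $\eta=\theta$, and the formal limit $\rho\to 1^{-}$ of the identity above yields $-\cos(n\theta)/(2n)$, which is precisely $-\cos(n\theta)\mathtt{m}^n(x,x)/(2n)$. To justify this rigorously one can either invoke the classical Fourier series $\log|2\sin(\phi/2)|=-\sum_{k\geq1}\cos(k\phi)/k$ (valid in $L^1(\mathbb{T})$) applied to $|e^{\ii\theta}-e^{\ii\eta}|=2|\sin((\theta-\eta)/2)|$, or pass to the limit $\rho\uparrow 1$ in the previous display using Lebesgue's dominated convergence together with the uniform bound $|\log|1-\rho e^{\ii(\eta-\theta)}||\leq|\log|2\sin((\theta-\eta)/2)||+C$ for $\rho$ close to $1$.
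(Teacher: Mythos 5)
Your proof is correct in substance and takes a more self-contained route than the paper's. The paper simply cites the identity $\frac{1}{2\pi}\int_0^{2\pi}\log|1-\rho e^{\ii\eta}|\cos(n\eta)\,d\eta=-\rho^n/(2n)$ for $\rho\in(0,1]$ from an external reference, extends it to $\rho\geq 1$ via the change of variables $\eta\to-\eta$ (this is what its appeal to ``parity and periodicity'' amounts to), and then factors out $\log|y|$ and shifts $\eta\to\eta+\theta$. You instead derive the core identity from scratch, expanding $\log|1-\rho e^{\ii\phi}|=-\sum_{k\geq 1}\rho^k\cos(k\phi)/k$ and invoking trigonometric orthogonality. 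This buys self-containedness (no external citation needed) at the cost of a slightly longer argument and a separate discussion of the endpoint $\rho=1$; the paper sidesteps the latter because the cited identity already includes $\rho=1$.

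One point to tighten: the justification ``by the symmetry $|ye^{\ii\theta}-xe^{\ii\eta}|=|xe^{\ii\eta}-ye^{\ii\theta}|$ one can assume WLOG $x\leq y$'' does not actually establish the reduction, since that identity is just $|a-b|=|b-a|$, and swapping $x\leftrightarrow y$ in the integrand would also swap the integration variable $\eta$ with the fixed parameter $\theta$, producing a different integral. The reduction is nevertheless valid, but for a different reason: if $x>y$, factor out $\log x$ instead of $\log y$, writing $|ye^{\ii\theta}-xe^{\ii\eta}|=x\,|1-(y/x)e^{\ii(\theta-\eta)}|$, note that $\cos\!\big(k(\theta-\eta)\big)=\cos\!\big(k(\eta-\theta)\big)$, and the same series expansion applies with $\rho=y/x=\mathtt{m}(x,y)$. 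Equivalently (as the paper effectively does), prove the one-variable identity uniformly in $\rho>0$ using $|1-\rho e^{\ii\eta}|=\rho\,|1-\rho^{-1}e^{-\ii\eta}|$, which removes any need for a case split on $x$ versus $y$.
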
 
 \begin{proof}
 First, we recall  the identity (see \cite[identity (4.16)]{HHR23}, for instance)
   $$\frac{1}{2\pi}\int_0^{2\pi}\log|1-xe^{\ii \eta}|\cos(n\eta)d\eta=-\frac{x^n}{2n},$$
   which holds for any $x \in (0,1]$.
Therefore, combining the preceding identity with the parity and periodicity of the trigonometric function $\eta\mapsto\cos(n\eta)$, it follows, for any $x\in [1,\infty)$, that 
$$\frac{1}{2\pi}\int_0^{2\pi}\log|1-xe^{\ii \eta}|\cos(n\eta)d\eta=-\frac{x^{-n}}{2n}\cdot$$
Hence we deduce, for any $x\in [1,\infty)$,  that 
\begin{equation}\label{IDENT:0}
\frac{1}{2\pi}\int_0^{2\pi}\log|1-xe^{\ii \eta}|\cos(n\eta)d\eta=-\frac{ \min^n\{x,x^{-1}\}}{2n}\cdot
\end{equation}
Thus, observing that 
\begin{equation*}
\int_0^{2\pi}\log|ye^{\ii \theta}-xe^{\ii \eta}|\cos(n\eta)d\eta=\log|y|\underbrace{\int_0^{2\pi}\cos(n\eta)d\eta}_{=0}+\int_0^{2\pi}\log\left|1-\frac{x}{y}e^{\ii (\eta-\theta)}\right|\cos(n\eta)d\eta, 
\end{equation*}
and employing the change of variables $\eta\to\eta+\theta$ along with \eqref{IDENT:0} infers, for all $x\in(0,\infty)$ and $\theta\in\mathbb{T}$, that  
\begin{equation*}
\frac{1}{2\pi}\int_0^{2\pi}\log|ye^{\ii \theta}-xe^{\ii \eta}|\cos(n\eta)d\eta=-\cos(n\theta)\frac{\mathtt{m}^n(\frac{x}{y},1)}{2n}+\frac{\sin (n\theta)}{2\pi}\underbrace{\int_0^{2\pi}\log\left|1-\tfrac{x}{y}e^{\ii  \eta}\right|\sin(n\eta)d\eta}_{=0}, 
\end{equation*}
where we have used the fact that the function
$$\eta\mapsto\log\left|1-\tfrac{x}{y}e^{\ii \eta}\right|\sin(n\eta)$$  
is odd and $2\pi$-periodic. In conclusion, \eqref{log:identity} follows and the proof of the lemma is now completed.
\end{proof}

\section*{Acknowledgements} 
T. Hmidi has been supported by Tamkeen under the NYU Abu Dhabi Research Institute grant. E. Roulley has been supported by PRIN 2020XB3EFL, ``Hamiltonain and Dispersive PDEs”.
 \bibliographystyle{plain} 
\bibliography{BIB}
\end{document}